\documentclass[10pt,a4paper]{amsart}
\usepackage{dhucs}

\usepackage{amsmath,amsbsy}
\usepackage{amssymb}
\usepackage{amsfonts}
\usepackage{ mathrsfs }
\usepackage{stmaryrd}
\usepackage{stmaryrd}
\usepackage[all]{xy}
\usepackage{kotex}
\usepackage[pdftex]{color, graphicx}
\usepackage{mathdots}
\usepackage{tikz}
\usepackage{hyperref}
\usepackage{marginnote}
\usepackage{mathabx}

\usepackage{xcolor}

\newtheorem{proposition}{Proposition}[section]
\newtheorem{theorem}[proposition]{Theorem}
\newtheorem{lemma}[proposition]{Lemma}
\newtheorem{corollary}[proposition]{Corollary}
\theoremstyle{definition}

\newtheorem{definition}[proposition]{Definition}

\newtheorem{notation}[proposition]{Notation}

\theoremstyle{remark}
\newtheorem{remark}[proposition]{Remark}

\newcommand{\R}{\mathbb{R}}

\newcommand{\Z}{\mathbb{Z}}

\newcommand{\Hom}{\operatorname{Hom}}

\newcommand{\SL}{\mathrm{SL}}
\newcommand{\PSL}{\mathrm{PSL}}
\newcommand{\Tr}{\operatorname{Tr}}
\newcommand{\GL}{\mathrm{GL}}

\renewcommand{\phi}{\varphi}

\newcommand{\Ad}{\operatorname{Ad}}

\newcommand{\Rep}{\mathcal{X}}

\renewcommand{\epsilon}{\varepsilon}
\renewcommand{\Tr}{\operatorname{Tr}}
\newcommand{\der}{\mathrm{d}}

\newcommand{\per}[1]{\operatorname{per}(#1)}

\setcounter{secnumdepth}{2}

\begin{document}
\title[Symplectic coordinates]{Symplectic coordinates on the deformation spaces of convex projective structures on 2-orbifolds}
\author{Suhyoung Choi}\thanks{Suhyonug Choi is supported in part by NRF grant 2019R1A2C108454412}
\address{Department of Mathematical Sciences, KAIST,
	Daejeon 34141, Republic of Korea}
\email{schoi@math.kaist.ac.kr}

\author{Hongtaek Jung}\thanks{Hongtaek Jung is supported by IBS-R003-D1.}
\address{Center for Geometry and Physics, Institute for Basic Science (IBS),
	Pohang 37673, Republic of Korea}
\email{htjung@ibs.re.kr}

\maketitle
\begin{abstract}
Let $\mathcal{O}$ be a closed orientable 2-orbifold of negative Euler characteristic.  Huebschmann constructed the Atiyah-Bott-Goldman type symplectic form $\omega$ on the deformation space $\mathcal{C}(\mathcal{O})$ of convex projective structures on $\mathcal{O}$. We show that the deformation space $\mathcal{C}(\mathcal{O})$ of convex projective structures on $\mathcal{O}$ admits a global Darboux coordinates system with respect to $\omega$. To this end, we show that $\mathcal{C}(\mathcal{O})$ can be decomposed into smaller symplectic spaces. In the course of the proof, we also study the deformation space $\mathcal{C}(\mathcal{O})$ for  an orbifold $\mathcal{O}$ with boundary and construct the symplectic form on the deformation space of convex projective structures on $\mathcal{O}$ with fixed boundary holonomy. 
\end{abstract}

\section{Introduction}
\subsection{Motivation}
Let $\mathcal{O}$ be a closed orientable 2-orbifold of negative Euler characteristic.  In this paper we study the deformation space $\mathcal{C}(\mathcal{O})$ of convex projective structures on $\mathcal{O}$ and its symplectic nature. 

It is a classical result that the Teichmüller space $\mathcal{T}(S)$ of a given orientable closed hyperbolic surface $S$ with genus $g$ admits a global Darboux coordinates system \cite{wolpert1985}.  In this case the Fenchel-Nielsen coordinates system is the one where the Weil-Petersson symplectic form $\omega_{WP}$ can be written in the standard form:
\[
\omega_{WP} = \sum_{i=1} ^{3g-3} \der \ell_i \wedge \der \theta_i. 
\]

The Teichmüller space $\mathcal{T}(S)$ for a closed surface $S$ can be  generalized in many directions. One way to do this starts from regarding $\mathcal{T}(S)$ as the set of conjugacy classes of discrete faithful representations of $\pi_1(S)$ into $\operatorname{PSL}_2(\R)$. Since there is a unique (up to conjugation) irreducible representation $\operatorname{PSL}_2(\R)\to \operatorname{PSL}_n(\R)$ for each $n$, we can canonically embed $\mathcal{T}(S)$ into $\Hom(\pi_1(S), \operatorname{PSL}_n(\R))/\operatorname{PSL}_n(\R)$. We call the image of this embedding the Fuchsian locus and call each element in the Fuchsian locus a Fuchsian character. The Hitchin component $\operatorname{Hit}_n(S)$ is then defined by a connected component of $\Hom(\pi_1(S), \operatorname{PSL}_n(\R))/\operatorname{PSL}_n(\R)$ that contains a Fuchsian character. This ``higher Teichmüller space'' has been studied by many people including Hitchin \cite{hitchin1992}, Goldman \cite{goldman1990, goldman1984}, Choi-Goldman \cite{choi1993}, Labourie \cite{labourie2006}, and Guichard-Wienhard \cite{guichard2012}. 

Due to Goldman \cite{goldman1984}, we know that the Hitchin components also carry the symplectic form so called Atiyah-Bott-Goldman form $\omega_{ABG}$ which is a natural generalization of $\omega_{WP}$. Their symplectic nature has been extensively studied  by Sun-Zhang \cite{sz2017}, Sun-Zhang-Wienhard \cite{swz2017} and they proved that $(\operatorname{Hit}_n(S), \omega_{ABG})$ admits a global Darboux coordinates system. Using a different method, the authors \cite{choi2019} also obtained the same result for  $\operatorname{Hit}_3(S)$. 

Another direction is to generalize a surface $S$ to an orbifold $\mathcal{O}$. Thurston's lecture note \cite{thurston1979}, for instance, contains some results about the Teichmüller spaces of orbifolds.  

In this paper we proceed in both directions and study symplectic nature of the ``Hitchin components'' of orbifolds via convex projective geometry. Our approach is geometric  and works mostly for $\operatorname{PSL}_3(\R)$. Recently Alessandrini-Lee-Schaffhauser \cite{alessandrini2018} gave a generalization of Hitchin components of orbifolds by using equivariant Higgs bundle theory.

\subsection{Statement of results}
Let $\mathcal{O}$ be a compact oriented  2-orbifold of negative Euler characteristic, and  $\Gamma:=\pi_1^{\operatorname{orb}}(\mathcal{O})$ its orbifold fundamental group.   Denote by $\mathcal{C}(\mathcal{O})$  the deformation space of convex projective structures on $\mathcal{O}$ with principal geodesic boundary. 

We can relate $\mathcal{C}(\mathcal{O})$ to an algebraic object. For this, we define the set of so-called good representations
\[
\Hom_{\operatorname{s}}(\Gamma, \operatorname{PSL}_{n}(\R)):=\{\rho\in \Hom(\Gamma, \operatorname{PSL}_{n}(\R))\,|\, Z(\rho)=\{1\} \text{ and }\rho\text{ is irreducible}\},
\]
where $Z(\rho)$ denotes the centralizer of $\rho$. Our target object is  the smooth part of the character variety $\Rep_{n} (\Gamma)$, which is defined by 
\[
\Rep_{n} (\Gamma) := \Hom_{\operatorname{s}}(\Gamma, \operatorname{PSL}_{n}(\R))/\operatorname{PSL}_n(\R).
\]
It is known that $\Rep_{n}(\Gamma)$ is a smooth manifold and $\Rep_3(\Gamma)$ contains $\mathcal{C}(\mathcal{O})$  as an open subspace.  

Suppose that  $\mathcal{O}$ has $b>0$ boundary components. For each oriented boundary component $\zeta_i$, choose an element $z_i\in \pi_1^{\operatorname{orb}}(\mathcal{O})$ that is freely homotopic to $\zeta_i$. We call such $z_i$ a \emph{primitive peripheral element} for $\zeta_i$.  An element in $\PSL_n(\R)$ is \emph{hyperbolic} if it lifts to $\SL_n(\R)$ a diagonalizable matrix with $n$ distinct positive eigenvalues. We also choose a set of $\PSL_n(\R)$-conjugacy classes $\mathscr{B}=\{B_1, \cdots, B_b\}$ of hyperbolic elements and define $\mathcal{C}^{\mathscr{B}}(\mathcal{O})$ to be a subspace of $\mathcal{C}(\mathcal{O})$ whose $i$th oriented boundary holonomy is in $B_i$.  The algebraic counterparts for $\mathcal{C}^{\mathscr{B}}(\mathcal{O})$ are the following spaces:
\[
\Hom_{\operatorname{s}} ^{\mathscr{B}}(\Gamma, \PSL_{n}(\R)):=  \{\rho\in \Hom_{\operatorname{s}} (\Gamma,  \PSL_{n}(\R))\,|\,\rho(z_i)\in B_i\}
\]
and
\[
\Rep_{n} ^{\mathscr{B}}(\Gamma):=\Hom_{\operatorname{s}} ^{\mathscr{B}}(\Gamma, \PSL_{n}(\R))/\PSL_{n}(\R). 
\]
It is also known that $\Rep_{n} ^{\mathscr{B}}(\Gamma)$ is an embedded submanifold of $\Rep_{n}(\Gamma)$ and $\mathcal{C}^{\mathscr{B}}(\mathcal{O})$ is an open  subspace of  $\Rep_3 ^{\mathscr{B}}(\Gamma)$. 

In our first main theorem, we construct a symplectic form on $\Rep_{n} ^{\mathscr{B}} (\Gamma)$. In fact we construct an equivariantly closed 2-form on a neighborhood of $\Hom_{\operatorname{s}} ^{\mathscr{B}} (\Gamma,\PSL_n(\R))$ which is reduced to a symplectic form on $\Rep_n ^{\mathscr{B}} (\Gamma)$. 
\begin{theorem}\label{sympintro}
Let $\mathcal{O}$ be a compact oriented 2-orbifold of negative Euler characteristic. Let $G=\PSL_n(\R)$. Choose  a set $\mathscr{B}$ of conjugacy classes of hyperbolic elements in $G$.  Then there is a neighborhood of $\Hom_{\operatorname{s}} ^{\mathscr{B}} (\pi_1 ^{\operatorname{orb}}(\mathcal{O}),G)$ and an equivariantly closed 2-form $\omega$ such that the restriction of $\omega$ to $\Hom_{\operatorname{s}} ^{\mathscr{B}} (\pi_1 ^{\operatorname{orb}}(\mathcal{O}),G)$ descends to a symplectic form $\omega ^{\mathcal{O}}$ on $\Rep_n ^{\mathscr{B}} (\pi_1 ^{\operatorname{orb}}(\mathcal{O}))$. In particular for $n=3$, $\omega^\mathcal{O}$ is a symplectic form on  $\mathcal{C}^{\mathscr{B}}(\mathcal{O})$.
\end{theorem}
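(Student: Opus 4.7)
The plan is to reduce the orbifold statement to the known surface-with-boundary case via a finite manifold cover, apply the Guruprasad--Huebschmann--Jeffrey--Weinstein (GHJW) construction on the cover, and then descend the resulting equivariantly closed 2-form to $\mathcal{O}$ by averaging over the deck group.

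First, I would invoke Selberg's lemma to pick a finite-index normal torsion-free subgroup $\Gamma_0$ of $\Gamma = \pi_1^{\operatorname{orb}}(\mathcal{O})$ whose covering orbifold is a compact oriented hyperbolic surface $\Sigma$, and set $F = \Gamma/\Gamma_0$. Each boundary circle $\zeta_i$ of $\mathcal{O}$ has preimages $\zeta_{i,j} \subset \partial \Sigma$ covering $\zeta_i$ with degrees $d_{i,j}$; I would take $\mathscr{B}'$ to be the set of conjugacy classes $[B_i^{d_{i,j}}]$. On a neighborhood of $\Hom_{s}^{\mathscr{B}'}(\Gamma_0, G)$ inside $\Hom(\Gamma_0, G)$, the GHJW construction, applied with the invariant pairing $\Tr(XY)$ on $\liealgebra{sl}_n$, produces an equivariantly closed 2-form $\omega_\Sigma$ whose restriction descends to a symplectic form on $\Rep_n^{\mathscr{B}'}(\Gamma_0)$.

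Next, I would exploit the natural $F$-action $(f \cdot \rho)(\gamma) = \rho(\tilde{f}^{-1} \gamma \tilde{f})$ on $\Hom(\Gamma_0, G)$, where $\tilde{f} \in \Gamma$ is any lift of $f$. Because deck transformations of $\Sigma \to \mathcal{O}$ preserve orientation, the integration-over-$\Sigma$ part of $\omega_\Sigma$ is $F$-invariant; the boundary correction term is $F$-equivariantly organized since $F$ permutes the lifts of each $\zeta_i$ together with the compatible powers of $B_i$. Restriction then gives an embedding $r\colon \Hom_{s}^{\mathscr{B}}(\Gamma, G) \hookrightarrow \Hom_{s}^{\mathscr{B}'}(\Gamma_0, G)^{F}$, and I would define
\[
\omega := \tfrac{1}{|F|}\, r^{\ast} \omega_\Sigma.
\]
Equivariant closedness and $G$-invariance of $\omega$ are inherited by pullback, so $\omega$ descends to a well-defined closed 2-form $\omega^{\mathcal{O}}$ on $\Rep_n^{\mathscr{B}}(\Gamma)$.

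The main obstacle will be establishing non-degeneracy of $\omega^{\mathcal{O}}$. The tangent space at $[\rho]$ should be identified with the parabolic (relative) group cohomology $H^1_{\mathrm{par}}(\Gamma, \liealgebra{g}_\rho)$. Since $|F|$ is invertible in characteristic zero, a transfer-and-average argument identifies $H^1_{\mathrm{par}}(\Gamma, \liealgebra{g}_\rho)$ with the $F$-invariants $H^1_{\mathrm{par}}(\Gamma_0, \liealgebra{g}_\rho)^{F}$; the latter already carries the non-degenerate GHJW pairing, and because $F$ acts by linear symplectomorphisms its orthogonal complement with respect to this pairing is a sum of non-trivial isotypic components, and therefore meets the invariants trivially. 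This forces the restricted pairing to remain non-degenerate on the $F$-invariants. Equivalently, one can argue intrinsically on $\mathcal{O}$ using orbifold Poincar\'e--Lefschetz duality for the oriented 2-orbifold pair $(\mathcal{O}, \partial \mathcal{O})$ with twisted coefficients. Specializing to $n = 3$ and restricting to the open locus $\mathcal{C}^{\mathscr{B}}(\mathcal{O}) \subset \Rep_3^{\mathscr{B}}(\Gamma)$ then yields the final assertion.
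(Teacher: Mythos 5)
Your route is genuinely different from the paper's. The paper never passes to a finite manifold cover for the construction itself: it proves directly that $(\Gamma,\partial\Gamma)$ is a $PD^2_{\R}$-pair (via explicit finite-length projective resolutions built from Fox derivatives, Proposition \ref{FP} and Lemma \ref{p2}), writes down an explicit relative fundamental cycle $\mathfrak{c}$ and the resulting cup--cap pairing $\omega_{PD}$ (Theorem \ref{explicit}), and then gets closedness and non-degeneracy by embedding $\Rep_n^{\mathscr{B}}(\Gamma)$ into $\Rep_n^{\mathscr{B}'}(\pi_1(X'_{\mathcal{O}}))$ for the \emph{punctured underlying surface} $X'_{\mathcal{O}}$ --- i.e.\ via the surjection $\pi_1(X'_{\mathcal{O}})\twoheadrightarrow\Gamma$, not via a finite-index subgroup --- and pulling back the known symplectic form there. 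Your transfer-and-average argument is sound where it is used: the restriction $H^1_{\operatorname{par}}(\Gamma,\partial\Gamma;\mathfrak{g})\to H^1_{\operatorname{par}}(\Gamma_0,\partial\Gamma_0;\mathfrak{g})^F$ is an isomorphism in characteristic zero, the covering degree matches the normalization $\tfrac1{|F|}$ against $\iota_*[\Sigma,\partial\Sigma]=|F|\,[\mathcal{O},\partial\mathcal{O}]$, and the fixed subspace of a finite group acting by linear symplectomorphisms is indeed symplectic (average $w$ over $F$ to see $V^F$ and the nontrivial isotypic part are $\omega$-orthogonal). What your approach buys is brevity on non-degeneracy; what the paper's buys is the explicit cocycle formula for $\omega^{\mathcal{O}}$, which is indispensable later for the decomposition theorems and the Mathematica verification on elementary orbifolds.

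Two points need more than you give them. First, the clause of the theorem asserting an equivariantly closed $2$-form on a \emph{neighborhood} of $\Hom_s^{\mathscr{B}}(\Gamma,G)$ does not follow by "inheritance under pullback'': on the cover, GHJW gives $\delta_G\omega_\Sigma=-\der\mu_\Sigma$ with $\mu_\Sigma$ built from the surface relator of $\Gamma_0$, so $\omega_\Sigma$ is equivariantly closed only on $\mu_\Sigma^{-1}(0)$; the relator of $\Gamma_0$, rewritten in the generators of $\Gamma$, is a product of conjugates of $\mathbf{r}^{\pm1}$ and $\mathbf{r}_i^{\pm1}$, so the pulled-back moment-map identity controls the wrong locus and does not by itself produce the required form on a neighborhood of $\Hom_s^{\mathscr{B}}(\Gamma,G)$ inside $\Hom^{\mathscr{B}}(F^{\#},G)$. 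The paper handles this by running the Cartan--Maurer/equivariant de Rham computation directly for $\Gamma$'s own relator $\mathbf{r}$, with the torsion relations absorbed into the group $F^{\#}$ (Section \ref{deRham}). Second, you use without comment that $\Hom_s^{\mathscr{B}}(\Gamma,G)$ is smooth with tangent space the parabolic cocycles; because $\Gamma$ has torsion this is not the bare surface-group statement and requires the vanishing $H^2(\Gamma;\mathfrak{g}_\rho)=0$ together with the identification of $T_\rho\Hom(Q_i,G)$ with $\mathfrak{t}_i$ (Lemmas \ref{H2=0}, \ref{tangent1}, \ref{tangent}). Neither issue sinks the construction of the symplectic form on $\Rep_n^{\mathscr{B}}(\Gamma)$, but both must be filled to prove the theorem as stated.
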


Goldman \cite{goldman1984} studied the symplectic structure on $\Rep_n (\pi_1(S))$ for a closed hyperbolic surface $S$. After his work, Huebschmann \cite{huebschmann1995b}, Karshon \cite{karshon1992} also gave their own constructions of the symplectic form on $\Rep_n (\pi_1(S))$.  Huebschmann \cite{huebschmann1995} extended his work to $\Rep_n (\pi_1^{\operatorname{orb}} (\mathcal{O}))$ where $\mathcal{O}$ is a closed orientable hyperbolic orbifold. Guruprasad-Huebschmann-Jeffrey-Weinstein \cite{guruprasad1997} generalized the construction to $\Rep_n ^{\mathscr{B}}(\pi_1(S))$  for a compact orientable hyperbolic surface $S$ with boundary. Our work is mainly based on   \cite{goldman1984}, \cite{huebschmann1995} and  \cite{guruprasad1997}  and covers all previous cases. 

We first construct  a 2-form $\omega_{PD}$ using the Poincaré duality for the group pair $(\pi_1 ^{\operatorname{orb}} (\mathcal{O}),\mathcal{S})$ where $\mathcal{S}=\{\langle z_1 \rangle , \cdots, \langle z_b\rangle\}$ is the set of cyclic subgroups of $\pi_1 ^{\operatorname{orb}} (\mathcal{O})$ each of which is generated by the primitive peripheral element $z_i\in \pi_1 ^{\operatorname{orb}}(\mathcal{O})$. We show that $(\pi_1 ^{\operatorname{orb}} (\mathcal{O}), \mathcal{S})$ is a $PD^2_{\R}$-pair, meaning that there is a fundamental class $[\mathcal{O}, \partial \mathcal{O}]\in H_2(\pi_1 ^{\operatorname{orb}} (\mathcal{O}), \mathcal{S}; \R)$ such that the cap product $H^2(\pi_1 ^{\operatorname{orb}} (\mathcal{O}), \mathcal{S} ;\R)\to H_0(\pi_1 ^{\operatorname{orb}} (\mathcal{O});\R)=\R$ is an isomorphism. The 2-form $\omega_{PD}$ so obtained is the pull-back of the well-known Atiyah-Bott-Goldman symplectic form on $\Rep^{\mathscr{B}'}_n(\pi_1(X'_{\mathcal{O}}))$ for some compact surface $X'_{\mathcal{O}}$. 

To find the equivariantly closed 2-form on a neighborhood of $\Hom_{\operatorname{s}} ^{\mathscr{B}} (\pi_1 ^{\operatorname{orb}}(\mathcal{O}),G)$,  we use the equivariant de Rham complex and Cartan-Maurer calculus. Then we show that after taking the reduction, this 2-form coincides up to sign with the symplectic form $\omega_{PD}$ on $\Rep^{\mathscr{B}}_n(\pi_1^{\operatorname{orb}}(\mathcal{O}))$.

A $1$-dimensional suborbifold of $\mathcal{O}$ is said to be {\em full} if it is based on an arc ending at cone points of order $2$ in $\mathcal{O}$. Choose pairwise disjoint essential simple closed curves or full 1-suborbifolds  $\{\xi_1, \cdots, \xi_m\}$ such that the completion of each connected component $\mathcal{O}_i$ of $\mathcal{O}\setminus \bigcup_{i=1} ^m  \xi_i$ has also negative Euler characteristic. Say $\xi_1, \cdots, \xi_{m_1}$ are simple closed curves and $\xi_{m_1+1},\cdots, \xi_{m_1+m_2}$ are full 1-suborbifolds, so that $m=m_1+m_2$. By using our previous work \cite{choi2019}, we show that there is an Hamiltonian $\R^{M}$-action, $M=2m_1+m_2$, on $\mathcal{C}^{\mathscr{B}}(\mathcal{O})$ with a moment map $\mu$ such that the Marsden-Weinstein quotient  $\mu^{-1}(y)/\R^{M}$ exists for each $y$ in the image of $\mu$. 

For suitably chosen $\mathscr{B}_1,\cdots,\mathscr{B}_l$, there is a map
\[
\mathcal{SP}'_y: \mu^{-1}(y) / \R^{M} \to  \mathcal{C}^{\mathscr{B}_1}(\mathcal{O}_1) \times \cdots \times \mathcal{C}^{\mathscr{B}_l}(\mathcal{O}_l)
\]
that is induced from restrictions to each subgroup $\pi_1 ^{\operatorname{orb}}(\mathcal{O}_i)\subset \pi_1 ^{\operatorname{orb}}(\mathcal{O})$. Theorem \ref{sympintro} implies that the right hand side admits the symplectic form $\omega^{\mathcal{O}_1}\oplus \cdots \oplus \omega^{\mathcal{O}_l}$. Then we have the following symplectic decomposition theorem:

\begin{theorem}\label{globaldecomp}
Let $\mathcal{O}$ be a compact oriented 2-orbifold  of negative Euler characteristic. Let $\{\xi_1, \cdots, \xi_m\}$ be a set of pairwise disjoint essential simple closed curves or full 1-suborbifolds such that the completion of  each connected component $\mathcal{O}_i$ of $\mathcal{O}\setminus \bigcup_{i=1} ^m  \xi_i$ has also negative Euler characteristic. Then the map
\[
\mathcal{SP}'_y: \mu^{-1}(y) / \R^{M} \to  \mathcal{C}^{\mathscr{B}_1}(\mathcal{O}_1) \times \cdots \times \mathcal{C}^{\mathscr{B}_l}(\mathcal{O}_l)
\]
defined above is a symplectomorphism.
\end{theorem}

The essential part of Theorem \ref{globaldecomp} is that the splitting map also preserves some symplectic information. This is a consequence of the following more general local decomposition theorem, whose proof is given in section \ref{localsecompsec}.

\begin{theorem}\label{localdecomp}
Let $\mathcal{O}$ be a compact oriented 2-orbifold of negative Euler characteristic.  Let $\{\xi_1, \cdots, \xi_m\}$ be pairwise disjoint essential simple closed curves or full 1-suborbifolds such that the completions of the connected components $\mathcal{O}_1, \cdots, \mathcal{O}_l$ of $\mathcal{O}\setminus \bigcup_{i=1} ^m  \xi_i$ have also negative Euler characteristic.  For each $i$, let $\iota_i: \pi_1 ^{\operatorname{orb}}(\mathcal{O}_i) \to \pi_1 ^{\operatorname{orb}}(\mathcal{O})$ be the inclusion.  Let $[\rho]\in \Rep_n ^{\mathscr{B}}(\pi_1^{\operatorname{orb}}(\mathcal{O}))$ be such that, with a nice choice of $\mathscr{B}_i$, $[\rho\circ \iota_i ]\in \Rep_n ^{\mathscr{B}_i}(\mathcal{O}_i)$ for each $i$.  Then
\[
\omega^{\mathcal{O}}([u],[v]) = \sum_{i=1} ^{l} \omega^{\mathcal{O}_i}(\iota_i^* [u], \iota_i ^* [v])
\]
for all $[u],[v]\in H^1_{\operatorname{par}}(\pi_1^{\operatorname{orb}}(\mathcal{O}), \mathcal{S};\mathfrak{g})$ where $\mathcal{S}= \{\langle z_1\rangle, \cdots, \langle z_b \rangle, \langle \xi_1 \rangle, \cdots, \langle \xi_m \rangle \}$.\end{theorem}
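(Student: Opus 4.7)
The plan is to reduce Theorem \ref{localdecomp} to a chain-level additivity of the orbifold fundamental class, using the Poincaré-duality model of $\omega^{\mathcal{O}}$ sketched in the introduction. In that model one has, up to sign,
\[
\omega^{\mathcal{O}}([u],[v])=\bigl\langle [u]\smile[v],\,[\mathcal{O},\partial\mathcal{O}]\bigr\rangle,
\]
where $\smile$ is the cup product on parabolic group cohomology twisted by a non-degenerate symmetric $\Ad$-invariant bilinear form on $\liealgebra{g}$, and $[\mathcal{O},\partial\mathcal{O}]\in H_2(\pi_1^{\operatorname{orb}}(\mathcal{O}),\mathcal{S};\R)$ is the fundamental class coming from the $PD^2_{\R}$-pair structure. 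Thus it suffices to exhibit a chain-level equality $c=\sum_{i=1}^{l}(\iota_i)_* c_i$ of relative fundamental $2$-cycles and then invoke naturality of $\smile$ under the inclusions $\iota_i$.

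To produce such a decomposition, I would realise both sides by a common cellular model. Choose a CW decomposition $K$ of $|\mathcal{O}|$ that contains every $\xi_j$ and every boundary component $\zeta_r$ as a subcomplex, let $K_i\subset K$ be the subcomplex carrying $\mathcal{O}_i$, so that $K=\bigcup_i K_i$ with $K_i\cap K_{i'}$ a union of $\xi_j$'s, and set $L=\bigcup_r \zeta_r\cup\bigcup_j \xi_j$. Promoted to a complex of groups in the sense of Huebschmann, $(K,L)$ computes $H_\bullet(\pi_1^{\operatorname{orb}}(\mathcal{O}),\mathcal{S};\R)$, and the orientation on $\mathcal{O}$ produces a canonical $c\in C_2(K,L;\R)$ representing $[\mathcal{O},\partial\mathcal{O}]$. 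Partitioning the $2$-cells of $K$ according to the $K_i$'s directly gives $c=\sum_i c_i$, where $c_i\in C_2(K_i,\partial K_i;\R)$ is the fundamental cycle of $(\mathcal{O}_i,\partial\mathcal{O}_i)$; any leftover chains would live on the $1$-complex $L$ and vanish for dimensional reasons. Cochain pullback being a ring homomorphism, $\iota_i^*(u\smile v)=\iota_i^*u\smile \iota_i^*v$, and the nice choice of $\mathscr{B}_i$ keeps $\iota_i^*u$ and $\iota_i^*v$ in the parabolic subcomplex of $\pi_1^{\operatorname{orb}}(\mathcal{O}_i)$; evaluating and summing then yields the stated identity.

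The main obstacle I foresee is the parabolic cancellation along the cutting curves. The cycles $c_i$ share $1$-cells on each $\xi_j$, and evaluating $u\smile v$ on $c$ produces contributions on these shared cells that must cancel between the two adjacent pieces. This is precisely where the parabolicity of $[u]$ and $[v]$ at each $\langle\xi_j\rangle$ is used: the restrictions of $u$ and $v$ to $\langle\xi_j\rangle$ are coboundaries, so a Stokes-type argument on a $2$-chain collaring $\xi_j$ (or, more cleanly, passing to representatives in the parabolic subcomplex that vanish on the cut subgroups) forces the two contributions to cancel exactly. A secondary technical point is the treatment of the cone singularities: because we work with $\R$-coefficients and the orbifold point groups are finite cyclic with vanishing positive-degree cohomology, the Huebschmann spectral sequence for the complex of groups collapses and $c$ is honestly represented by the top cells of $K$. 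Once these two points are in place, all of the preceding steps are formal, and Theorem \ref{globaldecomp} will then follow by combining Theorem \ref{localdecomp} with the description of $\mathcal{SP}'_y$ as the restriction map modulo the Hamiltonian $\R^{M}$-action.
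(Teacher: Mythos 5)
Your overall strategy --- realize $\omega^{\mathcal{O}}$ as $\pm\langle u\cup v,[\mathcal{O},\partial\mathcal{O}]\rangle$, decompose the relative fundamental cycle along the cutting curves, and use naturality of the cup product plus parabolicity at the cut subgroups --- is the same as the paper's. For the splitting along essential simple closed curves the paper simply imports the corresponding statement from the earlier surface paper, and your argument would reproduce that. The gap is in the case that is actually new here, splitting along a full $1$-suborbifold, and it sits exactly where you assert that ``any leftover chains would live on the $1$-complex $L$ and vanish for dimensional reasons.'' That assertion is false in the model the theorem is stated in. The fundamental class of $(\Gamma,\mathcal{S})$ over $\R$ carries cone-point corrections with coefficients $1/r_i$ (it is represented by $\llbracket\mathbf{r}\rrbracket-\sum_i \tfrac{1}{r_i}\llbracket\mathbf{r}_i\rrbracket$ in the geometric resolution), and when one cuts along a segment joining two order-two cone points $p_1,p_2$ the identity one actually gets at the chain level is
\[
\mathfrak{c}=\iota_*\mathfrak{c}'-\llbracket \mathbf{r}\,|\,s_1s_2\rrbracket+\llbracket \mathbf{r}\,|\,s_1\rrbracket+\llbracket \mathbf{r}s_1\,|\,s_2\rrbracket-\tfrac{1}{2}\llbracket s_1|s_1\rrbracket - \tfrac{1}{2}\llbracket s_2|s_2\rrbracket ,
\]
i.e.\ the defect is a genuine $2$-chain, not something supported in dimension one. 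Its pairing with $u\cup v$ does not vanish formally; it vanishes only after one uses that the representatives satisfy $u(s_1s_2)=v(s_1s_2)=0$ \emph{together with} the torsion relations $s_1^2=s_2^2=1$, which force $u(s_1)=u(s_2)$ and $v(s_1)=v(s_2)$ and make the three trace terms cancel. Note that ``vanishing on the cut subgroup'' here means vanishing on $\langle s_1s_2\rangle$ only; $u(s_1)$ and $u(s_2)$ are not zero, so the cross terms are not killed by your choice of representatives alone.

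So the missing step is precisely the explicit bookkeeping at a full $1$-suborbifold: you need to exhibit the decomposition of $\mathfrak{c}$ with its correction terms (or, equivalently, justify your cellular/complex-of-groups picture through the comparison chain map into the bar resolution, where the $\tfrac{1}{r_i}$ corrections reappear) and then carry out the trace computation showing the defect pairs to zero against parabolic cocycles. Your ``Stokes-type argument on a collar'' gestures at this but cannot be purely formal, because the cancellation genuinely uses that the two endpoints of the cut are order-two cone points. Once that computation is supplied, the rest of your argument (induction over the cutting family, naturality of $\iota_i^*$, the choice of $\mathscr{B}_i$) goes through as you describe.
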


Strategy for proving Theorem \ref{localdecomp}  is the same as that of our previous paper \cite{choi2019}. The essential difference is that the splitting can happen along a full 1-suborbifold. We will exhibit how the fundamental cycle of our orbifold is decomposed when we do splitting along a full 1-suborbifold. 

By following the framework of \cite{choi2019}, we can show our main theorem:

\begin{theorem}\label{mainthm}
Let $\mathcal{O}$ be a closed oriented 2-orbifold of negative Euler characteristic. Then the deformation space $\mathcal{C}(\mathcal{O})$ of convex projective structures admits a global Darboux coordinates system with respect to $\omega^{\mathcal{O}}$.
\end{theorem}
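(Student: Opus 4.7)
The plan is to follow the template of the authors' earlier work \cite{choi2019} on closed surfaces, using Theorem \ref{globaldecomp} as the key reduction tool. First I would choose a maximal collection $\{\xi_1,\ldots,\xi_m\}$ of pairwise disjoint essential simple closed curves and full 1-suborbifolds such that each connected component $\mathcal{O}_i$ of the completion of $\mathcal{O}\setminus\bigcup\xi_i$ is an \emph{elementary} 2-orbifold that admits no further nontrivial splitting of this type. The existence of such a maximal decomposition is a purely topological statement about orientable cone orbifolds. Theorem \ref{globaldecomp} then yields a symplectomorphism
\[
\mathcal{SP}'_y: \mu^{-1}(y)/\R^{M} \xrightarrow{\cong} \mathcal{C}^{\mathscr{B}_1}(\mathcal{O}_1)\times\cdots\times\mathcal{C}^{\mathscr{B}_l}(\mathcal{O}_l)
\]
for each $y$ in the image of the moment map $\mu$, where $M=2m_1+m_2$.

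Next I would construct Darboux coordinates on each elementary factor $(\mathcal{C}^{\mathscr{B}_i}(\mathcal{O}_i),\omega^{\mathcal{O}_i})$ by a case-by-case analysis of the finitely many possible topological types of elementary piece. For a pair of pants without cone points, such coordinates are already available from \cite{choi2019}. In each of the remaining cases---disks, annuli, or small closed orbifolds with a small number of cone points---the dimension of $\mathcal{C}^{\mathscr{B}_i}(\mathcal{O}_i)$ is small and can be computed explicitly, and I would produce explicit parameters (for instance trace-type or eigenvalue functions of the holonomies of a small generating set) and verify using the construction of $\omega^{\mathcal{O}_i}$ given by Theorem \ref{sympintro} that these parameters realize $\omega^{\mathcal{O}_i}$ in the standard Darboux form.

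Finally I would assemble the global Darboux system on $\mathcal{C}(\mathcal{O})$ by combining the pulled-back piecewise coordinates with the $M$ components of the moment map $\mu$ and the $M$ conjugate flow parameters of the Hamiltonian $\R^M$-action---each simple closed curve among the $\xi_j$ contributing two length--twist pairs (as in the Hitchin setting for $\PSL_3(\R)$) and each full 1-suborbifold contributing a single length--twist pair, so that the total count is $2M=4m_1+2m_2$. Because $\mathcal{SP}'_y$ is a symplectomorphism, this combined system puts $\omega^{\mathcal{O}}$ into standard Darboux form globally. The main obstacle lies in the second step: in \cite{choi2019} the only elementary pieces are pairs of pants, whereas the orbifold setting introduces several new elementary types containing cone points, whose deformation spaces with fixed boundary monodromies have a more delicate structure. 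Verifying the Darboux property in each of these new base cases---and ensuring that the chosen parameters remain globally smooth after the gluing procedure---is where the bulk of the technical effort will go.
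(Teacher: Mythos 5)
Your first two steps match the paper's route: the decomposition into elementary orbifolds of types (P1)--(P4) along simple closed curves and full 1-suborbifolds (Lemma \ref{pantsdecom}), and the verification that the internal Choi--Goldman parameters $(\mathbf{s}_i,\mathbf{t}_i)$ on each non-exceptional elementary piece put $\omega^{E_i}$ in Darboux form (Theorem \ref{symplectic}, done there by direct computation). The genuine gap is in your final assembly step. The claim that the pulled-back piece coordinates together with the components of $\mu$ and the conjugate twist parameters form a Darboux system ``because $\mathcal{SP}'_y$ is a symplectomorphism'' does not follow: the symplectomorphism statement only controls the restriction of $\omega^{\mathcal{O}}$ to each symplectic quotient $\mu^{-1}(y)/\R^{M}$, and says nothing about the pairing between the lifted internal coordinates and the twist directions inside $\mathcal{C}(\mathcal{O})$ itself. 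Concretely, the Hamiltonian vector field of an internal coordinate has the form $\mathbb{X}_{\mathbf{s}_i}=\partial/\partial\mathbf{t}_i+\sum_j a_j\,\partial/\partial\theta_j+\sum_j b_j\,\partial/\partial\phi_j$ with generally nonvanishing $a_j,b_j$ (Lemma \ref{form}), so $\{\mathbf{s}_i,\theta_j\}\neq 0$ and your combined system carries nontrivial cross terms. The paper explicitly warns that these global coordinates ``may not be Darboux.''

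What is missing is the action--angle step. One first checks that $\mathbf{s}_1,\ldots,\mathbf{s}_{I},\ell_1,\ldots,\ell_{K},\mathbf{m}_1,\ldots,\mathbf{m}_{J}$ Poisson-commute, that their Hamiltonian vector fields are complete and span a Lagrangian distribution, and that the resulting $\R^{I+J+K}$-action is free, proper and transitive on the fibers of the map $F$ recording these functions; hence $F$ is a Lagrangian fiber bundle with fibers diffeomorphic to $\R^{I+J+K}$ over a contractible base. Only then does the action--angle principle (Theorem 3.4.5 of \cite{choi2019}) produce \emph{new} conjugate coordinates $\overline{\mathbf{s}}_i,\overline{\ell}_j,\overline{\mathbf{m}}_j$ --- in general different from $\mathbf{t}_i,\theta_j,\phi_j$ --- for which $\omega^{\mathcal{O}}$ takes the standard form. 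Without this correction your proposed coordinate system is a global smooth chart but not a Darboux one, so you need to either carry out this action--angle argument or otherwise prove the vanishing of the cross brackets, which fails in general.
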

One may conceive to generalize Theorem \ref{mainthm} to $\PSL_n(\R)$-Hitchin components of orbifolds in the sense of \cite{alessandrini2018}. Indeed, some ingredients involved in Theorem \ref{mainthm} work in the generality: Theorem \ref{sympintro} and Theorem \ref{localdecomp} hold verbatim for general $\PSL_n(\R)$-Hitchin components and Theorem \ref{globaldecomp} seems to work with a minor modification on the statement. The major obstacle is, however, the lack of global symplectic coordinates on  $\PSL_n(\R)$-Hitchin components of elementary orbifolds. Choi-Goldman coordinates used in this paper are based on convex projective geometry and it seems hard to promote them to the general setting. Once we resolve this setback, we may apply our relevant tools to prove Theorem \ref{mainthm} for $\PSL_n(\R)$-Hitchin components of orbifolds.

\subsection{Organization of the paper} Section \ref{prem1} is about basic concepts related to orbifolds, convex projective structures, deformation spaces and character varieties. We also review geometric splitting operations which play an important role throughout this paper. 

In section \ref{group}, we review  the theory of group cohomology and parabolic group cohomology. To compute the parabolic group cohomology we introduce various projective resolutions. By using these resolutions and their relations, we show that  orbifold fundamental groups have a finite length projective resolution.  This fact will be intensively used in the next section. 

In section \ref{symp}, we construct the Atiyah-Bott-Goldman type symplectic form on the deformation space of convex projective structures on a given compact oriented 2-orbifold $\mathcal{O}$ of negative Euler characteristic that has only cone singularities as well as an equivariant closed 2-form  on a neighborhood of $\Hom_{\operatorname{s}} ^{\mathscr{B}} (\pi_1 ^{\operatorname{orb}}(\mathcal{O}),G)$.  Using the Poincar\'{e} duality we construct the 2-form $\omega_{PD}$ on $\Rep_n ^{\mathscr{B}}(\pi_1 ^{\operatorname{orb}}(\mathcal{O}))$ and show that this 2-form is the pull-back of some symplectic form. Then we utilizes the equivariant de Rham complex to find an  equivariantly closed 2-form $\omega_H$ on a neighborhood of $\Hom_{\operatorname{s}} ^{\mathscr{B}} (\pi_1 ^{\operatorname{orb}}(\mathcal{O}),G)$. We argue that this 2-form restricts on  $\Hom_{\operatorname{s}} ^{\mathscr{B}} (\pi_1 ^{\operatorname{orb}}(\mathcal{O}),G)$  to   $-\omega_{PD}$ after the reduction. 

Section \ref{main} is devoted to prove our main theorem. The proof goes almost parallel to that of our previous paper \cite{choi2019} for closed surface. We prove the local and global decomposition theorems.  By using this decomposition theorem we can find a fiber bundle structure whose fibers are Lagrangian.  Then apply our version of action-angle principle to get the main result.

\bigskip
\emph{Acknowledgements. } The authors would like to appreciate J. Huebschmann for helpful communication. The second author also wishes to give special thanks to Kyeongro Kim.  

\section{Orbifolds and their convex projective structures}\label{prem1}
In this section, we introduce our two main objects of interest; orbifolds and convex projective structures. 

We give a concise introduction to orbifolds in section \ref{orbifold}. We also summarize important facts on 2-orbifolds, which include the Euler characteristic, orbifold fundamental groups, classification of singularities. 

In section \ref{ccgeo}, we introduce convex projective structures and their deformation space. We also introduce the space of representations and character varieties. A  theorem of Choi-Goldman, Theorem \ref{ET}, provides the link between the deformation spaces and the character varieties. 

In the last section \ref{splittingpasting} we introduce splitting operations that split a given convex projective orbifold into smaller convex projective orbifolds. One should note that this operations are not only topological but also geometric. The splitting operations play an essential role throughout this paper. 

Most material in this section is based on Thurston \cite{thurston1979},  Choi \cite{choi2012}, Goldman \cite{goldman1990},  and Choi-Goldman \cite{choi2005}. 

\subsection{Orbifolds}\label{orbifold}
An orbifold is a generalization of a manifold that allows some singular points.  We adopt the chart and atlas style approach to orbifolds. Another more abstract definition using étale groupoid can be found, for instance, in \cite{bridson1999} and \cite{ALR}. Here, we only study 2-orbifolds.

Let $X$ be a paracompact Hausdorff topological space. An \emph{orbifold chart}  $(U,\Gamma, \phi)$ at $x \in X$ consists of an open neighborhood $U$ of $x$, a finite group $\Gamma$ acting effectively on an open subspace $\widetilde{U}$ of $\R^2$ as diffeomorphisms and a $\Gamma$-invariant map $\phi:\widetilde{U} \to U$ that induces the homeomorphism $\phi': \widetilde{U}/\Gamma \to U$. In this case, we say that $(\widetilde{U}, \Gamma)$ is a \emph{model pair} of $x$. Two orbifold charts $(U_1, \Gamma_1, \phi_1)$ and $(U_2,\Gamma_2, \phi_2)$ are said to be \emph{compatible} if each point $x\in U_1 \cap U_2$ has a chart $(V,\Gamma, \phi)$ with, for each $i=1,2$, an injective homomorphism $f_i: \Gamma \to \Gamma_i$ and a $f_i$-equivariant embedding $\psi_{i}: \widetilde{V} \to \widetilde{U}_i$ such that the following diagram
\[
 \xymatrix{
 \widetilde{V} \ar[rr] \ar[d]_{\psi_i}& & \widetilde{V}/\Gamma \ar[d] \ar[r] ^{\phi'} _{\cong}& V\ar[d]^{\text{inclusion}}\\
 \widetilde{U}_i \ar[r] & \widetilde{U}_i/ f_i(\Gamma) \ar[r] & \widetilde{U}_i/\Gamma_i \ar[r]^{\phi'_i}_{\cong} & U_i
 }
 \]
commutes for each $i=1,2$. We call the maps $f_{i}$, $i=1,2$, \emph{transition lifts}. An \emph{orbifold atlas} on $X$ is a set of compatible orbifold charts that covers $X$. An $2$\emph{-orbifold} is a paracompact Hausdorff topological space equipped with a maximal orbifold atlas.  Unless otherwise stated, we assume that the underlying topological space is connected. 

We can also define an \emph{orbifold with boundary} by allowing model pairs of the form $(\widetilde{U}, \Gamma)$ where $\widetilde{U}$ is an open subset of $\R^2 _+=\{(x_1,x_2)\in \R^2 \,|\,x_2\ge 0\}$. In this case we define the boundary $\partial \mathcal{O}$ of an orbifold $\mathcal{O}$ with boundary by the set of $x\in \mathcal{O}$ that has an orbifold chart $(U, \Gamma,\phi)$ such that  $(\widetilde{U},\Gamma)=(\mathbb{R}^2_+,\{1\})$ and $\phi(0) = x$.

\begin{remark}
For a 2-orbifold $\mathcal{O}$, the underlying space $X_{\mathcal{O}}$ is always a surface. In general, $\partial \mathcal{O}$ is not the same as $\partial X_{\mathcal{O}}$.\end{remark}

Suppose that, for a maximal orbifold atlas, each chart $\widetilde{U}$ is given an orientation and all transition lifts preserves are orientation preserving. Then the 2-orbifold is called \emph{oriented}. The collections of orientations on the models are called the \emph{orientation}.

Given two orbifolds $\mathcal{O}_1$ and $\mathcal{O}_2$, a continuous map $f:\mathcal{O}_1 \to \mathcal{O}_2$ is called an \emph{orbifold map} if, for each $x\in \mathcal{O}_1$, there is an orbifold chart $(U_1,\Gamma_1,\phi_1)$ at $x$ and $(U_2,\Gamma_2,\phi_2)$ at $f(x)$  such that $f|_{U_1}$ has an $h$-equivariant lift $\widetilde{f}:\widetilde{U}_1 \to \widetilde{U}_2$ with respect to a homomorphism $h:\Gamma_1\to \Gamma_2$.

Suppose that we are given an orbifold $\mathcal{O}$. An orbifold  $\mathcal{O}'$ with an orbifold map  $f: \mathcal{O}'\to \mathcal{O}$ is called a \emph{covering} of $\mathcal{O}$ if each point $x\in \mathcal{O}$ has an orbifold chart $(U, \Gamma, \phi)$ such that for each component $U_i$ of $f^{-1}(U)$, there is a subgroup $\Gamma_i$ of $\Gamma$ and an orbifold diffeomorphism $\psi_i: \widetilde{U}/\Gamma_i \to U_i$ such that $\phi=f\circ \psi_i \circ q_i $ where $q_i :\widetilde{U}\to \widetilde{U}/\Gamma_i$ is the quotient map.  A covering $\widetilde{\mathcal{O}}\to \mathcal{O}$ of $\mathcal{O}$ that has the universal lifting property is called a \emph{universal covering} orbifold of $\mathcal{O}$. It is known that every orbifold has a universal covering orbifold $\widetilde{\mathcal{O}}$. We call the group of deck transformations of the universal cover $\widetilde{\mathcal{O}}\to \mathcal{O}$ the \emph{orbifold fundamental group} and denote it by $\pi_1 ^{\operatorname{orb}}(\mathcal{O})$. 

Finally, we present some facts. Let $\mathcal{O}$ be an orientable 2-orbifold.
\begin{itemize}
\item Each singular point $x$ of $\mathcal{O}$ is
a \emph{cone point of order} $r$; that is, $x$ has a model pair $(\R^2, \Z/r)$ where $\Z/r$ acts on $\R^2$ as a group of rotations.
\item The underlying space $X_{\mathcal{O}}$ of the 2-orbifold $\mathcal{O}$ is always a topological manifold possibly with boundary. 
\item The 2-orbifold $\mathcal{O}$ is \emph{compact} if the underlying space $X_{\mathcal{O}}$ is compact. In this case, the set of cone points is necessarily discrete.
\item The \emph{Euler characteristic} of $\mathcal{O}$ with cone points of order $r_1, \cdots, r_{c}$ equals
\[
\chi(\mathcal{O}) = \chi(X_{\mathcal{O}})- \sum_{i=1} ^{c} \left( 1-\frac{1}{r_i}\right)
\] 
where $X_{\mathcal{O}}$ is the underlying topological space of $\mathcal{O}$. 
\item The \emph{genus} of $\mathcal{O}$ is the genus of its underlying space. 
\item We say that $\mathcal{O}$ is \emph{hyperbolic} if $\mathcal{O}$ can be realized as a quotient orbifold $\mathbb{H}^2/\Gamma$ for some discrete subgroup $\Gamma \subset \PSL_2(\R)$. Compact orientable $\mathcal{O}$ is hyperbolic  if and only $\chi(\mathcal{O})<0$. 
\end{itemize}

\subsection{Convex projective structures}\label{ccgeo}

We can equip a given orbifold with some additional structure.  Our main interest is the real projective structures on 2-orbifolds and their deformation space. This deformation space is closely related to the Hitchin component of the character variety.  Throughout this section $\mathcal{O}$ denotes a compact 2-orbifold. 

The real projective plane $\mathbb{RP}^2$ is the quotient space $(\R^3\setminus \{0\})/\sim$ where the equivalence relation is given by $x\sim y$ if and only if $x=\lambda y$ for some nonzero real number $\lambda$.  Hence each point of $\mathbb{RP}^2$ represents a 1-dimensional subspace in $\R^3$. A \emph{geodesic line} in $\mathbb{RP}^2$ represents a 2-dimensional subspace in $\R^3$. 

Recall that the projective linear group $\operatorname{PSL}_3(\R)$ acts on $\mathbb{RP}^2$. This action induces a simply transitive action on \[
(\mathbb{RP}^2)^{(4)} = \{(x_1,x_2,x_3,x_4)\in (\mathbb{RP}^2)^4\,|\, x_1, x_2,x_3,x_4 \text{ are in general position}\}.
\]

An element $g\in \operatorname{SL}_3(\R)$ is \emph{hyperbolic} if $g$ is conjugate to a matrix of the form 
\[
\begin{pmatrix}
\lambda_1 & 0 & 0 \\
0& \lambda_2 & 0 \\
0&0& \lambda_3
\end{pmatrix}
\]
with $\lambda_1>\lambda_2> \lambda_3>0$. We denote by $\mathbf{Hyp}^+$ the set of hyperbolic elements in $\operatorname{SL}_3(\R)$. 

A hyperbolic element $g$ has exactly three distinct fixed points in $\mathbb{RP}^2$.  The attracting fixed point is the one corresponding to the largest eigenvalue $\lambda_1$. The repelling fixed point is the attracting fixed point of $g^{-1}$. The remaining fixed point is called the saddle. A \emph{principal line} of a hyperbolic element $g$ is the geodesic line passing through attracting and repelling fixed points of $g$. A \emph{principal segment} of $g\in \mathbf{Hyp}^+$ is an open geodesic line segment joining the attracting fixed point and the repelling fixed point of $g$.   A closed curve $\xi$ on $\mathcal{O}$ is a {\em principal geodesic if for its lift $\widetilde{\xi}: \R \rightarrow \widetilde{\mathcal{O}}$, $\mathbf{dev}\circ \widetilde{\xi}$ develops to a principal segment of a hyperbolic element $\mathbf{h}(\hat{\xi})$ where $\hat{\xi}$ is the deck transformation corresponding to $\widetilde{\xi}$.}

Now we define what we mean by a real projective structure on $\mathcal{O}$. 
\begin{definition}\label{devpair}
A \emph{real projective structure} on a given $2$-orbifold $\mathcal{O}$ consists of 
\begin{itemize}
\item a homomorphism $\operatorname{\mathbf{h}}: \pi_1 ^{\operatorname{orb}}(\mathcal{O}) \to \operatorname{PSL}_3(\R)$ called the \emph{holonomy} and
\item a $\operatorname{\mathbf{h}}$-equivariant orbifold map $\operatorname{\mathbf{dev}}: \widetilde{\mathcal{O}} \to \mathbb{RP}^2$ called the \emph{developing map}. 
\end{itemize}
Suppose $\partial \mathcal{O}\ne\emptyset$.  A real projective structure  $(\operatorname{\mathbf{h}},\operatorname{\mathbf{dev}})$ on $\mathcal{O}$ satisfies the \emph{principal geodesic boundary condition} if each boundary component is a principal geodesic.

The pair $(\operatorname{\mathbf{h}}, \operatorname{\mathbf{dev}})$ is called the \emph{developing pair}. Two developing pairs  $(\operatorname{\mathbf{h}}, \operatorname{\mathbf{dev}})$ and $(\operatorname{\mathbf{h}}', \operatorname{\mathbf{dev}}')$ are \emph{equivalent} if there is $g\in \operatorname{PGL}_3(\R)$ such that $\operatorname{\mathbf{dev}}' = g \operatorname{\mathbf{dev}}$ and $\operatorname{\mathbf{h}}'(\cdot) = g\operatorname{\mathbf{h}}(\cdot)g^{-1}$. 
\end{definition}

One special property that we are interested in is the convexity of projective structures which is defined as follow

\begin{definition}
A real projective structure $(\operatorname{\mathbf{h}}, \operatorname{\mathbf{dev}})$ on a 2-orbifold $\mathcal{O}$ is \emph{convex} if $\operatorname{\mathbf{dev}}(\widetilde{\mathcal{O}})$ is convex domain in $\mathbb{RP}^2$, i.e., there is an affine patch of $\mathbb{RP}^2$ containing $\operatorname{\mathbf{dev}}(\widetilde{\mathcal{O}})$ and $\operatorname{\mathbf{dev}}(\widetilde{\mathcal{O}})$ is a convex subset of it. 
\end{definition}

Hence by a \emph{convex projective structure} on $\mathcal{O}$ we mean a developing pair $(\operatorname{\mathbf{h}}, \operatorname{\mathbf{dev}})$ such that $\operatorname{\mathbf{dev}}(\widetilde{\mathcal{O}})$ is convex. A \emph{convex real projective structure with principal geodesic boundary} on $\mathcal{O}$ is one that satisfies the principal geodesic boundary condition.

\begin{lemma} Let $\mathcal{O}$ be a compact orientable convex projective 2-orbifold of negative Euler characteristic.  Then  every closed essential simple closed curve $\xi$ is isotopic to a unique closed principal geodesic in $\mathcal{O}$. Moreover, $\operatorname{\mathbf{h}}(\xi)$ is a hyperbolic element. 
\end{lemma}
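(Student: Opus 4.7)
The plan is to identify the universal cover $\widetilde{\mathcal{O}}$ with its developed image $\Omega := \operatorname{\mathbf{dev}}(\widetilde{\mathcal{O}}) \subset \mathbb{RP}^2$, which under the convexity hypothesis is a properly convex open domain on which $\pi_1^{\operatorname{orb}}(\mathcal{O})$ acts properly discontinuously and cocompactly via $\operatorname{\mathbf{h}}$. Set $g := \operatorname{\mathbf{h}}(\xi)$. Since $\xi$ is essential and simple, it avoids the singular locus, has infinite order in $\pi_1^{\operatorname{orb}}(\mathcal{O})$, and the cyclic subgroup $\langle g \rangle$ acts freely and properly on $\Omega$. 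In particular $g$ has no fixed point inside $\Omega$. From here I would establish in sequence that $g$ is hyperbolic in the sense of Section~\ref{ccgeo}, that the axis of $g$ descends to the required closed principal geodesic, and that uniqueness follows from uniqueness of that axis.

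To classify $g$, I would invoke the standard trichotomy for projective transformations preserving a properly convex domain: any element of $\operatorname{Aut}(\Omega)$ without interior fixed point is parabolic, quasi-hyperbolic, or hyperbolic in the eigenvalue sense of Section~\ref{ccgeo}. Parabolicity would force the Hilbert translation length $\tfrac{1}{2}\log(\lambda_1/\lambda_3)$ of $g$ to vanish, contradicting the positive lower bound on the length spectrum coming from cocompactness of the action. The quasi-hyperbolic case would place the principal segment of $g$ inside $\partial\Omega$; the resulting flat arc in $\partial\Omega$ would project to a boundary loop of $\mathcal{O}$ parallel to $\xi$, contradicting essentiality. (In the closed case, Benoist's strict-convexity theorem for divisible properly convex domains rules this out directly; for cone orbifolds one can pass to a finite torsion-free surface cover via Selberg's lemma and apply the surface case.) Hence $g$ is hyperbolic, with attracting and repelling fixed points $x^+, x^- \in \partial\Omega$, and in particular the second assertion of the lemma holds.

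For existence of the principal geodesic, I would let $L$ be the projective line through $x^+$ and $x^-$; the intersection $\sigma := L \cap \Omega$ is a $g$-invariant open arc on which $g$ acts as a Hilbert translation. Its image in the annular cyclic cover $\mathcal{O}_\xi := \Omega/\langle g \rangle$ is a closed principal geodesic $\bar\xi$. To match it up with $\xi$, I would observe that a lift of $\xi$ to $\mathcal{O}_\xi$ and $\bar\xi$ are both essential simple closed curves in an annulus, hence both generate $\pi_1(\mathcal{O}_\xi)$, hence isotopic in $\mathcal{O}_\xi$; the isotopy descends to $\mathcal{O}$. For uniqueness, any closed principal geodesic in $\mathcal{O}$ freely homotopic to $\xi$ lifts to a $g$-invariant principal line in $\Omega$, which by uniqueness of the pair $(x^+, x^-)$ for the hyperbolic element $g$ must coincide with $L$.

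The genuinely delicate step is ruling out the quasi-hyperbolic case in the second paragraph when $\partial\mathcal{O} \ne \emptyset$: by the principal boundary condition, $\partial\Omega$ already contains flat arcs corresponding to boundary holonomies, so $\Omega$ need not be strictly convex and one cannot directly quote Benoist. The core observation that makes the reduction go through is that a $g$-invariant flat arc of $\partial\Omega$, if it existed, would have to project to a boundary component of $\mathcal{O}$ (since these are exactly the locus where such arcs are allowed in a convex projective cone orbifold), which would force $\xi$ to be boundary-parallel and hence inessential; this together with the Selberg reduction disposes of the quasi-hyperbolic possibility.
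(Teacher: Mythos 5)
The paper does not actually prove this lemma: it is stated without proof in Section~\ref{ccgeo} and is in substance a quotation of results of Goldman and Choi--Goldman (\cite{choi2005}, and Goldman's 1990 paper on convex $\mathbb{RP}^2$-surfaces). So the comparison here is between your argument and the standard one in those references --- and your outline does follow that standard route: identify $\widetilde{\mathcal{O}}$ with a properly convex domain, classify $\operatorname{\mathbf{h}}(\xi)$ via the hyperbolic/quasi-hyperbolic/parabolic trichotomy, exclude the degenerate types, and take the axis. The exclusion of parabolics via the displacement formula $\inf_x d_\Omega(x,gx)=\tfrac{1}{2}\log(\lambda_1/\lambda_3)$ and proper cocompactness, and of quasi-hyperbolics via strict convexity (closed case) or via the fact that the only flat arcs in the frontier are the principal boundary segments (boundary case), is the right mechanism. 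One caveat in the parabolic step: when $\partial\mathcal{O}\ne\emptyset$ the $\Gamma$-action on the \emph{open} domain is not cocompact (the quotient is the noncompact interior of $\mathcal{O}$), so "positive lower bound on the length spectrum from cocompactness" needs either a doubling argument or a restriction of the displacement infimum to the $\Gamma$-cocompact developed image.

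The genuine gap is in your existence-of-the-isotopy step. You pass to the annular cover $\mathcal{O}_\xi=\Omega/\langle g\rangle$, isotope a lift of $\xi$ to the projected axis $\bar\xi$ there, and assert that "the isotopy descends to $\mathcal{O}$." It does not: an isotopy in an intermediate cover has no reason to be equivariant, so it neither projects to an isotopy nor even to a homotopy through embeddings in $\mathcal{O}$. Worse, at that point in the argument you have not yet shown that the image of the axis in $\mathcal{O}$ is an \emph{embedded} closed curve; a priori the projected closed geodesic could have self-intersections, in which case no isotopy to the simple curve $\xi$ exists. The missing ingredients are: (i) the $\Gamma$-translates of the axis $L\cap\Omega$ are pairwise disjoint or equal (this follows from the simplicity of $\xi$ by an intersection-number argument comparing crossings of translates of a lift of $\xi$ with crossings of the corresponding axes), so that $\bar\xi$ is a simple closed geodesic in $\mathcal{O}$; and (ii) the standard fact (Epstein) that two freely homotopic essential simple closed curves in a surface, applied to the complement of the singular locus, are ambient isotopic. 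With (i) and (ii) in place of the "descend the isotopy" step, and with the observation that the open segment $(x^-,x^+)$ actually meets $\Omega$ (if it lay in $\partial\Omega$, $\xi$ would again be boundary-parallel --- the same dichotomy you use against quasi-hyperbolics), the argument closes up.
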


Now we can define the deformation space $\mathbb{RP}^2(\mathcal{O})$ of projective structures on a given compact 2-orbifold $\mathcal{O}$. We say two projective structures $(\operatorname{\mathbf{h}},\operatorname{\mathbf{dev}})$ and $(\operatorname{\mathbf{h}}',\operatorname{\mathbf{dev}}')$ are isotopic if there is a diffeomorphism $f:\widetilde{\mathcal{O}}\to \widetilde{\mathcal{O}}$ commuting with $\pi_1 ^{\operatorname{orb}}(\mathcal{O})$ such that $\operatorname{\mathbf{dev}}' = \operatorname{\mathbf{dev}} \circ f$ and $\operatorname{\mathbf{h}}' = \operatorname{\mathbf{h}}$. This isotopy equivalence and the $G$-action equivalence in Definition \ref{devpair} generate an equivalence relation. 
The quotient space is the \emph{deformation space} $\mathbb{RP}^2(\mathcal{O})$ \emph{of projective structures} on $\mathcal{O}$. 
(See Section 6.2 of \cite{choi2012} where the $G$-action and the isotopy action are 
acting left and right respectively and hence they commute.
See also Goldman \cite{goldman1987})

We also define the deformation space $\mathcal{C}(\mathcal{O})$ of convex projective structures on $\mathcal{O}$ as a subspace of $\mathbb{RP}^2(\mathcal{O})$ that consists of equivalent classes of convex projective structures  with principal geodesic boundary. 

To relate $\mathcal{C}(\mathcal{O})$ with an algebraic object, we define so called the character variety of $\Gamma:=\pi_1^{\operatorname{orb}}(\mathcal{O})$. The space of representations $\Hom(\Gamma, \operatorname{PSL}_n(\R))$ can be regarded as an affine algebraic set of $\operatorname{PSL}_n(\R)^N$ for some $N$. In general  $\Hom(\Gamma, \operatorname{PSL}_n(\R))$ may have a lot of singularities. In Corollary \ref{smoothpoint}, we will see that a sufficient condition for $\rho\in \Hom(\Gamma, \operatorname{PSL}_n(\R))$ being smooth is $H^2(\Gamma; \mathfrak{g}_\rho)=0$. Here $\mathfrak{g}_\rho$ is the Lie algebra of $\PSL_n(\R)$ regarded as a $\R\Gamma$-module with the $\Gamma$-action given by $\operatorname{Ad}\circ \rho$.  

Consider 
\[
\Hom_{\operatorname{s}} (\Gamma, \operatorname{PSL}_n(\R)):=\{\rho\in \Hom(\Gamma, \operatorname{PSL}_n(\R))\,|\, Z(\rho)=\{1\} \text{ and }\rho\text{ is irreducible}\},
\]
where 
\[
Z(\rho) := \{ g\in \PSL_n(\R)\,|\, g\rho(\gamma) = \rho(\gamma) g \text{ for all }\gamma\in \Gamma\}
\]
is the centralizer of $\rho$. This subspace $\Hom_{\operatorname{s}} (\Gamma, \operatorname{PSL}_n(\R))\subset \Hom(\Gamma, \operatorname{PSL}_n(\R))$  is nonempty and  open. The following lemma suggests that  $\Hom_{\operatorname{s}} (\Gamma, \operatorname{PSL}_n(\R))$ is smooth. 
\begin{lemma}\label{H2=0} 
$H^2(\Gamma; \mathfrak{g}_\rho)=0$ for all $\rho \in \Hom_{\operatorname{s}} (\Gamma, \operatorname{PSL}_n(\R))$.  
\end{lemma}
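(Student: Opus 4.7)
The plan is to reduce the vanishing to orbifold Poincaré duality and then invoke the hypothesis $Z(\rho)=\{1\}$. The argument naturally splits according to whether $\mathcal{O}$ has boundary.

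First, if $\partial\mathcal{O}\ne\emptyset$, then $\mathcal{O}$ deformation retracts onto a $1$-dimensional orbifold spine; equivalently, $\Gamma=\pi_1^{\operatorname{orb}}(\mathcal{O})$ is the fundamental group of a graph of finite groups. Such a group has $\R$-cohomological dimension at most one, since finite groups have trivial higher cohomology over $\R$ via the averaging operator, and the Bass--Serre tree for the splitting provides a $1$-dimensional contractible model. Consequently $H^2(\Gamma;V)=0$ for every real $\Gamma$-module $V$, which disposes of the bordered case.

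Next, suppose $\mathcal{O}$ is closed. I would invoke orbifold Poincaré duality to obtain
\[
H^2(\Gamma;\mathfrak{g}_\rho)\cong H_0(\Gamma;\mathfrak{g}_\rho)=(\mathfrak{g}_\rho)_\Gamma.
\]
Without waiting for the $PD^2_{\R}$-pair machinery developed later in the paper, one can achieve this by choosing via Selberg's lemma a torsion-free finite-index subgroup $\Gamma'\subset \Gamma$, applying classical Poincaré duality on the closed hyperbolic surface $\mathbb{H}^2/\Gamma'$, and transferring the isomorphism back to $\Gamma$ via the finite-group averaging argument valid over $\R$. (Alternatively the $PD^2_{\R}$-pair statement of Section~\ref{symp} applied with empty parabolic family gives it directly.)

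For the last step I would use that the Killing form on $\mathfrak{g}=\mathfrak{sl}_n$ is nondegenerate and $\Ad$-invariant, producing a $\Gamma$-equivariant isomorphism $\mathfrak{g}_\rho\cong \mathfrak{g}_\rho^{\ast}$. Dualizing coinvariants then gives
\[
\bigl((\mathfrak{g}_\rho)_\Gamma\bigr)^{\ast}\cong (\mathfrak{g}_\rho^{\ast})^{\Gamma}\cong (\mathfrak{g}_\rho)^{\Gamma}=\operatorname{Lie}\bigl(Z_G(\rho(\Gamma))\bigr),
\]
and the hypothesis $Z(\rho)=\{1\}$ forces this Lie algebra to vanish. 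Hence $(\mathfrak{g}_\rho)_\Gamma=0$ and the lemma follows. The main obstacle is truly just the closed-orbifold Poincaré duality; once that is in hand, the Killing form together with the centralizer hypothesis finish the job cleanly, and irreducibility of $\rho$ is not even needed beyond $Z(\rho)=\{1\}$.
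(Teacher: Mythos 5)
Your proof is correct, and for the closed case it is essentially the paper's argument: both pass via Selberg's lemma to a torsion-free finite-index (normal) subgroup $\Gamma'$, use classical Poincar\'e duality on the closed surface $\mathbb{H}^2/\Gamma'$ together with the nondegenerate $\Ad$-invariant form on $\mathfrak{sl}_n$, transfer back over $\R$ by averaging, and conclude from $H^0(\Gamma;\mathfrak{g}_\rho)=\operatorname{Lie}\bigl(Z_G(\rho(\Gamma))\bigr)=0$, which indeed uses only $Z(\rho)=\{1\}$ and not irreducibility. The paper phrases the duality as $H^2(\Gamma';\mathfrak{g})\cong H^0(\Gamma';\mathfrak{g})^*\cong H^0(\Gamma';\mathfrak{g})$ and tracks $\Gamma/\Gamma'$-invariance through the Lyndon--Hochschild--Serre edge map, whereas you dualize coinvariants against invariants; these are interchangeable. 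Where you genuinely diverge is the bordered case: you argue directly that $\Gamma$ is a free product of finite cyclic groups with a free group (a graph of finite groups), so $\operatorname{cd}_{\R}\Gamma\le 1$ and $H^2(\Gamma;V)=0$ for every real coefficient module, while the paper instead notes that the torsion-free finite-index subgroup is free and uses injectivity of the restriction map. Your route is slightly more self-contained and yields the stronger statement for arbitrary coefficients without choosing a finite-index subgroup; the paper's keeps both cases running through the same transfer mechanism. Either way the lemma follows.
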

\begin{proof}
For the sake of convenience, we write simply $\mathfrak{g}$ in place of $\mathfrak{g}_\rho$. 

Take a finite index torsion-free normal subgroup $\Gamma'$ of $\Gamma$. Then $\Gamma'$ is the fundamental group of a surface of negative Euler characteristic, say $S$. 

Because $\Gamma'$ is a normal subgroup of finite index, we have the transfer map $\operatorname{tr}: H^* (\Gamma'; \mathfrak{g})\to H^*(\Gamma;\mathfrak{g})$ \cite[Section 6.7.16]{weibel1994}. This map is induced from the average map $\operatorname{tr}:H^0(\Gamma';\mathfrak{g}) \to H^0(\Gamma;\mathfrak{g})$ given by $X \mapsto \frac{1}{[\Gamma:\Gamma']}\sum_{\eta\in \Gamma/\Gamma'}\eta X$. It is known that 
\[
H^*(\Gamma; \mathfrak{g} ) \overset{\operatorname{res}}{\to} H^* (\Gamma'; \mathfrak{g}) \overset{\operatorname{tr}}{\to} H^*(\Gamma; \mathfrak{g})
\]
is the multiplication by the index $[\Gamma:\Gamma']$ and therefore is an isomorphism in our setting. Here, $\operatorname{res}:H^*(\Gamma; \mathfrak{g}) \to H^*(\Gamma';\mathfrak{g})$ is the restriction map induced from the inclusion $\Gamma'\to \Gamma$. Regarding this fact, we refer the readers to Weibel  \cite[Lemma 6.7.17]{weibel1994}. In particular $\operatorname{res}$ is injective and $\operatorname{tr}$ is surjective.

If $S$ has nonempty boundary, then $\Gamma'$ is a free group and we have $H^2(\Gamma';\mathfrak{g})=0$. Since  $\operatorname{res}: H^2(\Gamma;\mathfrak{g})\to H^2(\Gamma';\mathfrak{g})=0$ is injective, $H^2(\Gamma;\mathfrak{g})=0$ as well. 

Suppose now that $S$ is closed. Since $\mathfrak{g}$ admits an $\Ad$-invariant nondegenerate bilinear form, we have the isomorphism
\[
D: H^2(\Gamma';\mathfrak{g}) \to H^0( \Gamma';\mathfrak{g})^*\cong H^0(\Gamma';\mathfrak{g})
\]
arisen from the Poincar\'{e} duality. Recall that the restriction map $\operatorname{res}:H^2(\Gamma;\mathfrak{g})\to H^2(\Gamma';\mathfrak{g})$ is the same as the edge map $H^2(\Gamma;\mathfrak{g})\to H^0(\Gamma/\Gamma';H^2(\Gamma';\mathfrak{g}))$ in the Lyndon-Hochschild-Serre  spectral sequence. This shows that the image of the restriction map in $H^2(\Gamma';\mathfrak{g})$  is the set of fixed points of $H^2(\Gamma';\mathfrak{g})$ under the action of the finite group $\Gamma/\Gamma'$. It follows that  
\[
H^2(\Gamma;\mathfrak{g})\overset{\operatorname{res}}{\to} H^2(\Gamma'\mathfrak{g}) \overset{D}{\to} H^0(\Gamma/\Gamma' ; H^0(\Gamma';\mathfrak{g}))\subset H^0(\Gamma';\mathfrak{g})
\]
is injective. The Lyndon-Hochschild-Serre spectral sequence yields
\[
H^0(\Gamma/\Gamma' ; H^0(\Gamma';\mathfrak{g}))\cong H^0(\Gamma;\mathfrak{g}).
\]
Because $H^0(\Gamma;\mathfrak{g})=0$ by the assumption one can conclude that $H^2(\Gamma;\mathfrak{g})=0$.  
\end{proof}

\begin{remark}
Lemma \ref{H2=0} is also proven in Porti's paper \cite[Proposition 3.4]{porti2020} with somewhat different techniques which appeared after this manuscript. 
\end{remark}

Now we take the quotient of $\Hom_{\operatorname{s}}(\Gamma,\PSL_n(\R))$ by the conjugation action of $\PSL_n(\R)$. This action of $\operatorname{PSL}_n(\R)$ on  $\Hom_{\operatorname{s}} (\Gamma, \operatorname{PSL}_n(\R))$ is free as $Z(\rho)=\{1\}$ for all $\rho\in \Hom_{\operatorname{s}} (\Gamma, \operatorname{PSL}_n(\R))$.  Moreover, due to Kim \cite[Lemma 1]{kim2001},  the action on $\Hom_{\operatorname{s}} (\Gamma, \operatorname{PSL}_n(\R))$ is  proper.  This allows us to define the smooth manifold
\[
\Rep_n (\Gamma) := \Hom_{\operatorname{s}} (\Gamma, \operatorname{PSL}_n(\R))/\operatorname{PSL}_n(\R).
\]

The space $\Hom_{\operatorname{s}}(\pi_1 ^{\operatorname{orb}}(\mathcal{O}),\PSL_3(\R))$ contains special elements. Let $\mathcal{O}$ be a closed orientable hyperbolic 2-orbifold.  A \emph{Fuchsian representation} is a representation that is of the form $\pi_1 ^{\operatorname{orb}}(\mathcal{O}) \overset{\rho_F}{\to} \operatorname{PO}(2,1)\hookrightarrow \operatorname{PSL}_3(\R)$ where $\rho_F:\pi_1 ^{\operatorname{orb}}(\mathcal{O}) \to \operatorname{PO}(2,1)$ is a discrete faithful representation  that corresponds to an element of the Teichmüller space. We denote by $\mathcal{C}_T(\pi_1 ^{\operatorname{orb}}(\mathcal{O}))$ the connected component of $\Hom_{\operatorname{s}} (\pi_1 ^{\operatorname{orb}}(\mathcal{O}), \operatorname{PSL}_3(\R))$ that contains the Fuchsian representations. 

Now assume that $\partial \mathcal{O} \ne \emptyset$. We define Fuchsian representations as we did for the closed orbifolds case.  Then $\mathcal{C}_T(\pi_1 ^{\operatorname{orb}}(\mathcal{O}))$ is defined to be the set of representations in $\Hom_{\operatorname{s}}(\pi_1 ^{\operatorname{orb}}(\mathcal{O}), \operatorname{PSL}_3(\R))$ that can be continuously deformed to a Fuchsian through a path $\rho_t$ such that $\rho_t(\zeta)$ is hyperbolic for all $t$ and for all boundary component $\zeta$. 

\begin{theorem}[Choi-Goldman \cite{choi2005}] \label{ET} Let $\mathcal{O}$ be a compact orientable 2-orbifold of negative Euler characteristic. Then the holonomy map 
\[
\operatorname{\mathbf{Hol}}: \mathcal{C}(\mathcal{O}) \to \mathcal{C}_T(\pi_1 ^{\operatorname{orb}}(\mathcal{O}))/\PSL_3(\R)
\]
mapping an equivalent class $[(\operatorname{\mathbf{h}}, \operatorname{\mathbf{dev}})]$ of developing pair of convex real projective structure with principal geodesic boundary to the conjugacy class $[\operatorname{\mathbf{h}}]$ of its  holonomy representation is a homeomorphism onto its image. 
\end{theorem}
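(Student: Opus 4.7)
The plan is to prove the theorem by combining the orbifold Ehresmann--Thurston deformation principle with rigidity of properly convex domains. Continuity of $\operatorname{\mathbf{Hol}}$ is essentially formal once the topologies are unpacked: the $C^1$-topology on developing pairs makes evaluation of holonomy at any $\gamma\in\pi_1^{\operatorname{orb}}(\mathcal{O})$ continuous, and both isotopy equivalence on the source and $\PSL_3(\R)$-conjugation on the target are compatible with passing to quotients. Well-definedness into $\mathcal{C}_T(\pi_1^{\operatorname{orb}}(\mathcal{O}))/\PSL_3(\R)$ follows because Fuchsian loci sit inside $\mathcal{C}(\mathcal{O})$ and the deformation space, being connected to the Fuchsian locus, maps into the Fuchsian-containing component (in the boundary case one also uses that convexity implies the hyperbolicity of boundary holonomies along any path). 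The real content lies in injectivity and continuity of the inverse on the image.

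For injectivity, suppose $(\operatorname{\mathbf{h}},\operatorname{\mathbf{dev}}_1)$ and $(\operatorname{\mathbf{h}},\operatorname{\mathbf{dev}}_2)$ are two convex projective structures on $\mathcal{O}$ sharing the same holonomy. The images $\Omega_i:=\operatorname{\mathbf{dev}}_i(\widetilde{\mathcal{O}})\subset \mathbb{RP}^2$ are $\operatorname{\mathbf{h}}(\pi_1^{\operatorname{orb}}(\mathcal{O}))$-invariant properly convex open domains. The key step is to show $\Omega_1=\Omega_2$; this rests on the rigidity result, going back to Kuiper and refined by Benoist, that an irreducible discrete subgroup of $\PSL_3(\R)$ dividing (or, in the boundary case, admitting compact convex core in) a properly convex domain does so uniquely. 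In the boundary case, the principal boundary condition additionally pins down the boundary arcs of $\Omega_i$ as the principal segments of boundary holonomies, so even the ``open ends'' of the convex domain must agree. Once $\Omega_1=\Omega_2=:\Omega$, both developing maps realize the same $\operatorname{\mathbf{h}}$-equivariant universal cover $\widetilde{\mathcal{O}}\to \Omega$, and $\operatorname{\mathbf{dev}}_2^{-1}\circ\operatorname{\mathbf{dev}}_1$ descends to an isotopy of $\mathcal{O}$ identifying the two structures.

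For continuity of the inverse, I would invoke the orbifold version of the Ehresmann--Thurston theorem. Given a representation $\operatorname{\mathbf{h}}'$ near $\operatorname{\mathbf{h}}$, one constructs a $C^1$-close developing map $\operatorname{\mathbf{dev}}'$ by equivariantly perturbing $\operatorname{\mathbf{dev}}$ on a fundamental domain of $\widetilde{\mathcal{O}}$. Here one must simultaneously match $\operatorname{\mathbf{h}}'$ on every finite stabilizer at a cone point; this is possible because finite subgroups of $\PSL_3(\R)$ are locally rigid, so equivariant neighborhoods of the singular locus can be adjusted consistently. In the boundary case one additionally has to realign $\operatorname{\mathbf{dev}}'$ along each boundary component so that it still maps onto the principal line of $\operatorname{\mathbf{h}}'(\zeta)$, using the continuous variation of the attracting and repelling fixed points of a hyperbolic element. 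Combined with injectivity, the local homeomorphism property upgrades $\operatorname{\mathbf{Hol}}$ to a homeomorphism onto its image.

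The hard part will be ensuring that convexity actually persists through this deformation. A priori $\mathcal{C}(\mathcal{O})$ is only locally closed in $\mathbb{RP}^2(\mathcal{O})$, so one must argue that along any path in $\mathcal{C}_T(\pi_1^{\operatorname{orb}}(\mathcal{O}))$ the convexity of the developing image is both open and closed. Openness is relatively easy via the local deformation construction, while closedness requires showing that a limit of properly convex domains with divisible (or appropriately cocompact-on-core) irreducible actions remains properly convex; this ultimately reduces to Benoist's stability theorems for divisible convex sets together with persistence of hyperbolic boundary holonomy along paths in $\mathcal{C}_T$. Once this stability is in place, $\operatorname{\mathbf{Hol}}$ is the desired continuous bijection with continuous inverse onto its image.
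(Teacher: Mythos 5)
The paper does not prove this statement: Theorem \ref{ET} is quoted verbatim from Choi--Goldman \cite{choi2005} and used as a black box, so there is no internal proof to compare yours against. That said, your outline reproduces the standard Choi--Goldman strategy faithfully: Ehresmann--Thurston gives that $\operatorname{\mathbf{Hol}}$ is a local homeomorphism on $\mathbb{RP}^2(\mathcal{O})$, openness of convexity restricts this to $\mathcal{C}(\mathcal{O})$, and global injectivity comes from uniqueness of the invariant properly convex domain (plus, with the paper's definition of isotopy, the observation that $\operatorname{\mathbf{dev}}_2^{-1}\circ\operatorname{\mathbf{dev}}_1$ is a deck-equivariant diffeomorphism). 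Since a continuous injective local homeomorphism is automatically a homeomorphism onto its (open) image, your final paragraph on closedness of convexity is not needed for the statement as given -- it is only needed for the stronger assertion that the image is an entire component, which this paper does not claim. Two points deserve more care than you give them: (i) well-definedness of the codomain, i.e.\ that every holonomy lies in $\mathcal{C}_T(\pi_1^{\operatorname{orb}}(\mathcal{O}))$, implicitly uses connectivity of $\mathcal{C}(\mathcal{O})$ (that every convex structure deforms to a hyperbolic one), which in Choi--Goldman is a consequence of the global parametrization rather than a soft connectivity argument, so one must avoid circularity there; and (ii) in the bordered case the uniqueness of the domain is really a uniqueness statement for the convex core together with the principal-boundary normalization, as you indicate, and the Benoist-style rigidity you cite must be adapted to actions that are cocompact only on the core.
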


\begin{remark}
One can say that $\mathcal{C}_T(\pi_1 ^{\operatorname{orb}}(\mathcal{O}))/\PSL_3(\R)$ is the Hitchin component for an orbifold $\mathcal{O}$. Another approach to this object is recently made by Alessandrini-Lee-Schaffhauser \cite{alessandrini2018}.
\end{remark}

In light of Theorem \ref{ET}, we identify $\mathcal{C}(\mathcal{O})$ with its image in $\mathcal{C}_T(\pi_1 ^{\operatorname{orb}}(\mathcal{O}))/\PSL_3(\R)$ from now on. By pulling back the smooth structure of $\mathcal{C}_T(\pi_1 ^{\operatorname{orb}}(\mathcal{O}))/\PSL_3(\R)$ by the map $\operatorname{\mathbf{Hol}}$, we can equip $\mathcal{C}(\mathcal{O})$ with the smooth structure. 

Suppose that $\mathcal{O}$ has $b$ boundary components. Recall that an elements in $\pi_1^{\operatorname{orb}}(\mathcal{O})$ is called \emph{primitive peripheral} if it is freely homotopic to a oriented boundary component of $\mathcal{O}$. Choose primitive peripheral elements $z_1,\cdots, z_b\in \pi_1 ^{\operatorname{orb}}(\mathcal{O})$ representing each boundary component. Let $\Gamma:= \pi_1 ^{\operatorname{orb}}(\mathcal{O})$. Let $\mathscr{B}=\{B_1, \cdots, B_b\}$ be a set of conjugacy classes of hyperbolic elements in $\operatorname{PSL}_n(\R)$, i.e., which can be lifted to diagonalizable matrices in $\SL_n(\R)$ with $n$ distinct positive eigenvalues. For later use, we define 
\[
\Hom_{\operatorname{s}} ^{\mathscr{B}}(\Gamma, \PSL_n(\R)) := \{ \rho\in \Hom_{\operatorname{s}} (\Gamma, \PSL_n(\R)\,|\, \rho(z_i) \in B_i \text{ for }i=1,2,\cdots, b\}
\]
and
\[
\Rep_n ^{\mathscr{B}}(\Gamma) :=\Hom_{\operatorname{s}} ^{\mathscr{B}}(\Gamma, \PSL_n(\R))/\operatorname{PSL}_n(\R). 
\]
We also define $\mathcal{C}^{\mathscr{B}}(\mathcal{O})$ in the same manner. Then $\mathcal{C}^{\mathscr{B}}(\mathcal{O})$ is a submanifolds of $\Rep_3 ^\mathscr{B}(\Gamma)$.  

We end this section by mentioning the following lemma. 

\begin{lemma}\label{irreducible}
Let $\mathcal{O}$ be a compact orientable 2-orbifold of negative Euler characteristic. The holonomy $\rho$ of $\mathcal{C}(\mathcal{O})$  is strongly irreducible and $Z(\rho) =\{1\}$. 
\end{lemma}
\begin{proof}  
We note that every finite index subgroup $\Gamma$ of $\pi_1 ^{\operatorname{orb}}(\mathcal{O})$ admits a further finite index normal subgroup which is isomorphic to a surface group say $\Gamma'$. Since  $\rho|_{\Gamma'}$ is in the $\PSL_3(\R)$-Hitchin component,  it is irreducible. Therefore $\rho|_{\Gamma}$ itself must be irreducible. 

The second assertion follows from a modified version of Schur's lemma: If $g\in \GL_n(\R)$ is a automorphism on an irreducible $\pi_1^{\operatorname{orb}}(\mathcal{O})$-module that has at least one eigenvector then $g$ is a scalar. Note that, when $n=3$, we have $\SL_3(\R) = \PSL_3(\R)$.
\end{proof}

\begin{remark}Schur's Lemma used in the proof of Lemma \ref{irreducible} reveals that if $n$ is odd, the Lie group $G=\PSL_n(\R)$ has the property CI in the sense of Sikora \cite{sikora2012}. Namely, for any irreducible subgroup $H\le G$, the centralizer $Z(H)$ is the same as the center of $G$. Consequently, we may drop the condition $Z(\rho)=\{1\}$ in the definition of $\Hom_{\operatorname{s}}(\Gamma, \PSL_n(\R))$ when $n$ is odd.\end{remark}

\subsection{Splitting convex projective 2-orbifolds}\label{splittingpasting}
Let $\mathcal{O}$ be a convex projective compact oriented 2-orbifold with genus $g$, $b$ boundary components and $c$ cone points of orders $r_1, \cdots, r_{c}$. Let $[\mathbf{h}]$ be a (conjugacy class) of the holonomy representation for the convex projective structure on $\mathcal{O}$.

Choose a canonical presentation 
\begin{equation*}
\Gamma:= \pi_1 ^{\text{orb}}(\mathcal{O}) = \langle x_1, y_1, \cdots, x_g, y_g, z_1, \cdots, z_b, s_1, \cdots, s_{c}\,|\,\mathbf{r}=  \mathbf{r}_1 = \cdots =\mathbf{r}_{c}=1\rangle
\end{equation*}
where
\[
\mathbf{r}=\prod_{i=1} ^g [x_i,y_i] \prod_{j=1} ^b z_j \prod_{k=1} ^{c} s_k
\]
and where $\mathbf{r}_i= s_i ^{r_i}$, $i=1,2,\cdots, c$. We also choose conjugacy classes $\mathscr{B}=\{B_1, \cdots, B_b\}$ and consider $\mathcal{C}^{\mathscr{B}}(\mathcal{O})$ the deformation space of convex projective structures on $\mathcal{O}$ with fixed boundary holonomies, $\operatorname{\mathbf{h}}(z_i)\in B_i$, $i=1,2,\cdots, b$.

\subsubsection{Splitting along a full 1-suborbifold}
Suppose that $\mathcal{O}$ has $c_b\ge 2$ numbers of order two cone points. Let $s_1, \cdots, s_{c_b}$ be the corresponding order two generators of $\pi_1 ^{\operatorname{orb}}(\mathcal{O})$.  A \emph{full 1-suborbifold} is an embedded 1-suborbifold which is a segment with both endpoints being mirror points.  Topologically a full 1-suborbifold is the image under the projection $\widetilde{\mathcal{O}}\to\mathcal{O}$ of a segment that contains exactly two fixed points of order 2 elements of $\pi_1^{\operatorname{orb}}(\mathcal{O})$ at its endpoints. Given a convex projective structure $(\mathbf{h},\mathbf{dev})$ on $\mathcal{O}$ with principal geodesic boundary, a full 1-suborbifold $\xi$ in $\mathcal{O}$ is \emph{principal} if it has a lift $\widetilde{\xi}$ in $\widetilde{\mathcal{O}}$ such that $\mathbf{dev}(\widetilde{\xi})$ is a principal segment for some hyperbolic element in $\mathbf{h}(\pi_1 ^{\operatorname{orb}}(\mathcal{O}))$.

\begin{lemma}[see also Section 3.3 of Choi-Goldman \cite{choi2005}]\label{full1sub}
Let $\mathcal{O}$ be a compact orientable 2-orbifold with negative Euler characteristic. Let $(\operatorname{\mathbf{h}},\operatorname{\mathbf{dev}})$ be a convex projective structure on $\mathcal{O}$ with principal geodesic boundary. Let $s_1$ and $s_2$ be distinct order two elements of $\pi_1 ^{\operatorname{orb}}(\mathcal{O})$. Then, $\operatorname{\mathbf{h}}(s_1s_2)$ is hyperbolic and is conjugate to a matrix of the form 
\[
\begin{pmatrix}
\lambda & 0 & 0 \\
0& 1 & 0 \\
0&0& \lambda^{-1}
\end{pmatrix}
\]
for some $\lambda>1$.
\end{lemma}
\begin{proof}
Let $\Omega=\mathbf{dev}(\widetilde{\mathcal{O}})$ be a convex domain in $\mathbb{RP}^2$ associated to $(\operatorname{\mathbf{h}},\operatorname{\mathbf{dev}})$. The set of fixed points of an order two rotation consists of a projective geodesic and an isolated point away from the geodesic. Let $l_i$ be the fixed geodesic of $\operatorname{\mathbf{h}}(s_i)$ and $p_i$ be the isolated fixed point of $\operatorname{\mathbf{h}}(s_i)$ respectively. Observe that $l_1$ and $l_2$ are disjoint from the closure of $\Omega$, meanwhile $p_1$ and $p_2$ are in $\Omega$. Moreover, we know that $l_1\ne l_2$ and $p_1\ne p_2$, otherwise, $\operatorname{\mathbf{h}}(s_1)$ and $\operatorname{\mathbf{h}}(s_2)$ become identical. The two geodesics $l_1$ and $l_2$ must intersect in $\mathbb{RP}^2$ at a single point say $q_0$.  This $q_0$ is a fixed point of $\operatorname{\mathbf{h}}(s_1 s_2)$ with eigenvalue 1. Now draw a geodesic line $l$ joining $p_1$ and $p_2$. Because $\Omega$ is convex, this $l$ intersects $\partial\Omega$ at exactly two points, say $q_1$ and $q_2$. Since $l$ is invariant under $\operatorname{\mathbf{h}}(s_1s_2)$, $q_1$ and $q_2$ are fixed points of $\operatorname{\mathbf{h}}(s_1 s_2)$. This shows that $\operatorname{\mathbf{h}}(s_1 s_2)$ has three distinct eigenvectors, hence, the lemma follows.
\end{proof}

Given order 2 cone points $p_1$ and $p_2$, a geodesic full 1-suborbifold joining $p_1$ and $p_2$ is principal because a geodesic segment joining the isolated fixed points of $\mathbf{h}(s_1)$ and $\mathbf{h}(s_2)$ lies on a principal segment of $\mathbf{h}(s_1s_2)$ as we can see from the proof of Lemma \ref{full1sub}.

Suppose that $\xi$ is any full 1-suborbifold between order two cone points $p_1$ and $p_2$. Due to  Lemma 4.1 of Choi-Goldman \cite{choi2005} we can isotope $\xi$ to  a principal full 1-suborbifold provided $\mathcal{O}\setminus \xi$ has negative Euler characteristic. We use the same letter $\xi$ to denote this principal full 1-suborbifold. Then the convex projective structure with principal boundary condition on $\mathcal{O}$  induces the canonical convex projective structure with principal boundary condition on the completion of $\mathcal{O}\setminus \xi$. We thus get the \emph{splitting map} $\mathcal{SP}: \mathcal{C}(\mathcal{O})\to \mathcal{C}(\mathcal{O}\setminus \xi)$ that equips $\mathcal{O}\setminus \xi$ with this canonical convex projective structure. 

This map can be best understood in terms of its effect on the holonomy representation. To describe this, we observe that $\pi_1 ^{\operatorname{orb}}(\mathcal{O}\setminus \xi)$ is a subgroup of $\pi_1 ^{\operatorname{orb}}(\mathcal{O})$ and $\pi_1 ^{\operatorname{orb}}(\mathcal{O}\setminus \xi)$ admits a presentation 
\begin{multline*}
\langle x_1, y_1, \cdots, x_g, y_g, z_1, \cdots,z_b, z_{b+1}, s_3,s_4, \cdots, s_{c}\,|\,\prod_{i=1} ^g [x_i,y_i] \prod_{i=1} ^{b+1} z_i \prod_{i=3} ^{c} s_i , \\s_3^{r_3}=\cdots = s_{c} ^{r_{c}}=1\rangle
\end{multline*}
where the inclusion $\iota:\pi_1 ^{\operatorname{orb}}(\mathcal{O}\setminus \xi) \to \pi_1^{\operatorname{orb}}(\mathcal{O})$ is induced from the set map on the generating set
\begin{gather*}
x_i \mapsto x_i,\quad  y_i \mapsto y_i, \quad s_i \mapsto s_i \\
z_i \mapsto z_i ,\quad i=1,2,\cdots, b\\
z_{b+1} \mapsto s_1s_2. 
\end{gather*}
This inclusion induces the smooth map $\iota^*:\Rep_3 (\pi_1 ^{\operatorname{orb}}(\mathcal{O}))\to  \Rep_3 (\pi_1 ^{\operatorname{orb}}(\mathcal{O}\setminus \xi))$ defined by $[\rho]\mapsto [\rho\circ \iota]$. Then the following relation holds
\[
\mathbf{Hol}\circ \mathcal{SP} =\iota^*\circ \mathbf{Hol}.
\]

\subsubsection{Splitting along a essential simple closed curves}

Let $X_{\mathcal{O}}$ be an underlying space of $\mathcal{O}$. Let $\Sigma_{\mathcal{O}}$ be the set of singularities and let  $X_{\mathcal{O}}':= X_{\mathcal{O}} \setminus \Sigma_{\mathcal{O}}$. By an \emph{essential simple closed curve} in $\mathcal{O}$ we mean a simple closed curve in $X'_{\mathcal{O}}$ that is not homotopic to a point, a boundary component or a puncture. Let $\xi$ be an essential simple closed curve in $\mathcal{O}$. If each connected component of $\mathcal{O}\setminus \xi$ has negative Euler characteristic, then by  Lemma 4.1 of Choi-Goldman \cite{choi2005},  $\xi$ can be isotoped to a principal closed geodesic, which is denoted again by $\xi$. Because $\operatorname{\mathbf{h}}(\xi)$ is in $\mathbf{Hyp}^+$, the convex projective structure with principal boundary condition on $\mathcal{O}$ induces the canonical convex projective structure with principal boundary condition on $\mathcal{O}\setminus \xi$.

The effect of splitting along $\xi$ varies by connectedness of $\mathcal{O}\setminus \xi$. If $\mathcal{O}\setminus\xi$ is disconnected and has two connected components $\mathcal{O}_1$ and $\mathcal{O}_2$,  we have the splitting map $\mathcal{SP}: \mathcal{C}(\mathcal{O}) \to \mathcal{C}(\mathcal{O}_1) \times \mathcal{C}(\mathcal{O}_2)$ that assigns the induced convex projective structure on each $\mathcal{O}_i$. At the same time, we can decompose $\pi_1^{\operatorname{orb}}(\mathcal{O})$ into the amalgamated product of $\pi_1 ^{\operatorname{orb}}(\mathcal{O})=\pi_1 ^{\operatorname{orb}}(\mathcal{O}_1) \star_{\xi} \pi_1 ^{\operatorname{orb}}(\mathcal{O}_2)$. Let $\iota_i$ denote the inclusions $\pi_1 ^{\operatorname{orb}}(\mathcal{O}_i) \to \pi_1 ^{\operatorname{orb}}(\mathcal{O})$ that induce the maps $\iota_i ^* : \Rep_3 (\pi_1 ^{\operatorname{orb}}(\mathcal{O}))\to \Rep_3 (\pi_1 ^{\operatorname{orb}}(\mathcal{O}_i))$ as above. Then we have 
\[
(\mathbf{Hol}_1 \times \mathbf{Hol}_2)\circ \mathcal{SP} =(\iota_1^* \times \iota_2 ^*)\circ \mathbf{Hol}.
\] 

If $\xi$ is non-separating so that $\mathcal{O}\setminus \xi =: \mathcal{O}_0$ remains connected, we have the splitting map $\mathcal{SP}:\mathcal{C}(\mathcal{O}) \to \mathcal{C}(\mathcal{O}_0)$ and the decomposition of $\pi_1 ^{\operatorname{orb}}(\mathcal{O})$ into the HNN-extension $\pi_1^{\operatorname{orb}}(\mathcal{O}_0)\star_{\xi, \xi^{\perp}}$. Denote by $\iota:\pi_1 ^{\operatorname{orb}}(\mathcal{O}_0) \to \pi_1 ^{\operatorname{orb}}(\mathcal{O})$ the  inclusion with the induced map $\iota^*:\Rep_3 (\pi_1 ^{\operatorname{orb}}(\mathcal{O}))\to \Rep_3 (\pi_1 ^{\operatorname{orb}}(\mathcal{O}_0))$. Then we have the relation $\mathbf{Hol}\circ \mathcal{SP} =\iota^* \circ \mathbf{Hol}$.

\subsubsection{Splitting along a family}
Now we treat a general situation. Let $\{\xi_1, \cdots, \xi_{m}\}$ be a set of pairwise disjoint essential simple closed curves or full 1-suborbifolds. Suppose that each connected component of $\mathcal{O} \setminus \bigcup _{i=1} ^m \xi_i$ has also negative Euler characteristic. As we did above, we can isotope $\xi_1, \cdots, \xi_m$ to pairwise disjoint principal simple closed geodesics or principal full 1-suborbifolds. The resulting principal geodesics and principal full 1-suborbifolds will be denoted by the same letters $\xi_1,\cdots,\xi_m$. Let $\mathcal{O}_1,\cdots, \mathcal{O}_l$ be the completions of connected components of $\mathcal{O} \setminus \bigcup _{i=1} ^{m} \xi_i$. The convex projective structure with principal boundary on $\mathcal{O}$ induces the convex projective structure with principal boundary on each $\mathcal{O}_i$. By  construction, we know that for each $i$,  the boundary holonomies of $\mathcal{O}_i$ are hyperbolic. Therefore this splitting operation induces the splitting map
\[
\mathcal{SP}: \mathcal{C}(\mathcal{O}) \to \mathcal{C}(\mathcal{O}_1) \times \cdots \times \mathcal{C}(\mathcal{O}_l). 
\]

As we described above, the holonomy $\operatorname{\mathbf{h}}_i$ of the induced convex projective structure on each $\mathcal{O}_i$ is the restriction of $\operatorname{\mathbf{h}}$ to the subgroup $\pi_1 ^{\operatorname{orb}}(\mathcal{O}_i) \subset \pi_1 ^{\operatorname{orb}}(\mathcal{O})$. In other words if we denote the inclusion $\pi_1 ^{\operatorname{orb}}(\mathcal{O}_i) \to \pi_1 ^{\operatorname{orb}}(\mathcal{O})$ by $\iota_i$, we have the commutative diagram
\[
\xymatrix{\mathcal{C}(\mathcal{O})\ar[d]^{\mathbf{Hol}} \ar[rr]^{\mathcal{SP}} & & \mathcal{C}(\mathcal{O}_1) \times \cdots \times \mathcal{C}(\mathcal{O}_l) \ar[d]^{\mathbf{Hol}_1 \times \cdots \times \mathbf{Hol}_l}\\
\Rep_3(\pi_1 ^{\operatorname{orb}}(\mathcal{O})) \ar[rr]_--{\iota^*} && \Rep_3(\pi_1 ^{\operatorname{orb}}(\mathcal{O}_1))\times \cdots \times \Rep_3(\pi_1 ^{\operatorname{orb}}(\mathcal{O}_l))  }
\]
where $\iota^*$ is given by $\iota^*([\mathbf{h}])=([\mathbf{h}\circ\iota_1],[\mathbf{h}\circ\iota_2],\cdots,[\mathbf{h}\circ\iota_l])$.

We will see that the map $\mathcal{SP}$ is a submersion but is not an injection. Pasting and folding are the processes that produce a section of $\mathcal{SP}$.

\section{Preliminaries on group cohomology}\label{group}
In this section we define group pairs and their parabolic cohomology group to study the local structure of character varieties through the cohomology of the orbifold fundamental group. 

We introduce four types of projective resolutions. By using these resolutions, we prove Proposition \ref{FP} which states that the orbifold fundamental group of compact  orientable orbifolds with negative Euler characteristic has a finite length projective resolution. In Proposition \ref{paraboliccocycle} we give a characterization of the parabolic cohomology in terms of the bar resolution.  They are preliminary results for the construction of the symplectic form.

Throughout this section we will use $G$ to denote the Lie group $\operatorname{PSL}_n(\R)$ with its Lie algebra $\mathfrak{g}$ if we do not need to specify $n$.  A representation $\rho: \Gamma \to G$ induces a linear representation $\Ad \circ \rho: \Gamma \to \operatorname{GL}(\mathfrak{g})$ and this turns $\mathfrak{g}$ into a (left) $\R\Gamma$-module. We denote this $\R\Gamma$-module by $\mathfrak{g}_\rho$ or just $\mathfrak{g}$ if the representation $\rho$ is clear from the context. 

\subsection{Group pairs and the parabolic group cohomology}
The parabolic group cohomology was first considered by Weil \cite{weil1964}. Here we recall the construction of the parabolic cohomology for the purpose of  describing the tangent space of relative character variety $\Rep_n ^{\mathscr{B}} (\Gamma)$.  We adopt the description for the parabolic group cohomology from  \cite{guruprasad1997}. 

Let $\Gamma$ be a finitely generated group. By a  \emph{group pair}, we mean a pair $(\Gamma, \mathcal{S})=(\Gamma, \{\Gamma_1, \cdots, \Gamma_b\})$ of a group $\Gamma$ and a collection $\mathcal{S}$ of its finitely generated subgroups $\Gamma_1, \cdots, \Gamma_b$.

\begin{definition}
Let  $(\Gamma, \mathcal{S})=(\Gamma, \{\Gamma_1, \cdots, \Gamma_b\})$ be a group pair. An auxiliary resolution for $(\Gamma, \mathcal{S})$ consists of the following data:
\begin{itemize}
\item A projective resolution $R_*\to \R$ of the trivial $\R\Gamma$-module $\R$ 
\item  Projective resolutions $A_* ^i\to \R$ of the trivial $\R\Gamma_i$-module $\R$  for each $i=1,2,\cdots, b$. These resolutions have the property that $A_*:= \oplus_{i=1} ^ b \R \Gamma \otimes_{\Gamma_i} A_* ^i$ is a direct summand of the projective resolution $R_*\to \R$. 
\end{itemize}
\end{definition}

Let $(R_*,A^i _*)$ be an auxiliary resolution for a group pair $(\Gamma, \mathcal{S})$. Since $A_*$ is a direct summand of $R_*$, we have a chain complex of  $\R\Gamma$-modules $R_*/A_* \to \R$.   For any $\R\Gamma$-module $M$, we apply the functor $\Hom_\Gamma(-,M)$  to $R_*/A_*$ to get the chain complex  $(\Hom_\Gamma (R_*/ A_* , M),\delta)$. We define \emph{the relative group cohomology} $H^* (\Gamma, \mathcal{S};M)$ with coefficients in $M$  to be the cohomology of this chain complex $(\Hom_\Gamma(R_*/ A_* , M),\delta)$.

More intrinsic definition for the relative group cohomology can be found in Bieri-Eckmann \cite{bieri1978}, where the relative group cohomology is defined by $H^k(\Gamma, \mathcal{S};M)=\operatorname{Ext}^{k-1}_{\R\Gamma}(\Delta_{\Gamma/\mathcal{S}},M)$ for some $\Gamma$-module $\Delta_{\Gamma/\mathcal{S}}$. In particular, relative cohomology is functorial.

Consider the following long exact sequence associated to the exact sequence $0\to A_* \to R_* \to R_*/A_* \to 0$:
\begin{equation}\label{relativeseq}
\cdots \to \bigoplus_{i=1} ^b  H^0(\Gamma_i;M)\to  H^1(\Gamma, \mathcal{S};M)\to H^1 (\Gamma;M) \to \bigoplus_{i=1} ^b H^1(\Gamma_i; M)\to \cdots.
\end{equation}
\begin{definition}
The 1st \emph{parabolic cohomology} $H^1 _{\operatorname{par}} (\Gamma, \mathcal{S};M)$ of the group pair $(\Gamma, \mathcal{S})$ with coefficients in $M$ is defined by the image of the map $H^1(\Gamma, \mathcal{S};M)\to H^1(\Gamma; M)$ given in (\ref{relativeseq}).
\end{definition}

We mostly use the $\Gamma$-module $\mathfrak{g}_{\rho}$ for some representation $\rho:\Gamma \to G$ as our coefficients. 

Let $Z^q(\Gamma, \mathcal{S};M):=\ker (\Hom_\Gamma (R_q/A_q,M) \to \Hom_\Gamma(R_{q+1}/A_{q+1},M))$ be the space of relative $q$-cocycles and $Z^q(\Gamma;M)=\ker (\Hom_\Gamma (R_q,M) \to \Hom_\Gamma(R_{q+1},M))$ be the space of usual $q$-cocycle. We use the notations $B^q(\Gamma,\mathcal{S};M)$ and $B^q(\Gamma;M)$ to denote the relative $q$-coboundary and usual $q$-coboundary  respectively.

Recall that, in the cochain level, the restriction map $Z^1(\Gamma,\mathcal{S};M) \to Z^1(\Gamma;M)$ is induced from the projection $R_1 \to R_1/A_1$ which induces the restriction map $\Hom_\Gamma(R_1/A_1,M) \to \Hom_\Gamma (R_1,M)$. We can then compute the 1st parabolic cohomology as \[ H^1_{\operatorname{par}}(\Gamma, \mathcal{S};M)= \frac{\operatorname{Image}(Z^1(\Gamma,\mathcal{S};M) \to Z^1(\Gamma;M))}{B^1(\Gamma;M)}. \]

We discuss a functorial property of the parabolic group cohomology. Let $(\Gamma, \mathcal{S})$ be a group pair. Let $\mathcal{S}'$ be a subset of $\mathcal{S}$. We can construct auxiliary resolutions $(R_*,A_*)$ for $(\Gamma, \mathcal{S})$ and  $(R_*,A'_*)$ for $(\Gamma, \mathcal{S}')$ such that $A_*'$ is a direct summand of $A_*$. Hence we have a chain map $R_*/A_*' \to R_*/A_*$ which induces the map $H^1(\Gamma, \mathcal{S};M) \to H^1(\Gamma, \mathcal{S}';M)$.  Note that this map is not necessarily injective. However, we have  injectivity in the parabolic group cohomology. 

\begin{lemma}\label{funtorialinj}
Let $(\Gamma, \mathcal{S})$ be a group pair. Let $\mathcal{S}'$ be a subset of $\mathcal{S}$ Then the induced map 
\[
H^1_{\operatorname{par}}(\Gamma, \mathcal{S};M) \to H^1_{\operatorname{par}} (\Gamma, \mathcal{S}';M)
\]
is injective. 
\end{lemma}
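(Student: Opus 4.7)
The plan is to reduce the question to a simple inclusion of kernels inside $H^1(\Gamma;M)$. By hypothesis we may choose auxiliary resolutions $(R_*,A_*)$ for $(\Gamma,\mathcal{S})$ and $(R_*,A'_*)$ for $(\Gamma,\mathcal{S}')$ that share the same ambient resolution $R_*$ and such that $A'_* \subset A_*$ is a direct summand (since $\mathcal{S}' \subset \mathcal{S}$, one just drops the summands $\R\Gamma\otimes_{\Gamma_i} A^i_*$ for $\Gamma_i\in \mathcal{S}\setminus \mathcal{S}'$). Applying $\Hom_\Gamma(-,M)$ to the split short exact sequence $0\to A_*\to R_*\to R_*/A_*\to 0$ produces, via Shapiro's lemma, the long exact sequence (\ref{relativeseq}). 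The key observation is that this exhibits $H^1_{\operatorname{par}}(\Gamma,\mathcal{S};M)$ as the kernel of the restriction homomorphism
\[
r_{\mathcal{S}}: H^1(\Gamma;M)\longrightarrow \bigoplus_{\Gamma_i\in\mathcal{S}} H^1(\Gamma_i;M),
\]
and the analogous statement for $\mathcal{S}'$.

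Next I would set up the commutative diagram relating the two long exact sequences. The inclusion $A'_*\hookrightarrow A_*$ induces the surjection $R_*/A'_*\twoheadrightarrow R_*/A_*$ of complexes of $\R\Gamma$-modules, so $\Hom_\Gamma(-,M)$ turns this into an inclusion of cochain complexes, yielding the map $H^1(\Gamma,\mathcal{S};M)\to H^1(\Gamma,\mathcal{S}';M)$ of the statement. Naturality of the connecting homomorphism gives the diagram
\[
\xymatrix{
H^1(\Gamma,\mathcal{S};M) \ar[r] \ar[d] & H^1(\Gamma;M) \ar@{=}[d] \ar[r]^-{r_{\mathcal{S}}} & \bigoplus_{\Gamma_i\in\mathcal{S}}H^1(\Gamma_i;M) \ar[d]^{\pi} \\
H^1(\Gamma,\mathcal{S}';M) \ar[r] & H^1(\Gamma;M) \ar[r]_-{r_{\mathcal{S}'}} & \bigoplus_{\Gamma_j\in\mathcal{S}'}H^1(\Gamma_j;M)
}
\]
where $\pi$ is the projection onto the summands indexed by $\mathcal{S}'$. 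In particular $r_{\mathcal{S}'}=\pi\circ r_{\mathcal{S}}$.

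It follows that $\ker r_{\mathcal{S}}\subseteq \ker r_{\mathcal{S}'}$ as subspaces of $H^1(\Gamma;M)$, and since the middle vertical arrow is the identity, the map $H^1_{\operatorname{par}}(\Gamma,\mathcal{S};M)\to H^1_{\operatorname{par}}(\Gamma,\mathcal{S}';M)$ induced by the diagram is simply this inclusion of subspaces of $H^1(\Gamma;M)$. Hence it is injective, which is what we needed to show.

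The argument is essentially formal once the diagram is in place; the only point requiring care is verifying that the map $H^1(\Gamma,\mathcal{S};M)\to H^1(\Gamma,\mathcal{S}';M)$ appearing in the statement is really induced by the chain-level map $R_*/A'_*\twoheadrightarrow R_*/A_*$ and that the right-hand vertical arrow is the evident projection. Both follow from the functoriality of Shapiro's isomorphism applied summand by summand, so there is no substantive obstacle.
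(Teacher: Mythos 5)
Your proof is correct and follows essentially the same route as the paper: both arguments exhibit $H^1_{\operatorname{par}}(\Gamma,\mathcal{S};M)$ and $H^1_{\operatorname{par}}(\Gamma,\mathcal{S}';M)$ as nested subspaces of $H^1(\Gamma;M)$, with the induced map being the inclusion. The only (cosmetic) difference is that the paper works at the cochain level, embedding the relative cocycle spaces $Z^1(\Gamma,\mathcal{S};M)\subseteq Z^1(\Gamma,\mathcal{S}';M)$ in $\Hom_\Gamma(R_1,M)$ and quotienting by the common coboundaries $B^1(\Gamma;M)$, whereas you identify the parabolic groups with $\ker r_{\mathcal{S}}\subseteq\ker r_{\mathcal{S}'}$ via exactness of the long exact sequence.
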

\begin{proof}
Observe that the restriction maps  $Z^1(\Gamma, \mathcal{S};M)\to Z^1(\Gamma;M)$ and $Z^1(\Gamma, \mathcal{S}';M)\to Z^1(\Gamma;M)$ are injective. Then  the   injection $Z^1(\Gamma, \mathcal{S};M)\to Z^1(\Gamma, \mathcal{S}';M)$   gives rise to the injection 
\[
\frac{Z^1(\Gamma, \mathcal{S};M)}{B^1(\Gamma;M)}\to \frac{Z^1(\Gamma, \mathcal{S}';M)}{B^1(\Gamma;M)}. 
\]
The left hand side is, by definition, $H^1_{\operatorname{par}}(\Gamma, \mathcal{S};M)$ and the right hand side equals $H^1_{\operatorname{par}}(\Gamma, \mathcal{S}';M)$.  This yields the lemma. 
\end{proof}

\subsection{Models for the group cohomology}
The purpose of this section is to construct four different models for the group cohomology, each of which has its own benefits. 

Throughout this section, $\mathcal{O}$ denotes a compact connected oriented 2-orbifold of negative Euler characteristic with genus $g$, $b$ boundary components and $c$ cone points. We choose a base point $p_0$ in the interior of $\mathcal{O}$ and fix a presentation 
\begin{multline}\label{present}
\Gamma:= \pi_1 ^{\text{orb}}(\mathcal{O},p_0) \\= \langle x_1, y_1, \cdots, x_g, y_g, z_1, \cdots, z_b, s_1, \cdots, s_{c}\,|\,\mathbf{r}=  \mathbf{r}_1 = \cdots =\mathbf{r}_{c}=1\rangle
\end{multline}
where
\[
\mathbf{r}=\prod_{i=1} ^g [x_i,y_i] \prod_{j=1} ^b z_j \prod_{k=1} ^{c} s_k
\]
and where $\mathbf{r}_i= s_i ^{r_i}$, $i=1,2,\cdots, c$. 

\subsubsection{The bar resolution}
The bar resolution is the standard resolution of the trivial $\R\Gamma$-module $\R$ for any group $\Gamma$.  For the sake of completeness, we briefly remind the readers of its construction. 

We first define the $n$-chain $\mathbf{B}_n(\Gamma)$, $n>0$, to be the free $\R\Gamma$-module based on the elements of the form $\llbracket g_1|g_2|\cdots|g_n\rrbracket $ where $g_1, \cdots, g_n\in \Gamma\setminus\{1\}$.  Exceptionally we define $\mathbf{B}_0 (\Gamma):= \R\Gamma[p_0]$ or just simply $\mathbf{B}_0(\Gamma)=\R\Gamma$. The differential is given by 
\begin{multline*}
\partial (\llbracket g_1|g_2|\cdots|g_n\rrbracket )= g_1\llbracket g_2|\cdots|g_n\rrbracket +\sum_{i=1} ^{n-1} (-1)^{i} \llbracket g_1|\cdots|g_{i}g_{i+1}|\cdots|g_n\rrbracket \\
  + (-1)^n\llbracket g_1|\cdots|g_{n-1}\rrbracket 
\end{multline*}
for $n>1$ and by
\[
\partial (\llbracket g\rrbracket  ) =g \llbracket p_0\rrbracket   - \llbracket p_0\rrbracket .
\]
We also have the augmentation map $\epsilon:\mathbf{B}_0(\Gamma) \to \R$ given by \[\epsilon\left(\sum_{\gamma\in \Gamma} n_\gamma \gamma\right) = \sum_{\gamma\in \Gamma} n_\gamma.\]

It is well-known and can be shown by adopting the proof of Lemma \ref{barresolexact} that 
\begin{equation}\label{barresolution}
\cdots \to \mathbf{B}_2(\Gamma) \to \mathbf{B}_1(\Gamma) \to \mathbf{B}_0(\Gamma) \overset{\epsilon}{\to} \R \to 0
\end{equation} 
is a free resolution.

\subsubsection{The groupoid bar resolution}

A groupoid is a small category whose morphisms are all invertible. For a morphism $f$ in a groupoid,  we denote by $o(f)$ and $t(f)$ its domain and range respectively. 

Given a compact oriented 2-orbifold $\mathcal{O}$ of negative Euler characteristic, we choose a point $p_i$ at each boundary component. Define $\widetilde{\Gamma} = \pi_1^{\operatorname{orb}}(\mathcal{O},\{p_0,p_1, \cdots, p_{b}\})$ to be the fundamental groupoid of $\mathcal{O}$ with base $p_0, \cdots , p_{b}$. We fix, for each $i$, a path $w_i$ from $p_0$ to $p_i$.  Let $\zeta_i$ be the oriented boundary loop based at $p_i$ so that the complement of $x_1,y_1,\cdots, x_g, y_g,s_1, \cdots,s_c, w_1, \cdots ,w_b,\zeta_1,\cdots, \zeta_b$ in the underlying space $X_{\mathcal{O}}$ is a union of open disks. We then have the following presentation of $\widetilde{\Gamma}$
\begin{multline*}
\widetilde{\Gamma}=\langle x_1,y_1, \cdots,x_g, y_g,w_1, \cdots, w_b, \zeta_1, \cdots, \zeta_b, s_1, \cdots, s_c\,|\,\\
 \prod_{i=1} ^g [x_i,y_i]\prod_{i=1} ^b w_i \zeta_i w_i^{-1} \prod_{i=1}^c s_i= s_1^{r_1}=\cdots =s_c ^{r_c}=1\rangle.
\end{multline*}

There is a natural groupoid morphism $\operatorname{ext}:\Gamma \to \widetilde{\Gamma}$ given by
\[
x_i \mapsto x_i,\: y_i\mapsto y_i,\: z_i \mapsto w_i  \zeta_i w_i^{-1},\:  s_i\mapsto s_i.
\]
On the other hand, we also have the groupoid morphism $\operatorname{ret}: \widetilde{\Gamma} \to \Gamma$ defined by
\[
x_i \mapsto x_i,\: y_i \mapsto y_i,\: w_i \mapsto 1,\: \zeta_i \mapsto z_i,\: s_i \mapsto s_i.
\]

Now we define the $n$-chain $\widetilde{\mathbf{B}}_n(\Gamma)$, $n>0$, to be the free $\R\Gamma$-module over the elements of the form 
\[
\llbracket g_1| g_2| \cdots| g_n\rrbracket  
\]
where $g_i \in \widetilde{\Gamma}\setminus\{1\}$ such that $t(g_i)=o(g_{i+1})$. The 0-chain $\widetilde{\mathbf{B}}_0(\Gamma)$ is given by the free $\R\Gamma$-module on $\{\llbracket p_0\rrbracket  ,\cdots,\llbracket p_b\rrbracket  \}$. The differential $\partial:\widetilde{\mathbf{B}}_n(\Gamma) \to \widetilde{\mathbf{B}}_{n-1}(\Gamma)$ is defined by
\begin{multline*}
\partial (\llbracket g_1|g_2|\cdots|g_n\rrbracket  )= \operatorname{ret}(g_1)\llbracket g_2|\cdots|g_n\rrbracket  \\
+\sum_{i=1} ^{n-1} (-1)^{i} \llbracket g_1|\cdots|g_{i}g_{i+1}|\cdots|g_n\rrbracket   + (-1)^n\llbracket g_1|\cdots|g_{n-1}\rrbracket  
\end{multline*}
for $n>1$ and 
\[
\partial (\llbracket g\rrbracket  ) = \operatorname{ret}(g) \llbracket t(g)\rrbracket   - \llbracket o(g)\rrbracket  .
\]
\begin{lemma}\label{barresolexact}
Define the augmentation $\widetilde{\epsilon}:\widetilde{\mathbf{B}}_0(\Gamma)\to \R$  by $\widetilde{\epsilon} (\sum a_i \llbracket p_i \rrbracket) = \sum \epsilon (a_i)$ where $\epsilon: \R\Gamma \to \R$ is the usual augmentation map given by $\epsilon(\sum n_i \gamma_i)=\sum n_i$.   Then the complex $\widetilde{\mathbf{B}}_\ast (\Gamma)\overset{\widetilde{\epsilon}}{\to} \R \to 0$ is exact. 
\end{lemma}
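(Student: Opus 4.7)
The strategy is to imitate the classical proof that the (normalized) bar resolution of a group is exact, by exhibiting an explicit $\R$-linear (not $\R\Gamma$-linear) contracting homotopy $s_n: \widetilde{\mathbf{B}}_n(\Gamma) \to \widetilde{\mathbf{B}}_{n+1}(\Gamma)$ together with $s_{-1}: \R \to \widetilde{\mathbf{B}}_0(\Gamma)$ satisfying $\widetilde{\epsilon}\circ s_{-1} = \operatorname{id}_\R$, $\partial\circ s_0 + s_{-1}\circ \widetilde{\epsilon} = \operatorname{id}$ on $\widetilde{\mathbf{B}}_0(\Gamma)$, and $\partial\circ s_n + s_{n-1}\circ \partial = \operatorname{id}$ on $\widetilde{\mathbf{B}}_n(\Gamma)$ for $n\ge 1$. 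Any such homotopy certifies exactness of the augmented complex as a sequence of $\R$-vector spaces, which is the content of the lemma.

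The novelty compared with the standard bar resolution is the presence of multiple objects $p_0,p_1,\ldots,p_b$ in the groupoid $\widetilde{\Gamma}$; the chosen paths $w_1,\ldots,w_b$ (and $w_0 := 1_{p_0}$) serve as the device to bridge different base points. Concretely, for a basis element $\llbracket g_1|\cdots|g_n\rrbracket$ with $o(g_1) = p_i$ and a coefficient $\gamma\in \Gamma$, I would set
\[
s_n\bigl(\gamma\,\llbracket g_1|\cdots|g_n\rrbracket\bigr) \;=\; \llbracket\operatorname{ext}(\gamma)\, w_i \,\big|\, g_1\,\big|\cdots\big|\, g_n\rrbracket,
\]
with the convention that the right-hand side is declared to be zero whenever the inserted morphism $\operatorname{ext}(\gamma)w_i$ equals an identity morphism (i.e.\ when $\gamma = 1$ and $i = 0$, so the normalization is respected). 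In degree zero, put $s_0(\gamma\llbracket p_i\rrbracket) = \llbracket\operatorname{ext}(\gamma)w_i\rrbracket$ (again zero in the degenerate case) and $s_{-1}(1) = \llbracket p_0\rrbracket$.

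The verification of the homotopy identities is a telescoping computation directly parallel to the group case. The crucial arithmetic input is $\operatorname{ret}(\operatorname{ext}(\gamma)\, w_i) = \gamma\cdot \operatorname{ret}(w_i) = \gamma$, which guarantees that the leading term produced by $\partial s_n$ is precisely $\gamma\,\llbracket g_1|\cdots|g_n\rrbracket$, while the intermediate terms coming from $\partial s_n$ cancel pairwise with those produced by $s_{n-1}\partial$ after re-associating $w_i$ with $g_1$.

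The main technical obstacle is the bookkeeping forced by the normalization convention that excludes identity morphisms from the basis: one must check that whenever a face in $\partial(\llbracket \operatorname{ext}(\gamma)w_i|g_1|\cdots|g_n\rrbracket)$ or in $s_{n-1}\partial(\gamma\llbracket g_1|\cdots|g_n\rrbracket)$ produces such an identity, the corresponding term is consistently declared zero on both sides, so that the telescoping still closes up to $\operatorname{id}$. This is routine but needs care. An equivalent, perhaps slicker, route is to show that the chain maps $\operatorname{ret}_*:\widetilde{\mathbf{B}}_*(\Gamma)\to \mathbf{B}_*(\Gamma)$ and $\operatorname{ext}_*:\mathbf{B}_*(\Gamma)\to \widetilde{\mathbf{B}}_*(\Gamma)$ induced by the groupoid equivalence $(\operatorname{ret},\operatorname{ext})$ satisfy $\operatorname{ret}_*\operatorname{ext}_* = \operatorname{id}$ and $\operatorname{ext}_*\operatorname{ret}_* \simeq \operatorname{id}$ via a chain homotopy built from the $w_i$; since the standard bar resolution $\mathbf{B}_*(\Gamma)\to \R$ is exact, so is $\widetilde{\mathbf{B}}_*(\Gamma)\to \R$.
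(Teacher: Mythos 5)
Your proposal is correct and follows essentially the same approach as the paper: both exhibit an explicit $\R$-linear contracting homotopy $h^n(\gamma\llbracket g_1|\cdots|g_n\rrbracket)=\llbracket\operatorname{ext}(\gamma)w_i\,|\,g_1|\cdots|g_n\rrbracket$ (with $w_i$ the chosen path from $p_0$ to $o(g_1)$), relying on $\operatorname{ret}(\operatorname{ext}(\gamma)w_i)=\gamma$ to recover the leading term and on telescoping cancellation of the interior faces. In fact your statement of the degeneracy condition --- set the output to zero precisely when $\operatorname{ext}(\gamma)w_i$ is an identity morphism, i.e.\ $\gamma=1$ and $o(g_1)=p_0$ --- is slightly more careful than the paper's, which reads ``$0$ if $g=1$'': when $\gamma=1$ but $o(g_1)\ne p_0$, the prepended morphism $w_i$ is a non-identity element of the groupoid and the resulting term is genuinely needed for $\partial h+h\partial=\operatorname{id}$ to close, so discarding it as the paper's case distinction literally instructs would leave the homotopy identity unverified on that part of the basis.
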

\begin{proof}
It is straightforward to check that $\partial \partial =0$. To prove  exactness, we use the homotopy operator $h^n : \widetilde{\mathbf{B}}_n (\Gamma) \to \widetilde{\mathbf{B}}_{n+1} (\Gamma)$ defined by
\[
h ^n(g\llbracket g_1|g_2|\cdots|g_n\rrbracket  )= \begin{cases}
 \llbracket g^{g_1}|g_1|g_2|\cdots|g_n\rrbracket   & \text{ if }g\ne 1\\
 0 & \text{ if }g=1
 \end{cases}
\]
where $g^{g_1}= \operatorname{ext}(g) w_i$ for the generator $w_i$ of $\widetilde{\Gamma}$ that joins $p_0$ and the origin $o(g_1)$ of $g_1$. Observe that $\operatorname{ret}(g^{g_1})= g$ and $(g\operatorname{ret}(g_1))^{g_2} = g^{g_1} g_1$ for $g\in \Gamma$, and $g_1,g_2\in \widetilde{\Gamma}$ with $t(g_1)=o(g_2)$. Using this observation, one can check that $h \partial + \partial h =\operatorname{Id}$ which proves the exactness of $\widetilde{\mathbf{B}}_*(\Gamma)$. 
\end{proof}

Therefore we have another free resolution $\widetilde{\mathbf{B}}_i(\Gamma)$ 
\begin{equation}\label{groupoidbarresolution}
\cdots \to \widetilde{\mathbf{B}}_i(\Gamma) \to\widetilde{\mathbf{B}}_{i-1}(\Gamma) \to \cdots \to\widetilde{\mathbf{B}}_1(\Gamma) \to \widetilde{\mathbf{B}}_0 (\Gamma) \to \R \to 0.
\end{equation}

\subsubsection{The geometric resolution I}
We consider another model which is based on the canonical cellular decomposition of $\mathcal{O}$. This resolution carries  clearer geometric picture of the  group $\pi_1 ^{\text{orb}}(\mathcal{O})$.   Although this type of resolution may be classical and well-known, its construction is somehow scattered all over the old literatures, for instance, \cite{lyndon1950}, \cite{fox1954}, \cite{huebschmann1979}, and \cite{majumdar1970}. Here, we give a specific construction that fits for our situation.

Our free resolution consists of the free $\R\Gamma$-modules
\begin{align*}
\mathbf{G}_k &= \R\Gamma [\mathbf{r}_1, \cdots, \mathbf{r}_c] , \text{ for }k\ge 3\\
\mathbf{G}_2 &= \R\Gamma[ \mathbf{r}, \mathbf{r}_1, \cdots, \mathbf{r}_{c}],\\
\mathbf{G}_1 &= \R\Gamma[x_1,y_1, \cdots, x_g,y_g, z_1, \cdots, z_b,s_1, \cdots, s_{c}], \text{ and} \\
\mathbf{G}_0 &= \R\Gamma[p_0].
\end{align*}
To  distinguish the generators of each module from group elements, we enclose the generators of modules by the double bracket $\llbracket\cdot \rrbracket$. 

Now we introduce the Fox derivatives \cite{fox1953}, the essential ingredients to define the differential. 
\begin{definition}
Let $F$ be the free group on the generators $v_1, \cdots, v_n$. The \emph{Fox derivatives} $\frac{\partial }{\partial v_i}:\R F \to \R F$, $i=1,2, \cdots, n$ are  $\R$-linear operators with the following properties
\begin{itemize}
\item For every $j$, 
\[
\frac{\partial v_j}{\partial v_i} = \begin{cases} 1 & \text{ if } i=j \\ 0 & \text{ if }i\ne j\end{cases}.
\]
\item For every $x,y\in \R F$, 
\[
\frac{\partial xy}{\partial v_i} = \frac{\partial x}{\partial v_i}\epsilon(y)+x \frac{\partial y}{\partial v_i}
\]
where $\epsilon:\R F \to \R$ is the augmentation map.
\item (The mean value property) For every $x\in \R F$, 
\[
x = \epsilon(x)1 +\sum_{i=1} ^n  \frac{\partial x}{\partial v_i}(v_i-1).
\]
\end{itemize}
\end{definition}

Let $\Gamma$ be given as the presentation (\ref{present}), and let $\mathcal{G}$ be the set of generators. We know that $\R\Gamma = \R F /N$ where $F$ is the free group on $\mathcal{G}$ and $N$ is the ideal in $\R F$ generated by the elements of the form $(x-1)$ for $x$ in the normal closure of $\mathbf{r}, \mathbf{r}_1, \cdots, \mathbf{r}_c$. Let $\pi:\R F \to \R \Gamma$ be the projection.

Using this Fox derivatives, the differential is given by 
\begin{align*}
\partial (\llbracket \mathbf{r}_i\rrbracket  )& := \pi(s_i-1)\llbracket \mathbf{r}_i\rrbracket  ,\text{ for }\llbracket\mathbf{r}_i\rrbracket \in \mathbf{G}_3, \mathbf{G}_5, \mathbf{G}_7,\cdots\\
\partial(\llbracket \mathbf{r}_i\rrbracket  ) &:=\pi(1+s_i+\cdots+s_i^{r_i-1})\llbracket \mathbf{r}_i\rrbracket,\text{ for }\llbracket\mathbf{r}_i\rrbracket \in \mathbf{G}_4, \mathbf{G}_6, \mathbf{G}_8,\cdots  \\
\partial(\llbracket \mathbf{r}_i\rrbracket  ) &:= \pi \frac{\partial \mathbf{r}_i}{\partial s_i}\llbracket s_i\rrbracket  = \pi(1+s_i+\cdots+s_i^{r_i-1})\llbracket s_i\rrbracket,\text{ for }\llbracket\mathbf{r}_i\rrbracket \in \mathbf{G}_2,  \\
\partial (\llbracket \mathbf{r}\rrbracket  ) &:= \sum_{i=1} ^g \left( \pi \frac{\partial \mathbf{r}}{\partial x_i } \llbracket x_i\rrbracket   + \pi \frac{\partial \mathbf{r}}{\partial y_i}\llbracket y_i\rrbracket  \right) + \sum_{i=1} ^b \pi \frac{\partial \mathbf{r}}{\partial z_i}\llbracket z_i\rrbracket   + \sum_{i=1} ^{c} \pi \frac{\partial \mathbf{r}}{\partial s_i}\llbracket s_i\rrbracket  \\
\partial(\llbracket x_i\rrbracket  ) &:=  \pi(x_i-1)\llbracket p_0\rrbracket  , \quad \partial(\llbracket y_i\rrbracket  ) :=  \pi(y_i-1)\llbracket p_0\rrbracket   \\
\partial (\llbracket z_i \rrbracket  )&:=  \pi(z_i-1)\llbracket p_0\rrbracket  ,\quad \partial (\llbracket s_i\rrbracket  ) :=  \pi (s_i-1)\llbracket p_0\rrbracket  .
\end{align*}
Together with this differential we get the chain complex of free $\R\Gamma$-modules
\begin{equation}\label{georesol1}
\cdots \to \mathbf{G}_k \to \cdots \to \mathbf{G}_2 \to \mathbf{G}_1 \to \mathbf{G}_0 \overset{\epsilon}{\to} \R \to 0. 
\end{equation}
\begin{lemma}\label{georesolproof}
The chain complex $(\mathbf{G}_*,\partial)$ is exact. 
\end{lemma}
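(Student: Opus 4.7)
The plan is to verify $\partial\circ\partial = 0$ by direct Fox-calculus computation, then establish exactness by realizing $(\mathbf{G}_*, \partial)$ as the cellular chain complex of the universal cover of an explicit $K(\Gamma, 1)$-space.

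For $\partial^2 = 0$, the essential identity for the cone-point part is that in $\R\Gamma$, $(s_i - 1)N_i = s_i^{r_i} - 1 = 0$ where $N_i := 1 + s_i + \cdots + s_i^{r_i - 1}$. This makes all compositions involving only the generators $\llbracket\mathbf{r}_i\rrbracket$ in degrees $\ge 2$ vanish, since the boundary alternates multiplication by $(s_i - 1)$ and $N_i$. For the surface relator $\llbracket\mathbf{r}\rrbracket \in \mathbf{G}_2$, the Fox mean-value identity $\sum_v \tfrac{\partial \mathbf{r}}{\partial v}(v - 1) = \mathbf{r} - 1$ reduces modulo $\mathbf{r} = 1$ to give $\partial\partial\llbracket\mathbf{r}\rrbracket = (\mathbf{r} - 1)\llbracket p_0\rrbracket = 0$ in $\mathbf{G}_0$. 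The remaining checks are trivial.

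For exactness, I would construct a $K(\Gamma, 1)$ as follows. Take the canonical $2$-dimensional CW-complex $X_0$ associated with the presentation \eqref{present}: one $0$-cell $p_0$, one $1$-cell per generator, and one $2$-cell per relator. By the classical construction of Fox and Lyndon, the cellular chain complex of its universal cover reproduces $\mathbf{G}_0, \mathbf{G}_1, \mathbf{G}_2$ with the stated Fox-derivative boundary maps. However $X_0$ is not aspherical: the relator $2$-cell for each $\mathbf{r}_i = s_i^{r_i}$ produces a non-trivial element of $\pi_2$ detecting $H_2(\langle s_i\rangle; \R)$. To kill these, attach at each cone point the cells of the standard infinite-dimensional $K(\Z/r_i, 1)$-model built from the periodic free resolution of $\R$ over $\R[\langle s_i\rangle]$ with alternating differentials $s_i - 1$ and $N_i$. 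Inducing up to $\R\Gamma$ via $\R\Gamma \otimes_{\R\langle s_i\rangle} -$ produces precisely the summands $\R\Gamma[\mathbf{r}_i]$ in each $\mathbf{G}_k$ for $k \ge 3$, with the stated alternating differentials.

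The resulting space $X$ is aspherical by a Mayer--Vietoris / van Kampen argument that separates small regular neighborhoods of the cone points from the surface complement: the complement is aspherical because its orbifold universal cover embeds into $\mathbb{H}^2$, each cone-point neighborhood is $K(\langle s_i\rangle, 1)$ by construction, and the overlaps are contractible. Hence the universal cover $\widetilde X$ is contractible, and its augmented cellular chain complex is exactly $(\mathbf{G}_*, \partial) \to \R \to 0$, proving exactness. The main obstacle is the careful verification of the cellular identification at the cone points, in particular choosing orientations and attaching maps so that the induced differentials match the stated alternating pattern and so that the new higher-dimensional cells do not disturb the boundary of $\llbracket \mathbf{r}_i \rrbracket \in \mathbf{G}_2$; this is a standard but delicate bookkeeping exercise in orbifold CW-theory and Fox calculus.
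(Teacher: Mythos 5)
Your proposal takes a genuinely different route from the paper. The paper's proof is a direct degree-by-degree algebraic verification: $\partial^2=0$ via the Fox mean-value property, exactness at $\mathbf{G}_0$ by induction on word length, at $\mathbf{G}_1$ by lifting to the free group ring and applying Fox derivatives, at $\mathbf{G}_k$ for $k\ge 3$ by the zero-divisor Lemma \ref{zerodivisor}, and at $\mathbf{G}_2$ by a three-case analysis on $b>0$, $g>0$, $b=g=0$ (the last case using hyperbolicity to guarantee $c\ge 3$). Your proposal instead tries to realize $(\mathbf{G}_*,\partial)$ as the equivariant cellular chain complex of the universal cover of an explicit $K(\Gamma,1)$. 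The preliminary steps of your proposal are correct: $\partial^2=0$, the identification of $\mathbf{G}_0,\mathbf{G}_1,\mathbf{G}_2$ with the chain complex of the universal cover of the presentation $2$-complex, and the observation that attaching cells according to the periodic $\R[\langle s_i\rangle]$-resolution produces the stated alternating differentials in degrees $\ge 3$. If the asphericity of the resulting space $X$ could be established, exactness would indeed follow.

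However, the asphericity step carries the entire content of the lemma, and the argument you give for it does not work. First, the overlaps between a cone-point neighborhood and the surface complement are annuli (homotopy equivalent to circles), not contractible, so the claim that ``the overlaps are contractible'' is simply false. Second, and more seriously, even with the correct overlaps the standard Whitehead--van Kampen asphericity theorem for unions of aspherical spaces requires $\pi_1$ of each overlap to inject into $\pi_1$ of the adjacent pieces. Here the overlap circle has $\pi_1\cong\Z$ generated by $s_i$, and its image in $\pi_1(N_i)\cong\Z/r_i$ is the quotient $\Z\to\Z/r_i$, which is not injective. This is exactly the situation the theorem is designed to exclude; a union of aspherical spaces along non-$\pi_1$-injective subspaces is not automatically aspherical. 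To repair this one would need substantially heavier machinery (Haefliger's theory of developable complexes of groups, or the Borel construction $E\Gamma\times_\Gamma\mathbb{H}^2$ together with a careful match of its induced CW structure against $(\mathbf{G}_*,\partial)$, at which point hyperbolicity of $\mathcal{O}$ must also enter to guarantee the orbifold is good). Without such an argument the proof has a genuine gap at its central step, whereas the paper's purely algebraic proof avoids topology entirely and is self-contained.
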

\begin{proof}
Using the mean value property of the Fox derivatives, we can show that $\partial\partial =0$.  

Let $\mathcal{G}$ be the generating set of the presentation (\ref{present}) and let $\mathcal{G}'= \mathcal{G}\cup \mathcal{G}^{-1}$.  Denote by $\|\cdot\|$ the word length with respect to the presentation (\ref{present}). Observe that $\mathbf{G}_1= \R \Gamma [\mathcal{G}]$. From now on, we identify $\mathbf{G}_0 = \R \Gamma [p_0]$ with $\R\Gamma$. 

Any element $z\in \R\Gamma$ can be uniquely written as $z= \sum_{i} n_i g_i$ where $n_i\in \R\setminus\{0\}$ and $g_i \in \Gamma$. Since such an expression is unique, the quantity $\ell(z) := \sum_i \| g_i\|$ is well-defined.

Suppose that $\partial (x) =\epsilon(x)=0$ for $x\in \mathbf{G}_0=\R\Gamma$. We show that there is $y\in \mathbf{G}_1$ such that $x=\partial (y)$ by induction on $\ell(x)$. 

When $\ell(x)= 1$, $x$ must be of the form $x= n v-n$ where $n \in \R\setminus\{0\}$ and $v\in \mathcal{G}\cap \mathcal{G}^{-1}$. If $v\in \mathcal{G}$, we have $x = \partial (n\llbracket v \rrbracket)$. Otherwise, as $v^{-1} \in \mathcal{G}$, we have $x = \partial (-nv \llbracket v^{-1} \rrbracket)$.

Now, assume $\ell(x)>1$. Write $x= \sum_{i\ge 1} n_i g_i$ where $n_i\in \R\setminus\{0\}$ and $g_i \in \Gamma$. Observe that at least one, say $g_1$, is not the identity element. Let $v_{\|g_1\|} v_{\|g_1\|-1} \cdots  v_1$ be the reduced word representing $g_1$ and let $g'=v_{\|g_1\|} v_{\|g_1\|-1} \cdots  v_2$. Then we can write 
\[
x=\begin{cases}
\pi(n_1 g' (v_1-1) +n_1 g' +\sum_{i>1} n_i g_i) & \text{ if }v_1 \in \mathcal{G}\\
\pi(-n_1g' v_1 (v_1 ^{-1} -1)+n_1 g'+\sum_{i>1} n_i g_i) & \text{ if }v_1 \in \mathcal{G}^{-1}
\end{cases}.
\]
Let $x' = \pi( n_1 g' +\sum_{i>1} n_i g_i)$. Since $\ell(x')<\ell(x)$ and since $\epsilon(x')=0$, by the induction hypothesis, we can find $y'\in \mathbf{G}_1$ such that $\partial (y' ) = x'$. Then \[ x=\begin{cases}\partial (n_1 \pi(g') \llbracket v_1 \rrbracket +y') & \text{ if } v_1 \in \mathcal{G}\\ \partial (-n_1 \pi( g' v_1) \llbracket v_1^{-1} \rrbracket +y') & \text{ if }v_1 \in \mathcal{G}^{-1}\end{cases},\]and this completes the induction. 

Now we prove the exactness at $\mathbf{G}_1$. For this let $x\in \mathbf{G}_1$ be such that $\partial (x) =0$. Lift the coefficients of $x$ to $\R F$ and write $x=\sum_{i\ge 0 } \pi\left( d_i\right) \llbracket v_i \rrbracket$ for $d_i\in \R F$.  Being $\partial (x)=0$  means that $\sum_{i\ge 1} d_i  (v_i -1)=\sum_{j \ge 1} n_j (m_j -1) u_j$ for  $n_j,u_j \in \R F \setminus\{0\}$ and $m_j$ in the normal closure of $\mathbf{r}, \mathbf{r}_1, \cdots, \mathbf{r}_c$. By applying the Fox derivative $\frac{\partial}{\partial v_i}$ on both sides we get 
\[
d_i = \sum_{j\ge 1} \left(n_j \frac{\partial m_j }{\partial v_i}\epsilon(u_j) + n_j (m_j-1)\frac{\partial u_j}{\partial v_i}\right).
\]
Note that the terms $n_j (m_j-1)\frac{\partial u_j}{\partial v_i}$ vanish under the projection  $\pi:\R F\to \R\Gamma$. Hence, we may assume that \[ d_i =\sum_{j\ge 1} n_j \frac{\partial m_j }{\partial v_i}\epsilon(u_j). \]

Since $m_j$ is in the normal closure of  the words $\mathbf{r}, \mathbf{r}_1, \cdots, \mathbf{r}_c$, $m_j$ is a product of words of the forms $g \mathbf{r}^{\pm 1} g^{-1}$ and $g \mathbf{r}_i ^{\pm 1} g^{-1}$. Observe that 
\[
\frac{\partial  }{\partial v_i}(g \mathbf{r}^{\pm 1} g^{-1}) = \pm g \frac{\partial \mathbf{r}}{\partial v_i}.
\]
This observation, in particular, implies that $\frac{ \partial m_j}{\partial v_i}$ can be written as 
\[
\frac{ \partial m_j}{\partial v_i}= f_j \frac{\partial \mathbf{r}}{\partial v_i} + \sum_{k=1} ^c h_{j,k} \frac{\partial \mathbf{r}_k}{\partial v_i}
\]
for the elements $f_j$ and $h_{j,k}$ of $\R F$ which depend only on $m_j$. This shows that $x$ is the image of
\[
\sum_{j\ge 1} n_j  \epsilon(u_j)\pi(f_j) \llbracket \mathbf{r}\rrbracket + \sum_{k=1} ^ c \sum_{j\ge 1} n_j \epsilon(u_j) \pi(h_{j,k}) \llbracket \mathbf{r}_k \rrbracket.
\]
This shows the exactness at $\mathbf{G}_1$. 

The exactness at $\mathbf{G}_k$, $k>2$, follows immediately from the following lemma:
\begin{lemma}[Also proven in \cite{majumdar1970}] \label{zerodivisor}
Let $x\in \R F$ be such that $\pi(x(s_i -1))=0$ then $\pi(x)=\pi( x' (s_i ^{r_i-1} + \cdots+s_i+1))$ for some $x'\in \R F$. 

If $\pi(x (s_i ^{r_i-1} + \cdots+s_i+1)) =0$ then $\pi(x)=\pi(x'(s_i-1))$ for some $x'\in \R F$. 
\end{lemma}
\begin{proof}
Suppose that $\pi(x(s_i-1) )=0$. Then $xs_i +y = x$ for some $y\in N$.  Inductively, we have that  \[ x=x s_i+y=xs_i ^2+ys_i+y = \cdots = xs_{i} ^{r_i-1}+y(1+s_i+s_i^2+\cdots+s_i^{r_i-2}).\] Therefore, $r_i \pi(x) = \pi(x+xs_i + \cdots +xs_i ^{r_i-1})$. This yields 
\[
\pi(x)= \pi\left(\frac{x}{r_i} \cdot  (s_i ^{r_i-1} + \cdots+s_i+1)\right).\]

For the second assertion, assume that $\pi(x  (s_i ^{r_i-1} + \cdots+s_i+1) )=0$. This means that $ x  (s_i ^{r_i-1} + \cdots+s_i+1) = y$ for some $y\in N$.  Observe that, since $\epsilon(x(s_i ^{r_i-1}+\cdots + s_i +1))=\epsilon (x) r_i = 0$, we have $\epsilon (x) =0$.  Apply the Fox derivatives to get $\frac{\partial y } {\partial v} = r_i  \frac{\partial x}{\partial v}$ for all $v\in \mathcal{G}\setminus\{s_i\}$. Hence, by the mean value property,
\[
x= \epsilon (x) + \sum_{v\in \mathcal{G}}\frac{\partial x}{\partial v} (v-1) = \frac{\partial x}{\partial s_i} (s_i-1)+ \sum_{v\in \mathcal{G}\setminus\{s_i\}} \frac{1}{r_i} \frac{\partial y}{\partial v}(v-1).
\]
Again by the mean value property, we have
\[
 \sum_{v\in \mathcal{G}\setminus\{s_i\}} \frac{1}{r_i} \frac{\partial y}{\partial v}(v-1) = \frac{1}{r_i}\left( y-\epsilon (y) - \frac{\partial y}{\partial s_i}(s_i-1) \right)=\frac{1}{r_i}\left( y- \frac{\partial y}{\partial s_i}(s_i-1) \right).
\]
Thus
\[
x= \left( \frac{\partial x}{\partial s_i} -\frac{1}{r_i} \frac{\partial y}{\partial s_i} \right) (s_i-1) +\frac{y}{r_i}.    
\]
Modulo $N$, we have that \[
\pi(x)= \pi\left(\left( \frac{\partial x}{\partial s_i} -\frac{1}{r_i} \frac{\partial y}{\partial s_i} \right) (s_i-1)\right).\]
\end{proof}

Finally, we show the exactness at $\mathbf{G}_2$. Let $x= \pi(d_0) \llbracket\mathbf{r}\rrbracket + \sum_{i=1} ^c \pi(d_i) \llbracket \mathbf{r}_i \rrbracket$, $d_i \in\R F$, be such that $\partial (x) =0$. 

Suppose that $b>0$. Since the coefficient of $\partial(x)$ of $\llbracket z_i \rrbracket$ is 
\[
\pi(d_0)\pi\left(\frac{\partial \mathbf{r}}{\partial z_i}\right)=\pi(d_0)\pi\left(\prod_{k=1} ^g [x_k,y_k]\prod_{j<i} z_j\right),
\]
we know that $\pi(d_0)=0$.  By Lemma  \ref{zerodivisor}, we have $\pi(d_i)=\pi(d_i' (s_i-1))$ for some $d_i'\in \R F$ as we wanted.  

If $g>0$ we look at the coefficient of $\partial(x)$ of $\llbracket x_1\rrbracket$, which is 
\[
\pi(d_0) \pi\left(\frac{\partial \mathbf{r}}{\partial x_1}\right)=\pi(d_0)\pi (1-x_1y_1 x_1^{-1}).
\]
Since $\pi(y_1)$ is not a torsion, $\pi(d_0)\pi(1-x_1y_1x_1^{-1})\ne 0$ unless $\pi(d_0)=0$. To conclude we argue as in the previous case. 

Therefore, it remains the case when $b=g=0$, in which case we must have $c\ge 3$ by the hyperbolicity of $\Gamma$.  Note that it suffices to show  $\pi(d_0)=0$. Note also that $\partial (x) = 0$ means
\begin{multline*}
 \sum_{i=1} ^c \pi\left( d_0 \frac{\partial \mathbf{r}}{\partial s_i} +  d_i \frac{\partial \mathbf{r}_i}{\partial s_i} \right) \llbracket s_i \rrbracket  \\  = \sum_{i=1} ^c   \pi\left( d_0 (s_1 s_2 \cdots s_{i-1}) +d_i (1+s_i + \cdots s_i ^{r_i-1})\right) \llbracket s_i \rrbracket=0. 
\end{multline*}
Since $\pi(\mathbf{r})=\pi( s_1 s_2 \cdots s_c)=1$,  we get
\begin{align*}
\pi(d_0) & = -\pi(d_j (1+ s_j + \cdots + s_j ^{r_j-1}) (s_1 s_2 \cdots s_{j-1})^{-1})\\
& = -\pi(d_j (1+s_j + \cdots + s_j ^{r_j -1})s_j s_{j+1} \cdots s_c) \\
&= -\pi(d_j (1+s_j + \cdots + s_j ^{r_j -1}) s_{j+1}s_{j+2} \cdots s_c)\\
&= -\pi(d_j)\pi\left( \frac{\partial \mathbf{r}_j}{\partial s_j}\right) \pi(s_{j+1}s_{j+2} \cdots s_c) 
\end{align*}
from the coefficient of $\llbracket s_{j} \rrbracket$. On the other hand, by looking at the coefficient of $\llbracket s_{j+1} \rrbracket$, we have
\begin{align*}
\pi(d_0) & = -\pi(d_{j+1} (1+s_{j+1} + \cdots +s_{j+1} ^{r_{j+1} -1}) (s_1 s_2  \cdots s_j )^{-1}) \\
&= -\pi(d_{j+1} (1+s_{j+1} + \cdots +s_{j+1} ^{r_{j+1} -1})s_{j+1} s_{j+2} \cdots s_c)\\
&=-\pi(d_{j+1})\pi\left( \frac{\partial \mathbf{r}_{j+1}}{\partial s_{j+1}}\right) \pi(s_{j+1}s_{j+2} \cdots s_c). 
\end{align*} From the above two equations, we know that \[\pi(d_j)\pi\left(\frac{\partial \mathbf{r}_{j}}{\partial s_{j}}\right)=\pi(d_{j+1})\pi\left(\frac{\partial \mathbf{r}_{j+1}}{\partial s_{j+1}}\right) \] for all $j=1,2,\cdots,c-1$. Note also that \begin{align*}-\pi(d_0)&=\pi(d_1) \pi \left(\frac{\partial \mathbf{r}_1}{\partial s_1}\right) \pi (s_2 \cdots s_c) \\ &= \pi(d_1)\pi((1+s_1+\cdots +s_1 ^{r_1-1})s_1^{-1})\\ &=\pi(d_1)\pi(1+s_1+\cdots + s_1 ^{r_1-1})\\ &=\pi(d_1)\pi \left(\frac{\partial \mathbf{r}_1}{\partial s_1}\right). \end{align*}Altogether, we have
\[
-\pi(d_0)=\pi(d_1)\pi\left( \frac{\partial \mathbf{r}_1}{\partial s_1}\right)=\pi(d_2)\pi\left( \frac{\partial \mathbf{r}_2}{\partial s_2}\right) =\cdots = \pi(d_{c})\pi\left( \frac{\partial \mathbf{r}_{c}}{\partial s_c}\right). 
\]
Then, $\pi(d_0(s_i-1))=0$ for all $i$, because 
\begin{align*}\pi(d_0)\pi(s_i-1) &= - \pi (d_i)\pi\left( \frac{\partial \mathbf{r}_i}{\partial s_i}\right)\pi(s_i-1) \\ & = - \pi(d_i)\pi((1+s_i +\cdots +s_i^{r_i-1})(s_i-1))\\ &=\pi(d_i)\pi(1-s_i^{r_i})\\ &=0.
\end{align*}
In other words, $\pi(d_0 s_i) = \pi(d_0)$ for all $i=1,2,\cdots, c$. Since any element $z\in \Gamma$ can be written as a product of generators $z=\pi(s_{i_1}\cdots s_{i_l})$, we can conclude that $\pi(d_0 (z-1))=0$ for all $z\in \Gamma$. Since $c\ge 3$, $\Gamma$ is infinite and there is a non-torsion  element $z_0\in \Gamma$. Therefore, $\pi(d_0 (z_0-1))=0$ for some non-torsion element $z_0$. This leads us to $\pi(d_0)=0$ as we wanted. 
 \end{proof}

\subsubsection{The geometric resolution II} We present the last projective resolution which is a slightly refined version of the previous one. This model is particularly useful for  computing the parabolic cohomology.  

As we did in the construction of the groupoid bar resolution, we  additionally pick a point $p_i$ on each $\zeta_i$ and a path  $w_i$ from $p_0$ to $p_i$ in such a way that the complement of
\[
x_1,y_1, \cdots, x_g,y_g, s_1, \cdots, s_{c}, w_1, \cdots, w_b, \zeta_1, \cdots, \zeta_b
\]
in the underlying topological space $X_\mathcal{O}$ of $\mathcal{O}$ is a union of open disks.

As before, let
\begin{align*}
\widetilde{\mathbf{G}}_k &= \R\Gamma [\mathbf{r}_1, \cdots, \mathbf{r}_{c}] , \text{ for }k\ge 3\\
\widetilde{\mathbf{G}}_2 &= \R\Gamma[ \mathbf{r}, \mathbf{r}_1, \cdots, \mathbf{r}_{c}],\\
\widetilde{\mathbf{G}}_1 &= \R\Gamma[x_1,y_1, \cdots, x_g,y_g, \zeta_1, \cdots, \zeta_b, w_1, \cdots, w_b,s_1, \cdots, s_{c}], \text{ and} \\
\widetilde{\mathbf{G}}_0 &= \R\Gamma[p_0,p_1, \cdots, p_b]. 
\end{align*}
Again to avoid any potential confusion, we place the generators of each module inside the double bracket $\llbracket\cdot \rrbracket$. We define the  differential by
\begin{align*}
\partial (\llbracket \mathbf{r}_i\rrbracket  )& := (s_i-1)\llbracket \mathbf{r}_i\rrbracket, \text{ for }\llbracket \mathbf{r}_i\rrbracket \in \widetilde{\mathbf{G}}_3, \widetilde{\mathbf{G}}_5, \widetilde{\mathbf{G}}_7,\cdots \\
\partial (\llbracket \mathbf{r}_i\rrbracket  )& := (1+s_i+\cdots+s_i^{r_i-1})\llbracket \mathbf{r}_i\rrbracket, \text{ for }\llbracket \mathbf{r}_i\rrbracket \in \widetilde{\mathbf{G}}_4, \widetilde{\mathbf{G}}_6, \widetilde{\mathbf{G}}_8,\cdots \\
\partial(\llbracket \mathbf{r}_i\rrbracket  ) &:= \frac{\partial \mathbf{r}_i}{\partial s_i}\llbracket s_i\rrbracket  =(1+s_i+\cdots+s_i^{r_i-1})\llbracket s_i\rrbracket, \text{ for }\llbracket \mathbf{r}_i \rrbracket \in \widetilde{\mathbf{G}}_2 \\
\partial (\llbracket \mathbf{r}\rrbracket  ) &:= \sum_{i=1} ^g \left( \frac{\partial \mathbf{r}}{\partial x_i } \llbracket x_i\rrbracket   + \frac{\partial \mathbf{r}}{\partial y_i}\llbracket y_i\rrbracket  \right) + \sum_{i=1} ^b \frac{\partial \mathbf{r}}{\partial z_i}\llbracket \zeta_i\rrbracket   + \sum_{i=1} ^{c} \frac{\partial \mathbf{r}}{\partial s_i}\llbracket s_i\rrbracket  +\sum_{i=1} ^b \frac{\partial \mathbf{r}}{\partial z_i} (1-z_i)\llbracket w_i\rrbracket  \\
\partial(\llbracket x_i\rrbracket  ) &:= (x_i-1)\llbracket p_0\rrbracket  , \quad \partial(\llbracket y_i\rrbracket  ) := (y_i-1)\llbracket p_0\rrbracket   ,\quad \partial(\llbracket w_i\rrbracket  ) = \llbracket p_i\rrbracket  -\llbracket p_0\rrbracket  \\
\partial (\llbracket \zeta_i\rrbracket  )&:= (z_i-1)\llbracket p_0\rrbracket  ,\quad \partial (\llbracket s_i\rrbracket  ) := (s_i-1)\llbracket p_0\rrbracket  .
\end{align*}

By adopting the proof of Lemma \ref{georesolproof}, one can show that 
\begin{equation}\label{georesol2}
\cdots  \to \widetilde{\mathbf{G}}_3 \to \widetilde{\mathbf{G}}_2 \to \widetilde{\mathbf{G}}_1 \to \widetilde{\mathbf{G}}_0 \overset{\widetilde{\epsilon}}{\to} \R\to 0 
\end{equation}
gives also a free resolution. 

\subsection{Consequences}
We list some  consequences of the above construction of various resolutions. 

Recall that a group $\Gamma$ is \emph{of type $FP$} over $\R$ if there is a finite length resolution
\[
0 \to R_n\to \cdots \to R_1 \to R_0 \to \R \to 0
\]
such that each $R_i$ is a projective and finitely generated $\R\Gamma$-module. 
\begin{proposition}\label{FP}
Let $\mathcal{O}$ be a compact orientable 2-orbifold of negative Euler characteristic. Then $\Gamma=\pi_1 ^{\operatorname{orb}}(\mathcal{O})$ is of type $FP$ over $\R$.
\end{proposition}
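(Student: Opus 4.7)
The plan is to truncate the finitely generated free resolution $(\mathbf{G}_*, \partial)$ from Lemma \ref{georesolproof} at degree $2$, exploiting the semisimplicity of $\R\langle s_i\rangle$ for each finite cyclic subgroup (which is available precisely because we work over $\R$, not $\Z$).

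First I would define $K := \ker(\partial \colon \mathbf{G}_2 \to \mathbf{G}_1)$. By the exactness established in Lemma \ref{georesolproof}, $K$ coincides with the image of $\partial \colon \mathbf{G}_3 \to \mathbf{G}_2$, and reading off the differential $\partial(\llbracket\mathbf{r}_i\rrbracket) = (s_i - 1)\llbracket\mathbf{r}_i\rrbracket$ on $\mathbf{G}_3$ yields the identification $K = \bigoplus_{i=1}^{c} \R\Gamma(s_i-1)\llbracket \mathbf{r}_i\rrbracket$; in particular $K$ is finitely generated over $\R\Gamma$ by the $c$ elements $(s_i-1)\llbracket\mathbf{r}_i\rrbracket$.

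The main step is to show that each summand $\R\Gamma(s_i-1)$ is a projective $\R\Gamma$-module. For this I would use the idempotent $e_i := \frac{1}{r_i}(1 + s_i + \cdots + s_i^{r_i-1}) \in \R\langle s_i\rangle$, which is well-defined because $r_i$ is invertible in $\R$ and satisfies $e_i^2 = e_i$ since $s_i e_i = e_i$. By the first part of Lemma \ref{zerodivisor}, the kernel of right multiplication by $(s_i-1)$ on $\R\Gamma$ equals $\R\Gamma(1+s_i+\cdots+s_i^{r_i-1}) = \R\Gamma e_i$, so right multiplication by $(s_i-1)$ induces an isomorphism $\R\Gamma(s_i-1) \cong \R\Gamma/\R\Gamma e_i \cong \R\Gamma(1-e_i)$, which is a direct summand of the free module $\R\Gamma = \R\Gamma e_i \oplus \R\Gamma(1-e_i)$ and hence projective. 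Consequently $K$ itself is finitely generated projective.

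Putting these together, the truncated complex
\[
0 \to K \to \mathbf{G}_2 \to \mathbf{G}_1 \to \mathbf{G}_0 \to \R \to 0
\]
is an exact sequence of finitely generated projective $\R\Gamma$-modules of length at most three, which is precisely the FP condition. The key conceptual obstacle is that the otherwise infinite periodic tail of $\mathbf{G}_*$ coming from the torsion generators $s_i$ must be cut off, and it can be cut off thanks to the semisimplicity of $\R\langle s_i\rangle$; this is exactly where $\R$-coefficients are essential, since over $\Z$ the analogous argument would fail.
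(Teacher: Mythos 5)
Your argument is sound and rests on exactly the same key mechanism as the paper's proof --- the idempotent $e_i=\frac{1}{r_i}(1+s_i+\cdots+s_i^{r_i-1})$, available because $r_i$ is invertible in $\R$, combined with Lemma \ref{zerodivisor} --- but you deploy it in the opposite direction. The paper \emph{compresses} the geometric resolution: it sets $\mathbf{G}_2'=\R\Gamma[\mathbf{r}]$ and $\mathbf{G}_1'=\mathbf{G}_1/\partial(Q)$, where $Q$ is spanned by the torsion relators, and uses the idempotent to see that the quotient $\mathbf{G}_1'$ is still projective (there it appears as the statement that $K_i=\ker(\R Q_i\to\R)$ is a projective $\R Q_i$-module). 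You instead \emph{cap} the resolution by adjoining $K=\ker\partial_2=\operatorname{im}\partial_3$ in degree $3$ and using the idempotent to identify $K\cong\bigoplus_{i}\R\Gamma(1-e_i)$, which is projective and finitely generated. All of your intermediate identifications are correct.

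The one point you must still address is the length. The definition of ``type FP over $\R$'' stated just above the proposition demands a resolution of the specific shape $0\to R_2\to R_1\to R_0\to\R\to 0$, i.e.\ of length two, and this length is used downstream: the last line of Lemma \ref{p2} reads off $H^q(\Gamma;\R\Gamma)=0$ for $q>2$ from the length-two resolution (\ref{resolution'}), and the $PD^2_\R$ conclusion of Proposition \ref{product} requires cohomological dimension $2$. Your resolution has length three, so as written it establishes FP only in the usual ``finite length'' sense, not the paper's formulation. The repair is one more application of the same idempotent: since $\R\Gamma(s_i-1)=\R\Gamma(1-e_i)$ as left ideals (note $(s_i-1)(1-e_i)=s_i-1$ and $1-e_i=-\frac{1}{r_i}\sum_{k=1}^{r_i-1}(1+s_i+\cdots+s_i^{k-1})(s_i-1)$), one gets $\mathbf{G}_2/K\cong\R\Gamma\llbracket\mathbf{r}\rrbracket\oplus\bigoplus_i\R\Gamma e_i\llbracket\mathbf{r}_i\rrbracket$, which is projective; hence $0\to K\to\mathbf{G}_2\to\mathbf{G}_2/K\to 0$ splits, $\operatorname{im}\partial_2\cong\mathbf{G}_2/K$ is projective, and $0\to\mathbf{G}_2/K\to\mathbf{G}_1\to\mathbf{G}_0\to\R\to 0$ is the required length-two resolution by finitely generated projectives. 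With that addendum your proof is complete and equivalent in substance to the paper's.
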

\begin{proof}
We begin with the geometric resolution (\ref{georesol1}). Let $Q$ be the submodule generated by $\llbracket \mathbf{r}_1\rrbracket  , \cdots, \llbracket \mathbf{r}_{c}\rrbracket  $ in $\mathbf{G}_2$. Consider 
\[
\mathbf{G}'_2  := \R\Gamma[\mathbf{r}],\quad  \mathbf{G}'_1 := \mathbf{G}_1 / \partial (Q), \quad  \mathbf{G}'_0 = \mathbf{G}_0, \quad  \text{ and }\quad \mathbf{G}'_j =0 \text{ for }j\ge 3.
\]
In light of Lemma \ref{zerodivisor}, we know that
\[
\mathbf{G}'_1\cong \R\Gamma[x_1, y_1, \cdots, x_g,y_g, z_1, \cdots, z_b]\oplus\bigoplus_{i=1} ^{c} (\R\Gamma \otimes_{\R Q_i} K_i)
\]
where $Q_i$ is a finite cyclic subgroup of $\Gamma$ generated by $s_i$ and $K_i$ is the kernel of the augmentation map $\varepsilon: \R Q_i \to \R$. Since $Q_i$ is finite, we have a section $\sigma:\R \to \R Q_i$ of $\varepsilon$ defined by $\sigma(x) = \frac{1}{\# Q_i} \sum_{\gamma\in Q_i} x\cdot \gamma$. This makes $K_i$  a direct summand of a free $\R Q_i$-module $\R Q_i$. Therefore, $K_i$ is a projective $\R Q_i$-module. It follows that $\R \Gamma \otimes_{\R Q_i} K_i$ is a projective $\R \Gamma$-module. Therefore  $\mathbf{G}'_1$ is also projective. The new complex
\begin{equation}\label{resolution'}
\cdots \to 0 \to \cdots \to 0 \to \mathbf{G}'_2 \to \mathbf{G}'_1 \to \mathbf{G}'_0 \to \R 
\end{equation}
is clearly exact and hence a  projective resolution of length 2. 
\end{proof}

We can also obtain the following description of the parabolic cohomology. While the result itself might be well-known, we are not able to find a good reference for its proof. For the sake of completeness, we present the proof here as well. 

\begin{proposition}\label{paraboliccocycle}
Let $\mathcal{S}=\{\langle z_1 \rangle, \cdots ,\langle z_b\rangle \}$. In terms of the groupoid bar resolution, the parabolic cohomology is represented by the parabolic cocycles
\[
\widetilde{Z}^1_{\operatorname{par}} (\Gamma, \mathcal{S}; \mathfrak{g}) := \{u\in \widetilde{Z}^1(\Gamma;\mathfrak{g})\,|\, u(\llbracket \zeta_i\rrbracket  )=0 \text{, } i=1,2,\cdots, b\}
\]
where $\widetilde{Z}^1(\Gamma;\mathfrak{g})=\ker (\Hom_{\Gamma}(\widetilde{\mathbf{B}}_1(\Gamma), \mathfrak{g})\to \Hom_{\Gamma}(\widetilde{\mathbf{B}}_2(\Gamma), \mathfrak{g}))$ is the group of usual cocycles. 

In terms of the bar resolution,  the parabolic cohomology is represented by
\[
Z^1_{\operatorname{par}} (\Gamma, \mathcal{S}; \mathfrak{g}) := \{u\in Z^1(\Gamma;\mathfrak{g})\,|\, u(\llbracket z_i\rrbracket  )=z_i\cdot X_i -X_i \text{ for some }X_i \in \mathfrak{g} \text{, } i=1,2,\cdots, b\}
\]
where $Z^1(\Gamma;\mathfrak{g})=\ker (\Hom_{\Gamma}(\mathbf{B}_1(\Gamma), \mathfrak{g})\to \Hom_{\Gamma}(\mathbf{B}_2(\Gamma), \mathfrak{g}))$. 
\end{proposition}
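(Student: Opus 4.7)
The plan is to use the long exact sequence \ref{relativeseq} to identify $H^1_{\operatorname{par}}(\Gamma,\mathcal{S};\mathfrak{g})$ with the kernel of the restriction map $H^1(\Gamma;\mathfrak{g})\to\bigoplus_{i=1}^{b}H^1(\Gamma_i;\mathfrak{g})$ induced by the inclusions $\Gamma_i=\langle z_i\rangle\hookrightarrow\Gamma$, and then to translate the vanishing condition of each restriction into the concrete pointwise conditions stated in the proposition. Since each $\Gamma_i\cong\Z$ is infinite cyclic, the minimal length-2 resolution $0\to\R\Gamma_i\xrightarrow{z_i-1}\R\Gamma_i\to\R\to 0$ gives $H^1(\Gamma_i;\mathfrak{g})\cong\mathfrak{g}/(z_i-1)\mathfrak{g}$, with the class of any cocycle on $\Gamma_i$ represented by its value on $z_i$.

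For the bar resolution assertion, the restriction of $u\in Z^1(\Gamma;\mathfrak{g})$ to $\Gamma_i$ is determined by $u(\llbracket z_i\rrbracket)\in\mathfrak{g}$, and $[u|_{\Gamma_i}]=0$ in $H^1(\Gamma_i;\mathfrak{g})$ amounts exactly to $u(\llbracket z_i\rrbracket)=z_i\cdot X_i-X_i$ for some $X_i\in\mathfrak{g}$. I will verify that this condition is stable under modifications by any global coboundary $\delta Y$, since $(\delta Y)(\llbracket z_i\rrbracket)=z_i\cdot Y_0-Y_0$ with $Y_0=Y(\llbracket p_0\rrbracket)$ again has the parabolic form; hence $Z^1_{\operatorname{par}}(\Gamma,\mathcal{S};\mathfrak{g})$ is saturated modulo $B^1(\Gamma;\mathfrak{g})$ and its image in $H^1(\Gamma;\mathfrak{g})$ is precisely $H^1_{\operatorname{par}}(\Gamma,\mathcal{S};\mathfrak{g})$.

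For the groupoid bar resolution, the key computation rests on the factorization $z_i=w_i\zeta_iw_i^{-1}$ in $\widetilde{\Gamma}$, together with $\operatorname{ret}(w_i)=1$ and $\operatorname{ret}(\zeta_i)=z_i$. Applying the cocycle identity to $\partial\llbracket w_i|w_i^{-1}\rrbracket$ first gives $\widetilde{u}(\llbracket w_i^{-1}\rrbracket)=-\widetilde{u}(\llbracket w_i\rrbracket)$, and iterating the general rule $\widetilde{u}(\llbracket g_1g_2\rrbracket)=\widetilde{u}(\llbracket g_1\rrbracket)+\operatorname{ret}(g_1)\cdot\widetilde{u}(\llbracket g_2\rrbracket)$ yields
\[
\widetilde{u}(\llbracket z_i\rrbracket)=\widetilde{u}(\llbracket\zeta_i\rrbracket)+(1-z_i)\,\widetilde{u}(\llbracket w_i\rrbracket).
\]
Thus $\widetilde{u}(\llbracket\zeta_i\rrbracket)=0$ forces $\widetilde{u}(\llbracket z_i\rrbracket)\in(z_i-1)\mathfrak{g}$, so the class of $\widetilde{u}$ is parabolic. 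Conversely, for any parabolic $\widetilde{u}$ the same formula implies $\widetilde{u}(\llbracket\zeta_i\rrbracket)\in(z_i-1)\mathfrak{g}$, and I will annihilate each such value by adding the coboundary of the $\Gamma$-equivariant $0$-cochain $Y$ prescribed by $Y(\llbracket p_0\rrbracket)=0$ and suitably chosen $Y(\llbracket p_i\rrbracket)=Y_i\in\mathfrak{g}$; the resulting shift of $\widetilde{u}(\llbracket\zeta_i\rrbracket)$ is exactly $z_iY_i-Y_i$, so the $Y_i$ can always be selected to cancel without altering the cohomology class.

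The main obstacle is to keep the comparison between the two resolutions bookkeeping-clean. Both $\mathbf{B}_*(\Gamma)$ and $\widetilde{\mathbf{B}}_*(\Gamma)$ are projective resolutions of $\R$ over $\R\Gamma$ (the latter by Lemma \ref{barresolexact}) and so compute the same $H^*(\Gamma;\mathfrak{g})$, but the parabolic condition passes between them through conjugation by the $w_i$. The converse direction in the groupoid case crucially uses the freedom to choose $Y$ nontrivially at each auxiliary basepoint $p_i$ while leaving $p_0$ fixed; this freedom is exactly what decouples the correction of $\widetilde{u}(\llbracket\zeta_i\rrbracket)$ from any perturbation of the cocycle values on the non-boundary generators.
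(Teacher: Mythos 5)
Your proposal is correct, and it reorganizes the argument relative to the paper's proof in a way worth noting. The paper works from the left end of the exact sequence (\ref{relativeseq}): it builds the explicit peripheral summand $A_*\subset\widetilde{\mathbf{B}}_*(\Gamma)$ spanned over $\R\Gamma$ by the $\llbracket \zeta_i^{k}\rrbracket$, observes that the cochain-level map underlying $j\colon H^1(\Gamma,\mathcal{S};\mathfrak{g})\to H^1(\Gamma;\mathfrak{g})$ carries relative $1$-cocycles exactly onto the cocycles vanishing on the $\llbracket\zeta_i\rrbracket$ (this already gives the groupoid statement, since $H^1_{\operatorname{par}}=\operatorname{im} j$ by definition), and only then transports the answer to the bar resolution via $\operatorname{ext}$ and the identity $u(\llbracket w_i\zeta_iw_i^{-1}\rrbracket)=u(\llbracket w_i\rrbracket)-z_i\cdot u(\llbracket w_i\rrbracket)$. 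You instead pass to the right end of the sequence, replacing $\operatorname{im} j$ by $\ker(\operatorname{res})$ via exactness and then characterizing that kernel in each resolution; the same conjugation identity $\widetilde{u}(\llbracket z_i\rrbracket)=\widetilde{u}(\llbracket\zeta_i\rrbracket)+(1-z_i)\cdot\widetilde{u}(\llbracket w_i\rrbracket)$ is the computational heart of both arguments. What your route buys is that the bar-resolution half becomes self-contained (no detour through the groupoid resolution) and the parabolic subspace is exhibited directly as the canonical $\ker\bigl(H^1(\Gamma;\mathfrak{g})\to\bigoplus_i H^1(\langle z_i\rangle;\mathfrak{g})\bigr)$. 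What it costs is the extra coboundary-adjustment at the auxiliary basepoints $p_i$ needed to pass from ``the restriction to $\langle z_i\rangle$ is a coboundary'' back to ``vanishes on $\llbracket\zeta_i\rrbracket$'' --- a step the paper avoids because its relative cocycles vanish there by construction --- together with one point you should make explicit: that the map $H^1(\Gamma;M)\to\bigoplus_i H^1(\Gamma_i;M)$ in (\ref{relativeseq}) is the canonical restriction map, which follows from Shapiro's lemma applied to the summand $A_*=\bigoplus_i\R\Gamma\otimes_{\Gamma_i}A^i_*$. Your prescription $Y(\llbracket p_0\rrbracket)=0$, $Y(\llbracket p_i\rrbracket)=Y_i$ with the $Y_i$ chosen independently is exactly the right device and is the same one the paper deploys later in the proof of Theorem \ref{explicit}.
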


\begin{proof} The proof goes as follow. We first characterize the parabolic cohomology in terms of the resolution (\ref{groupoidbarresolution}) that captures the peripheral structure well. Then we translate (\ref{groupoidbarresolution}) into the bar resolution (\ref{barresolution}) and track the parabolic cocycles. 

For each $i$, we define $A^i _j$, $j>0$, to be the free $\R[\langle z_i \rangle]$-module on the set $\{\llbracket x_1|x_2|\cdots|x_j\rrbracket\,|\,x_1, \cdots, x_j \in \langle \zeta_i \rangle\setminus \{1\}\}$.  We let $A^i_0 = \R[\langle z_i \rangle][p_i]$. The differential is defined in the same way as that of the bar resolution. Then it is clear that $A_j:=\bigoplus_{i=1} ^b \R \Gamma \otimes A^i _j$ is a direct summand of $\widetilde{\mathbf{B}}_j(\Gamma)$ and $(\widetilde{\mathbf{B}}_*(\Gamma), A_*)$ is an auxiliary resolution for the pair $(\Gamma,\mathcal{S})$.

By taking $\Hom_\Gamma(-,\mathfrak{g})$ on the complex $\widetilde{\mathbf{B}}_*(\Gamma)/ A_*$, we get the chain complex $C^*(\Gamma, \mathcal{S}; \mathfrak{g}):=\Hom_\Gamma(\widetilde{\mathbf{B}}_*(\Gamma)/ A_*, \mathfrak{g})$ for $H^*(\Gamma, \mathcal{S};\mathfrak{g})$. The map $j:H^1(\Gamma, \mathcal{S};\mathfrak{g}) \to H^1(\Gamma; \mathfrak{g})$ in (\ref{relativeseq})  sends the space of 1-cocycles $\ker(C^1(\Gamma, \mathcal{S};\mathfrak{g}) \to C^2(\Gamma, \mathcal{S};\mathfrak{g}))$ to $\widetilde{Z}^1_{\operatorname{par}} (\Gamma, \mathcal{S}; \mathfrak{g})$. This proves the first part of the lemma. 

For the second part, recall that both $\mathbf{B}_*(\Gamma)$ and $\widetilde{\mathbf{B}}_*(\Gamma)$ are projective resolutions, and $\operatorname{ext}:\mathbf{B}_*(\Gamma)\to \widetilde{\mathbf{B}}_*(\Gamma)$ is a chain map that commutes with the augmentations. Under this assumption, $\operatorname{ext}$ is a homotopy equivalence. (See Theorem 7.5 of Brown \cite{brown1982}) We use this homotopy equivalence $\operatorname{ext}$ to translate $\widetilde{Z}^1_{\operatorname{par}}(\Gamma,\mathcal{S};\mathfrak{g})$ into $Z^1_{\operatorname{par}}(\Gamma,\mathcal{S};\mathfrak{g})$. Observe that, by using the cocycle condition,
\[
u(\operatorname{ext}(\llbracket z_i \rrbracket))=  u (\llbracket w_i \zeta_i w_i ^{-1}\rrbracket)=u(\llbracket w_i \rrbracket) - z_i \cdot u(\llbracket w_i \rrbracket).
\]
Therefore, $\operatorname{ext}$ sends $\widetilde{Z}^1_{\operatorname{par}}(\Gamma,\mathcal{S};\mathfrak{g})$ to $Z^1_{\operatorname{par}}(\Gamma,\mathcal{S};\mathfrak{g})$. Conversely, let $u\in Z^1_{\operatorname{par}}(\Gamma,\mathcal{S};\mathfrak{g})$ and, for each $i$, let $X_i\in \mathfrak{g}$ be such that $u(\llbracket z_i \rrbracket ) = z_i\cdot X_i - X_i$. We define the 1-cocycle $u'\in \widetilde{Z}^1 (\Gamma; \mathfrak{g})$ by $u'(\llbracket x \rrbracket ) = u(\llbracket \operatorname{ret}(x)\rrbracket)$. Let $X\in \Hom_\Gamma (\widetilde{\mathbf{B}}_0(\Gamma),\mathfrak{g})$ be the 0-cocycle given $X(\llbracket p_0\rrbracket) = 0$ and $X(\llbracket p_i \rrbracket ) = X_i$ for $i=1,2,\cdots, b$. We define $\widetilde{u} = u' - \delta X$. We know that $\widetilde{u}\in\widetilde{Z}^1_{\operatorname{par}} (\Gamma, \mathcal{S}; \mathfrak{g})$ because
\begin{align*}
\widetilde{u}(\llbracket \zeta_i\rrbracket)  &= u(\llbracket \operatorname{ret}(\zeta_i) \rrbracket ) - (\operatorname{ret}(\zeta_i)\cdot X(\llbracket p_i \rrbracket ) - X(\llbracket p_i \rrbracket))\\
&=z_i \cdot X_i - X_i - (z_i \cdot X_i - X_i) \\
&= 0.
\end{align*}
We also compute, for $\llbracket x \rrbracket \in \mathbf{B}_1(\Gamma)$, 
\begin{align*}
\operatorname{ext}(\widetilde{u})(\llbracket x\rrbracket)&=\widetilde{u}(\llbracket\operatorname{ext}(x)\rrbracket)\\
& = u'(\llbracket \operatorname{ext}(x)\rrbracket)- \delta X(\llbracket\operatorname{ext}(x)\rrbracket)\\
&=u(\llbracket \operatorname{ret}(\operatorname{ext}(x))\rrbracket)-(x\cdot X(\llbracket p_0 \rrbracket)-X(\llbracket p_0 \rrbracket))\\
&=u(\llbracket x \rrbracket)
\end{align*}
to show that $\operatorname{ext}(\widetilde{Z}^1_{\operatorname{par}}(\Gamma,\mathcal{S};\mathfrak{g}))=Z^1_{\operatorname{par}}(\Gamma,\mathcal{S};\mathfrak{g})$.
\end{proof}

\section{The symplectic structure on orbifold Hitchin components}\label{symp}
In this section, we construct  the Atiyah-Bott-Goldman type symplectic form on the relative character variety $\Rep_n ^{\mathscr{B}}(\Gamma)$ of the orbifold group $\Gamma= \pi_1 ^{\text{orb}}({\mathcal{O}})$. 

In section \ref{poincaredualityconstruction}, we apply the Poincar\'{e} duality to produce a nondegenerate anti-symmetric bilinear pairing $\omega_{PD}$ on $T_{[\rho]}\Rep_n ^{\mathscr{B}}(\Gamma)$. Lemma \ref{p2} and the fact that $\Gamma$ is of type FP over $\R$, which is proven in the previous section, guarantee that $\Gamma$ has the  Poincar\'{e} duality map. In Lemma \ref{fundamentalclass}, we give an explicit form of the fundamental cycle of $\Gamma$ which leads to the concrete formula for the pairing $\omega_{PD}$ in Theorem \ref{explicit}. Then we embed $\Rep_n ^{\mathscr{B}}(\Gamma)$ into $\Rep^{\mathscr{B}'}_n (\pi_1(X'_{\mathcal{O}}))$ as a open submanifold where $X'_{\mathcal{O}}$ is  some compact surface obtained by removing cone points of $\mathcal{O}$. We show that $\omega_{PD}$ is the pull-back of the (well-known) Atiyah-Bott-Goldman symplectic form on $\Rep^{\mathscr{B}'}_n (\pi_1(X'_{\mathcal{O}}))$. 

In section \ref{deRham}, we construct another 2-form $\omega_H$ defined on a neighborhood of $\Hom^{\mathscr{B}}(\Gamma, G)$ by modifying Huebschmann \cite{huebschmann1995} and Guruprasad-Huebschmann-Jeffrey-Weinstein \cite{guruprasad1997}. From its construction we know that the 2-form $\omega_H$ is equivariantly closed in a small neighborhood of $\Hom^{\mathscr{B}}(\Gamma, G)$. We  show  in Theorem \ref{comp} that, after taking the symplectic reduction, $\omega_H = - \omega_{PD}$ on $\Rep_n ^{\mathscr{B}}(\Gamma)$.   

Consequently, we know that $\omega:=\omega_{PD}=-\omega_H$ is a closed non-degenerate 2-form on $\Rep_n ^{\mathscr{B}}(\Gamma)$ and this form $\omega$  will be our Atiyah-Bott-Goldman symplectic form. 

Throughout this section $G$ denotes the Lie group $\PSL_n(\R)$ and $\mathfrak{g}$ its Lie algebra. We also adopt the following notation.
\begin{notation}\label{notation}
Let $\mathcal{O}$ be a compact oriented 2-orbifold with boundary of negative Euler characteristic. Let us choose $z_1, \cdots, z_b$ the primitive peripheral elements of $\pi_1^{\operatorname{orb}}(\mathcal{O})$. Then we denote by $\per{\pi_1 ^{\operatorname{orb}}(\mathcal{O})}$ the set  $\{\langle z_1\rangle, \cdots, \langle z_b\rangle \}$. Although $\per{\pi_1 ^{\operatorname{orb}}(\mathcal{O})}$ is defined only  up to conjugation, this ambiguity does not make any difference in our discussion. 
\end{notation}

\subsection{Construction by the Poincar\'{e} duality}\label{poincaredualityconstruction}
As in the previous section, denote by $\mathcal{O}$ a compact orientable 2-orbifold of negative Euler characteristic and $\Gamma$ the orbifold fundamental group $\pi_1 ^{\operatorname{orb}}(\mathcal{O})$. We continue to use the presentation (\ref{present}) for $\Gamma$. 

Let $F$ be the free group on $\{x_1, y_1,\cdots, x_g, y_g, s_1, \cdots, s_c, z_1, \cdots, z_b\}$ and let
\[
F^{\#}:= \langle x_1, y_1, \cdots, x_g, y_g, s_1, \cdots, s_{c},z_1, \cdots, z_b | \mathbf{r}_1= \cdots = \mathbf{r}_{c} =1 \rangle
\]
where $\mathbf{r}_i=s_i ^{r_i}$.  Let $Q_i = \langle s_i\rangle$ be a cyclic subgroup  of $F^{\#}$ generated by $s_i$.  As $H^2(Q_i, \mathfrak{g}_\rho)=0$ for any $\rho\in \Hom(Q_i,G)$, $\Hom(Q_i,G)$ is a smooth embedded subspace of $\Hom(\Z ,G)=G$ and  
\[
\Hom(F^{\#},G) = G^{2g+b} \times \Hom(Q_1,G)\times \cdots\times \Hom(Q_c, G)
\]
is a smooth embedded submanifold of $G^{2g+b+c}$. 

The tangent space of $G$ at $k\in G$ can be identified with $\mathfrak{g}$ as follows. Recall that the Lie algebra $\mathfrak{g}$ is the tangent space of $G$ at the identity element. Write $R_k: G\to G$ the right translation defined by $R_k (h) = hk$. This map induces the tangent map $\mathfrak{g} \to T_k G$. Similarly for  \[\rho=(k_1, \cdots, k_{2g+b+c}) \in \Hom(F^{\#},G) \subset G^{2g+b+c},\]  write $R_{\rho*}: \mathfrak{g}^{2g+b+c} \to T_\rho \Hom(F^{\#},G)$ the tangent map of the coordinate-wise right translation  $R_{\rho} : G^{2g+b+c}\to G^{2g+b+c}$  defined by \[R_\rho (h_1, \cdots, h_{2g+b+c}) = (h_1 k_1, \cdots, h_{2g+b+c} k_{2g+b+c}).\] 

We have the conjugation action of $G$ on $\Hom^{\mathscr{B}}(F^{\#}, G)$. Each $X\in \mathfrak{g}$ induces the fundamental vector field $\overline{X}$ defined by the formula $\overline{X} _\rho (f) = \frac {\der }{\der t} f(\exp (-tX) \cdot \rho )$, where $f\in C^\infty (\Hom (F^{\#}, G))$, and where $\rho \in \Hom (F^{\#}, G)$. This gives rise to the linear map $A: X \mapsto \overline{X}_\rho$ from $\mathfrak{g}$ to $T_\rho \Hom ^{\mathscr{B}}(F^{\#},G)$.

Finally let $E: \Hom(F^{\#}, G) \to G$ be the map defined by  $E(\rho) = \rho(\mathbf{r})$.  Then we know that $\Hom(\Gamma,G) = E^{-1}(1)$.

\begin{lemma}\label{tangent1}Let $\mathbf{G}_*' \to \R \to 0$ be the resolution (\ref{resolution'}).  The cochain complex $C^* (\Gamma ;\mathfrak{g})=\Hom_{\Gamma}(\mathbf{G}_* ' ,\mathfrak{g})$  that computes $H^*(\Gamma;\mathfrak{g})$  fits into the commutative diagram
\[
\xymatrix{C^0 (\Gamma; \mathfrak{g})  \ar[rr] \ar[d]^{=}&& C^1  (\Gamma; \mathfrak{g})  \ar[rr] \ar[d]^{R_{\rho*}}&& C^2 (\Gamma; \mathfrak{g})\ar[d]^{R_{E(\rho)*}} \\ 
T_e G \ar[rr]_{A}  && T_{\rho} \Hom (F^{\#},G)  \ar[rr]_{ E_*} && T_{E(\rho)} G 
}
\]
where $R_\rho$ and $R_{E(\rho)}$ are the right translation maps. 
\end{lemma}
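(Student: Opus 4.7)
The plan is to compute both sides of each square by unpacking the cochain groups via the resolution $(\mathbf{G}_*', \partial)$ of Proposition~\ref{FP} and matching them with the tangent spaces under right translation.

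First I would identify the cochain groups. Since $\mathbf{G}_0' = \R\Gamma[p_0]$ and $\mathbf{G}_2' = \R\Gamma[\mathbf{r}]$ are free of rank one, $C^0(\Gamma;\mathfrak{g})$ and $C^2(\Gamma;\mathfrak{g})$ are canonically identified with $\mathfrak{g}$ by evaluation on the single generator, which matches $T_eG$ and $T_{E(\rho)}G$ under right translation. For the middle term, $\mathbf{G}_1'=\mathbf{G}_1/\partial Q$, a cochain $u\in C^1(\Gamma;\mathfrak{g})$ is determined by Lie algebra elements $X_v := u(\llbracket v\rrbracket)$ indexed by the generators of $F^{\#}$, subject to the Fox-derivative constraint $\sum_{j=0}^{r_i-1}\Ad(\rho(s_i))^j X_{s_i}=0$ coming from $\partial\llbracket\mathbf{r}_i\rrbracket$. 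I would then check this is the right-translate of $T_\rho \Hom(F^{\#},G)$: differentiating the relation $(\rho(s_i)\exp(tX_{s_i}))^{r_i}=1$ in $\Hom(Q_i,G)$ yields $\sum_{j=0}^{r_i-1}\Ad(\rho(s_i)^{-j}) X_{s_i}=0$, which agrees with the above constraint since $j\mapsto -j$ permutes $\Z/r_i\Z$. Thus $R_{\rho*}$ provides the desired isomorphism.

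Commutativity of the left square is a direct calculation. An $X\in \mathfrak{g}=C^0(\Gamma;\mathfrak{g})$ corresponds to the equivariant $u$ with $u(\llbracket p_0\rrbracket)=X$, giving $\delta u(\llbracket v\rrbracket) = u((v-1)\llbracket p_0\rrbracket) = \Ad(\rho(v))X-X$ for each generator $v$. On the geometric side
\[
\bar X_\rho(v) \;=\; \frac{\der}{\der t}\bigg|_{t=0}\exp(-tX)\rho(v)\exp(tX) \;=\; \rho(v)X-X\rho(v) \;=\; \bigl(\Ad(\rho(v))X - X\bigr)\cdot \rho(v),
\]
so $R_{\rho(v)*}^{-1}\bar X_\rho(v) = \Ad(\rho(v))X - X = \delta u(\llbracket v\rrbracket)$, as required.

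For the right square I would combine the Leibniz rule for Fox derivatives with the derivative-of-a-word formula in a Lie group. Writing $\rho_t(v)=\exp(tX_v)\rho(v)$ for each generator, a short induction on word length shows
\[
\rho_t(\mathbf{r}) \;=\; \exp\!\Bigl(t\sum_v \rho\bigl(\tfrac{\partial\mathbf{r}}{\partial v}\bigr)\cdot X_v\Bigr)\,\rho(\mathbf{r}) \,+\, O(t^2),
\]
where $\cdot$ denotes the $\R\Gamma$-module action on $\mathfrak{g}$. Consequently $E_*R_{\rho*}(u)$ corresponds under $R_{E(\rho)*}^{-1}$ to $\sum_v \rho(\partial\mathbf{r}/\partial v)\cdot X_v = u(\partial\llbracket\mathbf{r}\rrbracket) = \delta u(\llbracket\mathbf{r}\rrbracket)$, which is the desired commutativity. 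The step requiring the most care is the middle identification above: one must verify that the finite-cyclic relations $\mathbf{r}_i=1$, encoded in $\mathbf{G}_1/\partial Q$ through the Fox derivative of $\mathbf{r}_i$, translate into exactly the first-order tangency condition for the embedded subvariety $\Hom(Q_i,G)\subset G$. Once this matching is in place, both squares reduce to routine Fox-derivative computations.
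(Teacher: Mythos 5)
Your proof is correct and follows essentially the same route as the paper: identify $C^1(\Gamma;\mathfrak g)$ with $\mathfrak g^{2g+b}\times\mathfrak t_1\times\cdots\times\mathfrak t_c$, match each $\mathfrak t_i$ with $T_\rho\Hom(Q_i,G)$ by differentiating the relation $\rho_t(s_i)^{r_i}=1$, and then verify both squares by Fox-derivative computations. In fact you spell out the commutativity checks that the paper dismisses as ``straightforward computations,'' so no gap.
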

\begin{proof}
For each $j=1,2,\cdots, c$, let $\mathfrak{t}_j = \{X\in \mathfrak{g}\,|\, X+ s_j\cdot X + \cdots + s_j ^{r_j-1}\cdot X = 0\}$. Observe that
\[
C^1(\Gamma; \mathfrak{g}) = \mathfrak{g}^{2g+b}\times \mathfrak{t}_1 \times \cdots \times \mathfrak{t}_c.
\]
Since $\Hom(F^{\#}, G) = G^{2g+b}\times \Hom(Q_1,G) \times \cdots \times \Hom(Q_c, G)$, it is enough to check that $\mathfrak{t}_j$ can be identified with $T_\rho \Hom(Q_j, G)$ under the right translation. For this, we show that if $\rho_t$ is an analytic curve in $\Hom(Q_j, G)$ with $\rho_0 = \rho$ then $X_i:=\left(\frac{d}{dt} \rho_t (s_i) |_{t=0} \right)\rho(s_i)^{-1}$ is in $\mathfrak{t}_i$. Indeed, since $\rho_t (s_i)^{r_i} = 1$, we have that
\begin{align*}
0&=X_i\rho(s_i) ^{r_i}  + \rho(s_i) X_i \rho(s_i)^{r_i -1}  +  \cdots + \rho(s_i) ^{r_i-1}X_i \rho(s_i)\\
& = X_i + s_i \cdot X_i + \cdots + s_i ^{r_i-1} \cdot X_i 
\end{align*}
as desired. 

Commutativity of the diagram can be seen by straightforward computations. 
\end{proof}

The following corollary is known for closed surface groups. Here we record the same result for orbifold groups. 

\begin{corollary}\label{smoothpoint}
If $H^2(\Gamma; \mathfrak{g}_\rho)=0$ then $\rho$ has a neighborhood that is an analytic submanifold of $\Hom(F^{\#}, G)$. 
\end{corollary}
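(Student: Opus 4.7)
The plan is to deduce the corollary from the real analytic implicit function theorem applied to the map $E: \Hom(F^{\#},G)\to G$, whose zero-fiber $E^{-1}(1)$ equals $\Hom(\Gamma,G)$. The essential bridge between the hypothesis $H^2(\Gamma;\mathfrak{g}_\rho)=0$ and the hypothesis of the implicit function theorem (namely, that $E_*$ is surjective at $\rho$) is the commutative diagram in Lemma \ref{tangent1}.

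First, I would observe that the projective resolution $\mathbf{G}'_*$ of $\R$ produced in the proof of Proposition \ref{FP} has length $2$, i.e., $\mathbf{G}'_j=0$ for all $j\ge 3$. Consequently the cochain complex $C^*(\Gamma;\mathfrak{g})=\Hom_\Gamma(\mathbf{G}'_*,\mathfrak{g})$ terminates at degree $2$, and
\[
H^2(\Gamma;\mathfrak{g}_\rho)\;=\;C^2(\Gamma;\mathfrak{g})\big/\operatorname{im}\bigl(d\colon C^1(\Gamma;\mathfrak{g})\to C^2(\Gamma;\mathfrak{g})\bigr).
\]
Thus the assumption $H^2(\Gamma;\mathfrak{g}_\rho)=0$ is equivalent to the surjectivity of the coboundary map $d\colon C^1(\Gamma;\mathfrak{g})\to C^2(\Gamma;\mathfrak{g})$.

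Next, I would feed this into the commutative diagram of Lemma \ref{tangent1}. The vertical maps $R_{\rho*}\colon C^1(\Gamma;\mathfrak{g})\to T_\rho \Hom(F^{\#},G)$ and $R_{E(\rho)*}\colon C^2(\Gamma;\mathfrak{g})\to T_{E(\rho)}G$ are linear isomorphisms, being tangent maps of right translations (the identification on the $C^1$ side uses the description $C^1(\Gamma;\mathfrak{g})=\mathfrak{g}^{2g+b}\times \mathfrak{t}_1\times\cdots\times\mathfrak{t}_c$ from the proof of Lemma \ref{tangent1}). Since the diagram commutes, surjectivity of $d$ transfers to surjectivity of $E_*\colon T_\rho \Hom(F^{\#},G)\to T_{E(\rho)}G$ at the point $\rho$.

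Finally, since $\rho\in \Hom(\Gamma,G)$ we have $E(\rho)=\rho(\mathbf{r})=1$, so $E\colon \Hom(F^{\#},G)\to G$ is an analytic map between real analytic manifolds with $E(\rho)=1$ and $E_*\vert_\rho$ surjective. The real analytic regular value theorem then produces a neighborhood $U$ of $\rho$ in $\Hom(F^{\#},G)$ such that $U\cap E^{-1}(1)=U\cap \Hom(\Gamma,G)$ is an analytic submanifold of codimension $\dim G$. This is the desired local smoothness of $\Hom(\Gamma,G)$ at $\rho$. No step of this argument presents a genuine obstacle; the only delicate point is to ensure that the resolution used to compute $C^*(\Gamma;\mathfrak{g})$ is precisely the finite length one from Proposition \ref{FP}, so that the vanishing of $H^2$ really is equivalent to surjectivity at the top of the complex.
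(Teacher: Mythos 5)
Your proof is correct and follows essentially the same route as the paper: use the commutative diagram of Lemma \ref{tangent1} to transfer surjectivity of the coboundary $d\colon C^1\to C^2$ to surjectivity of $E_*$ at $\rho$, then invoke the real analytic inverse/implicit function theorem. The paper condenses this to one sentence; you usefully spell out the point the paper leaves implicit, namely that the resolution from Proposition \ref{FP} has length $2$, so $H^2(\Gamma;\mathfrak{g}_\rho)=0$ is literally surjectivity at the top of the cochain complex.
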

\begin{proof}
From Lemma \ref{tangent1}, the differential $E_*$ at $\rho$ is surjective if and only if $H^2(\Gamma; \mathfrak{g}_\rho)=0$. The result follows from the inverse function theorem. 
\end{proof}

The following lemma is a relative version of Lemma \ref{tangent1}. 
\begin{lemma}\label{tangent}There is a chain complex $C^* _{\operatorname{par}}(\Gamma, \per{\Gamma};\mathfrak{g})$ (will be explicitly constructed in the proof (\ref{paraboliccomplex2})) that computes $H^1_{\operatorname{par}}(\Gamma, \per{\Gamma};\mathfrak{g})$ and that fits into the commutative diagram
\[
\xymatrix{C^0 _{\operatorname{par}} (\Gamma, \per{\Gamma}; \mathfrak{g})  \ar[rr] \ar[d]^{=}&& C^1 _{\operatorname{par}} (\Gamma, \per{\Gamma}; \mathfrak{g})  \ar[rr] \ar[d]^{R_{\rho*}}&& C^2 _{\operatorname{par}} (\Gamma, \per{\Gamma}; \mathfrak{g})\ar[d]^{R_{E(\rho)*}} \\ 
T_e G \ar[rr]_{A}  && T_{\rho} \Hom ^{\mathscr{B}}(F^{\#},G)  \ar[rr]_{ E_*} && T_{E(\rho)} G 
}
\]
where $R_\rho$ and $R_{E(\rho)}$ are the right translation maps. 

In particular, the right translation identifies $H_{\operatorname{par}}^1(\Gamma, \per{\Gamma};\mathfrak{g}_\rho)$ with the (Zariski) tangent space $T_{[\rho]}\Rep^{\mathscr{B}}_ n(\Gamma)$ at $[\rho]$.
\end{lemma}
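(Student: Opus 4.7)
The plan is to construct a parabolic cochain complex tailored to the presentation (\ref{present}), patterned on the geometric resolution $\mathbf{G}'_*$ used in Lemma \ref{tangent1}, and then verify commutativity of the diagram slot by slot. For each boundary cyclic subgroup $P_i = \langle z_i \rangle \cong \Z$, I would use the length-one resolution $0\to \R P_i \xrightarrow{z_i-1}\R P_i \to \R \to 0$; these assemble into an auxiliary resolution $A_* = \bigoplus_{i=1}^b \R\Gamma \otimes_{\R P_i} A^i_*$ that embeds as a direct summand of $\mathbf{G}'_*$ after a trivial bookkeeping enlargement of $\mathbf{G}_1'$. The key outcome is a small explicit complex
\begin{equation*}
C^0_{\operatorname{par}} = \mathfrak{g},\quad C^1_{\operatorname{par}} = \mathfrak{g}^{2g} \times \prod_{i=1}^b (z_i-1)\cdot \mathfrak{g} \times \mathfrak{t}_1 \times \cdots \times \mathfrak{t}_c,\quad C^2_{\operatorname{par}} = \mathfrak{g},
\end{equation*}
with differentials inherited from $\Hom_\Gamma(\mathbf{G}'_*,\mathfrak{g})$. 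The $(z_i-1)\cdot \mathfrak{g}$ factor is exactly the parabolic cocycle condition $u(z_i) = z_i\cdot X_i - X_i$ from Proposition \ref{paraboliccocycle}, and since every coboundary $(\gamma-1)X_0$ automatically lies in that subspace when evaluated at $z_i$, the first cohomology of $C^*_{\operatorname{par}}$ is $Z^1_{\operatorname{par}}/B^1 = H^1_{\operatorname{par}}(\Gamma, \partial\Gamma;\mathfrak{g})$.

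Next I would verify the diagram by matching slots. The right translation $R_{\rho*}$ identifies the $x_i,y_i,s_j$ factors with $\mathfrak{g}^{2g}\times \mathfrak{t}_1\times\cdots\times \mathfrak{t}_c$ exactly as in Lemma \ref{tangent1}; the only new piece is the boundary factor. For a smooth curve $\rho_t$ in $\Hom^{\mathscr{B}}(F^{\#},G)$ with $\rho_t(z_i)=h_{i,t}\rho(z_i)h_{i,t}^{-1}$ and $\tfrac{d}{dt}\big|_{t=0}h_{i,t}=X_i$, a direct computation yields
\begin{equation*}
\left.\tfrac{d}{dt}\right|_{t=0}\rho_t(z_i)\cdot\rho(z_i)^{-1} = X_i-\Ad(\rho(z_i))X_i \in (z_i-1)\cdot \mathfrak{g},
\end{equation*}
so $R_{\rho*}$ carries $T_\rho \Hom^{\mathscr{B}}(F^{\#},G)$ isomorphically onto $C^1_{\operatorname{par}}$. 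Commutativity of both squares then reduces to the same Fox-derivative identity already established in Lemma \ref{tangent1}, which is insensitive to the restriction on the $z_i$ slot.

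For the final tangent-space identification, I would invoke Lemma \ref{H2=0} to conclude that $H^2(\Gamma;\mathfrak{g}_\rho)=0$ for $\rho\in\Hom_s^{\mathscr{B}}(\Gamma,G)$, and combine this with the analogous vanishing for each $\langle z_i\rangle$ (so that $\Hom^{\mathscr{B}}(F^{\#},G)$ is smooth at $\rho$) to upgrade Corollary \ref{smoothpoint}: the map $E\colon \Hom^{\mathscr{B}}(F^{\#},G)\to G$ is a submersion at $\rho$, hence $T_\rho \Hom_s^{\mathscr{B}}(\Gamma,G)$ corresponds under $R_{\rho*}$ to $\ker E_*$. Since the $G$-action is free on $\Hom_s^{\mathscr{B}}$ with tangent space $\operatorname{Image}(A)$, the diagram produces the desired isomorphism
\begin{equation*}
T_{[\rho]}\Rep_n^{\mathscr{B}}(\Gamma)\;\cong\;\ker(E_*)/\operatorname{Image}(A)\;\cong\; H^1(C^*_{\operatorname{par}})\;=\; H^1_{\operatorname{par}}(\Gamma,\partial\Gamma;\mathfrak{g}_\rho).
\end{equation*}

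The principal obstacle I anticipate is not any single calculation but rather the compatibility bookkeeping: ensuring that the auxiliary resolution $A_*$ sits inside the geometric resolution in a way that the induced boundary map on $C^1$ truly agrees with the restriction of the unmodified boundary map, and that the coboundary image $B^1(\Gamma;\mathfrak{g})$ lies inside $C^1_{\operatorname{par}}$ (this is the content of the identity $(z_i-1)X_0\in (z_i-1)\mathfrak{g}$). Once these are in place, every other step is a direct analogue of the non-parabolic Lemma \ref{tangent1}.
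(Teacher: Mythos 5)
Your proposal is correct and follows essentially the same route as the paper: the paper likewise works with a quotient of a geometric resolution that has boundary basepoints and connecting paths adjoined (its resolution $\widetilde{\mathbf{G}}'_*$), arrives at exactly the complex $\mathfrak{g}\to\mathfrak{g}^{2g}\times\mathfrak{t}_1\times\cdots\times\mathfrak{t}_c\times\prod_{i}\operatorname{im}(1-\Ad_{\rho(z_i)})\to\mathfrak{g}$, and identifies the boundary factors with the tangent spaces to the conjugacy orbits $B_i$ by the same curve computation $g_t=h_tg_0h_t^{-1}$. The one caveat is that your ``trivial bookkeeping enlargement'' must adjoin the basepoints $p_i$ in degree $0$ as well as the paths $w_i$ in degree $1$ (otherwise $\bigoplus_i\R\Gamma\otimes_{\R P_i}A^i_0$ cannot be a direct summand of $\R\Gamma[p_0]$ when $b>1$), which is precisely what the paper's geometric resolution II supplies, together with the explicit chain homotopy equivalence back to the one-basepoint model.
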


\begin{proof}  We begin with a modified version of the resolution (\ref{georesol2}). Let $Q$ be the submodule of $\widetilde{\mathbf{G}}_2$ generated by $\llbracket \mathbf{r}_1\rrbracket  , \cdots, \llbracket \mathbf{r}_{c}\rrbracket  $. Define
\begin{align*}
\widetilde{\mathbf{G}}_i' & = 0 , \quad i\ge 3\\
\widetilde{\mathbf{G}}_2'&= \R\Gamma [ \mathbf{r}]\\
\widetilde{\mathbf{G}}_1' &= \widetilde{\mathbf{G}}_1/ \partial (Q)\\
\widetilde{\mathbf{G}}_0'&= \widetilde{\mathbf{G}}_0.
\end{align*}

Using the same argument of the proof of Proposition \ref{FP}, we know that $\widetilde{\mathbf{G}}_* ' \to \R$ is a projective resolution of $\R$. 

For each $i$,  we define the $\R Q_i$-module $A_* ^i$ by
\[
A^i _0 = \R Q_i [p_i] , \quad A^i _1 = \R Q_i [ \zeta_i ], \quad \text{and} \quad  A^i _j =0 \text{ for }j\ge 2.
\]
Then $A_*:= \bigoplus_{i=1} ^ b \R \Gamma \otimes A_* ^i$ is a direct summand of the projective resolution $\widetilde{\mathbf{G}}' _*\to \R$. Therefore we have the auxiliary projective resolution $A_* ^i\to \R$ of $\R$.

Observe that
\begin{align*}
\widetilde{\mathbf{G}}'_0/A_0 &\cong \R\Gamma[p_0]\\
\widetilde{\mathbf{G}}'_1/A_1 &\cong \R\Gamma[x_1,y_1, \cdots,x_g,y_g, s_1, \cdots, s_{c},w_1, \cdots, w_b]/\partial(Q)\\
\widetilde{\mathbf{G}}'_i/A_i&\cong \widetilde{\mathbf{G}}'_i, \quad i\ge 2.
\end{align*}
We apply $\Hom_\Gamma(-,\mathfrak{g})$ to the above resolution $\widetilde{\mathbf{G}}'_*/A_*$. It follows that chain complex that computes the relative cohomology $H^1(\Gamma,\per{\Gamma};\mathfrak{g})$ is 
\begin{align*}
\widetilde{C}^0(\Gamma, \per{\Gamma};\mathfrak{g})&= \mathfrak{g}\\
\widetilde{C}^1(\Gamma, \per{\Gamma};\mathfrak{g})&=\mathfrak{g}^{2g}\times \mathfrak{t}_1\times\cdots \times \mathfrak{t}_{c} \times \mathfrak{g}^b \\
\widetilde{C}^2(\Gamma, \per{\Gamma};\mathfrak{g}) &= \mathfrak{g}
\end{align*}
where $\mathfrak{t}_j = \{X\in \mathfrak{g}\,|\, X+ s_j\cdot X + \cdots + s_j ^{r_j-1}\cdot X = 0\}$. 

The exact sequence of complexes $0\to A_* \to \widetilde{\mathbf{G}}'_* \to \widetilde{\mathbf{G}}'_*/A_*\to 0$ leads us to the following commutative diagram
\[
\xymatrix{
\widetilde{C}^0(\Gamma, \per{\Gamma};\mathfrak{g}) \ar[rr]\ar[d]^{\operatorname{inclusion}} && \widetilde{C}^1(\Gamma,\per{\Gamma}; \mathfrak{g}) \ar[rr] \ar[d]^{\operatorname{inclusion}} && \widetilde{C}^2(\Gamma, \per{\Gamma};\mathfrak{g})\ar[d]^{=}\\
\widetilde{C}^0(\Gamma; \mathfrak{g}) \ar[rr] && \widetilde{C}^1(\Gamma; \mathfrak{g}) \ar[rr] && \widetilde{C}^2(\Gamma; \mathfrak{g})
}
\]
where $\widetilde{C}^i(\Gamma;\mathfrak{g}) = \Hom_\Gamma (\widetilde{\mathbf{G}}'_i , \mathfrak{g})$. By the definition of the parabolic cohomology, we have that
\begin{equation}\label{paracomputation}
H^1_{\operatorname{par}}(\Gamma,\per{\Gamma};\mathfrak{g}) = \frac{\ker (\widetilde{C}^1(\Gamma, \per{\Gamma};\mathfrak{g}) \to \widetilde{C}^2(\Gamma, \per{\Gamma};\mathfrak{g}))}{\operatorname{im}(\widetilde{C}^0(\Gamma;\mathfrak{g}) \to \widetilde{C}^1(\Gamma; \mathfrak{g}))\cap \widetilde{C}^1 (\Gamma,\per{\Gamma};\mathfrak{g})}
\end{equation}
where the intersection $\operatorname{im}(\widetilde{C}^0(\Gamma;\mathfrak{g}) \to \widetilde{C}^1(\Gamma; \mathfrak{g}))\cap \widetilde{C}^1 (\Gamma,\per{\Gamma};\mathfrak{g})$ in the denominator takes place in $\widetilde{C}^1(\Gamma;\mathfrak{g})$ after identifying $\widetilde{C}^1 (\Gamma,\per{\Gamma};\mathfrak{g})$ with its image under the inclusion.

Note that we can also identify $\widetilde{C}^0 (\Gamma;\mathfrak{g})$ with $\mathfrak{g}\times \mathfrak{g}^b$. Take the subspace
\[
\mathfrak{g}\times \prod_{i=1} ^b  \ker( \Ad_{\rho(z_i)} -1)
\]
of $\widetilde{C}^0 (\Gamma;\mathfrak{g})=\mathfrak{g}\times \mathfrak{g}^b$. Let  $X=(X_0,X_1,\cdots,X_{b})$ be in this subspace, where $X_0$ is in the first $\mathfrak{g}$ factor and $X_i\in \ker(\operatorname{Ad}_{\rho(z_i)}-1)$. Then, we know that $\delta X \in \widetilde{C}^1(\Gamma,\per{\Gamma}; \mathfrak{g})$ as $\delta X (\llbracket \zeta_i \rrbracket) = \operatorname{Ad}_{\rho(z_i)} ( X_i ) - X_i = 0$. Moreover, every element in the denominator of (\ref{paracomputation}) can be realized in this way. Hence, $H^1 _{\text{par}}(\Gamma,\per{\Gamma};\mathfrak{g})$ can be computed from the following subcomplex
\begin{equation}\label{paraboliccomplex}
\mathfrak{g}\times \prod_{i=1} ^b  \ker( \Ad_{\rho(z_i)} -1) \to \widetilde{C}^1(\Gamma, \per{\Gamma};\mathfrak{g}) \to \widetilde{C}^2(\Gamma,\per{\Gamma};\mathfrak{g})
\end{equation}
where $ \widetilde{C}^1(\Gamma, \per{\Gamma};\mathfrak{g})$ and $\widetilde{C}^2(\Gamma, \per{\Gamma};\mathfrak{g})$ are regarded as subspaces of $ \widetilde{C}^1(\Gamma;\mathfrak{g})$ and $ \widetilde{C}^2(\Gamma;\mathfrak{g})$ respectively. 

We translate the above expression to the bar complex. For this purpose we write down the explicit chain homotopy equivalence between $\Hom_\Gamma (\widetilde{\mathbf{G}}'_*, \mathfrak{g})$ and $\Hom_\Gamma(\mathbf{G}'_* , \mathfrak{g})$. The chain homotopy equivalence $ \Hom_\Gamma(\widetilde{\mathbf{G}}'_*, \mathfrak{g})\to \Hom_\Gamma(\mathbf{G}'_* , \mathfrak{g})$ is induced from the chain map  $f_*:\mathbf{G}'_* \to \widetilde{\mathbf{G}}'_*$ given by
\begin{gather*}
f_i =\operatorname{Id},\quad i\ge 2\\
f_1: \llbracket x_i\rrbracket   \mapsto \llbracket x_i\rrbracket  ,\quad  \llbracket y_i\rrbracket   \mapsto \llbracket y_i\rrbracket   ,\quad \llbracket z_i\rrbracket   \mapsto \llbracket w_i\rrbracket   +\llbracket \zeta_i\rrbracket   - z_i \llbracket w_i\rrbracket   , \quad \llbracket s_i\rrbracket   \mapsto \llbracket s_i\rrbracket  \\
f_0: \llbracket p_0\rrbracket   \mapsto \llbracket p_0\rrbracket  
\end{gather*}
It follows that the subcomplex (\ref{paraboliccomplex}) can be translated into 
\begin{equation}\label{paraboliccomplex2}
C^*_{\operatorname{par}}(\Gamma,\per{\Gamma};\mathfrak{g}): \mathfrak{g} \to \mathfrak{g}^{2g} \times \mathfrak{t}_1\times \cdots \times \mathfrak{t}_{c}\times  \mathfrak{h}_1 \times \cdots \times \mathfrak{h}_b\to \mathfrak{g}
\end{equation}
where $\mathfrak{h}_i = \operatorname{im}(1- \Ad_{\rho(z_i)})$.

In light of Lemma \ref{tangent1}, we know that the tangent space of $\Hom(F^{\#}, G)$ can be identified with the subspace $\mathfrak{g}^{2g} \times \mathfrak{t}_1\times \cdots \times \mathfrak{t}_{c}\times  \mathfrak{g}_1 \times \cdots \times \mathfrak{g}_b$ of $C^1_{\operatorname{par}}(\Gamma,\per{\Gamma};\mathfrak{g})$ by the right translation $R_\rho$. Here $\mathfrak{g}_i = \mathfrak{g}$ for all $i=1,2,\cdots,b$. We now show that each $\mathfrak{h_i}\subset \mathfrak{g}_i$ is the tangent space to the orbit  of the conjugation action $B_i=\{g  \rho(z_i)g^{-1}\,|\,g\in G\}$  through $\rho(z_i)$. For if $g_t$ is an analytic curve in $B_i$ passing through $g_0=\rho(z_i) \in B_i$, then  we can find an analytic curve $h_t$ in $G$ with $h_0 = 1$  such that $g_t = h_t g_0 h_t ^{-1}$. Taking the derivative at $t=0$ yields $(\frac{ d g_t}{dt} |_{t=0} )g_0 ^{-1} = X_i - g_0 X_i g_0 ^{-1}$ where $X_i = \frac{d h_t}{d t}|_{t=0} \in \mathfrak{g}_i$. It follows that $T_{\rho(z_i)} B_i \subset \mathfrak{h}_i$. The reverse inclusion is achieved by setting $h_t = \exp t X_i$. We thus have the identification $R_{\rho*} : C^1_{\operatorname{par}}(\Gamma, \per{\Gamma}; \mathfrak{g}) \to T_\rho \Hom ^{\mathscr{B}}(F^{\#},G)$. 

Finally, it is a straightforward computation to show that the diagram in the statement is commutative. \end{proof}

Before we move on, we introduce the following definition which can be seen as a relative version of Poincare dual groups.

\begin{definition}[see also Bieri-Eckmann \cite{bieri1978}]
A group pair $(\Gamma,\mathcal{S})$ is called $PD^2 _{\R}$-pair if there is a distinguished  class, called a relative fundamental class, $[\Gamma,\mathcal{S}]\in H_2(\Gamma,\mathcal{S}; \R)$ such that
\[
\cap [\Gamma,\mathcal{S}]: H^* (\Gamma, \mathcal{S}; \R) \to H_{2-*} (\Gamma; \R)
\]
and
\[
\cap [\Gamma,\mathcal{S}] : H^* (\Gamma; \R) \to H_{2-*} (\Gamma, \mathcal{S}; \R)
\]
are isomorphisms.\end{definition}

\begin{lemma}\label{p2}
Let $\mathcal{O}$ be a compact orientable 2-orbifold of negative Euler characteristic.  Let $\Gamma= \pi_1 ^{\operatorname{orb}}(\mathcal{O})$.  Then $H^k (\Gamma; \R \Gamma )=0$ for $k\ne 1$ and $H^1 (\Gamma; \R\Gamma)\cong \Delta$ where $\Delta$ is the kernel of the augmentation $\bigoplus_{\langle z \rangle \in \per{\Gamma}}  \R \Gamma\otimes_{\langle z  \rangle }\R   \to \R$.
\end{lemma}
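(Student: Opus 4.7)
The plan is to compute $H^*(\Gamma;\R\Gamma)$ by dualizing the finite projective resolution from Proposition \ref{FP}. Applying $\Hom_\Gamma(-,\R\Gamma)$ to
\[
0 \to \mathbf{G}'_2 \to \mathbf{G}'_1 \to \mathbf{G}'_0 \to \R \to 0
\]
yields a three-term complex
\[
0 \to \R\Gamma \xrightarrow{d^0} \Hom_\Gamma(\mathbf{G}'_1,\R\Gamma) \xrightarrow{d^1} \R\Gamma \to 0,
\]
so $H^k(\Gamma;\R\Gamma)=0$ for $k\ge 3$ is automatic. First I would make the middle term explicit: the splitting of $\mathbf{G}'_1$ established in the proof of Proposition \ref{FP} together with Lemma \ref{zerodivisor} (which identifies the left annihilator of the norm element $N_k=1+s_k+\cdots+s_k^{r_k-1}$ with $(s_k-1)\R\Gamma$) yields
\[
\Hom_\Gamma(\mathbf{G}'_1,\R\Gamma) \cong (\R\Gamma)^{2g+b}\oplus\bigoplus_{k=1}^{c}(s_k-1)\R\Gamma,
\]
with dualized differentials $d^0(a)=((v-1)a)_v$ and $d^1((\psi_v)_v)=\sum_v \tfrac{\partial \mathbf{r}}{\partial v}\psi_v$ using Fox derivatives.

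The vanishings $H^0=H^2=0$ follow quickly. Since $\Gamma$ is infinite, $(v-1)a=0$ for every generator $v$ forces $\gamma a=a$ for all $\gamma\in\Gamma$, hence $a=0$, giving $H^0=0$. For $H^2$, the key observation is that $\tfrac{\partial \mathbf{r}}{\partial z_j} = \bigl(\prod_{i=1}^g[x_i,y_i]\bigr)(z_1\cdots z_{j-1})$ is a group element and hence a unit of $\R\Gamma$; letting $\psi_{z_j}$ range freely while the other components vanish shows $d^1$ is surjective. This step uses the hypothesis $b\ge 1$, which is implicit in the statement since for closed $\mathcal{O}$ the expression $\Delta$ degenerates to $0$ while classically $H^2(\Gamma;\R\Gamma)\cong \R$.

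The heart of the argument is the identification $H^1(\Gamma;\R\Gamma)\cong\Delta$. I would define
\[
\Phi: \ker d^1 \to \bigoplus_{j=1}^b \R\Gamma\otimes_{\langle z_j\rangle}\R,\qquad (\psi_v)_v\mapsto ([\psi_{z_j}])_{j=1}^{b},
\]
and verify $\Phi(\psi)\in\Delta$ by applying the augmentation $\epsilon$ to the cocycle equation $\sum_v\tfrac{\partial \mathbf{r}}{\partial v}\psi_v=0$: using $\epsilon(\tfrac{\partial \mathbf{r}}{\partial z_j})=1$, $\epsilon(\tfrac{\partial \mathbf{r}}{\partial x_i})=\epsilon(\tfrac{\partial \mathbf{r}}{\partial y_i})=0$, together with $\psi_{s_k}\in(s_k-1)\R\Gamma\subset\ker\epsilon$, one obtains $\sum_j\epsilon(\psi_{z_j})=0$. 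Coboundaries $\psi=d^0(a)$ satisfy $\psi_{z_j}=(z_j-1)a$, which vanishes in $\R\Gamma\otimes_{\langle z_j\rangle}\R$, so $\Phi$ descends to $\bar\Phi:H^1(\Gamma;\R\Gamma)\to\Delta$. Surjectivity follows by lifting a given $\delta=([d_j])_j\in\Delta$ to $(\psi_{z_j})=(d_j)$ and solving the remaining cocycle equation, exploiting that the augmentation constraint $\sum_j\epsilon(d_j)=0$ places the required element of $\R\Gamma$ in $\ker\epsilon=\sum_v\R\Gamma\cdot(v-1)$ (the mean-value property). Injectivity is obtained by subtracting suitable coboundaries $d^0(a_j)$ to reduce a cocycle with $\Phi(\psi)=0$ to one with every $\psi_{z_j}=0$, and then showing such a cocycle is itself a coboundary.

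The main obstacle will be this injectivity step, namely showing that a cocycle with all $\psi_{z_j}=0$ must be a coboundary. This requires precise bookkeeping with the Fox derivatives of $\mathbf{r}$ against the remaining generators and their interaction with the annihilator subspaces $(s_k-1)\R\Gamma$; it is essentially parallel to the combinatorial analysis carried out for the exactness at $\mathbf{G}_2$ in the proof of Lemma \ref{georesolproof}.
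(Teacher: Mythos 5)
Your route is genuinely different from the paper's: you dualize the length-two projective resolution of Proposition \ref{FP} and compute the three-term cochain complex by hand, whereas the paper passes to a finite-index torsion-free normal subgroup $N\cong\pi_1(S)$ (a free group, since $S$ has boundary), imports the known identification $H^1(N;\R\Gamma)\cong\bigoplus_i\gamma_i\Delta_N$ for surfaces with boundary, and assembles $H^*(\Gamma;\R\Gamma)$ from the Lyndon--Hochschild--Serre spectral sequence plus a transfer argument for $H^2$. Your vanishing statements are fine (including the correct remark that $b\ge 1$ is implicitly assumed), and the description of $\Hom_\Gamma(\mathbf{G}'_1,\R\Gamma)$ via Lemma \ref{zerodivisor} is correct. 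But the identification $H^1(\Gamma;\R\Gamma)\cong\Delta$, which is the actual content of the lemma, has genuine gaps. Already the well-definedness of $\bar\Phi$ rests on a false identity: for a coboundary, $\psi_{z_j}=(z_j-1)a$ does \emph{not} lie in $\R\Gamma(z_j-1)$, which is what vanishing in $\R\Gamma\otimes_{\langle z_j\rangle}\R\cong\R\Gamma/\R\Gamma(z_j-1)$ requires; its image in $\R[\Gamma/\langle z_j\rangle]$ is a sum of terms $[z_jg\langle z_j\rangle]-[g\langle z_j\rangle]$, nonzero whenever some $g$ in the support of $a$ fails to normalize $\langle z_j\rangle$. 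The correct receptacle is $\R\Gamma/(z_j-1)\R\Gamma\cong H^1(\langle z_j\rangle;\R\Gamma)$, and matching this right module with the left module $\Delta$ of the statement needs the involution $g\mapsto g^{-1}$. This is repairable, but it must be repaired.

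The serious gap is injectivity. You cannot ``subtract suitable coboundaries $d^0(a_j)$'' to arrange $\psi_{z_j}=0$ for every $j$: a coboundary is $d^0(a)$ for a \emph{single} $a$ and moves every component at once, and since left multiplication by $z_j-1$ is injective on $\R\Gamma$ ($z_j$ has infinite order), normalizing $\psi_{z_1}$ already pins $a$ down, leaving no freedom for the other boundary components. Moreover, even granting the reduction, the residual claim ``a cocycle with all $\psi_{z_j}=0$ is a coboundary'' is equivalent to ``such a cocycle is zero'' (a coboundary $d^0(a)$ with $(z_j-1)a=0$ forces $a=0$ when $b\ge 1$); the kernel of $\bar\Phi$ is exactly the image of $H^1(\Gamma,\partial\Gamma;\R\Gamma)$ in $H^1(\Gamma;\R\Gamma)$, so what you are asserting is $H^1(\Gamma,\partial\Gamma;\R\Gamma)=0$ --- one half of the $PD^2_{\R}$-pair property this lemma exists to establish. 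That is not ``parallel to the exactness at $\mathbf{G}_2$'' (a statement about left annihilators in the chain complex); it is the analytic core of the lemma and needs either a genuine normal-form argument in $\R\Gamma$ or, as the paper does, a reduction to the surface case. The surjectivity step has a smaller hole of the same kind: $\epsilon\bigl(\sum_j\tfrac{\partial\mathbf{r}}{\partial z_j}d_j\bigr)=0$ is necessary but not obviously sufficient, since one must verify that $\ker\epsilon$ is contained in the sum of the right ideals $\tfrac{\partial\mathbf{r}}{\partial v}\R\Gamma$ over non-boundary generators (with the $(s_k-1)\R\Gamma$ constraint) together with $\sum_j\tfrac{\partial\mathbf{r}}{\partial z_j}(z_j-1)\R\Gamma$.
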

\begin{proof}
Let $C_i\subset \partial \mathcal{O}$ be the boundary component corresponding to the generator $z_i\in \Gamma$. Take a finite index torsion-free normal subgroup $N$ in $\Gamma$. $N$ is necessarily the fundamental group of a compact surface with boundary, say $S$ and we have a finite covering map $p:S\to \mathcal{O}$ with the deck group  $D:= \Gamma/N=\{\gamma_1, \cdots, \gamma_d\}$. Since there is no singular points on the boundary of $\mathcal{O}$, we know that $D$ acts simply transitively on the set of connected components of $p^{-1}(C_i)$. Hence $N$ admits a presentation
 \begin{multline*}
 N=\langle x_1',y_1', \cdots, x_{g'}', y_{g'}',\\
 \gamma_1 z_1 \gamma_1 ^{-1}, \cdots,\gamma_1 z_b\gamma_1^{-1}, \gamma_2 z_1 \gamma_2 ^{-1}, \cdots,\gamma_2 z_b\gamma_2^{-1},\\
 \cdots,\gamma_d z_1 \gamma_d ^{-1}, \cdots,\gamma_d z_b\gamma_d ^{-1}\,|\,\prod_i [x_i',y_i'] \prod_{i,j} \gamma_j z_i \gamma_j ^{-1}=1\rangle. 
 \end{multline*}
Let $\per{N}=\{\langle \gamma_j z_i \gamma_j ^{-1} \rangle \}_{i=1,2,\cdots,b} ^{ j=1,2,\cdots,d}$.  Then, by  Theorem 6.3 of Bieri-Eckmann \cite{bieri1978},  we know that $(N,\per{N})$ is a $PD^2 _{\Z}$-pair. Let $\Delta_N$ be the kernel of the augmentation map $\bigoplus_{\langle z \rangle\in\per{N}} \R N \otimes_{\langle z \rangle } \R \to \R$. 

Since $N$ is the fundamental group of the compact surface $S$ with boundary, \cite[Theorem 6.3]{bieri1978} shows that $H^q(N;\R\Gamma)=0$ for $q\ne  1$ and $H^1(N;\R N) = \Delta_N$. We also have the Lyndon-Hochschild-Serre spectral sequence whose second page is 
\[
E_2 ^{p,q} = H^p(\Gamma/N; H^q(N;\R\Gamma)). 
\] 
Recall that this spectral sequence converges to $H^{p+q}(\Gamma; \R\Gamma)$. Consequently, we know that 
\[
H^*(\Gamma;\R\Gamma)=H^0(\Gamma/N; H^*(N;\R\Gamma))=H^*(N; \R\Gamma)^{D},
\]where $H^*(N; \R\Gamma)^{D}$ is the space of fixed points under the action of the Deck transformation group $D$.

Observe that as an $N$-module, $\R\Gamma$ can be decomposed as $\bigoplus_{\gamma\in D}\R [N \gamma]$ where $N\gamma=\{n \gamma\,|\, n\in N\}$. This gives rise to the decomposition 
\[
H^*(N; \R \Gamma) = \bigoplus_{\gamma\in D}H^*(N;\R N) \gamma.
\]
Hence, we have that $H^q(\Gamma;\R\Gamma) =0$ for all $q\ne 1$ and $H^1(\Gamma;\R\Gamma)=(\bigoplus_{\gamma\in D} \Delta_N \gamma)^D$. Because the action of $D=\Gamma/N$ on  $\bigoplus_{\gamma\in D}   \Delta_N \gamma$  is the left multiplication, the set of the fixed points is isomorphic (as abelian groups) to $\Delta_N$. There is a further isomorphism $\Delta_N \to \Delta$ given by sending $x\otimes r \in \R N \otimes_{\langle \gamma_i z_j \gamma_i ^{-1}\rangle } \R$ to $x \gamma_i \otimes   r\in  \R \Gamma \otimes_{\langle z_j\rangle } \R$. This proves that $H^1(\Gamma;\R\Gamma) \cong \Delta$. 
\end{proof}

\begin{proposition}\label{product}
Let $\mathcal{O}$ be a compact orientable 2-orbifold of negative Euler characteristic and $\Gamma= \pi_1 ^{\operatorname{orb}}(\mathcal{O})$.  Then $(\Gamma, \per{\Gamma})$ is a $PD^2 _{\R}$-pair. 
\end{proposition}
\begin{proof}
Due to Proposition \ref{FP}, Lemma \ref{p2} and Theorem 6.2 of \cite{bieri1978}, $(\Gamma,\per{\Gamma})$ is a $PD^2_\R$-pair. 
\end{proof}

Recall that we also have the cup product
\[
\cup: H^1(\Gamma, \per{\Gamma};\mathfrak{g})\otimes H^1(\Gamma;\mathfrak{g}) \to H^2 (\Gamma, \per{\Gamma};\mathfrak{g}\otimes\mathfrak{g})\overset{\Tr}{\to} H^2(\Gamma, \per{\Gamma};\R),
\]
where $\Tr:\mathfrak{g} \otimes \mathfrak{g} \to \R$ denotes the trace pairing, $X\otimes Y \mapsto \Tr (XY)$, acting on the coefficients. Composing this cup product with the cap product $\cap [\mathcal{O},\partial\mathcal{O}]$, we get the pairing 
\[
\omega_{PD}: H^1(\Gamma, \per{\Gamma};\mathfrak{g})\otimes H^1(\Gamma;\mathfrak{g}) \overset{\cup}{\to}H^2(\Gamma,\per{\Gamma};\R) \overset{\cap[\mathcal{O},\partial\mathcal{O}]}{\to} H_0(\Gamma;\R)=\R.
\]

We now present the explicit form of $\omega_{PD}$. The groupoid bar resolution is the best choice for this purpose. 
\begin{notation}
We adopt the following convention: $\llbracket a\pm b | c\rrbracket = \llbracket a  | c \rrbracket \pm \llbracket b | c \rrbracket$ for all $a,b,c\in F \setminus \{1\}$. For instance,
\[
\left\llbracket\left. \frac{\partial [x,y]}{\partial y} \right| y\right\rrbracket = \llbracket\left. x-xyx^{-1}y^{-1} \right| y\rrbracket = \llbracket\left. x\right| y\rrbracket -\llbracket xyx^{-1}y^{-1}|y \rrbracket. 
\]
\end{notation}

\begin{lemma}\label{fundamentalclass}
Let
\begin{equation}\label{prefundamentalcycle}
\mathfrak{c}= \sum_{i=1} ^g\left( \left\llbracket \left. \frac{\partial \mathbf{r}}{\partial x_i }\right| x_i \right\rrbracket +   \left\llbracket\left. \frac{\partial \mathbf{r}}{\partial y_i }\right| y_i \right\rrbracket \right) +  \sum_{i=1} ^b \left\llbracket\left. \frac{\partial \mathbf{r}}{\partial z_i }\right| z_i \right\rrbracket +\sum_{i=1} ^{c} \left\llbracket\left. \frac{\partial \mathbf{r}}{\partial s_i} \right| s_i \right\rrbracket -\sum_{i=1} ^{c} \frac{1}{r_i}\left\llbracket\left. \frac{\partial \mathbf{r}_i}{\partial s_i}\right|s_i\right\rrbracket
\end{equation}
be an element of $ \mathbf{B}_2(\Gamma)\otimes \R$. Here, we understand $\frac{\partial \mathbf{r}}{\partial x_i}$ and any other terms involving the Fox differential as elements of $\R\Gamma$.
Then
\[
\widetilde{\mathfrak{c}} := \operatorname{ext} ( \mathfrak{c}) - \sum_{i=1} ^b ( \llbracket w_i ^{-1} | w_i \zeta_i \rrbracket - \llbracket w_i \zeta_i | w_i^{-1} \rrbracket) \in (\widetilde{\mathbf{B}}_2(\Gamma)/A_2)\otimes \R
\]
represents the relative fundamental class $[\mathcal{O},\partial \mathcal{O}]\in H_2(\Gamma, \per{\Gamma};\R)$.
\end{lemma}

\begin{proof}
We use the auxiliary resolution $(\widetilde{\mathbf{G}}_*, A_*)$ and claim that the element
\begin{equation}\label{relfun}
\llbracket \mathbf{r}\rrbracket-\sum_{i=1} ^{c} \frac{1}{r_i}\llbracket \mathbf{r}_i \rrbracket \in  (\widetilde{\mathbf{G}}_2 /A_2)\otimes \R
\end{equation}
represents the relative fundamental class.  Since we know that $(\Gamma,\per{\Gamma})$ is a $PD_{\R}^2$-pair, it is enough to show that (\ref{relfun}) represents a non-trivial element of $H_2(\Gamma,\per{\Gamma};\R)$. For this, we compute
\begin{align*}
\partial\left(\llbracket \mathbf{r}\rrbracket-\sum_{i=1} ^c \frac{1}{r_i}\llbracket \mathbf{r}_i \rrbracket \right) &= \sum_{i=1} ^ b \llbracket \zeta_i \rrbracket + \sum_{i=1}^c \llbracket s_i \rrbracket -\sum_{i=1} ^c \frac{1}{r_i} \partial ( \llbracket \mathbf{r}_i \rrbracket )\\
&=\sum_{i=1} ^ b \llbracket \zeta_i \rrbracket +\sum_{i=1} ^c \llbracket s_i \rrbracket - \sum_{i=1} ^c \frac{1}{r_i} \epsilon (\pi (1+s_i + \cdots + s_i ^{r_i-1}))\llbracket s_i \rrbracket\\
&= \sum_{i=1} ^b \llbracket \zeta_i \rrbracket.
\end{align*}
Hence, $\partial\left(\llbracket \mathbf{r}\rrbracket-\sum_{i=1} ^c \frac{1}{r_i}\llbracket \mathbf{r}_i \rrbracket \right)\in A_1$. Here, we use the fact that $a\otimes b = 1 \otimes \epsilon (a) b$ in $\R\Gamma \otimes \R$. Since, $\partial(\widetilde{\mathbf{G}}_3)$ does not contain the $\llbracket \mathbf{r} \rrbracket$ component, we know that (\ref{relfun}) is nonzero in $H_2(\Gamma,\per{\Gamma};\R)$.

Now we explicitly write down the chain homotopy equivalence $f_*$ from $\widetilde{\mathbf{G}}_*$ to $\widetilde{\mathbf{B}}_*(\Gamma)$. The chain map $f_*$ sends the generator $\llbracket p_i\rrbracket \in \widetilde{\mathbf{G}}_0$ to $\llbracket p_i\rrbracket\in\widetilde{\mathbf{B}}_0(\Gamma)$ and acts similarly in degree 1. In degree 2, we define $f_*$ by
\begin{align*}
\llbracket \mathbf{r}\rrbracket\mapsto&  \sum_{i=1} ^g \operatorname{ext} \left( \left\llbracket \left. \frac{\partial \mathbf{r}}{\partial x_i }\right | x_i \right\rrbracket +   \left\llbracket\left. \frac{\partial \mathbf{r}}{\partial y_i }\right| y_i \right\rrbracket\right) \\ 
&\qquad+  \sum_{i=1} ^b\left( \left\llbracket\left. \frac{\partial \mathbf{r}}{\partial z_i } w_i \right| \zeta_i \right\rrbracket - \left\llbracket \left. \frac{\partial \mathbf{r}}{\partial z_i} w_i \right| w_i^{-1} \right\rrbracket+\left\llbracket\left. \frac{\partial \mathbf{r}}{\partial z_i } w_i \zeta_i\right|w_i^{-1} \right\rrbracket \right)+\sum_{i=1} ^{c} \left\llbracket\left.\frac{\partial \mathbf{r}}{\partial s_i} \right| s_i \right\rrbracket\\
\left\llbracket \mathbf{r}_i\right\rrbracket \mapsto& \operatorname{ext}\left(\left\llbracket\left. \frac{\partial \mathbf{r}_i}{\partial s_i} \right| s_i\right\rrbracket\right).
\end{align*}
Since both $\widetilde{\mathbf{G}}_*$ and $\widetilde{\mathbf{B}}_*$ are projective resolutions, the chain map $f_*$ inductively extends to all higher degrees. Our chain map $f_*$ takes the element (\ref{relfun}) to 
\begin{multline*}
\mathfrak{c}_0:=\sum_{i=1} ^g\left( \left\llbracket\left. \frac{\partial \mathbf{r}}{\partial x_i }\right| x_i \right\rrbracket +   \left\llbracket\left. \frac{\partial \mathbf{r}}{\partial y_i }\right| y_i \right\rrbracket \right) \\ 
+  \sum_{i=1} ^b\left( \left\llbracket\left. \frac{\partial \mathbf{r}}{\partial z_i } w_i \right| \zeta_i \right\rrbracket - \left\llbracket \left. \frac{\partial \mathbf{r}}{\partial z_i} w_i \right| w_i^{-1} \right\rrbracket+\left\llbracket\left. \frac{\partial \mathbf{r}}{\partial z_i } w_i \zeta_i\right|w_i^{-1} \right\rrbracket\right)+\sum_{i=1} ^{c} \left\llbracket\left.\frac{\partial \mathbf{r}}{\partial s_i} \right| s_i \right\rrbracket\\ 
-\sum_{i=1} ^{c} \frac{1}{r_i} \left\llbracket\left. \frac{\partial \mathbf{r}_i}{\partial s_i} \right| s_i\right\rrbracket.
\end{multline*}
By direct computation, we know that $\mathfrak{c}_0$ and $\widetilde{\mathfrak{c}}$ are homologues. Indeed $\mathfrak{c}_0 - \widetilde{\mathfrak{c}}$ is the boundary of the 3-chain
\[ 
\sum_{i=1} ^b \left(\left\llbracket\left.\left. \frac{\partial \mathbf{r} }{\partial z_i}w_i \right| w_i^{-1}\right| w_i \zeta_i \right\rrbracket - \left\llbracket \left. \left. \frac{\partial \mathbf{r}}{\partial z_i } \right| w_i \zeta_i \right| w_i ^{-1}\right\rrbracket \right)\in \widetilde{\mathbf{B}}_3\otimes \R.
\]
Hence the lemma follows. 
\end{proof}

Recall that we have the auxiliary resolution $(\widetilde{\mathbf{B}}_*(\Gamma), A_*)$ for the group pair $(\Gamma, \per{\Gamma})$. For the description of the peripheral part $A_*$, see the proof of Proposition \ref{paraboliccocycle}.

\begin{lemma}\label{diagonalapprox}
Let $\mathcal{O}$ be a compact orientable 2-orbifold of negative Euler characteristic, and $\Gamma= \pi_1 ^{\operatorname{orb}}(\mathcal{O})$. Let $(\widetilde{\mathbf{B}}_*(\Gamma), A_*)$ be the auxiliary resolution for a group pair $(\pi_1 ^{\operatorname{orb}}(\mathcal{O}), \per{\pi_1 ^{\operatorname{orb}}(\mathcal{O})})$. Let  $u\in \Hom_\Gamma (\widetilde{\mathbf{B}}_1(\Gamma)/A_1, \mathfrak{g})$ and $v\in  \Hom_\Gamma (\widetilde{\mathbf{B}}_1(\Gamma), \mathfrak{g})$. Then for any $\llbracket x | y\rrbracket\in  \widetilde{ \mathbf{B}}_2(\Gamma)\otimes \R$, we have
\[
\langle u\cup v,\llbracket x_1|x_2\rrbracket \rangle = \Tr( u(\llbracket x_1\rrbracket) \operatorname{ret}(x_1)\cdot v(\llbracket x_2\rrbracket )).
\]
\end{lemma}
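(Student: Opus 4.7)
The plan is to introduce an explicit Alexander--Whitney-type diagonal approximation on the groupoid bar resolution $\widetilde{\mathbf{B}}_*(\Gamma)$ and read off the cup product formula from it. Define a $\R\Gamma$-linear chain map
\[
\Delta:\widetilde{\mathbf{B}}_*(\Gamma)\longrightarrow \widetilde{\mathbf{B}}_*(\Gamma)\otimes_{\R\Gamma} \widetilde{\mathbf{B}}_*(\Gamma)
\]
by
\[
\Delta(\llbracket g_1|\cdots|g_n\rrbracket) = \sum_{i=0}^n \llbracket g_1|\cdots|g_i\rrbracket \otimes \operatorname{ret}(g_1\cdots g_i)\llbracket g_{i+1}|\cdots|g_n\rrbracket,
\]
with the convention that the $i=0$ and $i=n$ terms contain the basepoint $0$-chains $\llbracket o(g_1)\rrbracket$ and $\llbracket t(g_n)\rrbracket$ respectively. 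The $\operatorname{ret}$ factor enforces the correct $\R\Gamma$-module structure on the right: the tail $\llbracket g_{i+1}|\cdots|g_n\rrbracket$ is rooted at $t(g_i)$, and $\operatorname{ret}(g_1\cdots g_i)$ translates it back so that the tensor product is a genuine element of $\widetilde{\mathbf{B}}_i(\Gamma)\otimes_{\R\Gamma}\widetilde{\mathbf{B}}_{n-i}(\Gamma)$.

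Next I would verify that $\Delta$ is a chain map covering the identity on $\R$. This is a bookkeeping calculation paralleling the classical proof for the standard bar resolution, the single new ingredient being that the first term of the groupoid differential reads $\operatorname{ret}(g_1)\llbracket g_2|\cdots|g_n\rrbracket$. The identity $\operatorname{ret}(gh)=\operatorname{ret}(g)\operatorname{ret}(h)$ whenever $gh$ is defined in $\widetilde{\Gamma}$, together with the fact that $\operatorname{ret}$ restricts to the identity on $\Gamma$, makes all cancellations work exactly as in the classical setting. Specializing to degree two gives
\[
\Delta(\llbracket x_1|x_2\rrbracket) = \llbracket o(x_1)\rrbracket\otimes\llbracket x_1|x_2\rrbracket + \llbracket x_1\rrbracket\otimes \operatorname{ret}(x_1)\llbracket x_2\rrbracket + \llbracket x_1|x_2\rrbracket \otimes \operatorname{ret}(x_1x_2)\llbracket t(x_2)\rrbracket.
\]
The cup product of the 1-cochains $u,v$, coupled with the $\operatorname{Ad}$-invariant trace pairing on $\mathfrak{g}$, extracts only the middle term; using the $\Gamma$-equivariance of $v$ to pull $\operatorname{ret}(x_1)$ outside yields
\[
\langle u\cup v,\llbracket x_1|x_2\rrbracket\rangle = \Tr\bigl(u(\llbracket x_1\rrbracket)\cdot \operatorname{ret}(x_1)\cdot v(\llbracket x_2\rrbracket)\bigr),
\]
which is the asserted formula.

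Finally, the hypothesis $u\in \Hom_\Gamma(\widetilde{\mathbf{B}}_1(\Gamma)/A_1,\mathfrak{g})$ is naturally compatible with this diagonal: in the only surviving term the $u$-factor is evaluated on $\llbracket x_1\rrbracket$, so if $\llbracket x_1\rrbracket\in A_1$ the whole expression vanishes. Thus $\Delta$ descends to $\widetilde{\mathbf{B}}_*(\Gamma)/A_* \to (\widetilde{\mathbf{B}}_*(\Gamma)/A_*)\otimes_{\R\Gamma} \widetilde{\mathbf{B}}_*(\Gamma)$ and realizes the relative cup product $H^1(\Gamma,\partial\Gamma;\mathfrak{g})\otimes H^1(\Gamma;\mathfrak{g})\to H^2(\Gamma,\partial\Gamma;\R)$ used to build $\omega_{PD}$. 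The only real obstacle, minor but requiring care, is the chain-map verification for $\Delta$ in the groupoid setting; once that is in place the formula follows immediately from unpacking the middle term.
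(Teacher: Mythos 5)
Your proposal is correct and is essentially the paper's argument: the paper disposes of this lemma by invoking the Alexander--Whitney diagonal approximation, and you have simply written out that diagonal explicitly on the groupoid bar resolution (with $\operatorname{ret}$ supplying the $\R\Gamma$-module translation) and read off the bidegree $(1,1)$ term. The remaining chain-map check and the observation that $\Delta$ preserves the peripheral summand $A_*$ (so the product lands in relative cochains) are exactly the routine verifications the paper's one-line proof leaves implicit.
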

\begin{proof}
This is the Alexander-Whitney diagonal approximation theorem. 
\end{proof}

\begin{lemma}\label{relll}
Suppose that $u$ is either in $Z^1(\Gamma, \per{\Gamma};\mathfrak{g})\subset \Hom_\Gamma(\widetilde{\mathbf{B}}_1(\Gamma)/A_1, \mathfrak{g})$ or in $Z^1_{\operatorname{par}}(\Gamma, \per{\Gamma};\mathfrak{g})\subset \Hom_{\Gamma} (\mathbf{B}_1(\Gamma),\mathfrak{g})$. Then for each $s_i$, there exists $T_i\in  \mathfrak{g}$ such that $u(\llbracket s_i\rrbracket) = s_i\cdot  T_i-T_i$. Moreover if $T_i'\in \mathfrak{g}$ are other choices with the same property, $u(\llbracket s_i \rrbracket) = s_i \cdot T_i ' - T_i'$, then we have $\Tr(T_i v(\llbracket s_i\rrbracket)) = \Tr(T_i ' v(\llbracket s_i\rrbracket))$ for any $v\in Z^1 _{\operatorname{par}}(\Gamma,\per{\Gamma};\mathfrak{g})$. 
\end{lemma}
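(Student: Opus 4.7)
The plan is to reduce both assertions to two standard facts: the vanishing of group cohomology of a finite cyclic group over $\R$, and the $\Ad$-invariance of the trace form on $\mathfrak{g}$.

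First I would establish that $u(\llbracket s_i\rrbracket)$ lies in $\mathfrak{t}_i=\ker(1+s_i+\cdots+s_i^{r_i-1})$ in either description. In the $\widetilde{\mathbf{B}}_1(\Gamma)/A_1$ picture, this is forced because transporting via the chain-homotopy equivalence to the geometric model $\widetilde{\mathbf{G}}'_1=\widetilde{\mathbf{G}}_1/\partial(Q)$ one sees that $\partial\llbracket\mathbf{r}_i\rrbracket=(1+s_i+\cdots+s_i^{r_i-1})\llbracket s_i\rrbracket$ has been killed in the quotient, and $\R\Gamma$-equivariance of $u$ gives the desired relation. In the bar picture, the cocycle identity applied to $\llbracket s_i\,|\,s_i^{r_i-1}\rrbracket$ (together with the convention $u(\llbracket 1\rrbracket)=0$) yields the same conclusion, which can also be phrased as the crossed-homomorphism identity $0=u(s_i^{r_i})=(1+s_i+\cdots+s_i^{r_i-1})u(\llbracket s_i\rrbracket)$.

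Once this is in place, restricting $u$ to $\langle s_i\rangle$ gives a $1$-cocycle for the finite cyclic group $\langle s_i\rangle$ valued in the $\R$-vector space $\mathfrak{g}$. Since any finite group has trivial positive cohomology with coefficients in an $\R$-vector space (proved by averaging), this cocycle is a coboundary, producing $T_i\in\mathfrak{g}$ with $u(\llbracket s_i\rrbracket)=s_i\cdot T_i-T_i$. One may take the explicit choice $T_i=\frac{1}{r_i}\sum_{k=0}^{r_i-1} k\, s_i^k\cdot u(\llbracket s_i\rrbracket)$.

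For the independence claim, any two solutions $T_i,T_i'$ differ by an $\Ad_{s_i}$-fixed element, so $s_i\cdot(T_i-T_i')=T_i-T_i'$. Applying the existence half to $v$, write $v(\llbracket s_i\rrbracket)=s_i\cdot S_i-S_i$ for some $S_i\in\mathfrak{g}$. Cyclicity of $\Tr$ and the $\Ad_{s_i}$-invariance of $T_i-T_i'$ then give
\[
\Tr\bigl((T_i-T_i')\,(s_i\cdot S_i)\bigr)=\Tr\bigl(s_i^{-1}(T_i-T_i')s_i\cdot S_i\bigr)=\Tr\bigl((T_i-T_i')\,S_i\bigr),
\]
from which $\Tr((T_i-T_i')\cdot v(\llbracket s_i\rrbracket))=0$ follows immediately. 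I do not expect any genuinely difficult step; the only bookkeeping to watch is verifying the norm-annihilation uniformly in both resolution models, and I note in passing that the parabolic hypothesis on $v$ is not really used here --- only that $v$ is a cocycle.
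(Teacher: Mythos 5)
Your proof is correct and follows essentially the same route as the paper's: the paper takes $T_i=-\frac{1}{r_i}\sum_{k=1}^{r_i-1}u(\llbracket s_i^k\rrbracket)$ and verifies the identity directly from the cocycle condition, which (after expanding $u(\llbracket s_i^k\rrbracket)$ via the crossed-homomorphism rule and using the norm relation) agrees with your explicit $T_i=\frac{1}{r_i}\sum_k k\,s_i^k\cdot u(\llbracket s_i\rrbracket)$, and its independence argument is the same trace/$\Ad$-invariance computation as yours. Your observation that only the cocycle property of $v$ is needed matches the paper, which likewise only uses $v(\llbracket s_i\rrbracket)=s_i\cdot V_i-V_i$.
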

\begin{proof}
The proof is along the same lines as that of Lemma \ref{zerodivisor}. More precisely, let
\[
T_i = -\frac{1}{r_i} \left( u(\llbracket s_i\rrbracket) + u(\llbracket s_i ^2\rrbracket) + \cdots + u(\llbracket s_i ^{r_i-1}\rrbracket)\right).
\]
Then using the cocycle condition, we have that 
\begin{align*}
r_i (s_i \cdot T_i - T_i )& =  -s_i \cdot u(\llbracket s_i\rrbracket ) -s_i \cdot u(\llbracket s_i ^2\rrbracket )- \cdots - s_i\cdot u(\llbracket s_i ^{r_i-1}\rrbracket ) \\ 
&\qquad  + u(\llbracket s_i\rrbracket) + (u(\llbracket s_i\rrbracket  ) + s_i \cdot u(\llbracket s_i\rrbracket ) ) + (u(\llbracket s_i\rrbracket ) + s_i \cdot u(\llbracket s_i^2\rrbracket )) + \cdots\\
&\qquad \quad  \cdots + (u(\llbracket s_i\rrbracket ) + s_i \cdot u(\llbracket s_i ^{r_i-2}\rrbracket ))\\
&= (r_i -1) u(\llbracket s_i\rrbracket ) - s_i \cdot u(\llbracket s_i ^{r_i-1}\rrbracket ) \\
&= (r_i -1) u(\llbracket s_i\rrbracket ) + u(\llbracket s_i\rrbracket ) -u(\llbracket s_i ^{r_i}\rrbracket )\\
&= r_i u(\llbracket s_i\rrbracket ).
\end{align*}
Therefore the result follows. 

To prove the second statement, we observe that $s_i \cdot(T_i - T_i')=T_i-T_i'$ for each $i$. Let $D_i=T_i - T_i'$. Since $v\in Z^1(\Gamma, \per{\Gamma};\mathfrak{g})$ we can also  choose $V_i\in \mathfrak{g}$  so that $v(\llbracket s_i \rrbracket) = s_i\cdot V_i - V_i$ for each $i$.  Then we have 
\begin{align*}
\Tr(T_i  v(\llbracket s_i\rrbracket)) - \Tr(T_i ' v(\llbracket s_i\rrbracket)) & = \Tr(D_i  v(\llbracket s_i\rrbracket)) \\
&= \Tr (D_i (s_i\cdot V_i -V_i)) \\
&= \Tr(D_i s_i \cdot V_i) - \Tr(D_i V_i) \\
&= \Tr (D_i V_i ) -\Tr(D_i V_i) \\
&=0
\end{align*}
as we desired. 
\end{proof}

The following proposition and its proof is a slight variation of that of Guruprasad-Huebschmann-Jeffrey-Weinstein \cite{guruprasad1997}. 

To avoid cumbersome notation, we sometimes write $u(x)$ rather than $u(\llbracket x \rrbracket)$.  We also use the map $\overline{(\cdot)}:\mathbf{B}_1(\Gamma) \to \mathbf{B}_1 (\Gamma)$ defined by $\overline{\sum n_i\llbracket g_i\rrbracket }= \sum n_i \llbracket g_i ^{-1}\rrbracket$.

\begin{proposition}\label{explicitpre}
Let $\mathcal{O}$ be a compact oriented 2-orbifold of negative Euler characteristic and $\Gamma=\pi_1 ^{\operatorname{orb}}(\mathcal{O})$. Let $[\rho]\in \Rep_n ^{\mathscr{B}} (\Gamma)$. Let $u\in Z^1(\Gamma, \per{\Gamma}; \mathfrak{g}_\rho)\subset \Hom_\Gamma (\widetilde{\mathbf{B}}_1(\Gamma)/A_1 , \mathfrak{g}_\rho)$ and $v\in \widetilde{Z}^1_{\operatorname{par}}(\Gamma,\per{\Gamma};\mathfrak{g}_\rho)\subset \Hom_\Gamma(\widetilde{\mathbf{B}}_1(\Gamma), \mathfrak{g}_\rho)$. Let $T_i$ be the elements in $\mathfrak{g}$ defined in Lemma \ref{relll}. Then
\begin{align*}
\omega_{PD}([u],[v])&=-\sum_{i=1} ^ g \Tr \left( u' \left( \overline{\frac{\partial \mathbf{r}}{\partial x_i}}\right) v'(x_i) \right)-\sum_{i=1} ^g \Tr \left( u' \left(\overline{ \frac{\partial \mathbf{r}}{\partial y_i}}\right) v'(y_i) \right)\\
&\quad\quad\quad -\sum_{i=1} ^b \Tr \left( u' \left( \overline{\frac{\partial \mathbf{r}}{\partial z_i}}\right) v'(z_i) \right)-\sum_{i=1} ^{c} \Tr \left( u' \left( \overline{\frac{\partial \mathbf{r}}{\partial s_i}}\right) v'(s_i) \right)\\
&\qquad\qquad\qquad\qquad-\sum_{i=1} ^{c} \Tr  (T_i  v'(s_i)) -\sum_{i=1} ^b  \Tr (X_i v'(z_i)).
\end{align*}
where $u' = u \circ \operatorname{ext}$, $v'= v\circ \operatorname{ext}$,  and $X_i =- u(\llbracket w_i\rrbracket )$. 
\end{proposition}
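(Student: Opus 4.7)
The plan is to evaluate $\omega_{PD}([u],[v]) = \langle u \cup v,\,[\mathcal{O},\partial\mathcal{O}]\rangle$ directly, representing the relative fundamental class by the explicit 2-cycle $\widetilde{\mathfrak{c}}$ produced in Lemma \ref{fundamentalclass} and applying the Alexander--Whitney formula of Lemma \ref{diagonalapprox} to each summand. The decomposition $\widetilde{\mathfrak{c}} = \operatorname{ext}(\mathfrak{c}) - \sum_i \bigl(\llbracket w_i^{-1}|w_i\zeta_i\rrbracket - \llbracket w_i\zeta_i|w_i^{-1}\rrbracket\bigr)$ naturally partitions the computation into three parts: the Fox-derivative pieces coming from $\mathbf{r}$, the torsion correction coming from the relators $\mathbf{r}_i$, and the peripheral correction.

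For each Fox-derivative summand $\operatorname{ext} \left\llbracket\frac{\partial \mathbf{r}}{\partial v}\,\Big|\,v\right\rrbracket$ (with $v\in\{x_i,y_i,z_i,s_i\}$), write $\frac{\partial \mathbf{r}}{\partial v} = \sum_j n_j g_j$ with $g_j\in\Gamma$. Since $\operatorname{ret}\circ\operatorname{ext}$ is the identity on $\Gamma$, Alexander--Whitney yields $\sum_j n_j \Tr\bigl(u'(g_j)\,g_j\,v'(v)\bigr)$, where $v'(v) = v\bigl(\operatorname{ext}(\llbracket v\rrbracket)\bigr)$; for $v=z_i$ this tacitly uses $\operatorname{ext}(z_i) = w_i\zeta_i w_i^{-1}$. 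The cocycle identity $u'(g^{-1}) = -g^{-1}\cdot u'(g)$ together with $\operatorname{Ad}$-invariance of the trace, $\Tr\bigl((g\cdot X)\,Y\bigr) = \Tr\bigl(X\,(g^{-1}\cdot Y)\bigr)$, rewrites this sum as $-\Tr\bigl(u'(\overline{\partial\mathbf{r}/\partial v})\,v'(v)\bigr)$, which is precisely the corresponding term in the claimed formula.

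The torsion summand $-\frac{1}{r_i}\llbracket\frac{\partial \mathbf{r}_i}{\partial s_i}|s_i\rrbracket$ expands to $-\frac{1}{r_i}\sum_{k=0}^{r_i-1}\llbracket s_i^k|s_i\rrbracket$ and requires the elements $T_i$ of Lemma \ref{relll}: the cocycle identity gives inductively $u(\llbracket s_i^k\rrbracket) = s_i^k\cdot T_i - T_i$. Applying the cocycle identity for $v$ to the relation $s_i^{r_i}=1$ yields the key vanishing $(1 + s_i + \cdots + s_i^{r_i-1})\cdot v'(s_i) = 0$. A direct $\operatorname{Ad}$-invariance computation then collapses $\sum_{k=0}^{r_i-1}\Tr\bigl(u(\llbracket s_i^k\rrbracket)\,s_i^k v'(s_i)\bigr)$ to $r_i\Tr\bigl(T_i\,v'(s_i)\bigr)$, so the torsion contribution becomes $-\Tr(T_i\,v'(s_i))$; independence of the particular $T_i$ chosen is exactly the content of the second statement of Lemma \ref{relll}.

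For the peripheral correction, the relative condition on $u$ and the parabolic condition on $v$ give $u(\llbracket\zeta_i\rrbracket) = v(\llbracket\zeta_i\rrbracket) = 0$. The cocycle identities (together with $\operatorname{ret}(w_i) = 1$ and $\operatorname{ret}(w_i\zeta_i) = z_i$) then yield $u(\llbracket w_i^{-1}\rrbracket) = X_i$, $u(\llbracket w_i\zeta_i\rrbracket) = -X_i$, $v(\llbracket w_i^{-1}\rrbracket) = -v'(w_i)$, and $v(\llbracket w_i\zeta_i w_i^{-1}\rrbracket) = v'(w_i) - z_i\cdot v'(w_i) = -v'(z_i)$. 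Substituting into Alexander--Whitney, the pair $-\llbracket w_i^{-1}|w_i\zeta_i\rrbracket + \llbracket w_i\zeta_i|w_i^{-1}\rrbracket$ contributes $-\Tr(X_i v'(w_i)) + \Tr(X_i\,z_i\,v'(w_i)) = -\Tr\bigl(X_i\,v'(z_i)\bigr)$. Assembling all three parts gives the stated formula. The main obstacle is sign and convention bookkeeping: propagating the groupoid conventions correctly through Alexander--Whitney, tracking every appearance of $\operatorname{ret}$, and ensuring the auxiliary elements $T_i$ and $X_i$ appear with the correct signs. No single algebraic step is deep.
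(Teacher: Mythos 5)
Your proposal is correct and follows exactly the same route as the paper's proof: decompose the cycle $\widetilde{\mathfrak{c}}$, apply the Alexander--Whitney formula of Lemma \ref{diagonalapprox} termwise, use Lemma \ref{relll} and the vanishing $\sum_k s_i^k\cdot v'(s_i)=0$ for the torsion relators, and evaluate the two groupoid correction terms using the cocycle identities and $\operatorname{ret}(w_i)=1$, $\operatorname{ret}(w_i\zeta_i)=z_i$. One small correction: since $v'(z_i)=v(\llbracket w_i\zeta_iw_i^{-1}\rrbracket)=v(\llbracket w_i\rrbracket)-z_i\cdot v(\llbracket w_i\rrbracket)$ by definition of $v'=v\circ\operatorname{ext}$, your parenthetical identity should read ``$=v'(z_i)$'', not ``$=-v'(z_i)$''; this typo does not propagate, as your subsequent line $-\Tr(X_iv(\llbracket w_i\rrbracket))+\Tr(X_i\,z_i\cdot v(\llbracket w_i\rrbracket))=-\Tr(X_iv'(z_i))$ is computed directly and is correct.
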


\begin{proof}
By the definition of $\omega_{PD}$ and by Lemma \ref{fundamentalclass}, we have that 
\begin{align*}
\omega_{PD} ([u],[v]) & = \langle u \cup v, \widetilde{\mathfrak{c}}\rangle\\
&= \langle u\cup v,\mathfrak{c}'\rangle -\sum_{i=1} ^b \langle u \cup v , \llbracket w_i^{-1}|w_i\zeta_i\rrbracket-\llbracket w_i\zeta_i|w_i ^{-1}\rrbracket \rangle.
\end{align*}
where $\mathfrak{c}' = \operatorname{ext}( \mathfrak{c})$. Then by making use of Lemma \ref{diagonalapprox} we can evaluate
\begin{align*}
\langle u \cup v, \llbracket w_i^{-1} | w_i \zeta_i\rrbracket \rangle &= \Tr (u(\llbracket w_i^{-1}\rrbracket) \operatorname{ret}(w_i^{-1})\cdot v(\llbracket w_i \zeta_i \rrbracket))\\
&=\Tr (X_i v(\llbracket w_i \zeta_i \rrbracket))\\
&=\Tr(X_i v(\llbracket w_i \rrbracket)).
\end{align*}
For the last equality, we use the cocycle condition to get $v(\llbracket w_i \zeta_i \rrbracket)= v(\llbracket w_i\rrbracket)$. 
Likewise, we have that
\begin{align*}
\langle u \cup v, \llbracket w_i\zeta_i | w_i ^{-1}\rrbracket \rangle &=  - \Tr (X_i( \operatorname{ret}(w_i\zeta_i)\cdot v(\llbracket w_i ^{-1}\rrbracket)))\\ 
&= \Tr  (X_i (z_i \cdot  v(\llbracket w_i\rrbracket))).
\end{align*}
Observe that $v'(\llbracket z_i \rrbracket)=v(\llbracket \operatorname{ext} (z_i )\rrbracket) = v(\llbracket w_i \zeta_i w_i^{-1} \rrbracket)=v(\llbracket w_i\rrbracket) - z_i \cdot v(\llbracket w_i \rrbracket)$. Combining all, we get
\begin{align*}
 \sum_{i=1} ^b \langle u \cup v , \llbracket w_i^{-1}|w_i\zeta_i\rrbracket -\llbracket w_i\zeta_i|w_i ^{-1}\rrbracket \rangle &=\sum_{i=1} ^b \left(\Tr(X_i v(\llbracket w_i \rrbracket)) - \Tr (X_i(z_i \cdot v(\llbracket w_i \rrbracket )))\right)\\ &=\sum_{i=1} ^b \Tr(X_i (v(\llbracket w_i \rrbracket ) -z_i \cdot v(\llbracket w_i \rrbracket))) \\
 &= \sum_{i=1} ^b \Tr(X_i v'(\llbracket z_i\rrbracket )). 
\end{align*}

We can also compute $\langle u \cup v , \frac{1}{r_i}\llbracket \frac{\partial \mathbf{r}_i }{\partial s_i} | s_i \rrbracket \rangle$ as follows. First, the inductive application of the cocycle condition, shows that $u(\llbracket s_i ^j \rrbracket )= s_i ^j \cdot T_i - T_i$ for all $j\in \mathbb{Z}$, where $T_i$ is as in Lemma \ref{relll}. Due to Lemma \ref{diagonalapprox}, we know that
\begin{align*}
\left\langle u \cup v, \left.\left\llbracket \frac{\partial \mathbf{r}_i }{\partial s_i }\right| s_i \right\rrbracket \right\rangle &=\sum_{j=1} ^{r_i}\Tr (u(\llbracket s_i ^j \rrbracket ) s_i ^j \cdot v(\llbracket s_i \rrbracket)) \\
&= \sum_{j=1} ^ {r_i}\Tr ((s_i ^j T_i - T_i )s_i ^j\cdot v(\llbracket s_i \rrbracket))\\ 
&=\sum_{j=1} ^ {r_i } \Tr (T_i v(\llbracket s_i \rrbracket ))-\sum_{j=1} ^{r_i}\Tr (s_i ^j \cdot T_i v(\llbracket s_i \rrbracket)) \\ 
&= r_i \Tr (T_i v(\llbracket s_i \rrbracket)) - \Tr((T_i + s_i \cdot T_i + \cdots + s_i ^{r_i-1}\cdot T_i ) v(\llbracket s_i \rrbracket ) ).
\end{align*}
Since $T_i + s_i T_i + \cdots + s_i ^{r_i-1}T_i$ is an invariant element under the action of $s_i$, we argue as in the proof of Lemma \ref{relll} and this yields 
\[
\Tr((T_i + s_i \cdot T_i + \cdots + s_i ^{r_i-1}\cdot T_i ) v(\llbracket s_i \rrbracket )) =0.
\]
Hence, we have
\[
\left\langle u \cup v, \left.\left\llbracket \frac{\partial \mathbf{r}_i }{\partial s_i }\right| s_i \right\rrbracket \right\rangle =  \Tr (T_i v(\llbracket s_i \rrbracket )).
\]

By using Lemma \ref{diagonalapprox} and Lemma \ref{relll} and the identity 
\[
\Tr ( u(\llbracket g\rrbracket ) g\cdot v(\llbracket h \rrbracket)) = -\Tr (u(\llbracket g^{-1}\rrbracket)v(\llbracket h\rrbracket)),
\]
we can expand $\langle u \cup v ,\mathfrak{c}' \rangle=\langle u'\cup v' ,\mathfrak{c}\rangle$
\begin{multline*}
\langle u'\cup v' ,\mathfrak{c} \rangle=-\sum_{i=1} ^ g \Tr \left( u' \left( \overline{\frac{\partial \mathbf{r}}{\partial x_i}}\right) v'(x_i) \right)-\sum_{i=1} ^g \Tr \left( u' \left(\overline{ \frac{\partial \mathbf{r}}{\partial y_i}}\right) v'(y_i) \right)\\
-\sum_{i=1} ^b \Tr \left( u' \left( \overline{\frac{\partial \mathbf{r}}{\partial z_i}}\right) v'(z_i) \right)-\sum_{i=1} ^{c} \Tr \left( u' \left( \overline{\frac{\partial \mathbf{r}}{\partial s_i}}\right) v'(s_i) \right)-\sum_{i=1} ^{c} \Tr ( T_i  v'(s_i)). 
\end{multline*}
This yields the theorem.
\end{proof}

\begin{corollary}\label{welldefinedness}
Let $j:H^1(\Gamma,\per{\Gamma};\mathfrak{g})\to H^1(\Gamma, \mathfrak{g})$ be the map in (\ref{relativeseq}). For $[u] \in \ker j \subset H^1(\Gamma, \per{\Gamma}; \mathfrak{g}) $ and $[v]\in  \operatorname{im} j=H^1_{\operatorname{par}}(\Gamma, \per{\Gamma};\mathfrak{g})\subset H^1(\Gamma;\mathfrak{g})$, we have 
\[
\omega_{PD}([u],[v])=0.
\]
\end{corollary}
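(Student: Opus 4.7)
The plan is to avoid a direct computation via Proposition \ref{explicitpre} and instead to exploit the structural identity relating the cup product and the connecting homomorphisms in the long exact sequence (\ref{relativeseq}). Since $\omega_{PD}$ is defined as the composition of the cup product with evaluation on $[\mathcal{O}, \partial\mathcal{O}]$, it suffices to show $[u] \cup [v] = 0$ in $H^2(\Gamma, \partial\Gamma; \R)$.

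First, I would use the hypotheses to rewrite $[u]$ and $[v]$ conveniently. Exactness of (\ref{relativeseq}) applied to $[u] \in \ker j$ produces a class $Y \in \bigoplus_{i=1}^b H^0(\Gamma_i; \mathfrak{g}) = H^0(\partial\Gamma; \mathfrak{g})$ with $\delta_1(Y) = [u]$, where $\delta_1: H^0(\partial\Gamma; \mathfrak{g}) \to H^1(\Gamma, \partial\Gamma; \mathfrak{g})$ is the connecting homomorphism. Symmetrically, $[v] \in \operatorname{im} j$ together with exactness at $H^1(\Gamma; \mathfrak{g})$ forces $\operatorname{res}[v] = 0$ in $\bigoplus_i H^1(\Gamma_i; \mathfrak{g})$, where $\operatorname{res}$ denotes the restriction map appearing in (\ref{relativeseq}).

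The key step will be the following Leibniz-type identity, coming from the short exact sequence $0 \to A_* \to R_* \to R_*/A_* \to 0$ of resolutions:
\[
\delta_1(Y) \cup [v] \;=\; \delta_2\bigl(Y \cup \operatorname{res}[v]\bigr) \quad \text{in } H^2(\Gamma, \partial\Gamma; \R),
\]
where $\delta_2: H^1(\partial\Gamma; \R) \to H^2(\Gamma, \partial\Gamma; \R)$ is the degree-one connecting homomorphism. To establish it, I would take a cocycle representative $\alpha \in \Hom(A_0, \mathfrak{g})$ of $Y$, lift it to $\tilde\alpha \in \Hom(R_0, \mathfrak{g})$, and pick a cocycle representative $\beta \in \Hom(R_1, \mathfrak{g})$ of $[v]$. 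Then $d\tilde\alpha \in \Hom(R_1, \mathfrak{g})$ vanishes on $A_1$ and represents $\delta_1(Y)$; the cochain $\tilde\alpha \cup \beta \in \Hom(R_1, \R)$ lifts $\alpha \cup (\beta|_{A_1})$; and the cochain Leibniz rule combined with $d\beta = 0$ yields $d(\tilde\alpha \cup \beta) = d\tilde\alpha \cup \beta$ (the sign is trivial since $|Y|=0$). Both sides of the displayed identity are then represented by the same cocycle $d\tilde\alpha \cup \beta$.

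Once the Leibniz identity is in hand, $\operatorname{res}[v] = 0$ kills the right-hand side, so $[u] \cup [v] = \delta_1(Y) \cup [v] = 0$ and hence $\omega_{PD}([u], [v]) = 0$. The main obstacle is the Leibniz identity; the rest is formal. A more computational alternative would substitute $u' = \delta Y$ directly into the formula of Proposition \ref{explicitpre} and reduce the resulting sum using the Fox derivative mean-value identity $\mathbf{r} - 1 = \sum_a \frac{\partial \mathbf{r}}{\partial a}(a - 1)$ together with the cocycle conditions for $v'$ at $\mathbf{r}$ and $\mathbf{r}_i$, but the structural argument above sidesteps this bookkeeping.
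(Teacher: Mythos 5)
Your argument is correct, and it takes a genuinely different route from the paper. The paper proves the corollary by brute force: it writes $u=\delta X$, adjusts $X$ to an $X'$ supported on $\llbracket p_0\rrbracket$ so that $u-\delta X'$ kills every loop based at $p_0$, and then checks term by term that the explicit formula of Proposition \ref{explicitpre} vanishes (the Fox-derivative terms die because the cocycle vanishes on based loops, $T_i$ can be taken to be $0$, and the boundary terms $\Tr(X_iv'(z_i))$ die because $X_i$ is $\Ad_{\rho(z_i)}$-invariant). You instead prove the stronger statement $[u]\cup[v]=0$ in $H^2(\Gamma,\partial\Gamma;\R)$ via the standard compatibility $\delta_1(Y)\cup[v]=\delta_2\bigl(Y\cup\operatorname{res}[v]\bigr)$ of the connecting homomorphisms of $0\to A_*\to R_*\to R_*/A_*\to 0$ with the cup product, together with exactness of (\ref{relativeseq}) at $H^1(\Gamma;\mathfrak{g})$, which forces $\operatorname{res}[v]=0$. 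Your cochain-level verification of the Leibniz identity is sound (the sign is indeed trivial in degree $0$, and $d\tilde\alpha\cup\beta$ does vanish on $A_2$ since $d\tilde\alpha$ vanishes on $A_1$); the only hypothesis you are implicitly using is that the diagonal approximation on $R_*$ restricts to one on the peripheral subcomplex $A_*$, which holds for the (groupoid) bar resolutions the paper actually works with. Your approach buys class-level vanishing independent of any choice of representatives or of the auxiliary elements $T_i$, $X_i$, and it makes transparent that the obstruction is exactly $\operatorname{res}[v]$; what it gives up is the explicit cochain bookkeeping that the paper reuses later (e.g.\ in Theorem \ref{explicit} and Theorem \ref{comp}), which is why the authors route everything through Proposition \ref{explicitpre}. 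Your remark that one could alternatively substitute $u'=\delta Y$ into that formula is essentially the paper's proof.
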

\begin{proof} 
Being $[u]\in \ker j$ means that $u=\delta X$ for some $X\in \Hom_\Gamma (\widetilde{\mathbf{B}}_0(\Gamma), \mathfrak{g})$. Define $X'\in \Hom_\Gamma (\widetilde{\mathbf{B}}_0(\Gamma)/A_0 , \mathfrak{g})$ to be $X'(\llbracket p_0 \rrbracket) = X(\llbracket p_0 \rrbracket)$ and $X'(\llbracket p_i\rrbracket)=0$ for $i=1,2,\cdots, b$. Then $[u-\delta X']=[u]$ in $H^1(\Gamma, \partial \Gamma;\mathfrak{g})$. 

Now we apply Proposition \ref{explicitpre} to conclude. Namely, we show that each term in the formula for $\omega_{PD}$ vanishes. 

Since $[u]= [u-\delta X']$ we can replace $u$ with $u-\delta X'$.  Observe that $(u-\delta X' )(\llbracket \gamma \rrbracket )=0$ whenever $o(\gamma)=t(\gamma)=p_0$.  This means that 
\[
(u-\delta X')\circ \operatorname{ext}\left( \overline{\frac{\partial \mathbf{r}}{\partial x_i}}\right)=(u-\delta X')\circ \operatorname{ext}\left( \overline{\frac{\partial \mathbf{r}}{\partial y_i}}\right)=0,
\] 
for $i=1,2,\cdots, g$, 
\[
(u-\delta X')\circ \operatorname{ext}\left( \overline{\frac{\partial \mathbf{r}}{\partial s_i}}\right)=0
\]
for $i=1,2,\cdots, c$, and
\[
(u-\delta X')\circ \operatorname{ext}\left( \overline{\frac{\partial \mathbf{r}}{\partial z_i}}\right)=0
\]
for $i=1,2,\cdots, b$. Moreover since $(u-\delta X')(\llbracket s_i \rrbracket )=0$, we can set $T_i=0$ for all $i$.  This shows that the first five terms in the formula for $\omega_{PD}$ are zero. 

Recall that in the statement of Proposition \ref{explicitpre} we found $X_i$ as 
\[
-X_i=(u-\delta X' )(\llbracket w_i \rrbracket) =\delta X(\llbracket w_i \rrbracket) - \delta X' (\llbracket w_i \rrbracket) = X(\llbracket p_i \rrbracket).
\]
Since $u=\delta X \in \Hom_\Gamma (\widetilde{\mathbf{B}}_1(\Gamma)/A_1, \mathfrak{g})$ we  have 
\[
0=u(\llbracket \zeta_i \rrbracket)=\delta X (\llbracket \zeta_i \rrbracket ) = z_i \cdot X(\llbracket p_i \rrbracket ) - X(\llbracket p_i \rrbracket)=X_i- z_i \cdot X_i.
\]
This implies that, for each $i$,
\[
\Tr ( X_i v'(z_i)) = \Tr(X_i (v(\llbracket w_i \rrbracket) - z_i \cdot v(\llbracket w_i\rrbracket )))= \Tr (( X_i-z_i ^{-1} \cdot X_i )v(\llbracket w_i \rrbracket)) =0. 
\]

It follows that $\omega_{PD}([u],[v])=\langle (u-\delta X' )\cup v ,\widetilde{\mathfrak{c}}\rangle =0$. 
\end{proof}

Recall that $H^1_{\operatorname{par}}(\Gamma, \per{\Gamma}; \mathfrak{g}) = H^1(\Gamma,\per{\Gamma};\mathfrak{g})/ \ker j$.  In particular, due to Corollary \ref{welldefinedness}, $\omega_{PD}$ gives rise to the well-defined paring 
\[
\omega_{PD} : H^1_{\operatorname{par}}(\Gamma, \per{\Gamma};\mathfrak{g})\otimes H^1_{\operatorname{par}}(\Gamma, \per{\Gamma};\mathfrak{g}) \to \R.
\]
We use the same letter $\omega_{PD}$ to denote the induced pairing. By Lemma \ref{tangent}, and the properties of cup and cap products, it is immediate that $\omega_{PD}$ is a 2-form on $\Rep_n ^{\mathscr{B}}(\Gamma)$ via the right translation. This 2-form on $\Rep_n ^{\mathscr{B}}(\Gamma)$ will also be denoted by $\omega_{PD}$.

\begin{theorem}\label{explicit}
Let $\mathcal{O}$ be a compact oriented 2-orbifold of negative Euler characteristic and $\Gamma=\pi_1 ^{\operatorname{orb}}(\mathcal{O})$. Let $[\rho]\in \Rep_n ^{\mathscr{B}} (\Gamma)$. Let $u,v\in Z^1_{\operatorname{par}}(\Gamma, \per{\Gamma};\mathfrak{g})\subset \Hom_{\Gamma}(\mathbf{B}_1(\Gamma),\mathfrak{g})$ be  parabolic cocycles in terms of the bar resolution.  We choose $X_i, T_j\in \mathfrak{g}$ such that $u(z_i)=z_i\cdot X_i-X_i$ and  $u(s_j)=s_j \cdot T_j - T_j$ for each $i=1,2,\cdots,b$ and $j=1,2,\cdots, c$ respectively. Then  $\omega_{PD}$ can be written explicitly as
\begin{align*}
\omega_{PD}([u],[v])&=\langle u\cup v , \mathfrak{c}\rangle -\sum_{i=1} ^b \Tr(X_i v(z_i))\\
&=-\sum_{i=1} ^ g \Tr \left( u \left( \overline{\frac{\partial \mathbf{r}}{\partial x_i}}\right) v(x_i) \right)-\sum_{i=1} ^g \Tr \left( u \left(\overline{ \frac{\partial \mathbf{r}}{\partial y_i}}\right) v(y_i) \right)\\
&\quad\quad\quad -\sum_{i=1} ^b \Tr \left( u \left( \overline{\frac{\partial \mathbf{r}}{\partial z_i}}\right) v(z_i) \right)-\sum_{i=1} ^{c} \Tr \left( u \left( \overline{\frac{\partial \mathbf{r}}{\partial s_i}}\right) v(s_i) \right)\\
&\qquad\qquad\qquad\qquad-\sum_{i=1} ^{c} \Tr  (T_i  v(s_i)) -\sum_{i=1} ^b  \Tr (X_i v(z_i)).
\end{align*}
\end{theorem}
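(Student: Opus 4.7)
The plan is to translate the formula of Proposition \ref{explicitpre} (stated in terms of the groupoid bar resolution) into the bar-resolution language of the theorem, by first lifting the given bar parabolic cocycles to the groupoid bar resolution.

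Given $u,v\in Z^1_{\operatorname{par}}(\Gamma,\partial\Gamma;\mathfrak{g})$ with $u(\llbracket z_i\rrbracket)=z_i\cdot X_i-X_i$ and $v(\llbracket z_i\rrbracket)=z_i\cdot Y_i-Y_i$, I define $\tilde u,\tilde v\in \widetilde{Z}^1_{\operatorname{par}}(\Gamma,\partial\Gamma;\mathfrak{g})$ by specifying their values on the groupoid generators: let $\tilde u$ agree with $u$ on $\{x_i,y_i,s_i\}$, and set $\tilde u(\llbracket \zeta_i\rrbracket)=0$, $\tilde u(\llbracket w_i\rrbracket)=-X_i$ (analogously for $\tilde v$ using $Y_i$). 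Using the cocycle rule together with $\operatorname{ret}(w_i)=1$ and $\operatorname{ret}(\zeta_i)=z_i$, one checks
\[
\tilde u(\llbracket w_i\zeta_iw_i^{-1}\rrbracket)=\tilde u(\llbracket w_i\rrbracket)+\tilde u(\llbracket \zeta_i\rrbracket)+z_i\cdot\tilde u(\llbracket w_i^{-1}\rrbracket)=-X_i+z_i\cdot X_i=u(\llbracket z_i\rrbracket),
\]
so $\tilde u\circ\operatorname{ext}=u$ (and likewise $\tilde v\circ\operatorname{ext}=v$); this identity also guarantees consistency with the defining relation $\operatorname{ext}(\mathbf{r})=1$ of $\widetilde\Gamma$, so the formulas do define genuine cocycles.

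Now I apply Proposition \ref{explicitpre} to $\tilde u$ and $\tilde v$. Since $u'=\tilde u\circ\operatorname{ext}=u$, $v'=\tilde v\circ\operatorname{ext}=v$, and the element $-\tilde u(\llbracket w_i\rrbracket)$ appearing in the proposition equals the $X_i$ fixed in the theorem, the expanded formula of Proposition \ref{explicitpre} immediately yields the second (expanded) formula of Theorem \ref{explicit}. For the compact form, I observe that by Lemma \ref{diagonalapprox} and the identity $\operatorname{ret}\circ\operatorname{ext}=\operatorname{id}_\Gamma$, one has $\langle\tilde u\cup\tilde v,\operatorname{ext}(\mathfrak{c})\rangle=\langle u\cup v,\mathfrak{c}\rangle$; combining this with the computation of the correction terms $\sum_i\langle\tilde u\cup\tilde v,\llbracket w_i^{-1}|w_i\zeta_i\rrbracket-\llbracket w_i\zeta_i|w_i^{-1}\rrbracket\rangle=\sum_i\Tr(X_iv(z_i))$ carried out in the proof of Proposition \ref{explicitpre}, and with $\widetilde{\mathfrak{c}}=\operatorname{ext}(\mathfrak{c})-\sum_i(\llbracket w_i^{-1}|w_i\zeta_i\rrbracket-\llbracket w_i\zeta_i|w_i^{-1}\rrbracket)$ from Lemma \ref{fundamentalclass}, gives the compact form.

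A brief independence check closes the argument: the element $X_i$ is only determined modulo $\ker(\operatorname{Ad}_{\rho(z_i)}-1)$, but replacing $X_i$ by $X_i+Z_i$ with $\operatorname{Ad}_{\rho(z_i)}Z_i=Z_i$ changes $\Tr(X_iv(z_i))$ by $\Tr(Z_i(z_i\cdot Y_i-Y_i))=0$ by $\operatorname{Ad}$-invariance of $\Tr$, and independence from $T_i$ is already given by Lemma \ref{relll}. The main (and only delicate) point is keeping track of the $\operatorname{ret}$ factors in the Alexander--Whitney formula when moving between the two resolutions, but since $\operatorname{ret}\circ\operatorname{ext}$ is the identity on $\Gamma$, this bookkeeping presents no genuine difficulty.
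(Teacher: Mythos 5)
Your proposal is correct and follows essentially the same route as the paper: the lift you define by its values on generators (agreeing with $u$ on $x_i,y_i,s_i$, sending $\llbracket\zeta_i\rrbracket\mapsto 0$ and $\llbracket w_i\rrbracket\mapsto -X_i$) is exactly the paper's $\widetilde{u}=u\circ\operatorname{ret}-\delta X$, after which both arguments conclude by applying Proposition \ref{explicitpre}. The only (harmless) difference is that the paper's closed-form expression for $\widetilde{u}$ makes the cocycle property automatic, whereas your generator-by-generator definition requires the brief compatibility check you sketch; your added independence-of-$X_i$ verification is a nice supplement not spelled out in the paper's proof.
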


\begin{proof}
Let $X,Y$ be elements of $\widetilde{\mathbf{B}}_0(\Gamma)$ such that 
\[
X(\llbracket p_i\rrbracket )= \begin{cases} X_i & i=1,2,\cdots,b \\ 0 & i=0\end{cases},\qquad 
Y(\llbracket p_i \rrbracket) = \begin{cases} Y_i& i=1,2,\cdots,b \\ 0 & i=0 \end{cases}. 
\]
Let $\widetilde{u}=u\circ \operatorname{ret}-\delta X$ and $\widetilde{v}=v\circ\operatorname{ret}-\delta Y$. Then $\widetilde{u}$ can be seen as a member of $Z^1(\Gamma, \per{\Gamma}; \mathfrak{g})\subset \Hom_\Gamma (\widetilde{\mathbf{B}}_1(\Gamma)/A_1, \mathfrak{g})$. Observe that $\widetilde{u}\circ \operatorname{ext} = u$ and $\widetilde{v}\circ \operatorname{ext} = v$. Now apply Proposition \ref{explicitpre} to get the conclusion. 
\end{proof}

Let $X_{\mathcal{O}}$ be the underlying space of $\mathcal{O}$. Let $\Sigma_{\mathcal{O}}$ be the set of singularities. Recall that we have defined  $X_{\mathcal{O}}':= X_{\mathcal{O}} \setminus \Sigma_{\mathcal{O}}$.  $X_{\mathcal{O}} '$ is a manifold and each puncture of $X_{\mathcal{O}} '$ corresponds to a singularity of $\mathcal{O}$. Hence the fundamental group is given by
\[
\pi_1(X_{\mathcal{O}}')=\langle x_1, y_1, \cdots, x_g, y_g, z_1, \cdots, z_b, s_1, \cdots, s_c\,|\,\prod_{i=1} ^g [x_i,y_i] \prod_{i=1} ^b z_i \prod_{i=1} ^c s_i\rangle. 
\]

\begin{lemma}\label{smooth}Let $\mathcal{O}$ be a compact oriented 2-orbifold of negative Euler characteristic and $\Gamma=\pi_1 ^{\operatorname{orb}}(\mathcal{O})$.  We have a smooth embedding
\[
\mathcal{I}:\Rep^{\mathscr{B}}_n(\Gamma) \to \Rep_n ^{\mathscr{B}'}(\pi_1(X' _{\mathcal{O}}))
\]
for some choice $\mathscr{B}'$ of conjugacy classes of boundary holonomies. 
\end{lemma}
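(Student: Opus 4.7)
The plan is to exploit the natural surjection $\pi:\pi_1(X'_\mathcal{O})\to \Gamma$ obtained by imposing the torsion relations $s_k^{r_k}=1$ on the presentation of $\pi_1(X'_\mathcal{O})$. The two groups share the same generators and the single long relation $\mathbf{r}=1$, so $\pi$ is the identity on generators and is surjective. Pullback yields an injection $\pi^*:\Hom(\Gamma,G)\hookrightarrow \Hom(\pi_1(X'_\mathcal{O}),G)$, $\rho\mapsto \rho\circ\pi$, whose image consists of those $\rho'$ satisfying $\rho'(s_k)^{r_k}=1$ for every $k$.

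To specify $\mathscr{B}'$, I would observe that an element of $\PSL_n(\R)$ whose $r_k$-th power is trivial is semisimple with eigenvalues among $r_k$-th roots of unity, and therefore lies in one of finitely many conjugacy classes. Consequently, on each connected component of $\Rep_n^{\mathscr{B}}(\Gamma)$ the conjugacy class $C_k$ of $\rho(s_k)$ is locally constant, hence constant. Setting $\mathscr{B}':=\mathscr{B}\cup\{C_1,\ldots,C_c\}$ (componentwise if required), the composite $\rho\circ\pi$ has the same image as $\rho$, so irreducibility and triviality of the centralizer are preserved. After quotienting by the conjugation action, $\pi^*$ descends to an injective map $\mathcal{I}:\Rep_n^{\mathscr{B}}(\Gamma)\to \Rep_n^{\mathscr{B}'}(\pi_1(X'_\mathcal{O}))$.

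To show smoothness and injectivity of the differential, I would apply Lemma \ref{tangent} to identify
\[
T_{[\rho]}\Rep_n^{\mathscr{B}}(\Gamma)\cong H^1_{\operatorname{par}}(\Gamma,\partial\Gamma;\mathfrak{g}_\rho),\qquad T_{[\rho\circ\pi]}\Rep_n^{\mathscr{B}'}(\pi_1(X'_\mathcal{O}))\cong H^1_{\operatorname{par}}(\pi_1(X'_\mathcal{O}),\mathcal{S}';\mathfrak{g}_\rho),
\]
where $\mathcal{S}'$ consists of the cyclic subgroups generated by every peripheral element (the $z_j$ and now also the $s_k$). The differential of $\mathcal{I}$ is then pullback on parabolic cohomology. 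By Lemma \ref{relll}, for any cocycle $u$ on $\Gamma$ there exist $T_k\in\mathfrak{g}$ with $u(s_k)=s_k\cdot T_k-T_k$, so $\pi^*u$ is automatically parabolic at $\langle s_k\rangle$. Injectivity on cohomology is then immediate: if $\pi^*u=\delta Y$ for some $Y\in\mathfrak{g}$, surjectivity of $\pi$ forces $u=\delta Y$ already on $\Gamma$.

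The subtle point is verifying that $\Rep_n^{\mathscr{B}'}(\pi_1(X'_\mathcal{O}))$ is itself a smooth manifold near the image, so that Corollary \ref{smoothpoint} can be invoked. When $b+c\ge 1$ the group $\pi_1(X'_\mathcal{O})$ is free, so $H^2(\pi_1(X'_\mathcal{O});\mathfrak{g})=0$ for trivial reasons; the degenerate case $b=c=0$ is the closed surface case where $\pi_1(X'_\mathcal{O})=\Gamma$ and the lemma is tautological. A dimension count using that the $r_k$-torsion locus in $G$ coincides locally with the conjugacy class $C_k$ shows that $\mathcal{I}$ is in fact a local diffeomorphism onto an open submanifold, yielding the desired embedding (and a bit more).
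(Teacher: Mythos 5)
Your proposal is correct and follows essentially the same route as the paper: define $\mathcal{I}$ by precomposition with the surjection $\pi_1(X'_{\mathcal{O}})\to\Gamma$, use Lemma \ref{relll} to see that pulled-back cocycles are parabolic at the $\langle s_k\rangle$, and identify the differential as an isomorphism onto the tangent space $H^1_{\operatorname{par}}$ of a single leaf $\Rep_n^{\mathscr{B}'}(\pi_1(X'_{\mathcal{O}}))$ of the foliation by relative character varieties. Your extra observation that the conjugacy classes of the torsion images $\rho(s_k)$ are locally constant (finitely many classes of elements of bounded order) is a clean way to pin down $\mathscr{B}'$, which the paper instead deduces from the tangent-space computation.
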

\begin{proof}

The map $\mathcal{I}$ is defined by precomposing $\rho\in\Rep^{\mathscr{B}}_n(\pi_1^{\operatorname{orb}}(\mathcal{O}))$ with the projection map $\pi_1(X' _{\mathcal{O}})\to \pi_1 ^{\text{orb}}(\mathcal{O})$.

We know that  $\Rep_n(\pi_1(X'_{\mathcal{O}}))$ is foliated by leaves of the form $\bigcup_{\mathscr{Z}}\Rep^{\mathscr{Z}} _n(\pi_1(X'_{\mathcal{O}}))$.  Lemma \ref{relll} tells us that  the derivative $\der \mathcal{I}$ of $\mathcal{I}$ at each point of $\Rep^{\mathscr{B}}_n(\Gamma)$ maps isomorphically the tangent space $H^1_{\operatorname{par}}(\Gamma,\per{\Gamma};\mathfrak{g}_\rho)$ of $\Rep^{\mathscr{B}}_n(\Gamma)$ to the tangent space $H^1_{\text{par}}(\pi_1(X'_{\mathcal{O}}), \per{\pi_1(X'_{\mathcal{O}})};\mathfrak{g}_{\mathcal{I}(\rho)})$ of a leaf.  Therefore, the image of $\mathcal{I}$ must be contained in a single leaf, say  $\Rep_n ^{\mathscr{B}'}(\pi_1(X' _{\mathcal{O}}))$. 
\end{proof}

\begin{remark}
The image $\mathcal{I}(\mathcal{C}^{\mathscr{B}}(\mathcal{O}))$ is not in the Hitchin component of $X'_{\mathcal{O}}$. In fact, it is not even an Anosov representation because every element of  $\mathcal{I}(\mathcal{C}^{\mathscr{B}}(\mathcal{O}))$ is not faithful. 
\end{remark}

 \begin{theorem}\label{pdpairing} 
Let $\mathcal{O}$ be a compact oriented 2-orbifold of negative Euler characteristic and $\Gamma=\pi_1 ^{\operatorname{orb}}(\mathcal{O})$. Then the pairing $\omega_{PD}$ is a symplectic form on   $\Rep_n ^{\mathscr{B}}(\Gamma)$ via the right translation. 
 \end{theorem}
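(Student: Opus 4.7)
The plan is to identify $\omega_{PD}$ with the pullback of the classical Atiyah-Bott-Goldman form on the surface character variety $\Rep_n^{\mathscr{B}'}(\pi_1(X'_\mathcal{O}))$ via the map $\mathcal{I}$ of Lemma \ref{smooth}, which reduces the theorem to the known symplectic structure in the surface case. First I would establish antisymmetry, which follows from graded commutativity of the cup product combined with the symmetry of the trace form on $\mathfrak{g}$; using the explicit formula of Theorem \ref{explicit}, this is a direct (if notationally heavy) check.

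Next I would handle nondegeneracy using Poincar\'{e} duality. By Proposition \ref{product}, the cap product with $[\mathcal{O}, \partial \mathcal{O}]$ together with the $\Ad$-invariant trace pairing on $\mathfrak{g}$ yields a perfect pairing
\[
H^1(\Gamma, \partial\Gamma; \mathfrak{g}) \times H^1(\Gamma; \mathfrak{g}) \to \R,
\]
which is precisely $\omega_{PD}$ viewed on absolute and relative cohomology. By Corollary \ref{welldefinedness}, $\ker j$ lies in the left-radical of this pairing, so it descends to a well-defined pairing on $H^1_{\operatorname{par}}(\Gamma, \partial\Gamma;\mathfrak{g}) \times H^1_{\operatorname{par}}(\Gamma, \partial\Gamma;\mathfrak{g})$. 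A rank count, using that $H^1_{\operatorname{par}}$ is both the image of $j$ and the quotient $H^1(\Gamma, \partial\Gamma;\mathfrak{g})/\ker j$, shows the descended pairing is still nondegenerate.

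For closedness I would identify $\omega_{PD}$ with $\mathcal{I}^* \omega_{AB}$ and invoke Guruprasad-Huebschmann-Jeffrey-Weinstein. The proof of Lemma \ref{smooth} exhibits $\der \mathcal{I}$ as a pointwise isomorphism of tangent spaces, so $\mathcal{I}$ is in fact an open embedding into the leaf $\Rep_n^{\mathscr{B}'}(\pi_1(X'_\mathcal{O}))$. The Atiyah-Bott-Goldman form $\omega_{AB}$ on this leaf is known to be symplectic by \cite{guruprasad1997}; its explicit formula in terms of parabolic cocycles matches Theorem \ref{explicit} on the nose, once one observes that a parabolic $\Gamma$-cocycle precomposed with the surjection $\pi_1(X'_\mathcal{O}) \twoheadrightarrow \Gamma$ becomes a parabolic surface cocycle, with the same correction elements $X_i$ for boundary loops $z_i$ and $T_i$ for the loops $s_i$ around the (former) cone points. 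Closedness of $\omega_{PD}$ then follows immediately.

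The main obstacle I anticipate is carefully tracking the parabolic corrections at cone points: the relation $s_i^{r_i}=1$ holds in $\Gamma$ but not in $\pi_1(X'_\mathcal{O})$, so one must verify that the element $T_i$ produced by Lemma \ref{relll} (via averaging over the cyclic group $\langle s_i \rangle$, using the torsion relation) coincides with the parabolic correction one chooses for the surface cocycle at the corresponding puncture. The Fox-derivative terms must also be compared between the orbifold relator $\mathbf{r}$ and the surface relator $\prod [x_i,y_i]\prod z_j \prod s_k$: they agree because these relators are literally the same word, and the derivatives differ only in how one handles $\mathbf{r}_i=s_i^{r_i}$, which enters Theorem \ref{explicit} only through the $T_i$ correction. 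Once this matching is verified, antisymmetry, nondegeneracy, and closedness combine to prove $\omega_{PD}$ is symplectic.
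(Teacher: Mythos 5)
Your argument is correct, and the closedness part is exactly the paper's route: the paper also proves Theorem \ref{pdpairing} by comparing the explicit formula of Theorem \ref{explicit} with the known formula for the symplectic form on the leaf $\Rep_n^{\mathscr{B}'}(\pi_1(X'_{\mathcal{O}}))$ (citing \cite{guruprasad1997} and Kim's Theorem 5.6 in \cite{kim1999}), concluding $\mathcal{I}^*\omega_K^{X'_{\mathcal{O}}}=\omega^{\mathcal{O}}$ and hence closedness. Where you genuinely diverge is nondegeneracy. The paper gets it as a further consequence of the surface comparison: since $\der\mathcal{I}$ identifies $H^1_{\operatorname{par}}(\Gamma,\partial\Gamma;\mathfrak{g})$ with $H^1_{\operatorname{par}}(\pi_1(X'_{\mathcal{O}}),\partial\pi_1(X'_{\mathcal{O}});\mathfrak{g})$ and $\omega_K^{X'_{\mathcal{O}}}$ is nondegenerate on the leaf, so is the pullback. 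You instead derive nondegeneracy intrinsically from Proposition \ref{product}: the $PD^2_{\R}$-pair property makes the cup-cap pairing $H^1(\Gamma,\partial\Gamma;\mathfrak{g})\times H^1(\Gamma;\mathfrak{g})\to\R$ perfect, Corollary \ref{welldefinedness} puts $\ker j$ in the left radical of the restriction to $\operatorname{im} j$, and the dimension count $\dim\ker j=\dim H^1(\Gamma,\partial\Gamma;\mathfrak{g})-\dim\operatorname{im} j$ forces the descended pairing on $H^1_{\operatorname{par}}\times H^1_{\operatorname{par}}$ to be perfect. This is sound (Bieri--Eckmann duality holds with arbitrary coefficients, so the $\mathfrak{g}$-coefficient version you need is available), and it buys independence from the surface embedding for the nondegeneracy claim; the paper's version buys brevity by extracting both properties from the single comparison with the surface form. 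Your cautionary remarks about matching the $T_i$ corrections and Fox-derivative terms under $\mathcal{I}$ are exactly the points one must check in the paper's comparison as well, and your resolution of them is right.
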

 \begin{proof}
By works of \cite{guruprasad1997} and \cite{kim1999},  each leaf $\Rep_n ^{\mathscr{B}'}(\pi_1(X'_{\mathcal{O}}))$ of $\Rep_n(\pi_1(X'_{\mathcal{O}}))$ carries the symplectic form $\omega_K ^{X'_{\mathcal{O}}}$.  By comparing the explicit formula for $\omega_{PD}$ (Theorem \ref{explicit}) and that of $\omega_K ^{X'_{\mathcal{O}}}$  \cite[Theorem 5.6]{kim1999}, we know that $\mathcal{I}^* \omega_K ^{X'_{\mathcal{O}}}= \omega_{PD}$. This proves that $\omega_{PD}$ is closed. Moreover we know that $\der\mathcal{I}(H^1_{\operatorname{par}}(\Gamma, \per{\Gamma};\mathfrak{g})) = H^1_{\operatorname{par}}(\pi_1(X_{\mathcal{O}}'), \per{\pi_1(X_{\mathcal{O}}')};\mathfrak{g})$. Since $\omega_K ^{X'_{\mathcal{O}}}$ is nondegenerate, so is $\omega_{PD}$. 
 \end{proof}
 
 In particular we have the following result for $n=3$ case.

 \begin{proposition}\label{symplecticembedding}
Let $\mathcal{O}$ be a compact oriented 2-orbifold of negative Euler characteristic. The image of the embedding $\mathcal{I}: \mathcal{C}^{\mathscr{B}}(\mathcal{O}) \to \Rep_3 ^{\mathscr{B}'} (\pi_1(X'_{\mathcal{O}}))$ is a symplectic submanifold. 
\end{proposition}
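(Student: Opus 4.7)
The plan is to observe that essentially all of the work has been done in Theorem \ref{pdpairing}, and the proposition follows by combining that result with Lemma \ref{smooth} specialized to $n=3$.

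First I would recall from Lemma \ref{smooth} that the map $\mathcal{I}:\mathcal{C}^{\mathscr{B}}(\mathcal{O})\hookrightarrow \Rep_3^{\mathscr{B}}(\Gamma)\overset{\mathcal{I}}{\to}\Rep_3^{\mathscr{B}'}(\pi_1(X'_{\mathcal{O}}))$ is a smooth embedding, so that $\mathcal{I}(\mathcal{C}^{\mathscr{B}}(\mathcal{O}))$ is in particular an embedded submanifold of $\Rep_3^{\mathscr{B}'}(\pi_1(X'_{\mathcal{O}}))$. To conclude that it is a symplectic submanifold, what must be checked is that the restriction of the ambient Atiyah--Bott--Goldman symplectic form $\omega_K^{X'_{\mathcal{O}}}$ to each tangent space $T_{\mathcal{I}([\rho])}\mathcal{I}(\mathcal{C}^{\mathscr{B}}(\mathcal{O}))$ is nondegenerate.

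Next I would invoke the identity $\mathcal{I}^* \omega_K^{X'_{\mathcal{O}}} = \omega^{\mathcal{O}}$ established in the proof of Theorem \ref{pdpairing} by direct comparison of the explicit formulas (Theorem \ref{explicit} against \cite[Theorem 5.6]{kim1999}). Because $\mathcal{I}$ is an embedding, $\der\mathcal{I}$ at $[\rho]$ is a linear isomorphism onto its image, so nondegeneracy of the restriction $\omega_K^{X'_{\mathcal{O}}}|_{T_{\mathcal{I}([\rho])}\mathcal{I}(\mathcal{C}^{\mathscr{B}}(\mathcal{O}))}$ is equivalent to nondegeneracy of $\mathcal{I}^*\omega_K^{X'_{\mathcal{O}}}=\omega^{\mathcal{O}}$ on $T_{[\rho]}\mathcal{C}^{\mathscr{B}}(\mathcal{O})$.

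Finally, $\mathcal{C}^{\mathscr{B}}(\mathcal{O})$ is an open subset of $\Rep_3^{\mathscr{B}}(\Gamma)$ by Theorem \ref{ET}, so its tangent space at $[\rho]$ coincides with $H^1_{\operatorname{par}}(\Gamma,\partial\Gamma;\mathfrak{g}_\rho)$ identified via right translation (Lemma \ref{tangent}), and Theorem \ref{pdpairing} says $\omega^{\mathcal{O}}$ is symplectic there. This yields nondegeneracy of $\omega_K^{X'_{\mathcal{O}}}$ on the image and hence the proposition. There is no substantive obstacle here; the only mild subtlety is bookkeeping, namely making sure that the conjugacy classes $\mathscr{B}'$ produced by Lemma \ref{smooth} are indeed those appearing in Kim's symplectic structure and that the restriction of $\omega_K^{X'_{\mathcal{O}}}$ to the leaf $\Rep_3^{\mathscr{B}'}(\pi_1(X'_{\mathcal{O}}))$ is the form being pulled back, but both of these are built into the setup of Theorem \ref{pdpairing}.
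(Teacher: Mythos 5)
Your proof is correct and follows essentially the same route the paper takes: the paper simply records this proposition as an immediate specialization of Theorem \ref{pdpairing} (prefaced only by ``In particular we have the following result for $n=3$ case''), and your argument spells out that deduction accurately by combining $\mathcal{I}^*\omega_K^{X'_{\mathcal{O}}}=\omega^{\mathcal{O}}$ with the embedding property from Lemma \ref{smooth} and the nondegeneracy from Theorem \ref{pdpairing}.
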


 \begin{definition}
We will call $\omega_{PD}$ on  $\mathcal{X}_n ^{\mathscr{B}}(\pi_1 ^{\text{orb}}(\mathcal{O}))$ or on $\mathcal{C}^{\mathscr{B}}(\mathcal{O})$ the Atiyah-Bott-Goldman symplectic form and denote it simply by $\omega^{\mathcal{O}}$. 
\end{definition}

\subsection{Construction by equivariant de Rham bicomplex}\label{deRham}
In this section, we give another construction of the Atiyah-Bott-Goldman symplectic form. The approach of this section, originally carried out in \cite{huebschmann1995,guruprasad1997,weinstein1995}, produced the \emph{equivariantly} closed 2-form on a neighborhood of $\Rep_n^{\mathscr{B}}(S)$ which restricts to the Atiyah-Bott-Goldman symplectic form on $\Rep_n^{\mathscr{B}}(S)$ for a compact surface of negative Euler characteristic. We inherit their method to yield the same result for compact orientable orbifolds.

In fact, this is much more than what we actually want. We only need to construct the Atiyah-Bott-Goldman type symplectic form on $\mathcal{C}^{\mathscr{B}}(\mathcal{O})$, which is already done in Theorem \ref{pdpairing}. Nonetheless, we include this construction as it is interesting in itself.

Let $G$ be a Lie group acting on a smooth manifold $M$. Recall that the equivariant de Rham complex $\mathcal{A}^{p,q}_G (M)$ is defined as follows: $\mathcal{A}^{2j,p}_G (M)$ is the set of $G$-equivariant homogeneous polynomials of degree $j$ on the Lie algebra $\mathfrak{g}$ of $G$ with values in  $p$-forms on $M$. We have two differentials $\der$ and $\delta_G$ defined by
\begin{align*}
&\der : \mathcal{A}^{2j, p } _G (M) \to \mathcal{A} ^{2j, p+1} _G (M)\, \quad \text{ the usual de Rham differential}\\
&\delta_G : \mathcal{A} ^{2j, p} _G (M) \to \mathcal{A}^{2j+2, p-1} _G (M) \, \quad \delta_G ( \alpha) (X) = -\iota_{\overline{X}} (\alpha(X))
\end{align*} 
where $\overline{X}$ is the fundamental vector field generated by $X$. 

An element $\alpha\in \mathcal{A}^{2j,p}_G(M)$ is said to be equivariantly closed if $(\der + \delta_G) (\alpha)=0$. If $M$ is a symplectic manifold with a symplectic form $\omega$ and the $G$-action is symplectomorphic, then $\omega$ becomes a $\der$-closed element of $\mathcal{A}^{0,2}_G (M)$. In general $\omega$ may not be equivariantly closed. An element (if exists) $\mu\in \mathcal{A}^{2,0}_G(M)$ making $\omega+\mu$  equivariantly closed is called a \emph{moment map}. 

We consider $M=G^q$ where $G$ acts on $M$ as conjugations on each component. There is another differential $\delta: \mathcal{A}^{2j, p} _G (G^q) \to \mathcal{A}^{2j, p}_G (G^{q+1})$ defined by 
\[
\delta f = (-1)^{q+1} \sum_{i=0} ^{q} (-1)^i (\delta_i f)
\]
where $\delta _i f$ is induced by $(x_0, \cdots, x_q) \mapsto (x_1, \cdots, x_q)$ for $i=0$, $(x_0, \cdots, x_q) \mapsto (x_0, \cdots, x_{q-1})$ for $i=q$ and $(x_0, \cdots, x_q) \mapsto (x_0, \cdots, x_{i-1} x_i, \cdots, x_q)$ for $1\le i \le q$.  

Let $\omega$ be the Cartan-Maurer form on $G$. That is, $\omega_g(X)$ is the unique left invariant vector field such that $\omega_ g (X)=X_g$. Let $\iota_i:G\times G \to G$ be the projection onto $i$th factor, $i=1,2$. Define
\[
\Omega := \frac{1}{2} \iota_1 ^* \omega \cdot \overline{\iota_2 ^* \omega}
\]
where $\cdot$ is the trace form. $\Omega$ is an element of $\mathcal{A}^{0,2}_G(G^2)$. 
\begin{remark}
Our convention of operations of Lie algebra valued forms is
\begin{align*}
(\omega \cdot \omega)(X,Y) &= \omega (X) \cdot \omega(Y) - \omega(Y) \cdot \omega(X)\\
[\omega, \omega] (X,Y) &= [\omega(X),\omega(Y)] - [\omega(Y), \omega(X)].
\end{align*}
This coincides with the one used in Huebschmann \cite{huebschmann1995}, \cite{huebschmann1995b} and Weinstein \cite{weinstein1995}. Under this convention the structure equation takes of the form
\[
\der \omega = -\frac{1}{2} [\omega, \omega].
\]
\end{remark}

We also define
\begin{align*}
\lambda &:= \frac{1}{12}[\omega,\omega]\cdot \omega \in \mathcal{A}^{0,3}_G (G)\\
\theta&:=\frac{1}{2} X \cdot (\omega+ \overline{\omega})\in \mathcal{A}^{2,1}_G (G).
\end{align*}
\begin{lemma}[Weinstein \cite{weinstein1995}] We have the following identities
\begin{align*}
\der \Omega &= \delta \lambda\\
\delta \Omega &= 0\\
\delta_G \Omega &= - \delta \theta\\
\der \lambda &= 0\\
\delta_G \lambda &= \der \theta \\ 
\delta_G \theta &= 0.
\end{align*}
\end{lemma}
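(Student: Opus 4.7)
The plan is to verify each of the six identities by direct computation in Cartan-Maurer calculus, exploiting the structure equations for the left- and right-invariant Maurer-Cartan forms together with the $\Ad$-invariance (cyclic property) of the trace form $\cdot$ on $\mathfrak{g}$.

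First I would record the preliminary structure equations: the given identity $\der \omega = -\tfrac{1}{2}[\omega,\omega]$ for the left-invariant form $\omega$, the companion identity $\der \overline{\omega} = \tfrac{1}{2}[\overline{\omega},\overline{\omega}]$ for the right-invariant form $\overline{\omega}$ (obtained by pulling back $\omega$ via inversion $g \mapsto g^{-1}$), and the relation $\overline{\omega} = \Ad_g \omega$ relating them pointwise. The trace form satisfies $A \cdot [B,C] = [A,B]\cdot C$, and the Jacobi identity gives $[\omega,[\omega,\omega]] = 0$; these two facts will drive the algebraic simplifications. I would also compute once and for all the contractions of $\omega$ and $\overline{\omega}$ with the fundamental vector field $\overline{X}$ of the (simultaneous) conjugation action, which work out to $\omega(\overline{X})_g = \Ad_{g^{-1}} X - X$ and $\overline{\omega}(\overline{X})_g = X - \Ad_g X$.

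For the three purely de Rham identities on a single copy of $G$, namely $\der \lambda = 0$ and $\delta_G \lambda = \der \theta$ and $\delta_G \theta = 0$, I would substitute the structure equation into $\lambda = \tfrac{1}{12}[\omega,\omega]\cdot\omega$ and use Jacobi plus cyclic symmetry of the trace. For $\delta_G \theta = 0$ one just computes $-\iota_{\overline{X}}(X\cdot(\omega+\overline{\omega}))$ and observes the two terms cancel using $\Ad$-invariance of the trace. For $\delta_G \lambda = \der \theta$ one contracts the three factors of $\lambda$ with $\overline{X}$ cyclically and matches with the de Rham derivative of $\theta$, where again the structure equation converts $\der \omega$ into a bracket.

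For the identities on $G \times G$ involving $\Omega = \tfrac{1}{2}\,\iota_1^*\omega \cdot \iota_2^*\overline{\omega}$, I would first apply $\der$ using the structure equations and then re-bracket with the cyclic property to recognise the result as the alternating sum $\delta\lambda$ pulled back via $\iota_1$, the multiplication $\iota_1\cdot \iota_2$, and $\iota_2$; this uses the key cocycle-type identity $\omega(xy) = \Ad_{y^{-1}}\omega(x) + \omega(y)$ and its right-invariant counterpart $\overline{\omega}(xy) = \overline{\omega}(x) + \Ad_x \overline{\omega}(y)$. The identity $\delta \Omega = 0$ on $G^3$ then drops out of the same cocycle identities: the four terms in $\delta \Omega$ rearrange into pairs that cancel via $\Ad$-invariance of $\cdot$. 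Finally $\delta_G \Omega = -\delta \theta$ is proven by contracting $\Omega$ with the fundamental vector field of the diagonal conjugation action and matching with the alternating sum $\theta(x_1) - \theta(x_0 x_1) + \theta(x_0)$.

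The main obstacle will be the sign and $\Ad$-bookkeeping, since one must carefully track the interaction between left- and right-invariant forms, the sign flip in the Maurer-Cartan equation for $\overline{\omega}$, and the $\Ad$-twists introduced by pulling back $\omega$ and $\overline{\omega}$ along the multiplication map. Once the cocycle-type formulas $\omega(xy) = \Ad_{y^{-1}}\omega(x) + \omega(y)$ and $\overline{\omega}(xy) = \overline{\omega}(x) + \Ad_x \overline{\omega}(y)$ are in hand and the cyclic property of the trace is applied systematically, each of the six identities reduces to a short algebraic verification.
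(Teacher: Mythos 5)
The paper gives no proof of this lemma at all: it is stated with a citation to Weinstein, and the identities are taken as known. Your plan is the standard direct verification (and is essentially Weinstein's own argument): structure equations for the left- and right-invariant Maurer--Cartan forms, the cocycle identities for their pullbacks under multiplication, $\Ad$-invariance of the trace form, Jacobi, and contraction with the fundamental vector fields of conjugation; each of the six identities does reduce to a short algebraic check along exactly the lines you describe. The one point to watch is the one you already flag: your contraction formulas $\omega(\overline{X})_g=\Ad_{g^{-1}}X-X$ and $\overline{\omega}(\overline{X})_g=X-\Ad_gX$ correspond to generating the conjugation orbit by $\exp(tX)g\exp(-tX)$, whereas the paper defines fundamental vector fields using $\exp(-tX)$; with the wrong choice the two identities involving $\delta_G$ acquire an overall sign, so the convention must be fixed consistently with the paper's definition of $\delta_G$ before the computation is carried out.
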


Recall that the group $F^{\#}$ was defined by the presentation 
\[
F^{\#}:=\langle x_1, y_1, \cdots, x_g, y_g, s_1, \cdots, s_{c},z_1, \cdots, z_b \,|\, \mathbf{r}_1= \cdots = \mathbf{r}_{c} = 1\rangle.
\]
Let
\[
E: \Hom (F^{\#}, G) \times (F^{\#})^q \to G^q
\]
be the evaluation map $E(\rho, x_1,x_2, \cdots, x_q) = (\rho(x_1),\rho(x_2), \cdots \rho(x_q))$. Here  $G$ acts trivially on $(F^{\#})^q$.  Then  $E$ is $G$-equivariant so that it induces
\[
E^*: \mathcal{A}^{2j, p} _G (G^q) \to \mathcal{A}^{2j, p}_G (\Hom(F^{\#},G)\times (F^{\#})^q). 
\]
Observe that 
\[
 \mathcal{A}^{2j, p}_G (\Hom(F^{\#},G)\times (F^{\#})^q)=\mathcal{A}^{2j,p} _G (\Hom(F^{\#},G))\otimes\mathcal{A}^{2j,0}_G((F^{\#})^q).
 \]
Following Huebschmann \cite{huebschmann1995}, define 
\[
 \omega_{\mathfrak{c}} = \langle E^* \Omega, -\mathfrak{c} \rangle \in \mathcal{A}^{0,2}_G (\Hom(F^{\#},G))
 \]
 where $\mathfrak{c}$ is the one define in Lemma \ref{fundamentalclass}. Here we regard $\mathfrak{c}$ as an element of $ \mathbf{B}_2(F^{\#})\otimes \R\subset \R[F^{\#}\times F^{\#}]$. 
Notice that
\begin{align*}
 \der \omega_{\mathfrak{c}} &=  \langle  E^* \der \Omega, -\mathfrak{c} \rangle \\
 &= \langle E^* \delta \lambda, -\mathfrak{c}\rangle  \\
 &= \langle E^* \lambda, -\partial \mathfrak{c} \rangle\\
 &= \langle E^* \lambda,  \mathbf{r} - \sum z_i  \rangle\\
 &= \mathbf{r}^* \lambda  -\sum z_i ^* \lambda.
\end{align*}
Here we view $\mathbf{r}$ and $z_i$ as the maps from $\Hom(F^{\#},G)$ to $G$ defined by $\rho \mapsto E(\rho, \mathbf{r})$ and $\rho \mapsto E(\rho, z_i)$ respectively. Now we choose any regular neighborhood $O\subset \mathfrak{g}$ for the exponential map at the identity and consider the pull-back diagram
\[
\xymatrix{\mathcal{H}_{\#} ^{\mathscr{B}}\ar[d]^{\eta} \ar[rrr]^--{(\widehat{\mathbf{r}},\widehat{z_1} ,\cdots, \widehat{z_b})} && &O \times C_1\times \cdots \times C_b \ar[d]^{\exp\times \operatorname{Id}} \\
\Hom^{\mathscr{B}}(F^{\#}, G) \ar[rrr]_--{(\mathbf{r},z_1, \cdots, z_b)} &&&G \times C_1\times \cdots \times C_b}.
\]
Since $O\subset \mathfrak{g}$ is contractible, we can construct an adjoint invariant homotopy operator $h: \mathcal{A}^{2j, p}_G (\mathfrak{g} ) \to \mathcal{A}^{2j, p-1}_G (\mathfrak{g})$ such that $\der h + h \der = \operatorname{Id}$.  Using this homotopy operator, we define
\[
\omega= \eta^* \omega_{\mathfrak{c}} - \widehat{\mathbf{r}} ^* h \exp^* \lambda. 
\]
 Then we have
 \begin{equation}\label{dcomputed}
 \der \omega = -\sum \widehat{z_i} ^* \lambda. 
 \end{equation}

Let $\mu: \mathcal{H}_{\#} ^{\mathscr{B}} \to \mathfrak{g}^*$ be given by $\mu = \psi \circ \widehat{\mathbf{r}}$ where $\psi:\mathfrak{g} \to \mathfrak{g}^*$ is the dualization with respect to the trace form $\Tr$.  In view of the proof of Theorem 2 of \cite{huebschmann1995}, we know
\begin{equation}\label{deltacomputed}
 \delta_G \omega =- \der \mu +\sum \widehat {z_i} ^* \theta. 
\end{equation}

Let $C$ be a conjugacy class in $G$. To find a closed 2-form on $\mathcal{H}_{\#} ^{\mathscr{B}}$, we need $\tau \in \mathcal{A}^{0,2}_G (C)$ such that $\der \tau = \lambda$ and $\delta_G \tau =- \theta$. At each $p\in C$, the right translation by $p$ identifies the tangent space $T_p C$ with the subspace $\{ \Ad _p X - X\,|\,X\in \mathfrak{g}\}$ of $\mathfrak{g}= T_e G$. The 2-form $\tau$ on $C$ is then defined by
\[
\tau (\Ad_p X- X , \Ad_p Y - Y) = \frac{1}{2} ( \Tr (X \Ad _p Y) - \Tr (Y \Ad _p X)).
\] 
\begin{lemma}[Guruprasad-Huebschmann-Jeffrey-Weinstein \cite{guruprasad1997}]\label{ghjw2}  We have
\[
\der \tau = \lambda ,\quad \delta_G \tau = -\theta.
\]
\end{lemma}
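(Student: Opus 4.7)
The strategy is to verify both identities by direct Maurer--Cartan calculation on the conjugacy class $C$, exploiting its homogeneous structure $C\cong G/Z(p_0)$. Throughout, I parametrize $C$ via the orbit map $q\colon G\to C$, $q(g)=gp_0g^{-1}$; under right translation, tangent vectors to $C$ at $p=q(g)$ are identified with elements of the form $\Ad_p X-X$ for $X\in\mathfrak{g}$. The first step is to verify that $\tau$ is well defined on $T_pC$: if $X\in\mathfrak{z}(p)$ (so $\Ad_pX=X$), then using $\Ad$-invariance of the trace form, $\Tr(X\Ad_pY)=\Tr(\Ad_{p^{-1}}X\cdot Y)=\Tr(XY)=\Tr(Y\Ad_pX)$, so the second slot is annihilated as required.

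For $\der\tau=\lambda|_C$, I observe that both sides are invariant under the conjugation action of $G$ (on $C$ and $G$ respectively), and the identity is tensorial, so it suffices to verify it at a single point $p\in C$. The pull-back $q^*\omega$ of the left-invariant Maurer--Cartan form can be written as $\omega-\Ad_{p^{-1}}\omega$ (up to right translation), so that $q^*\tau$ becomes an explicit polynomial expression in $\omega$ and $\Ad_p$. Differentiating and applying the structure equation $\der\omega=-\tfrac12[\omega,\omega]$, together with the $\Ad$-invariance of the trace form $X\cdot Y=\Tr(XY)$, collapses the result to $\tfrac{1}{12}[\omega,\omega]\cdot\omega$ restricted along $q$, which is precisely $q^*\lambda$. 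The calculation can also be carried out using the Koszul formula on three fundamental vector fields $\overline X,\overline Y,\overline Z$, using $[\overline X,\overline Y]=\overline{[X,Y]}$ and the explicit derivative formulas $\overline X\bigl(p\mapsto\Ad_p Y\bigr)=[\Ad_pY,X]$.

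For $\delta_G\tau=-\theta$, I fix $X\in\mathfrak{g}$ and compute $\iota_{\overline X}\tau$. The fundamental vector field at $p\in C$ for the conjugation action corresponds (after right translation) to $\Ad_pX-X$, so that for a tangent vector $v=\Ad_pY-Y\in T_pC$,
\[
(\iota_{\overline X}\tau)(v)=\tau(\Ad_pX-X,\Ad_pY-Y)=\tfrac12\bigl(\Tr(X\Ad_pY)-\Tr(Y\Ad_pX)\bigr).
\]
On the other hand, under the right-translation identification, $\omega(v)=\Ad_{p^{-1}}v=Y-\Ad_{p^{-1}}Y$ and $\overline{\omega}(v)=v=\Ad_pY-Y$, so
\[
\theta(X)(v)=\tfrac12 X\cdot(\omega(v)+\overline\omega(v))=\tfrac12\bigl(\Tr(XY)-\Tr(X\Ad_{p^{-1}}Y)+\Tr(X\Ad_pY)-\Tr(XY)\bigr),
\]
and cyclicity of trace rearranges this to $\tfrac12(\Tr(X\Ad_pY)-\Tr(Y\Ad_pX))$. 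Hence $\iota_{\overline X}\tau=\theta(X)|_C$, and by the definition $\delta_G\tau(X)=-\iota_{\overline X}\tau$ the claimed identity follows.

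The only real obstacle is bookkeeping: matching the numerical factor $\tfrac{1}{12}$ in $\lambda$ against the $\tfrac12$ in $\tau$, and fixing the sign conventions for $\omega$ versus $\overline\omega$ and for the fundamental vector field (the paper's convention places a minus sign in $\overline X_\rho(f)=\tfrac{d}{dt}f(\exp(-tX)\cdot\rho)$). With these conventions pinned down at the outset, the two identities reduce to algebraic rearrangements that rely only on the $\Ad$-invariance of the trace form and the Maurer--Cartan structure equation.
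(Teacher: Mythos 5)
The paper offers no proof of this lemma at all --- it is quoted verbatim from Guruprasad--Huebschmann--Jeffrey--Weinstein --- so there is no in-paper argument to measure you against; your attempt has to stand on its own. The half of it that you actually carry out is correct: the well-definedness check for $\tau$ is fine, and the verification of $\delta_G\tau=-\theta$ is complete. You correctly identify the fundamental vector field at $p$ (with the paper's $\exp(-tX)$ convention) with $\Ad_pX-X$ under right translation, the evaluations $\omega(v)=Y-\Ad_{p^{-1}}Y$ and $\overline{\omega}(v)=\Ad_pY-Y$ are right, and the $\Ad$-invariance step $\Tr(X\Ad_{p^{-1}}Y)=\Tr(Y\Ad_pX)$ closes the computation in agreement with the sign in the paper's definition $\delta_G(\alpha)(X)=-\iota_{\overline{X}}(\alpha(X))$.

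The gap is in the first identity. For $\der\tau=\lambda|_C$ you name the right strategy --- pull back along the orbit map $q(g)=gp_0g^{-1}$, express $q^*\tau$ through the Maurer--Cartan form, differentiate with $\der\omega=-\tfrac12[\omega,\omega]$ --- but then simply assert that the result ``collapses'' to $q^*\lambda$. That collapse \emph{is} the lemma: after differentiating $q^*\tau$ one is left with a sum of trilinear terms of the shapes $\Tr(\Ad_p[\xi_i,\xi_j]\,\xi_k)$ and $\Tr([\xi_i,\xi_j]\,\xi_k)$, and one must check that the $\Ad_p$-dependent terms cancel after antisymmetrization while the remainder assembles to exactly $\tfrac{1}{12}[\omega,\omega]\cdot\omega$ under the paper's wedge conventions. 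This is precisely where the coefficient $\tfrac{1}{12}$ against the $\tfrac12$ in $\tau$, and the sign discrepancies you yourself flag (note, e.g., that the correct pullback is $q^*(\overline{\omega}^C)=\overline{\omega}-\Ad_p\overline{\omega}$ up to an overall sign, not quite what you wrote), get tested; the appeal to $G$-invariance only localizes the identity to one point of $C$ and does not shrink the computation. To have a proof rather than a plan, you need to write out that expansion, as is done in GHJW or in the quasi-Hamiltonian treatment of conjugacy classes.
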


Now define the 2-form $\omega_H$ on $\mathcal{H}_{\#} ^{\mathscr{B}}$ by 
\[
\omega_H = \eta ^* \langle  E^* \Omega, -\mathfrak{c} \rangle - \widehat{\mathbf{r}} ^* h \exp^* \lambda  + \sum_{i=1} ^b  \widehat{z_i} ^*  \tau_i. 
\]

\begin{proposition} $\omega_H$ is closed  on $\mathcal{H}_{\#} ^{\mathscr{B}}$ and a moment map is given by
\[
\mu= \psi \circ \widehat{\mathbf{r}}.
\]
\end{proposition}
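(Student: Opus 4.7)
The plan is to directly compute $\der\omega_H$ and $\delta_G\omega_H$ using the identities already compiled in the preceding paragraphs, and observe that the boundary-correction terms $\widehat{z_i}^{\,*}\tau_i$ were precisely designed to cancel the leftover pieces of the bulk form $\omega = \eta^{*}\omega_{\mathfrak{c}} - \widehat{\mathbf{r}}^{*}h\exp^{*}\lambda$.

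First I would check closedness. Write $\omega_H = \omega + \sum_i \widehat{z_i}^{\,*}\tau_i$. Applying $\der$ and using naturality of pullbacks,
\[
\der\omega_H \;=\; \der\omega \;+\; \sum_{i=1}^{b}\widehat{z_i}^{\,*}(\der\tau_i).
\]
By the identity $(\ref{dcomputed})$ the first term equals $-\sum_i \widehat{z_i}^{\,*}\lambda$, while by Lemma \ref{ghjw2} we have $\der\tau_i = \lambda|_{C_i}$. The two contributions cancel, giving $\der\omega_H=0$ on $\mathcal{H}_{\#}^{\mathscr{B}}$.

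Next I would verify the moment map identity. Recall that $\mu + \omega_H$ being equivariantly closed amounts, in the relevant bidegrees, to the conditions $\der\omega_H = 0$ (done) and $\delta_G\omega_H = -\der\mu$, together with $\delta_G\mu=0$, the last being automatic since $\mu$ is a $\mathfrak{g}^{*}$-valued degree-$(2,0)$ element that is equivariant by construction. Compute
\[
\delta_G\omega_H \;=\; \delta_G\omega \;+\; \sum_{i=1}^{b}\widehat{z_i}^{\,*}(\delta_G\tau_i).
\]
The identity $(\ref{deltacomputed})$ gives $\delta_G\omega = -\der\mu + \sum_i \widehat{z_i}^{\,*}\theta$, whereas Lemma \ref{ghjw2} gives $\delta_G\tau_i = -\theta|_{C_i}$. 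The two $\theta$-contributions cancel and what remains is $-\der\mu$, as required. Hence $\omega_H + \mu$ is equivariantly closed on $\mathcal{H}_{\#}^{\mathscr{B}}$, and $\mu = \psi\circ\widehat{\mathbf{r}}$ is a moment map for the $G$-action.

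There is no real obstacle here; the proposition is essentially a formal assembly of $(\ref{dcomputed})$, $(\ref{deltacomputed})$ and Lemma \ref{ghjw2}, and the only thing one has to be mindful of is that the forms $\tau_i$ are defined on the conjugacy classes $C_i$, so that $\widehat{z_i}^{\,*}\tau_i$ makes sense precisely because the map $\widehat{z_i}$ takes values in $C_i$, which is built into the definition of $\mathcal{H}_{\#}^{\mathscr{B}}$ via the pullback diagram defining it.
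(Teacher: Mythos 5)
Your proposal is correct and follows essentially the same argument as the paper: both verify $\der\omega_H=0$ by cancelling $-\sum_i\widehat{z_i}^{\,*}\lambda$ against $\sum_i\widehat{z_i}^{\,*}\der\tau_i$ via (\ref{dcomputed}) and Lemma \ref{ghjw2}, and both obtain $\delta_G\omega_H=-\der\mu$ by cancelling the $\theta$-terms via (\ref{deltacomputed}) and Lemma \ref{ghjw2}, with $\delta_G\mu=0$ automatic by bidegree. No gaps; the reasoning matches the paper's proof step for step.
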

\begin{proof}
We first observe that $\omega_H$ is $\der$-closed. Indeed, from (\ref{dcomputed}) and  Lemma \ref{ghjw2}, we have
\[
\der\omega_H = -\sum_{i=1} ^b  \widehat{z}_i ^* \lambda +\sum_{i=1} ^b  \widehat{z}_i ^* \der \tau_i = 0.
\]

For the second assertion regarding the moment map, we use (\ref{deltacomputed}) and Lemma \ref{ghjw2} to compute
\[
\delta_G \omega_H = -\der \mu +\sum_{i=1} ^b \widehat{z}^* _i \theta +\sum_{i=1} ^b \widehat{z}^*_i \delta_G \tau_i = -\der \mu.
\]
Since $\mu\in \mathcal{A}_G ^{2,0}(\mathcal{H}^{\mathscr{B}}_{\#})$, $\delta_G \mu = 0$ is automatic. It follows that $\omega_H+\mu$ is equivariantly closed meaning that $\mu=\psi\circ \widehat{\mathbf{r}}$ is a moment map for the $G$-action on $\mathcal{H}^{\mathscr{B}}_{\#}$. 
\end{proof}

Let $\rho_0\in \mu^{-1}(0)$. We know that $\eta(\rho_0)\in \Hom (F^{\#}, G)$ induces the representation $\rho:\Gamma \to G$. In view of Lemma \ref{tangent} the tangent space at $\rho_0 \in \mu^{-1}(0)$ can be identified with $Z^1 _{\text{par}}(\Gamma,  \per{\Gamma}; \mathfrak{g}_{\rho})$. The fundamental vector fields $\overline{X}$, $X\in \mathfrak{g}$ span the subspace $B^1(\Gamma, \mathfrak{g})$. 

\begin{theorem}\label{comp}
Let $\rho_0 \in \mu^{-1} (0)$. Assume that the induced representation $\rho:\Gamma \to G$ is in $\Hom_{\operatorname{s}} ^{\mathscr{B}}(\Gamma, G)$.  Then $\omega_{H,\rho_0}|_{Z^1_{\operatorname{par}}(\Gamma, \per{\Gamma}; \mathfrak{g}_\rho)}$ descends to $H^1 _{\operatorname{par}}(\Gamma, \per{\Gamma}; \mathfrak{g}_\rho)$ and defines the closed 2-form, denoted by the same symbol  $\omega_H$, on $\mathcal{X}^ {\mathscr{B}} _n (\Gamma)$.   Moreover the induced closed 2-form $\omega_H$ on  $\mathcal{X}^ {\mathscr{B}} _n (\Gamma)$ coincides with the right translation of $-\omega_{PD}$. 
\end{theorem}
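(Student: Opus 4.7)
The plan is to evaluate $\omega_H$ at the distinguished point $\rho_0 \in \mu^{-1}(0)$ summand by summand on parabolic cocycles, and to match the outcome against the explicit formula for $\omega_{PD}$ proved in Theorem \ref{explicit}.

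First I would establish the descent. Since $\omega_H + \mu$ is equivariantly closed and $\mu(\rho_0)=0$, the defining identity of the moment map gives $\iota_{\overline{X}}\omega_H = -d\langle \mu, X\rangle$, whose right hand side vanishes on $T_{\rho_0}\mu^{-1}(0)$. Under the right-translation identification of Lemma \ref{tangent}, this space contains $Z^1_{\operatorname{par}}(\Gamma,\partial\Gamma;\mathfrak{g}_\rho)$, while the fundamental vector fields span the coboundary subspace $B^1(\Gamma;\mathfrak{g}_\rho)$ (using irreducibility of $\rho$, which comes from $\rho \in \Hom^{\mathscr{B}}_s$). Hence $\omega_{H,\rho_0}|_{Z^1_{\operatorname{par}}}$ kills coboundaries and descends to $H^1_{\operatorname{par}}(\Gamma,\partial\Gamma;\mathfrak{g}_\rho)$, and $G$-invariance promotes this to a well-defined closed 2-form on $\Rep^{\mathscr{B}}_n(\Gamma)$.

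Second, I would evaluate the bulk term $\eta^*\langle E^*\Omega,-\mathfrak{c}\rangle$. The form $\Omega$ is precisely the Alexander--Whitney diagonal representative of the trace cup product on $G \times G$, so combining Lemma \ref{diagonalapprox} with the explicit expression for $\mathfrak{c}$ in Lemma \ref{fundamentalclass} identifies this summand with $-\langle u\cup v, \mathfrak{c}\rangle$. By Theorem \ref{explicit} this quantity accounts for the first five sums in the explicit formula for $\omega_{PD}$ (the $T_i$-contribution arising from the factor $-\frac{1}{r_i}\llbracket\partial\mathbf{r}_i/\partial s_i\,|\,s_i\rrbracket$ in $\mathfrak{c}$ combined with Lemma \ref{relll}), so this summand contributes exactly the negative of that block.

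Third I would dispose of the correction $-\widehat{\mathbf{r}}^* h\exp^* \lambda$ by choosing the $\Ad$-invariant radial homotopy
\[
(h\alpha)_X(v_1,\dots,v_{p-1}) = \int_0^1 t^{p-1}\alpha_{tX}(X,v_1,\dots,v_{p-1})\,dt,
\]
for which $(h\alpha)_0 = 0$ tautologically; since $\widehat{\mathbf{r}}(\rho_0) = 0$, the pull-back to $\rho_0$ vanishes. Finally I would evaluate the boundary term $\sum_i \widehat{z_i}^*\tau_i$: the derivative $d\widehat{z_i}$ sends $u$ to the tangent vector represented by $\Ad_{\rho(z_i)}X_i - X_i$ with $X_i$ as in Lemma \ref{relll}. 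Substituting into the defining formula for $\tau_i$ and using $\Ad_{\rho(z_i)}Y_i = Y_i + v(z_i)$, cyclicity of the trace, and the cocycle identity for $v$, the expression collapses to $\Tr(X_i v(z_i))$. Summation over $i$ yields $\sum_i \Tr(X_i v(z_i))$, the positive of the boundary correction appearing in $\omega_{PD}$; together with the preceding step this establishes $\omega_H = -\omega_{PD}$.

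The main obstacle will be the careful bookkeeping of signs: the antisymmetrization hidden in $\Omega$, the conventions distinguishing $\omega$ from its conjugate $\overline{\omega}$ in the Cartan--Maurer calculus, and the signs produced by the Fox derivative expansion of $\mathfrak{c}$ must all be simultaneously aligned with the conventions of Theorem \ref{explicit}. Once this alignment is executed, closedness of the descended form is automatic from the equivariantly closed property already established.
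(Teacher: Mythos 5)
Your proposal follows the same route as the paper: establish descent via the equivariant-closedness and the vanishing of $\der\mu$ on $T\mu^{-1}(0)$, identify the bulk term $\eta^*\langle E^*\Omega,-\mathfrak{c}\rangle$ with the cup--cap pairing via Lemma \ref{diagonalapprox}, discard the $\widehat{\mathbf{r}}^*h\exp^*\lambda$ correction, and match the boundary $\tau_i$ term to the $\Tr(X_i v(z_i))$ correction. Two remarks. First, on the $\widehat{\mathbf{r}}^*h\exp^*\lambda$ term you argue by choosing the radial homotopy so that $(h\alpha)_0 = 0$; this works, but it ties the conclusion to a specific choice of $h$. The paper instead observes that $\mu^{-1}(0) = \widehat{\mathbf{r}}^{-1}(0) = \Hom^{\mathscr{B}}(\Gamma,G)$, so $\widehat{\mathbf{r}}$ is the constant map $0$ on this subspace and its differential kills every tangent vector in $T_{\rho_0}\mu^{-1}(0)$; hence the pullback of \emph{any} 2-form via $\widehat{\mathbf{r}}$ vanishes there, irrespective of $h$. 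Second, your identifications ``$\eta^*\langle E^*\Omega,-\mathfrak{c}\rangle = -\langle u\cup v,\mathfrak{c}\rangle$'' and ``the $\tau_i$ term collapses to $\Tr(X_i v(z_i))$'' are only antisymmetrized equalities at the cochain level: $\Omega$ carries a hidden $\frac{1}{2}(u\otimes v - v\otimes u)$ and so does $\tau$. You flag this in your final paragraph, but the paper removes the issue at the source by first rewriting $\omega_{PD}$ in the manifestly antisymmetric form (\ref{comp2}) and only then matching term by term; if you carry out the comparison without that preliminary rewrite, the two sides differ by exact terms rather than agreeing as cochains, which is harmless on cohomology but worth making explicit. (Also, the $X_i$ appearing in the boundary term is the one from Proposition \ref{paraboliccocycle}/Proposition \ref{explicitpre}, not Lemma \ref{relll}, which concerns the $T_i$ attached to the torsion generators $s_i$.)
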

\begin{proof}
By the construction, we have $\delta_G \omega_H=0$ on $\mu^{-1}(0)$. This means that $\omega_H$ vanishes along $B^1(\Gamma; \mathfrak{g})$. Therefore $\omega_H$ descends to $H^1_{\text{par}}(\Gamma,  \per{\Gamma}; \mathfrak{g})$. By $\der$-closedness of $\omega_H$, we know that the induced 2-form is also closed on $\mathcal{X}^ {\mathscr{B}} _n (\Gamma)$. Since the $G$-action on $\Hom_{\operatorname{s}} ^\mathscr{B} (\Gamma, G)\subset\mu^{-1}(0)$ is proper, we can understand this process as the Marsden-Weinstein quotient.

Recall Theorem \ref{explicit}. Since $\omega_{PD}$ is anti-symmetric, we have that
\begin{align}
\omega_{PD}([u],[v]) & = \frac{1}{2} (\langle u\cup v, \mathfrak{c}\rangle -\langle v\cup u, \mathfrak{c}\rangle) - \frac{1}{2} \sum_{i=1} ^b \Tr (X_i v(z_i) - Y_i u(z_i))\nonumber\\
&=\frac{1}{2} (\langle u\cup v, \mathfrak{c}\rangle -\langle v\cup u, \mathfrak{c}\rangle) - \frac{1}{2} \sum_{i=1} ^b \Tr (X_i z_i \cdot Y_i  - Y_i z_i \cdot X_i). \label{comp2}
\end{align}
The last term coincides with $\sum_{i=1} ^b \tau(z_i\cdot X_i-X_i, z_i\cdot Y_i -Y_i)=\sum_{i=1} ^b z_i ^* \tau_i$. We claim that the first term $\frac{1}{2}(\langle u\cup v , \mathfrak{c}\rangle - \langle v\cup u , \mathfrak{c}\rangle)$ equals $\langle E^*\Omega,-\mathfrak{c}\rangle(u,v)$. For this,  it suffices to show that  
\[
\langle  E^*\Omega ,  \llbracket x|y \rrbracket\rangle (u,v)=\frac{1}{2}\left(\langle u\cup v,\llbracket x|y\rrbracket \rangle - \langle v\cup u , \llbracket x|y \rrbracket \rangle\right)
\]
for $\llbracket x|y\rrbracket \in \mathbf{B}_2(F^{\#})\otimes \R$.  

Recall that (Lemma \ref{tangent1}) the right translation identifies  $Z^1(\Gamma;\mathfrak{g})$ with $T_\rho \Hom(\Gamma,G)$. Thus given $u\in Z^1(\Gamma;\mathfrak{g})$ and $x\in \Gamma$, we have that $E_{x*} (u) = R_{\rho(x)*} (u(x))$ where  the map $E_x:\Hom(\Gamma,G) \to G$ is defined by $\rho \mapsto \rho(x)$. Then we compute
\begin{align*}
\langle  E^*\Omega, \llbracket x|y \rrbracket \rangle (u,v)&= \frac{1}{2}( \iota_1 \omega \cdot \overline{\iota_2 ^*\omega})_{(\rho(x),\rho(y))} (E_* u, E_*v)\\
&=\frac{1}{2}\left( \omega_{\rho(x)}(E_{x*}u) \cdot \overline{ \omega_{\rho (y)} (E_{y*}v) } - \omega_{\rho(x)} (E_{x*}v)\cdot \overline{\omega_{\rho(y)} (E_{y*}u)}\right)\\
&=\frac{1}{2}\left(\Tr(\Ad_{\rho(x)^{-1}}(u(x))  v(y))-\Tr(\Ad_{\rho(x)^{-1}}(v(x)) u(y))\right)\\
&=\frac{1}{2}\left(\Tr(u(x) x\cdot v(y))-\Tr(v(x)x\cdot u(y))\right).
\end{align*}
By comparing this to the Alexander-Whitney approximation theorem, Lemma \ref{diagonalapprox}, we conclude that $\langle  E^*\Omega ,  \llbracket x|y \rrbracket\rangle (u,v)=\frac{1}{2}\left(\langle u\cup v,\llbracket x|y\rrbracket \rangle - \langle v\cup u , \llbracket x|y \rrbracket \rangle\right)$ as we wanted. 

Finally observe that, on the space $\Hom^{\mathscr{B}}(\Gamma,G)$, the second term $ \widehat{\mathbf{r}} ^* h \exp^* \lambda $ of $\omega_H$ vanishes. Therefore  $-\omega_H$ equals (\ref{comp2}) on $\mathcal{X}^{\mathscr{B}} _n (\Gamma)$. 
\end{proof}

\section{Global Darboux coordinates}\label{main}
In this section, we prove our main theorem. Recall that we have the splitting operation that decomposes a given convex projective orbifold into simpler pieces of convex projective orbifolds.  We show the decomposition theorem for $\mathcal{C}^{\mathscr{B}}(\mathcal{O})$ which is analogous to a previous result of the authors \cite[Theorem 4.5.7]{choi2019}.  Finally by using our version of the action-angle principle \cite[Theorem 3.4.5]{choi2019}, we obtain the global Darboux coordinates for the symplectic manifold $(\mathcal{C}(\mathcal{O}), \omega^{\mathcal{O}})$. 

Throughout this section, we continue to use $\per{\Gamma}$ to denote a set of subgroups generated by primitive peripheral elements. See Notation \ref{notation} for details. 
\subsection{Local decomposition}\label{localsecompsec}

We will prove that the tangent space can be decomposed into a direct sum of symplectic subspaces under the splitting operations. Indeed, we did show the same result for compact surfaces. What is new here is that  splittings can happen along full 1-suborbifolds. 

Let $\mathcal{O}$ be a compact oriented 2-orbifold of negative Euler characteristic, $\Gamma:=\pi_1 ^{\operatorname{orb}}(\mathcal{O})$. Let $\xi$ be a principal full 1-suborbifold joining two order 2 cone points of $\mathcal{O}$.  Note that $\mathcal{O}'=\mathcal{O}\setminus \xi$ has also negative Euler characteristic. The orbifold fundamental group $\Gamma':= \pi_1^{\operatorname{orb}}(\mathcal{O}')$ of $\mathcal{O}'$ is a subgroup of $\Gamma$ and we denote by  $\iota:\Gamma' \to \Gamma$ the inclusion.  We can find presentations 
\begin{multline*}
\Gamma= \langle x_1,y_1,\cdots, x_g,y_g, z_1, \cdots, z_b, s_1, \cdots, s_c\,|\,\\ 
\prod_{i=1} ^g [x_i,y_i] \prod_{i=1} ^b z_i \prod_{i=1} ^c s_i = s_1 ^{r_1}=\cdots=s_c ^{r_c}=1\rangle
\end{multline*}
and 
\begin{multline*}
\Gamma'=\langle x_1,y_1,\cdots, x_g,y_g, z_1, \cdots, z_{b+1}, s_3, \cdots, s_c\,|\,\\
\prod_{i=1} ^g [x_i,y_i] \prod_{i=1} ^{b+1} z_i \prod_{i=3} ^c s_i = s_3^{r_3}=\cdots=s_c ^{r_c}=1\rangle
\end{multline*}
such that $r_1=r_2=2$ and $\iota (z_{b+1}) = s_1s_2$. Recall that $z_{1},\cdots, z_{b+1}$ correspond to the boundary components of $\mathcal{O}'$. 

Let  $\mathcal{S}= \per{\Gamma} \cup \{\langle s_1s_2\rangle\}$. We consider the group pair $(\Gamma, \mathcal{S})$. In view of  Lemma \ref{funtorialinj}, $H^1_{\operatorname{par}}(\Gamma, \mathcal{S};\mathfrak{g})$ is a subspace of $H^1_{\operatorname{par}}(\Gamma, \per{\Gamma};\mathfrak{g})$. Hence one can compute $\omega^{\mathcal{O}}([u],[v])$  by regarding $[u]$ and  $[v]$ as members of $H^1_{\operatorname{par}}(\Gamma, \per{\Gamma};\mathfrak{g})$.

\begin{proposition}\label{sum}
Keep the notations as above. Let $[\rho]\in \Rep_n ^{\mathscr{B}}(\Gamma)$ be such that $[\rho\circ\iota]\in \Rep_n ^{\mathscr{B}'}(\Gamma')$ for some $\mathscr{B}'$.  Then for any $[u],[v]\in H^1_{\operatorname{par}}(\Gamma, \mathcal{S};\mathfrak{g}_\rho)$ we have
\[
\omega^{\mathcal{O}}([u],[v]) = \omega^{\mathcal{O}' }(\iota^* [u],\iota^* [v]).
\]
where $\iota^*: H^1 _{\operatorname{par}} (\Gamma, \mathcal{S}; \mathfrak{g}) \to H^1_{\operatorname{par}} (\Gamma',\per{\Gamma'}; \mathfrak{g})$ is the map induced from $\iota: \Gamma'\to \Gamma$. 
\end{proposition}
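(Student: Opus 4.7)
The plan is to prove this identity by direct comparison of the explicit cocycle formulas for $\omega^{\mathcal{O}}$ and $\omega^{\mathcal{O}'}$ supplied by Theorem \ref{explicit}. Since the presentations of $\Gamma$ and $\Gamma'$ agree on the generators $x_i, y_i$, $z_1, \ldots, z_b$, $s_3, \ldots, s_c$, and since the Fox derivatives $\partial \mathbf{r}/\partial v$ and $\partial \mathbf{r}'/\partial v$ for such a generator $v$ map to one another under $\iota$, all contributions to the two symplectic pairings coming from these shared generators (including the associated boundary corrections $-\Tr(X_i v(z_i))$ for $i = 1, \ldots, b$) coincide term-by-term. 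The proposition therefore reduces to matching the ``splitting-locus'' contributions on the two sides.

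Write $W = \prod_{i=1}^g [x_i,y_i]\prod_{j=1}^b z_j$. On the $\mathcal{O}$ side, the Fox derivatives at $s_1,s_2$ give $\partial\mathbf{r}/\partial s_1 = W$ and $\partial\mathbf{r}/\partial s_2 = W s_1$, producing the contribution
\[
-\Tr\bigl(u(W^{-1})\,v(s_1)\bigr) - \Tr\bigl(u(s_1 W^{-1})\,v(s_2)\bigr) - \Tr\bigl(T_1 v(s_1)\bigr) - \Tr\bigl(T_2 v(s_2)\bigr).
\]
Applying the cocycle identity $u(s_1 W^{-1}) = u(s_1) + s_1\cdot u(W^{-1})$, the $\Ad$-invariance of $\Tr$, and the relation $v(s_1 s_2) = v(s_1) + s_1\cdot v(s_2)$, this rewrites as
\[
-\Tr\bigl(u(W^{-1})\,v(s_1 s_2)\bigr) - \Tr\bigl(u(s_1)\,v(s_2)\bigr) - \Tr\bigl(T_1 v(s_1)\bigr) - \Tr\bigl(T_2 v(s_2)\bigr).
\]
On the $\mathcal{O}'$ side, the single new generator $z_{b+1}$ (satisfying $\iota(z_{b+1}) = s_1 s_2$ and $\partial \mathbf{r}'/\partial z_{b+1} \mapsto W$) contributes
\[
-\Tr\bigl(u(W^{-1})\,v(s_1 s_2)\bigr) - \Tr\bigl(X_{b+1}\,v(s_1 s_2)\bigr),
\]
where $X_{b+1}\in\mathfrak{g}$ is the vector producing $u(s_1 s_2) = s_1 s_2\cdot X_{b+1} - X_{b+1}$ afforded by the parabolic condition for $\mathcal{S}$.

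Matching the two then reduces the proposition to the trace identity
\[
\Tr\bigl(u(s_1)\,v(s_2)\bigr) + \Tr\bigl(T_1\,v(s_1)\bigr) + \Tr\bigl(T_2\,v(s_2)\bigr) = \Tr\bigl(X_{b+1}\,v(s_1 s_2)\bigr).
\]
To establish this, one substitutes $u(s_i) = s_i\cdot T_i - T_i$ and $v(s_1 s_2) = s_1 s_2\cdot Y - Y$, decomposes $u(s_1 s_2) = (s_1\cdot T_1 - T_1) + s_1\cdot(s_2\cdot T_2 - T_2)$ to read off how $X_{b+1}$ relates to $T_1, T_2$ modulo the fixed subspace of $\Ad_{s_1 s_2}$, and then expands each trace using the order-$2$ relations $s_1^2 = s_2^2 = 1$ together with $\Ad$-invariance of the trace form. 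The main obstacle is exactly this computation: the auxiliary vectors $T_1, T_2, X_{b+1}$ (and the analogous $V_1, V_2, Y$ for $v$) are only defined up to the fixed subspaces in Lemma \ref{relll} and up to the kernel of $\Ad_{s_1 s_2} - 1$, so the identity must be verified in a manifestly choice-independent way; the cancellation comes from the fact that the ambiguity in each $T_i$ is absorbed by the corresponding ambiguity of $X_{b+1}$, as forced by the identity $u(s_1 s_2) = u(s_1) + s_1\cdot u(s_2)$. Once this trace identity is checked, the equality $\omega^{\mathcal{O}}([u],[v]) = \omega^{\mathcal{O}'}(\iota^*[u], \iota^*[v])$ follows.
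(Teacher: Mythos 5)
Your reduction is sound and follows essentially the same route as the paper: the paper decomposes the relative fundamental cycle as $\mathfrak{c}=\iota_*\mathfrak{c}'-\llbracket \mathbf{r}|s_1s_2\rrbracket+\llbracket \mathbf{r}|s_1\rrbracket+\llbracket \mathbf{r}s_1|s_2\rrbracket-\tfrac{1}{2}\llbracket s_1|s_1\rrbracket-\tfrac{1}{2}\llbracket s_2|s_2\rrbracket$ and evaluates $u\cup v$ against it, which amounts to exactly your term-by-term comparison of the two explicit formulas; the shared-generator terms do cancel as you claim, and your residual trace identity is the correct remaining obligation. The gap is that you never actually establish that identity: you name it as ``the main obstacle,'' sketch a substitution strategy, and assert that the ambiguities in $T_1,T_2,X_{b+1}$ absorb one another. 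As written this is not a proof, and the route you propose is genuinely delicate, because the relation $u(s_1s_2)=u(s_1)+s_1\cdot u(s_2)$ only pins down $X_{b+1}$ in terms of $T_1,T_2$ modulo $\ker(\Ad_{\rho(s_1s_2)}-1)$, so the ``manifestly choice-independent'' expansion you describe needs an extra argument of the type in Lemma \ref{relll} before any cancellation can be read off.

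The missing idea that closes the gap cheaply --- and is what the paper does --- is to exploit the hypothesis $[u],[v]\in H^1_{\operatorname{par}}(\Gamma,\mathcal{S};\mathfrak{g})$ with $\langle s_1s_2\rangle\in\mathcal{S}$: since both sides of your trace identity are unchanged when $u$ or $v$ is modified by a coboundary (and are independent of the admissible choices of $T_i$ and $X_{b+1}$, again by the Lemma \ref{relll} argument), one may choose representatives with $u(s_1s_2)=v(s_1s_2)=0$. Then $X_{b+1}=0$, the cocycle relation together with $s_1^2=s_2^2=1$ forces $u(s_1)=u(s_2)$ and $v(s_1)=v(s_2)$, and one may take $T_i=-\tfrac{1}{2}u(s_i)$, so your identity collapses to
\[
\Tr\bigl(u(s_1)v(s_1)\bigr)-\tfrac{1}{2}\Tr\bigl(u(s_1)v(s_1)\bigr)-\tfrac{1}{2}\Tr\bigl(u(s_2)v(s_2)\bigr)=0,
\]
which is immediate. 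Adding this normalization step (or, alternatively, actually carrying out the general expansion you only outline) would complete your argument.
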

\begin{proof}
Throughout the proof, we will use the explicit formula for $\omega$ given in Theorem \ref{explicit} and conventions therein. In particular, the parabolic cocycles $u,v$ are regarded as members of $\Hom_\Gamma (\mathbf{B}_1,\mathfrak{g})$ with the property that, for each $i=1,2,\cdots, b$, $u(\llbracket z_i \rrbracket) = z_i \cdot X_i -X_i$ for some $X_i\in \mathfrak{g}$.

Recall that the map $\iota:\Gamma' \to \Gamma$ induces the equivariant chain map $\iota_*:\mathbf{B}_*(\Gamma') \to \mathbf{B}_*(\Gamma)$ defined by $\iota_*(\llbracket x | y \rrbracket) = \llbracket \iota(x) | \iota(y) \rrbracket$. Let $\mathbf{r}' := \prod_{i=1} ^g [x_i,y_i] \prod_{i=1} ^ {b+1} z_i \prod_{i=3}^c s_i$,  $\mathbf{r} = \prod_{i=1} ^g [x_i,y_i] \prod_{i=1} ^{b} z_i$, and $\mathbf{r}_i = s_i ^{r_i}$.  For the chain $\mathfrak{c}'\in \mathbf{B}_2(\Gamma')\otimes \R$ given as in (\ref{prefundamentalcycle}) of Lemma \ref{fundamentalclass} with respect to $\Gamma'$, we compute
\begin{align*}
\iota_* \mathfrak{c}' &= \sum_{i=1}^g \left( \left.\left\llbracket \iota\frac{\partial \mathbf{r}'}{\partial x_i} \right|\iota x_i \right\rrbracket+ \left.\left\llbracket \iota\frac{\partial \mathbf{r}'}{\partial y_i} \right| \iota y_i \right\rrbracket\right) +\sum_{i=1}^{b+1}  \left.\left\llbracket \iota\frac{\partial \mathbf{r}'}{\partial z_i} \right|\iota z_i \right\rrbracket \\ 
&\qquad\qquad +\sum_{i=3} ^ c\left.\left\llbracket \iota\frac{\partial \mathbf{r}'}{\partial s_i}\right|\iota s_i \right\rrbracket - \sum_{i=3} ^{c} \frac{1}{r_i} \left.\left\llbracket \iota\frac{\partial \mathbf{r}_i'}{\partial s_i}\right| \iota s_i \right\rrbracket\\
&= \sum_{i=1}^g \left( \left.\left\llbracket \frac{\partial \mathbf{r}}{\partial x_i} \right| x_i \right\rrbracket+ \left.\left\llbracket \frac{\partial \mathbf{r}}{\partial y_i} \right|  y_i \right\rrbracket\right)+ \sum_{i=1}^{b}  \left.\left\llbracket \frac{\partial \mathbf{r}}{\partial z_i} \right| z_i \right\rrbracket +  \left.\left\llbracket \mathbf{r} \right| s_1s_2 \right\rrbracket\\ 
&\qquad\qquad +\sum_{i=3} ^ c\left.\left\llbracket \iota \frac{\partial \mathbf{r}'}{\partial s_i}\right| s_i \right\rrbracket - \sum_{i=3} ^{c} \frac{1}{r_i} \left.\left\llbracket \frac{\partial \mathbf{r}_i}{\partial s_i}\right|  s_i \right\rrbracket\\
&=\mathfrak{c}  +  \left.\left\llbracket \mathbf{r} \right| s_1s_2 \right\rrbracket - \left.\left\llbracket \mathbf{r}\right| s_1 \right\rrbracket-\left.\left\llbracket  \mathbf{r}s_1\right| s_2 \right\rrbracket+ \frac{1}{r_1} \left.\left\llbracket \frac{\partial \mathbf{r}_1}{\partial s_1}\right|  s_1 \right\rrbracket+\frac{1}{r_2} \left.\left\llbracket \frac{\partial \mathbf{r}_2}{\partial s_2}\right|  s_2 \right\rrbracket.
\end{align*}
Since $r_1=r_2=2$, we know that the last two terms become
\[
 \frac{1}{r_1} \left.\left\llbracket \frac{\partial \mathbf{r}_1}{\partial s_1}\right|  s_1 \right\rrbracket+\frac{1}{r_2} \left.\left\llbracket \frac{\partial \mathbf{r}_2}{\partial s_2}\right|  s_2 \right\rrbracket= \frac{1}{2} \llbracket s_1|s_2 \rrbracket + \frac{1}{2}\llbracket s_2|s_2\rrbracket.
\]
This shows that the chain $\mathfrak{c}\in \mathbf{B}_2(\Gamma) \otimes \R$ given in (\ref{prefundamentalcycle}) with respect to $\Gamma$ can be  written as
\[
\mathfrak{c}=\iota_*\mathfrak{c}'-\llbracket \mathbf{r}|s_1s_2\rrbracket+\llbracket \mathbf{r}|s_1\rrbracket+\llbracket \mathbf{r}s_1|s_2\rrbracket-\frac{1}{2}\llbracket s_1|s_1\rrbracket - \frac{1}{2}\llbracket s_2|s_2\rrbracket.
\]

Since $[u],[v]\in H^1_{\operatorname{par}}(\Gamma,\mathcal{S};\mathfrak{g})$, one can choose representatives $u,v$ of $[u], [v]$ so that $u(s_1s_2)=v(s_1s_2)=0$. Then by the cocycle condition $v(s_1s_2) = v(s_1) + s_1 \cdot v(s_2)$, we have
\[
\Tr(u(\mathbf{r}) \mathbf{r}\cdot v(s_1s_2)) = \Tr (u(\mathbf{r})\mathbf{r}\cdot v(s_1))+\Tr(u(\mathbf{r}) (\mathbf{r}s_1)\cdot v(s_2))
\]
and
\[
\Tr(u(\mathbf{r}s_1) (\mathbf{r}s_1)\cdot v(s_2)) = \Tr((u(\mathbf{r})+\mathbf{r}\cdot u(s_1)) (\mathbf{r}s_1) \cdot v(s_2)). 
\]
Therefore we get
\begin{multline*}
\langle u\cup v ,\mathfrak{c}\rangle = \langle u \cup v,\iota_* \mathfrak{c}'\rangle\\+\Tr(u(s_1) s_1 \cdot v(s_2))-\frac{1}{2}\Tr(u(s_1)s_1\cdot v(s_1))-\frac{1}{2}\Tr(u(s_2)s_2\cdot v(s_2)).
\end{multline*}
Since $u(s_1s_2)=0$, we know that $0=u(s_1) + s_1 \cdot u(s_2)$. Because $s_1 ^2=1$, we can deduce $u(s_2) = -s_1\cdot u(s_1) = u(s_1)$. Similarly, we get $v(s_1) = v(s_2)$. Hence it follows that
\begin{align*}
\Tr(u(s_1) s_1 \cdot  v(& s_2)) -\frac{1}{2}\Tr(u(s_1)s_1\cdot v(s_1))-\frac{1}{2}\Tr(u(s_2)s_2\cdot v(s_2))\\
&=-\Tr(u(s_1)  v(s_2))+\frac{1}{2}\Tr(u(s_1) v(s_1))+\frac{1}{2}\Tr(u(s_2) v(s_2))\\
&=-\Tr(u(s_1)  v(s_1))+\frac{1}{2}\Tr(u(s_1) v(s_1))+\frac{1}{2}\Tr(u(s_1) v(s_1))\\
&=0.
\end{align*}
Finally, our choice of cocycles $u,v$ yields that $X_{b+1}=0$. Combining all, we get
\begin{align*}
\omega^{\mathcal{O}} ([u],[v]) &= \langle u \cup v , \iota_*\mathfrak{c}'\rangle +\Tr (X_{b+1} v(z_{b+1})) -\sum_{i=1} ^{b+1} \Tr (X_i v(z_i))\\
&=\omega^{\mathcal{O}'}(\iota^*[u], \iota^*[v]).
\end{align*}
as desired.
\end{proof}

We unify the local decomposition theorem in the full generality. Let $\mathcal{O}$ be a compact oriented 2-orbifold of negative Euler characteristic. Suppose that we are given   a collection $\{\xi_1, \cdots, \xi_m\}$ of pairwise disjoint essential simple closed curves or full 1-suborbifolds such that the completions of the connected components $\mathcal{O}_1, \cdots, \mathcal{O}_l$ of $\mathcal{O}\setminus \bigcup_{i=1} ^m  \xi_i$ have negative Euler characteristic. As before, we denote $\Gamma:= \pi_1 ^{\operatorname{orb}}(\mathcal{O})$ and $\Gamma_i := \pi_1 ^{\operatorname{orb}}(\mathcal{O}_i)$.

Let $\mathcal{S}=\per{\Gamma}\cup\{\langle \xi_1\rangle, \cdots, \langle \xi_m\rangle\}$. Here we should clarify what we mean by  $\langle \xi_i \rangle$ when $\xi_i$ is a full 1-suborbifold. Suppose that the lift $\widetilde{\xi_i}$ of $\xi_i$ in the universal cover is joining the fixed points of the corresponding generators in $\pi_1 ^{\operatorname{orb}}(\mathcal{O})$, say, $s_1$ and $s_2$. Then $\langle \xi_i\rangle$ is, by convention, $\langle s_1 s_2\rangle$.

For each $i$, there is the inclusion $\iota_i: \Gamma_i \to \Gamma$ which induces $\iota^*_i :H^1_{\operatorname{par}}(\Gamma, \mathcal{S} ; \mathfrak{g}) \to H^1_{\operatorname{par}}(\Gamma_i, \per{\Gamma_i} ;\mathfrak{g})$.

\begin{theorem}\label{localdecompmain}
Let $\mathcal{O}$ be a compact oriented 2-orbifold of negative Euler characteristic and $\Gamma=\pi_1^{\operatorname{orb}}(\mathcal{O})$. Let $\{\xi_1, \cdots, \xi_m\}$ be a set of pairwise disjoint essential simple closed curves or full 1-suborbifolds such that the completions of the connected components $\mathcal{O}_1, \cdots, \mathcal{O}_l$ of $\mathcal{O}\setminus \bigcup_{i=1} ^m  \xi_i$ have negative Euler characteristic. Let $[\rho]\in \Rep_n ^{\mathscr{B}}(\Gamma)$ be such that, for each $i$, $[\rho\circ \iota_i ]\in \Rep_n ^{\mathscr{B}_i}(\Gamma_i)$ for some choice of conjugacy classes $\mathscr{B}_i$.   Then  for any $[u],[v]\in H^1_{\operatorname{par}}(\Gamma, \mathcal{S};\mathfrak{g})$ we have
\[
\omega^{\mathcal{O}}([u],[v]) = \sum_{i=1} ^{l} \omega^{\mathcal{O}_i}(\iota_i^* [u], \iota_i ^* [v]). 
\]
\end{theorem}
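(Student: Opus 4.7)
The plan is an induction on $m$, the number of essential simple closed curves and full $1$-suborbifolds in the splitting family. For $m=1$ the statement splits into two subcases. If $\xi_1$ is a full $1$-suborbifold, the result is Proposition \ref{sum}, understood component-by-component when $\mathcal{O}\setminus \xi_1$ is disconnected (so that $\omega^{\mathcal{O}'}$ is interpreted as the sum $\omega^{\mathcal{O}_1}\oplus\omega^{\mathcal{O}_2}$). If $\xi_1$ is an essential simple closed curve, the statement is the orbifold counterpart of \cite[Theorem 4.5.7]{choi2019}; since every cone point sits in the interior of some $\mathcal{O}_i$ and is invisible to $\xi_1$, the proof in loc.\ cit., which rests only on the decomposition of the relative fundamental cycle and the cancellation of peripheral trace terms, goes through verbatim after replacing the surface fundamental cycle of \cite{choi2019} by the orbifold fundamental cycle $\mathfrak{c}$ of Lemma \ref{fundamentalclass}.

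For the inductive step, assume the theorem for every family of size at most $m-1$. Let $\mathcal{O}'$ be the completion of $\mathcal{O}\setminus \xi_m$ and let $\mathcal{S}':=\partial\pi_1^{\operatorname{orb}}(\mathcal{O}')\cup\{\langle\xi_1\rangle,\ldots,\langle\xi_{m-1}\rangle\}$. By Lemma \ref{funtorialinj}, the restriction $\iota_m^*\colon H^1_{\operatorname{par}}(\Gamma,\mathcal{S};\mathfrak{g}) \to H^1_{\operatorname{par}}(\pi_1^{\operatorname{orb}}(\mathcal{O}'),\mathcal{S}';\mathfrak{g})$ is well-defined and injective, so the base case applied to the single splitting $\mathcal{O}\rightsquigarrow \mathcal{O}'$ along $\xi_m$ gives
\[
\omega^{\mathcal{O}}([u],[v]) \;=\; \omega^{\mathcal{O}'}\bigl(\iota_m^*[u],\iota_m^*[v]\bigr),
\]
where $\omega^{\mathcal{O}'}$ is the sum of the symplectic forms on the connected components of $\mathcal{O}'$. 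Invoking the inductive hypothesis on each connected component, equipped with the inherited family of cuts $\{\xi_1,\ldots,\xi_{m-1}\}$, converts the right-hand side into $\sum_{i=1}^{l}\omega^{\mathcal{O}_i}(\iota_i^*[u],\iota_i^*[v])$, completing the induction.

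The main obstacle, which also explains why the parabolic hypothesis is indispensable, is the coherent choice of cocycle representatives. At each step the base-case argument (Proposition \ref{sum} for a full $1$-suborbifold, its simple-closed-curve analog otherwise) depends on selecting $u,v$ with $u(\xi_m)=v(\xi_m)=0$, because only then do the correction trace terms at the endpoints of $\xi_m$ and the peripheral contribution $\Tr(X v(\xi_m))$ cancel, so that the fundamental cycle $\mathfrak{c}$ of $\mathcal{O}$ effectively reduces to $\iota_*\mathfrak{c}'$. Since $\mathcal{S}=\partial\Gamma\cup\{\langle\xi_1\rangle,\ldots,\langle\xi_m\rangle\}$ contains \emph{all} the curve/suborbifold classes at once, any representative of a class in $H^1_{\operatorname{par}}(\Gamma,\mathcal{S};\mathfrak{g})$ can be normalized to vanish on every $\xi_j$ simultaneously; hence the normalizations required at successive steps of the induction are compatible, and the correction $2$-chains produced at each cut contribute zero to $\omega^{\mathcal{O}}$ by exactly the mechanism of Proposition \ref{sum}.
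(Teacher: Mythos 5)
Your argument is correct and essentially the paper's: the paper likewise combines the orbifold extension of the simple-closed-curve decomposition of \cite{choi2019} (it invokes the family version, Corollary 4.4.3 there, where you iterate the single-curve case) with Proposition \ref{sum} applied inductively to the full $1$-suborbifolds, so the two proofs differ only in how the induction is organized. One small caveat: your closing claim that a representative can be normalized to vanish on \emph{all} the $\xi_j$ simultaneously is stronger than needed and not obviously available (each normalization subtracts a different coboundary); fortunately it is not load-bearing, since at each inductive step one only needs a representative vanishing on the current cut, while parabolicity of the restricted classes follows directly from the characterization of $H^1_{\operatorname{par}}$ as the kernel of $H^1(\Gamma;\mathfrak{g})\to\bigoplus_i H^1(\Gamma_i;\mathfrak{g})$.
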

\begin{remark}
Again, $[u]$ and  $[v]$ should be understood (by Lemma \ref{funtorialinj}) as members of $H^1_{\operatorname{par}}(\Gamma,\per{\Gamma};\mathfrak{g})$  so that the left hand side $\omega^{\mathcal{O}}([u],[v]) $ makes sense. 
\end{remark}
\begin{proof}
Renumbering if necessary, we may assume that $\xi_1, \cdots, \xi_k$ represent the essential simple closed curves and remaining $\xi_{k+1}, \cdots, \xi_m$ are the full 1-suborbifolds. Then, we adopt the proof of  Corollary 4.4.3 of \cite{choi2019}. Although Corollary 4.4.3 of \cite{choi2019} is stated for compact surfaces, the same assertion and the proof work for compact orbifolds. Then, we use Proposition \ref{sum} inductively for each full 1-suborbifold $\xi_i$, $i\ge k+1$. 
\end{proof}

\subsection{Hamiltonian action and global decomposition}
Now we deal with the global decomposition theorem. Throughout this subsection we focus on $n=3$ case only. Unless otherwise stated,  $G$ denotes $\PSL_3(\R)$.

We briefly summarize Goldman's work on Hamiltonian flows. By an \emph{invariant function}, we mean a smooth function $f: U \to \R$ defined on an nonempty conjugation invariant subset $U$ of $G$ such that $f(ghg^{-1})= f(h)$ for all $h\in U$ and $g\in G$.  Given an invariant function $f$ we can associate so call the \emph{Goldman function} $f^{\#}:U \to \mathfrak{g}$ whose defining property is that $\frac{\der}{\der t}|_{t=0} f(h \exp tX)=\Tr (f^{\#}(h)X)$ for all $X\in \mathfrak{g}$.

We fix the invariant functions $\ell$ and $\mathbf{m}$ defined on the set $\mathbf{Hyp}^+$ of hyperbolic elements of $G$. 
\[
\mathbf{\ell} (g) := \log \left|\frac{\lambda_1(g)}{\lambda_3(g)}\right|,\quad \mathbf{m}(g):=\log |\lambda_2(g)|
\]
where $\lambda_i(g)$ is the $i$th largest eigenvalue of $g\in \mathbf{Hyp}^+$. We can observe that $(\ell,\mathbf{m})$ is the set of complete invariants for elements in $\mathbf{Hyp}^+$. Namely, two elements $g,h\in \mathbf{Hyp}^+$ are conjugate if and only if $(\ell(g), \mathbf{m}(g))=(\ell(h), \mathbf{m}(h))$.

Let $\xi_1, \cdots, \xi_{m_0}$ be essential simple closed curves and let $\xi_{m_0+1}, \cdots, \xi_{m_0+m_e}$ be full 1-suborbifolds. Set $m=m_0+m_e$ and $M=2m_0+m_e$. Suppose that $\xi_1, \cdots, \xi_m$ are mutually disjoint, non-isotopic and each component of $\mathcal{O}\setminus \bigcup_{i=1} ^m \xi_i$ has negative Euler characteristic. 

We apply the argument of Choi-Jung-Kim \cite{choi2019} with respect to the closed curves $\xi_1, \cdots, \xi_{m_0}$. This process identifies $\pi_1 ^{\operatorname{orb}}(\mathcal{O})$ with the fundamental group of a graph of groups $(\Gamma, \mathcal{G})$ where $\Gamma=\pi_1 ^{\operatorname{orb}}(O)$.  The graph $\mathcal{G}$ has the vertex set $V(\mathcal{G}):=\{\mathcal{O}_1, \cdots, \mathcal{O}_l\}$ and we join $\mathcal{O}_i$ and $\mathcal{O}_j$, possibly $i=j$, by an edge if and only if they are glued along some $\xi_k$. We choose a base vertex say $\mathcal{O}_1$ and a maximal spanning tree $\mathcal{T}$ of $\mathcal{G}$. Then $V(\mathcal{G})$ admits the natural partial ordering $\le$ with  the minimal element $\mathcal{O}_1$. The vertex group $\Gamma_{\mathcal{O}_i}$ of $\mathcal{O}_i$ is isomorphic to $\pi_1 ^{\operatorname{orb}}(\mathcal{O}_i)$ and the edge group $\Gamma_{\xi_i}$ of $\xi_i$ is the infinite cyclic generated by the abstract generator $e_i$. If $\xi_i$, $i\le m_0$, is a loop in $\mathcal{G}$, then we a need extra generator $e_i ^\perp$. For each edge $\xi_i$, we have two homomorphisms $(\cdot)^\pm:\Gamma_{\xi_i} \to \Gamma_{(\xi_i)^\pm}$ where $(\xi_i)^+$ and $(\xi_i)^-$ are the terminal and the initial vertex of $\xi_i$ respectively. All these generators are subject to the following relations
\begin{equation}\label{rel1}
e_i^+ = e_i ^-
\end{equation}
and
\begin{equation}\label{rel2}
e_i^ \perp e_i ^+ (e_i^\perp)^{-1} = e_i^-. 
\end{equation}
Since $\Gamma_{\mathcal{O}_j}$ is isomorphic to $\pi_1 ^{\operatorname{orb}}(\mathcal{O}_j)$, we can choose a presentation of each $\Gamma_{\mathcal{O}_i}$
\begin{multline*}
\Gamma_{\mathcal{O}_j} = \langle x_{j,1}, y _{j,1}, \cdots, x_{j, g_j}, y_{j, g_j}, z_{j,1}, \cdots, z_{j,b_j}, s_{j,1}, \cdots, s_{j, c_j}\,|\, \\\prod_{i=1} ^{g_j} [x_{j,i},y_{j,i}]\prod_{i=1} ^{b_j} z_{j,i} \prod_{i=1} ^{c_j } s_{j,i}= s_{j,1}^{r_{j,1}}=\cdots s_{j,c_j} ^{r_{j, c_j}}=1\rangle
\end{multline*}
such that
\begin{itemize}
\item the homomorphisms $\Gamma_{\xi_a}\to \Gamma_{\xi_a ^\pm}$ at an edge $\xi_a$ connecting $\mathcal{O}_p< \mathcal{O}_q$ are given by $e_a^+ =  z_{q,i}$ and $e_a ^- = z_{p,j}^{-1}$  for some $i,j$. 
\item if $\xi_i$ is in $\mathcal{O}_k$ then the lift of $\xi_i$ in the universal cover joins the fixed points of $s_{k,i}$ and $s_{k,i+1}$ for some $i$. 
\end{itemize}
To make our notation consistent, we formally define $e_i^+$, $i>m_0$,  to be $z_{k,b_k+1}^{-1}=s_{k,2}s_{k,1}$ if $\xi_i$ is a full 1-suborbifold in $\mathcal{O}_k$ whose lift in the universal cover joins the fixed points of $s_{k,1}$ and $s_{k,2}$.

The invariant functions $\mathbf{\ell}$ and $\mathbf{m}$ induce the smooth functions $\ell_i,\mathbf{m}_i$ on $\mathcal{C}^{\mathscr{B}}(\mathcal{O})$ defined by $\mathbf{\ell}_i([\rho]) := \ell(\rho(\xi_i))$, $i=1,2,\cdots,m$ and  $\mathbf{m}_i([\rho]):=\mathbf{m}(\rho(\xi_i))$ for each $i=1,2,\cdots, m_0$.  Now following Goldman \cite[Theorem 4.3]{goldman1986} and Choi-Jung-Kim \cite[Theorem 4.5.6]{choi2019}, we define the flows $L_t ^i$ and $M_t ^i$ along $\xi_i$ for $i=1,2,\cdots, m_0$. The flow along the principal full 1-suborbifold $\xi_i$, $i>m_0$ is something new. For readers' convenience, we present their definitions.

\emph{-- Case I: $\xi_i$ is a simple closed curve and is in $\mathcal{T}$. } Suppose that $\xi_i$ joins $\mathcal{O}_p$ and $\mathcal{O}_q$, $\mathcal{O}_p<\mathcal{O}_q$. Then for each $t\in \R$, we define $L_t ^i$ by 
\[
L^i _t ([\rho])(x) =\begin{cases}
\rho(x) & \text{if } x\in \Gamma_{\mathcal{O}_r}, \mathcal{O}_r \not\ge \mathcal{O}_q \\
\exp( t \ell^{\#}(\rho(e_i^+))) \rho(x) \exp (-t \ell^{\#} (\rho(e_i^+))) & \text{if } x\in \Gamma_{\mathcal{O}_{r}}, \mathcal{O}_r \ge \mathcal{O}_q\\
\exp( t \ell^{\#}(\rho(e_i^+))) \rho(x) \exp (-t \ell^{\#} (\rho(e_i^+))) & \text{if } x=e_r^\perp \text{ and } \xi_r ^+ , \xi_r ^- \ge \mathcal{O}_q\\
\rho(x) \exp( -t \ell^{\#}(\rho(e_i^+))) & \text{if }x=e_r^{\perp} \text{ and } \xi_r ^+ \ge \mathcal{O}_q>\xi^- _r\\
\exp( t \ell^{\#}(\rho(e_i^+)))  \rho(x) & \text{if }x=e_r ^{\perp}\text{ and } \xi_r ^- \ge \mathcal{O}_q>\xi^+ _r
\end{cases}
\]
and similarly $M^i _t$ with $\ell^{\#}$ replaced by $\mathbf{m}^{\#}$. Note that $L^i _t([\rho])$ respects the relations (\ref{rel1}) and (\ref{rel2}).

\emph{-- Case II: $\xi_i$ is a simple closed curve and is not in $\mathcal{T}$. }This case we have simpler expressions
\[
L^i _t ([\rho])(x) = \begin{cases}
 \rho(x)  \exp( t \ell^{\#}(\rho(e_i^+)))  & \text{if }x=e_i^\perp \\
 \rho(x) & \text{otherwise }
\end{cases}
\]
and again similarly $M^i _t$. 

\emph{-- Case III: $\xi_i$ is  a  full 1-suborbifold. }  In this case, we only have flows of the type $L_t ^i$. Suppose that the lift of $\xi_i$ in the universal cover of $\mathcal{O}_j$ joins fixed points of order two generators $s_{j,1}$ and $s_{j,2}$. Then we understand the word $z_{j,b_j+1}$ as $s_{j,1}s_{j,2}$. For each $t\in\R$, we define
\[
L^i _t ([\rho])(x) = \begin{cases}
 \exp( t \ell^{\#}(\rho(e_j^+)))   \rho(x)  \exp( - t \ell^{\#}(\rho(e_j^+)))  & \text{if }x=s_{j,1},s_{j,2} \\
 \rho(x) & \text{otherwise }. 
\end{cases}
\]

\begin{lemma}
The flows $L^i_t$, $M^i_t$ defined above are Hamiltonian, complete and commute each other. Consequently, they give rise to the Hamiltonian action of the abelian Lie group $\R^M$.  
\end{lemma}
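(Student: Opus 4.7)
The plan is to treat the three cases separately, reducing the novel Case III to known results via the local decomposition theorem, and then to verify completeness and commutativity by inspection of the explicit formulas.

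First I would handle Cases I and II. The formulas defining $L^i_t$ and $M^i_t$ when $\xi_i$ is an essential simple closed curve are exactly the ones introduced by Goldman \cite{goldman1986} and reformulated in \cite[Theorem 4.5.6]{choi2019}. Goldman's generalized twist-flow calculation shows these flows are the Hamiltonian flows of $\ell_i$ and $\mathbf{m}_i$ with respect to the Atiyah--Bott--Goldman form on the character variety $\Rep_3^{\mathscr{B}}(\Gamma)$. Because $\mathcal{C}^{\mathscr{B}}(\mathcal{O})$ is an open symplectic subspace of $\Rep_3^{\mathscr{B}}(\Gamma)$ (Theorem~\ref{pdpairing} together with Theorem~\ref{ET}) and the flows preserve the convexity of the developing image (as in \cite{choi2019}), the restricted flows remain Hamiltonian on $\mathcal{C}^{\mathscr{B}}(\mathcal{O})$.

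The main new input is Case III, where $\xi_i$ is a principal full $1$-suborbifold joining two order-two cone points. My plan is to reduce to Case I via the splitting operation along $\xi_i$. Let $\mathcal{O}' = \mathcal{O}\setminus\xi_i$ with fundamental group $\Gamma'$ and inclusion $\iota\colon \Gamma'\to\Gamma$. Under $\iota$, the element $s_{j,1}s_{j,2}$ corresponds to a boundary curve $z_{j,b_j+1}$ of $\mathcal{O}'$, and the restriction map sends $\mathcal{C}^{\mathscr{B}}(\mathcal{O})$ into a union of strata $\mathcal{C}^{\mathscr{B}'}(\mathcal{O}')$. On $\mathcal{O}'$, the invariant function $\ell_i$ coincides with $\ell\circ \rho\circ\iota$ evaluated on $z_{j,b_j+1}$, which is a peripheral element. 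The flow $L^i_t$ defined in Case III is precisely the lift to $\Gamma$ of the boundary-twist flow on $\Gamma'$ associated to $\ell$ and $z_{j,b_j+1}$: it conjugates the generators $s_{j,1}, s_{j,2}$ simultaneously, hence preserves their products $z_{j,b_j+1} = s_{j,2}s_{j,1}$ and the order-two relations $s_{j,1}^2=s_{j,2}^2=1$. Combining this with Proposition~\ref{sum} (applied to variations $[u] = \frac{d}{dt}|_{t=0}L^i_t$ and arbitrary $[v]$ lying in $H^1_{\operatorname{par}}(\Gamma,\mathcal{S};\mathfrak{g})$) identifies the contraction $\iota_{\Ham{\ell_i}}\omega^{\mathcal{O}}$ with the analogous contraction on $\mathcal{O}'$, from which $\der \ell_i = \iota_{\Ham{\ell_i}}\omega^{\mathcal{O}}$ on the dense subspace of such $[v]$, and then on all of $T\mathcal{C}^{\mathscr{B}}(\mathcal{O})$ by continuity.

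Completeness in each case is immediate from the fact that $\exp(t\ell^{\#}(\rho(e_i^{+})))$ and $\exp(t\mathbf{m}^{\#}(\rho(e_i^{+})))$ are defined for all $t\in\R$, so the formulas produce representations varying smoothly on $\R$; that the resulting representation still lies in $\mathcal{C}^{\mathscr{B}}(\mathcal{O})$ follows because the conjugations preserve $\mathscr{B}$ and do not alter convexity along a principal geodesic or principal full $1$-suborbifold (this is the content of the pasting/folding discussion in section~\ref{splittingpasting}). For commutativity, since $\xi_1,\dots,\xi_m$ are pairwise disjoint, the supports of the flows $L^i_t, M^i_t$ (as prescriptions on generators of the graph-of-groups presentation) act nontrivially only on generators belonging to distinct vertex subgroups $\Gamma_{\mathcal{O}_k}$ lying in comparable cones of the partial order, and the elements $\ell^{\#}(\rho(e_i^{+})), \mathbf{m}^{\#}(\rho(e_i^{+}))$ commute since $\ell$ and $\mathbf{m}$ are invariant functions of the same hyperbolic element. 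A direct check, identical to the one in \cite[Theorem~4.5.6]{choi2019}, confirms that each pair of flows commutes.

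The hard part is Case III; specifically, verifying that the prescription only on the two order-two generators yields a genuine element of $\Hom(\Gamma,G)$ and that its derivative represents the Hamiltonian vector field of $\ell_i$. The cleanest path is the reduction to $\mathcal{O}'$ described above, since on $\mathcal{O}'$ the flow is manifestly the Goldman twist along a boundary component and Proposition~\ref{sum} transports the Hamiltonian identity back to $\mathcal{O}$.
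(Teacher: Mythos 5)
Your Case III argument has a genuine gap. The derivative $u$ at $t=0$ of $t\mapsto L^i_t[\rho]$ is a cocycle vanishing on every generator except $s_{j,1},s_{j,2}$, and one computes $u(s_{j,1}s_{j,2})=X-(s_{j,1}s_{j,2})\cdot X=0$ because $X=\ell^{\#}(\rho(e_i^+))$ commutes with $\rho(s_{j,1}s_{j,2})$; hence $\iota^*[u]=0$ in $H^1_{\operatorname{par}}(\Gamma',\partial\Gamma';\mathfrak{g})$. Proposition \ref{sum} therefore only yields $\omega^{\mathcal{O}}([u],[v])=0$ for $[v]\in H^1_{\operatorname{par}}(\Gamma,\mathcal{S};\mathfrak{g})$, which is consistent with but does not establish the Hamiltonian identity $\omega^{\mathcal{O}}([u],\cdot)=\der\ell_i$, since $\der\ell_i$ also vanishes on that subspace. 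And $H^1_{\operatorname{par}}(\Gamma,\mathcal{S};\mathfrak{g})$ is \emph{not} dense in $T_{[\rho]}\mathcal{C}^{\mathscr{B}}(\mathcal{O})\cong H^1_{\operatorname{par}}(\Gamma,\partial\Gamma;\mathfrak{g})$: by the proof of Lemma \ref{proper1} it is the tangent space to the level set $\mu^{-1}(y)$, a proper linear subspace of codimension $M$ in a finite-dimensional space. So the ``by continuity'' step fails precisely in the directions transverse to the level set, which is where the entire content of ``$L^i_t$ is the Hamiltonian flow of $\ell_i$'' lives. A milder version of the same objection applies to your Cases I and II: Goldman's twist-flow computation is for surface groups, whereas $\omega^{\mathcal{O}}$ (Theorem \ref{explicit}) carries extra cone-point terms ($T_i$ and the $\partial\mathbf{r}/\partial s_i$ contributions), so it cannot be cited verbatim for $\Rep_3^{\mathscr{B}}(\Gamma)$ with $\Gamma$ an orbifold group.

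The paper closes both gaps at once with the symplectic embedding $\mathcal{I}\colon\Rep_3^{\mathscr{B}}(\Gamma)\to\Rep_3^{\mathscr{B}'}(\pi_1(X'_{\mathcal{O}}))$ of Lemma \ref{smooth} and Proposition \ref{symplecticembedding}: $\mathcal{I}^*$ of the surface symplectic form is $\omega^{\mathcal{O}}$, $\mathcal{I}$ intertwines the flows (in Case III the flow corresponds on $X'_{\mathcal{O}}$ to the ordinary twist along the simple closed curve representing $s_{j,1}s_{j,2}$, which encircles the two punctures coming from the order-two cone points), and $\ell_i,\mathbf{m}_i$ factor through $\mathcal{I}$; the Hamiltonian identity and commutativity then transfer from \cite[Theorem 4.5.6]{choi2019} with no density argument. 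If you wish to avoid the embedding, you must verify $\omega^{\mathcal{O}}([u],[v])=\der\ell_i([v])$ directly from Theorem \ref{explicit} for $[v]$ transverse to $\mu^{-1}(y)$. Your completeness and commutativity remarks are essentially right in spirit, but note that completeness is not ``immediate from the formulas'': the nontrivial point is that the deformed holonomy remains in $\mathcal{C}^{\mathscr{B}}(\mathcal{O})$ (convexity is preserved for all $t$), which is exactly what the paper cites Choi--Goldman \cite{choi2005} for.
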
 
\begin{proof}
Let $X_{\mathcal{O}}'$ be the surface obtained by removing all singular points from the underlying space $X_{\mathcal{O}}$ of $\mathcal{O}$. We have the natural inclusion $f: X_{\mathcal{O}}' \to X_{\mathcal{O}}$. Let $\xi_i ' := f^{-1}(\xi_i)$ for $i=1,2,\cdots, m_0$. For $i=m_0+1,\cdots,m$, consider a small tubular neighborhood $N_i\subset X_{\mathcal{O}}$ of $\xi_i$ and $N_i'=f^{-1}(\overline{N_i})$. Topologically, $N'_i$ is a disk with two punctures. We define $\xi_i'$ to be the boundary of $N_i'$. Now, we can construct the flows ${L_t ^1}', \cdots, {L_t ^m}'$ and ${M_t ^1}', \cdots, {M_t ^{m_0}}'$ on $\Rep_3(\pi_1(X_{\mathcal{O}}'))$ with respect to $\xi_i'$. By \cite[Theorem 4.5.6]{choi2019}, ${L_t ^i}'$ and ${M_t ^j}' $ are Hamiltonian and commute. 

Recall that the embedding $\mathcal{I}$ defined in Lemma \ref{smooth} is symplectic. Moreover, one can observe that $\mathcal{I} \circ  L_t ^i = {L_t ^i}' \circ \mathcal{I}$ and $\mathcal{I} \circ M_t^j = {M_t ^j}'\circ \mathcal{I}$ for $i=1,2,\cdots, m$ and $j=1,2,\cdots,m_0$. Therefore, the flows  $L^1 _t , \cdots, L^m _t$, and $M^1 _t, \cdots, M^{m_0}_t$ are also Hamiltonian and commute.

The completeness of $L^1 _t , \cdots, L^m _t$, $M^1 _t, \cdots, M^{m_0}_t$ follows from section 3.4.1 and section 3.4.2 of Choi-Goldman \cite{choi2005}. 
\end{proof}

The moment map $\mu:\mathcal{C}^{\mathscr{B}}(\mathcal{O})\to (\R^{M})^*$ of this action is given by the invariants of $\rho(\xi_i)$, that is, 
\[
[\rho] \mapsto (\ell_1([\rho]), \cdots, \ell_{m}([\rho]),\mathbf{m}_{1} ([\rho]),\cdots, \mathbf{m}_{m_0}([\rho])).
\]
Here we identify $\R^{M}$ with its dual by the canonical inner product. We briefly explain why this is the case. Since this is an abelian Lie group action, it suffices to handle the action of a single component. The moment map of the total Lie group action is given by summing up all the moments maps of each component action. Hence, let us present the case of the flow $L_t ^1:\R \times \mathcal{C}^{\mathscr{B}}(\mathcal{O})\to \mathcal{C}^{\mathscr{B}}(\mathcal{O})$ only. It is known \cite[Theorem 4.5 and Theorem 4.7]{goldman1986} that the Hamiltonian function for the flow $L_t ^1$ is the function $\ell_1:\mathcal{C}^{\mathscr{B}}(\mathcal{O})\to \R$. We know that an element $x \in \R$ (regarded as the Lie algebra of $\R$) generates the flow $L_{x t} ^1$. Then, the associated Hamiltonian function will be $[\rho]\mapsto x \ell_1([\rho])$. Thus, if we identify the dual $\R ^*$ with $\R$ itself, the moment map $\mu: \mathcal{C}^{\mathscr{B}}(\mathcal{O})\to \R^*$ for this action can be simply written as $\mu([\rho]) = \ell_1 ([\rho])$.   See section 4.5 of \cite{choi2019} for details. 

\begin{lemma}\label{RMfree}
The $\R^{M}$-action is free. 
\end{lemma}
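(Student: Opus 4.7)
The plan is to suppose $\vec{t}=(t_1,\ldots,t_m,u_1,\ldots,u_{m_0})\in \R^M$ stabilizes some $[\rho]\in \mathcal{C}^{\mathscr{B}}(\mathcal{O})$ and to deduce $\vec{t}=0$. By definition, there exists $g\in \PSL_3(\R)$ with $g\rho'(x)g^{-1}=\rho(x)$ for every $x\in \Gamma$, where $\rho'$ denotes the representation obtained by applying the composed flow to $\rho$. I will argue in three steps: triviality of $g$, triviality of the cumulative conjugation at each vertex of a suitably refined graph of groups, and linear-algebraic isolation of each parameter.

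First I refine $(\Gamma,\mathcal{G})$ by splitting each vertex orbifold $\mathcal{O}_j$ along every full $1$-suborbifold inside it, obtaining a finer graph of groups $\mathcal{G}'$ with vertex orbifolds $\mathcal{O}_1',\ldots,\mathcal{O}_{l'}'$, all of negative Euler characteristic. Choose a leaf $\mathcal{O}_1'$ of $\mathcal{G}'$; by the splitting procedure in Section \ref{splittingpasting} the two order-$2$ generators at the endpoints of a full $1$-suborbifold disappear from the presentation of $\pi_1^{\operatorname{orb}}$ of each resulting piece, so $\Gamma_{\mathcal{O}_1'}$ can be arranged to contain no such generator. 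Consequently every flow acts as the identity on $\Gamma_{\mathcal{O}_1'}$: the closed-curve flows $L_t^i$, $M_u^i$ affect only vertices above $\mathcal{O}_1$ in $\mathcal{T}$, and the full-$1$-suborbifold flows only conjugate generators that have been deleted. Thus $g\rho(x)g^{-1}=\rho(x)$ on all of $\Gamma_{\mathcal{O}_1'}$, and since $\rho|_{\Gamma_{\mathcal{O}_1'}}$ is the holonomy of the induced convex projective structure on $\mathcal{O}_1'$, Lemma \ref{irreducible}(b) gives $g=1$. Inducting along the partial order on $V(\mathcal{G}')$, the conjugation $U_q$ relating $\rho'|_{\Gamma_{\mathcal{O}_q'}}$ to $\rho|_{\Gamma_{\mathcal{O}_q'}}$ is an ordered product of factors $\exp(t_i\ell^{\#}(\rho(\xi_i))+u_i\mathbf{m}^{\#}(\rho(\xi_i)))$ over the edges on the path from $\mathcal{O}_1'$ to $\mathcal{O}_q'$ in a spanning tree of $\mathcal{G}'$ (with $u_i$ absent when $\xi_i$ is a full $1$-suborbifold); the relation $U_q\rho(x)U_q^{-1}=\rho(x)$ on $\Gamma_{\mathcal{O}_q'}$ combined with Lemma \ref{irreducible}(b) forces $U_q=1$, and edges outside the spanning tree are handled by the analogous analysis applied to the image of $e_i^\perp$.

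Comparing $U_q$ with $U_p$ for an adjacent pair in the spanning tree isolates the identity $\exp(t_i\ell^{\#}(\rho(\xi_i))+u_i\mathbf{m}^{\#}(\rho(\xi_i)))=1$ for each edge $\xi_i$. Since $\rho(\xi_i)\in \mathbf{Hyp}^+$, its logarithm lies in a two-dimensional $\R$-split Cartan subalgebra $\mathfrak{a}_i\subset \mathfrak{g}$, and $\exp$ maps $\mathfrak{a}_i$ diffeomorphically onto the simply connected $\R$-split torus $\{\operatorname{diag}(e^{x_1},e^{x_2},e^{-x_1-x_2}):x_1,x_2\in\R\}$ through $\rho(\xi_i)$; in particular $\exp|_{\mathfrak{a}_i}$ is injective. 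The Goldman functions $\ell^{\#}(\rho(\xi_i))$ and $\mathbf{m}^{\#}(\rho(\xi_i))$ both lie in $\mathfrak{a}_i$ and are linearly independent because they compute gradients of the algebraically independent invariants $\log|\lambda_1/\lambda_3|$ and $\log|\lambda_2|$ on $\mathbf{Hyp}^+$, so $t_i=u_i=0$. The main obstacle is the existence of a base leaf $\mathcal{O}_1'$ whose vertex group is genuinely untouched by every flow (as opposed to merely conjugated by some local element), which is what forces the refinement by full $1$-suborbifolds in the first step; without this refinement the initial identity $g\rho(x)g^{-1}=\rho(x)$ holds only on a proper subgroup of $\Gamma_{\mathcal{O}_1}$ and one cannot immediately invoke irreducibility.
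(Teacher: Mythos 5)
Your argument is a self-contained direct verification, whereas the paper disposes of the lemma in two lines: for $m=1$ freeness is Choi--Goldman \cite[Proposition 3.11]{choi2005}, and the general case follows by induction on $m$. Your route is more informative --- it exhibits exactly where each parameter $t_i,u_i$ is detected --- and its skeleton (kill the global conjugator $g$ using the trivial centralizer of $\rho$ restricted to the minimal vertex group, which no flow touches; propagate $U_q=1$ up the spanning tree; then extract the parameters from $\exp\bigl(t_i\ell^{\#}+u_i\mathbf{m}^{\#}\bigr)=1$ via injectivity of $\exp$ on the split Cartan and linear independence of $\ell^{\#},\mathbf{m}^{\#}$) is sound for the simple-closed-curve edges, both the tree edges and the HNN edges handled through $e_i^{\perp}$. (Minor wording point: the base piece you need is the \emph{minimal} vertex of the partial order, not a leaf of $\mathcal{G}'$; a non-minimal leaf is conjugated by the product along its path.)

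The step that does not work as written is the full $1$-suborbifold case. Splitting $\mathcal{O}_j$ along such a $\xi_i$ does not disconnect it and does not produce an edge between two vertex groups to which Lemma \ref{irreducible}(b) applies: the generators $s_{j,1},s_{j,2}$ survive only inside an infinite dihedral piece $\langle s_{j,1}\rangle\ast\langle s_{j,2}\rangle$, and the centralizer of $\rho(\langle s_{j,1},s_{j,2}\rangle)$ in $\PSL_3(\R)$ is a positive-dimensional torus (matrices $\diag(a,b,a)$ in the eigenbasis of $\rho(s_{j,1}s_{j,2})$). So for these edges you can conclude neither ``$U_q=1$'' from centralizer triviality nor the identity $\exp(t_i\ell^{\#})=1$ that your final linear-algebra paragraph consumes. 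The patch is a short explicit computation: in the eigenbasis of $\rho(s_{j,1}s_{j,2})$ one has $V_i:=\exp\bigl(t_i\ell^{\#}(\rho(s_{j,1}s_{j,2}))\bigr)=\diag(e^{t_i},1,e^{-t_i})$, while $\rho(s_{j,1})$ conjugates $\rho(s_{j,1}s_{j,2})$ to its inverse and hence swaps its attracting and repelling eigenlines; the required relation $V_i\,\rho(s_{j,1})\,V_i^{-1}=\rho(s_{j,1})$ then forces $e^{t_i}=e^{-t_i}$, i.e.\ $t_i=0$. With that addition your proof is complete.
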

\begin{proof}
Suppose that $m=1$ (either $m_0=1$ or $m_e=1$). In view of Choi-Goldman \cite[Proposition 3.11]{choi2005}, we know that the $\R^M$-action is free. The lemma follows by induction on $m$. 
\end{proof}

Suppose that we are given a value $y$ in the image of the moment map $\mu$. We observe  two elements $g$ and $h$ in $\mathbf{Hyp}^+$ are conjugate if and only if $(\ell(g),\mathbf{m}(g))=(\ell(h),\mathbf{m}(h))$. Therefore, the value of $\mu$ determines the conjugacy classes of $\rho(e_i^+)$ say $C_i$. That is, $\mu^{-1}(y)$ is the set of classes of representations $[\rho]$ in $\mathcal{C}^{\mathscr{B}}(\mathcal{O})$ such that $\rho(e_i^+)\in C_i$ for each $i=1,2,\cdots, m$. This also defines a set of conjugacy class $\mathscr{B}_i$ for each $i=1,2,\cdots,l$ such that the image of the projection $\mathcal{C}^{\mathscr{B}}(\mathcal{O}) \to \mathcal{C}(\mathcal{O}_i)$ lies in $\mathcal{C}^{\mathscr{B}_i }(\mathcal{O}_i)$.

For each $y$ in the image of the moment map $\mu$, the splitting map $\mathcal{SP}$ induces the map $\mathcal{SP}_y: \mu^{-1}(y) \to \mathcal{C}^{\mathscr{B}_1}(\mathcal{O}_1) \times \cdots \times \mathcal{C}^{\mathscr{B}_l}(\mathcal{O}_l)$. Recall that, in terms of holonomy, $\mathcal{SP}_y$ is given by
\[
[\rho] \mapsto ([\rho\circ \iota_1], [\rho\circ \iota_2],\cdots, [\rho\circ \iota_l ]).
\]
We observe that $\mathcal{SP}_y$ descends to the map
\[
\mathcal{SP}'_y: \mu^{-1}(y) / \R^{M} \to  \mathcal{C}^{\mathscr{B}_1}(\mathcal{O}_1) \times \cdots \times \mathcal{C}^{\mathscr{B}_l}(\mathcal{O}_l). 
\]

\begin{lemma}\label{proper1}
For every $y\in \operatorname{Image}(\mu)$, the $\R^{M}$-action on $\mu^{-1}(y)$ is proper. 
\end{lemma}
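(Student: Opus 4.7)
The plan is to deduce properness of the orbifold $\R^M$-action from the corresponding properness of a larger Goldman $\R^{M'}$-action on the surface character variety $\Rep_3^{\mathscr{B}'}(\pi_1(X'_{\mathcal{O}}))$, established in our previous work \cite{choi2019}. The symplectic embedding $\mathcal{I}:\mathcal{C}^{\mathscr{B}}(\mathcal{O})\to\Rep_3^{\mathscr{B}'}(\pi_1(X'_{\mathcal{O}}))$ of Proposition \ref{symplecticembedding} will provide the bridge, with $M'=2(m_0+m_e)$.

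First I would identify each orbifold flow with a surface Goldman flow. For a simple closed curve $\xi_i$ with $i\le m_0$, the flows $L_t^i$ and $M_t^i$ are by construction the $\ell$- and $\mathbf{m}$-Goldman flows along $\xi_i$. For a full $1$-suborbifold $\xi_i$ with $i>m_0$, joining order-two cone points whose associated generators are $s_{j,1},s_{j,2}$, I would use the fact that the simple closed curve in $X'_{\mathcal{O}}$ homotopic to $s_{j,1}s_{j,2}$ bounds a three-holed disk containing exactly the two punctures corresponding to $s_{j,1},s_{j,2}$. With respect to a presentation of $\pi_1(X'_{\mathcal{O}})$ adapted to this decomposition, only $s_{j,1}$ and $s_{j,2}$ live on the disk side, so the Goldman $\ell$-twist flow along $s_{j,1}s_{j,2}$ conjugates exactly these two generators by $\exp(t\,\ell^{\#}(\rho(s_{j,1}s_{j,2})))$, matching the Case~III formula verbatim.

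Next I would check that $\mathcal{I}(\mathcal{C}^{\mathscr{B}}(\mathcal{O}))$ is invariant under this restricted $\R^M\subset \R^{M'}$ action. The only subtle case is Case~III: since $\ell^{\#}(\rho(s_{j,1}s_{j,2}))$ centralizes $\rho(s_{j,1}s_{j,2})$, the product $\rho(s_{j,1})\rho(s_{j,2})$ is unchanged; conjugation trivially preserves the torsion relations $s_{j,\alpha}^2=1$; hence the flow stays inside the orbifold submanifold. Then I would invoke the properness of the $\R^{M'}$-action on each level set of the surface moment map $\mu'$ from \cite{choi2019}. The level set $\mu^{-1}(y)$ satisfies $\mathcal{I}(\mu^{-1}(y))\subset (\mu')^{-1}(y')$ for a $y'$ determined by $y$ together with the invariants of the relevant $\rho(s_{j,1}s_{j,2})$. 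Because properness is inherited by restriction to closed subgroups and by restriction to invariant subspaces, the $\R^M$-action on $\mathcal{I}(\mu^{-1}(y))$ is proper, and the diffeomorphism $\mathcal{I}$ transports this back to properness of the $\R^M$-action on $\mu^{-1}(y)$.

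The hard part will be Step~1, the precise matching between the Case~III orbifold flow and the Goldman $\ell$-flow on the surface. This relies on the topological fact that $s_{j,1}s_{j,2}$ represents a separating simple closed curve in $X'_{\mathcal{O}}$ bounding a three-holed disk, together with a choice of compatible presentations so that the conjugation patterns line up exactly. Once this dictionary is in place the transfer of properness is formal.
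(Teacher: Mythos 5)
Your Step~1 dictionary (matching the Case~III flow with the Goldman $\ell$-twist along the separating curve $s_{j,1}s_{j,2}$ in $X'_{\mathcal{O}}$) and the invariance check are fine, but the key step of your transfer argument has a genuine gap: you propose to ``invoke the properness of the $\R^{M'}$-action on each level set of the surface moment map $\mu'$ from \cite{choi2019},'' and no such result is available on the locus where you need it. The properness statements in \cite{choi2019} concern the Hitchin component (equivalently, holonomies of convex projective structures on the surface), and the image $\mathcal{I}(\mathcal{C}^{\mathscr{B}}(\mathcal{O}))$ lies \emph{outside} that locus: as remarked after Lemma \ref{smooth}, every representation in the image is non-faithful on $\pi_1(X'_{\mathcal{O}})$ and not Anosov. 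Properness of Goldman twist flows is not a formal property of the whole relative character variety --- it fails at degenerate representations (e.g.\ wherever a twist flow is trivial or periodic) --- and the known proofs use geometric input (limit maps, cross-ratio/length estimates) that is only supplied by the convex projective structure. So ``properness is inherited by restriction to invariant subspaces'' does not help you: that principle requires the ambient action to already be proper on a set containing $\mathcal{I}(\mu^{-1}(y))$, which is exactly what is missing.

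For comparison, the paper gives two arguments that each sidestep this. The first is soft: the Mayer--Vietoris sequence shows $\mathcal{SP}'_y$ is a diffeomorphism, so $\mu^{-1}(y)\to\mu^{-1}(y)/\R^M$ is a submersion whose fibers are $\R^M$-orbits homeomorphic to $\R^M$, and Meigniez's fibration criterion then makes it a fiber bundle, which forces properness. The second is closer to your plan but repairs the gap: it works with the space $\mathcal{H}(y)\subset\Hom(\pi_1(X'_{\mathcal{O}}),G)$ of representations actually induced from the orbifold, where a modification of the argument of \cite{choi2019} applies because the orbifold's convex projective geometry is available, and it then descends properness from $\Hom$ to the quotient $\mu^{-1}(y)$ by constructing an $\R^M$-equivariant section out of the limit maps $\xi_\rho:\partial\widetilde{\mathcal{O}}\to\mathbb{RP}^2$ --- a descent step ($\Hom$ versus $\Rep$) that your outline also does not address. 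To fix your proof you would need either to supply a direct properness argument on the image of $\mathcal{I}$ (using the orbifold geometry), or to switch to one of these two routes.
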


This lemma is implicitly shown in Choi-Goldman \cite{choi2005}. To be more precise we include two proofs for this lemma. The first proof is indirect; we prove that $\mu^{-1}(y)$ is a fiber bundle over the image.  The second proof is a variant of the authors' previous paper \cite{choi2019} which is more intrinsic but slightly complicated. 

\begin{proof}[Proof 1]
Let $\mathcal{S}=\per{\Gamma} \cup \{\langle \xi_1\rangle, \cdots, \langle \xi_m \rangle\}$. One can identify $T_{[\rho]}\mu^{-1}(y)$ with $H^1_{\operatorname{par}}(\Gamma, \mathcal{S};\mathfrak{g}_\rho)$. There is a Mayer-Vietoris type short exact sequence
\[
0\to \bigoplus_{i=1} ^m H^0 (\langle \xi_i \rangle;\mathfrak{g})\overset{\delta}{ \to} H^1 _{\operatorname{par}}(\Gamma,\mathcal{S};\mathfrak{g}) \overset{\der \mathcal{SP}_y}{\to} \bigoplus_{i=1} ^l H^1_{\operatorname{par}}(\Gamma_i ,\per{\Gamma_i} ;\mathfrak{g})\to 0
\]
where the image of $\delta$ in $H^1 _{\operatorname{par}}(\Gamma,\mathcal{S};\mathfrak{g}) $ is the tangent directions of the $\R^M$-action.  This shows that $\mathcal{SP}_y'$ is a local diffeomorphism and that $\mathcal{SP}_y: \mu^{-1}(y) \to  \mathcal{C}^{\mathscr{B}_1}(\mathcal{O}_1) \times \cdots \times \mathcal{C}^{\mathscr{B}_l}(\mathcal{O}_l)$ is a submersion. Since $\mathcal{SP}'_y$ is known to be one-to-one and onto we know that $\mathcal{SP}_y '$ is a diffeomorphism. In particular, $\mu^{-1}(y) \to \mu^{-1}(y)/\R^M$ is also a submersion whose fiber coincides with that of $\mathcal{SP}_y$.

Since $\mu^{-1}(y)\to \mu^{-1}(y)/\R^M$ is a submersion, we know that each fiber of $\mu^{-1}(y)\to \mu^{-1}(y)/\R^M$ is an embedded submanifold. Since Lemma \ref{RMfree} shows that $\R^M$ acts simply transitively on each fiber, we know that each fiber is homeomorphic to $\R^M$. By Corollary 31 of Meigniez \cite{meigniez2002}, we know that $\mu^{-1}(y)\to \mu^{-1}(y)/\R^M$ is a fiber bundle. Hence the action is proper. 
\end{proof}

\begin{proof}[Proof 2]
Let 
\[
\mathcal{H}(y)=\{\rho\in \Hom(\pi_1(X'_{\mathcal{O}}),G)\,|\,[\rho]\in \mu^{-1}(y)\}.
\]
A slight modification of the proof of Lemma 4.5.4 of \cite{choi2019} shows that the $\R^{M}$-action on  $\mathcal{H}(y)$ is proper. 

Now we construct an equivariant section of $\mathcal{H}(y)\to  \mu^{-1}(y)$. For each $\rho\in \mathcal{H}(y)$, we have a unique limit map $\xi_\rho : \partial \widetilde{\mathcal{O}} \to \mathbb{RP}^2$. Choose four points  $x_1, \cdots, x_4 \in \partial \widetilde{\mathcal{O}}$, no two of which are in the same orbit. We also choose four generic points $v_1, \cdots, v_4$ in $\mathbb{RP}^2$. We define the section $s:  \mu^{-1}(y) \to \mathcal{H}$ by declaring that $s([\rho])$ is the unique $\rho\in \mathcal{H}(y)$ such that $\xi_\rho (x_i) = v_i$. Then $s$ is clearly the desired equivariant section. 

For a given compact set $K$ of $\mu^{-1}(y)$, we know that $\{\mathbf{t}\in \R^{M}\,|\, \mathbf{t}\cdot s(K)\cap s(K) \ne \emptyset\}$ is compact. Since $s$ is equivariant, we have  $\{\mathbf{t}\in \R^{M}\,|\, \mathbf{t}\cdot s(K)\cap s(K) \ne \emptyset\}= \{\mathbf{t}\in \R^{M}\,|\, \mathbf{t}\cdot K\cap K \ne \emptyset\}$. Therefore, the $\R^{M}$-action on $\mu^{-1}(y)$ is also proper. 
\end{proof}

Since the $\R^{M}$-action on $\mu^{-1}(y)$ is proper, we can take the Marsden-Weinstein quotient $\mu^{-1}(y)/\R^{M}$.  On the other hand, the right hand side admits the symplectic structure given by the orthogonal sum $\omega^{\mathcal{O}_1} \oplus \cdots \oplus \omega^{\mathcal{O}_l}$. 

By appealing to Theorem \ref{localdecompmain} and Proposition \ref{symplecticembedding}, we obtain the following decomposition theorem

\begin{theorem}
Let $\mathcal{O}$ be a compact oriented 2-orbifold of negative Euler characteristic. Let $\xi_1, \cdots, \xi_m$ be pairwise disjoint essential simple closed curves or full 1-suborbifolds such that the completion of each connected component $\mathcal{O}_i$ of $\mathcal{O}\setminus \bigcup_{i=1} ^m  \xi_i$ has also negative Euler characteristic. Then the map
\[
\mathcal{SP}'_y: \mu^{-1}(y) / \R^{M} \to  \mathcal{C}^{\mathscr{B}_1}(\mathcal{O}_1) \times \cdots \times \mathcal{C}^{\mathscr{B}_l}(\mathcal{O}_l)
\]
defined above is a symplectomorphism.
\end{theorem}
\begin{proof}
From the proof of Lemma \ref{proper1}, we can conclude that $\mathcal{SP}_y '$ is a diffeomorphism. 

Since $\mathcal{SP}_y'$ is induced from $\iota_i:\Gamma_i\to \Gamma$, Theorem \ref{localdecompmain} tells us that $\mathcal{SP}_y'$ is a symplectomorphism. 
\end{proof}
\subsection{Construction of global coordinates}
To make upcoming indices neat, we introduce the following notation.

\begin{notation}
Let $\mathcal{O}$ be a closed oriented 2-orbifold of negative Euler characteristic, with genus $g$,  $c$ cone points and $c_b$ order two cone points. Let
\[
I=I(g,c,c_b):=\begin{cases} 
2g-2+c-\lfloor c_b/2 \rfloor & \text{if }c_b\text{ is even}\\
2g-2+c-\lfloor c_b/2 \rfloor-1 & \text{if }c_b\text{ is odd}
\end{cases},
\]
\[
J=J(g,c,c_b):= 3g-3+c-\lfloor c_b/2 \rfloor
\]
and 
\[
K=K(g,c):= 3g-3+c.
\] 
\end{notation}
We prove that $\mathcal{O}$ can be decomposed into so called elementary orbifolds of types (P1),(P2),(P3) and (P4). This is analogues to the pair-of-pants decomposition of a surface but we require more building blocks. 

For the definition of elementary orbifolds, we refer readers to Choi-Goldman \cite{choi2005}. 

\begin{lemma}\label{pantsdecom}
There is a decomposition of $\mathcal{O}$ by essential simple closed curves $\xi_1, \cdots, \xi_J$ and full 1-suborbifolds $\xi_{J+1},\cdots, \xi_K$ such that each connected component of $\mathcal{O}\setminus\bigcup_{i=1} ^{K} \xi_i$ is an elementary orbifold of the type (P1), (P2), (P3), or (P4). 
\end{lemma}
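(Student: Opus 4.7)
The plan is to combine full $1$-suborbifold cuts along paired order-$2$ cone points with a classical pants-type decomposition by essential simple closed curves. First I would enumerate the order-$2$ cone points as $p_1, \ldots, p_{c_b}$ and, for each $i=1,\ldots,\lfloor c_b/2\rfloor$, select pairwise disjoint embedded full $1$-suborbifolds $\xi_{J+i}$ joining $p_{2i-1}$ to $p_{2i}$. These are to be chosen so that every connected component of the completion of $\mathcal{O} \setminus \bigcup_{i=J+1}^{K} \xi_i$ has negative Euler characteristic. Such choices exist because $\chi(\mathcal{O})<0$ and there are only finitely many "bad" (spherical or Euclidean) sub-orbifold types to avoid (the relevant obstruction is a sphere with four order-$2$ cone points, whose $\chi$ vanishes); a general-position/exhaustion argument rules these out.

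After these $\lfloor c_b/2 \rfloor$ suborbifold cuts, the resulting (possibly disconnected) orbifold $\mathcal{O}'$ has $c - 2\lfloor c_b/2\rfloor$ cone points remaining, all of order $\geq 3$ except possibly one unpaired order-$2$ point. Next I would apply the classical pants decomposition for cone $2$-orbifolds to $\mathcal{O}'$, viewing each remaining cone point as a puncture/marked point: one inductively finds essential simple closed curves that either split off an elementary piece of type (P1)--(P4) or reduce the genus. Because each remaining cone point has order $\geq 3$ (with the one possible exception that can always be grouped with another cone point into a type (P3) or (P4) piece), standard arguments produce such a decomposition with exactly $J = 3g-3+c-\lfloor c_b/2\rfloor$ closed curves, giving $J + \lfloor c_b/2\rfloor = K = 3g-3+c$ cuts in total as required.

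The main obstacle is the first step: ensuring the paired order-$2$ cone points can simultaneously be connected by disjoint full $1$-suborbifolds in such a way that every resulting piece is hyperbolic. The difficulty is essentially combinatorial: if the order-$2$ cone points are arranged unfavorably (e.g.~four of them bounding a region with no other features), a naive pairing cuts off a flat sub-orbifold. The remedy is to choose the pairing globally, possibly crossing between would-be "flat" regions, and to invoke $\chi(\mathcal{O})<0$ together with the finite list of non-hyperbolic $2$-orbifolds to rule out obstructions. A secondary verification is that each piece arising at the end truly matches one of the elementary types (P1)--(P4); this is a finite Euler-characteristic check given that each piece has $\chi<0$ and at most three "special" features (boundary components plus cone points) by minimality of the decomposition.
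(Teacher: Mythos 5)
Your proposal is correct and follows essentially the same route as the paper: pair the order-$2$ cone points by disjoint full $1$-suborbifolds, then take a pair-of-pants decomposition of the punctured underlying surface of the resulting orbifold $\mathcal{O}'$. The only difference is that you worry about the arc-cutting step producing non-hyperbolic pieces, which is a non-issue: disjoint arcs with interior endpoints never separate the surface, so after the $\lfloor c_b/2\rfloor$ arc cuts one obtains a single connected orbifold with the same (negative) Euler characteristic, and the hyperbolicity of every final piece then follows from the Euler-characteristic check you describe at the end.
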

\begin{proof}
This is essentially proven in Theorem 4.3  of Choi-Goldman \cite{choi2005}. Here, we give simpler argument specialized to our situation. 

If $\mathcal{O}$ has order 2 cone points, we pair them by full 1-suborbifolds $\xi_{J+1},\cdots, \xi_{K}$.  By splitting $\mathcal{O}$ along $\xi_{J+1},\cdots, \xi_K$, we get another orbifold $\mathcal{O}'$ of the same genus $g$ with $\lfloor c_b/2 \rfloor$ boundary components, $c-2\lfloor c_b/2 \rfloor$ cone points and at most one order two cone point.  Let $\mathscr{C}=\{\xi_1,\cdots, \xi_J\}$ be a pair-of-pants decomposition of $X' _{\mathcal{O}'}$.  Then it is clear that $\mathscr{C}$ decomposes $\mathcal{O}$ into $2g-2+c-\lfloor c_b/2 \rfloor$ connected components $E_1, \cdots, E_{2g-2+c-\lfloor c_b/2 \rfloor}$ each of which is an elementary orbifold of the type (P1), (P2), (P3), or (P4). 
\end{proof}

If an elementary orbifold $E$ of type (P1), (P2), (P3) or (P4) has an order 2 cone point, then the deformation space $\mathcal{C}^{\mathscr{B}}(E)$ becomes singleton. Call such an elementary orbifold \emph{exceptional}. Every non-exceptional elementary orbifold $E$ of type (P1), (P2), (P3) or (P4) has two dimensional deformation space $\mathcal{C}^{\mathscr{B}}(E)$. Observe that among the pieces $E_i$ of Lemma \ref{pantsdecom}, there appears at most one exceptional elementary orbifold depending on the parity of $c_b$.

There is the smooth map $\mathcal{SP}: \mathcal{C}(\mathcal{O}) \to \triangle$ given by the restriction on each component  where $\triangle$ is the subset of  $\mathcal{C}(E_1) \times \cdots \times \mathcal{C}(E_{2g-2+c-\lfloor c_b/2 \rfloor})$ which satisfies the condition that if $\gamma_i$ and $\gamma_j$ are gluing boundaries, then they have the same invariants. Note that $\mathcal{SP}$ restricts on each leaf $\mu^{-1}(y)$ to $\mathcal{SP}_y$ . 

Our first coordinates are called \emph{length coordinates} 
\[
\mu=(\ell_1,\cdots,\ell_K,\mathbf{m}_1, \cdots,\mathbf{m}_J). 
\]

\begin{lemma}
The principal $\R^M$-fiber bundle map $\mathcal{SP}: \mathcal{C}(\mathcal{O}) \to \triangle$ admits a global section, say $s: \triangle\to \mathcal{C}(\mathcal{O})$ where $M=2J+\lfloor c_b/2 \rfloor=6g-6+2c-\lfloor c_b/2 \rfloor$. 
\end{lemma}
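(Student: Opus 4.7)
The plan is to construct the section concretely via canonical pasting, following the framework developed in Section 4.5 of \cite{choi2019} for the closed surface case, while handling the new ingredient of full 1-suborbifolds. Abstractly, since $\R^M$ is contractible, every principal $\R^M$-bundle over a paracompact base admits a section; however, the explicit geometric construction below makes the section amenable to the later symplectic analysis.

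First I would fix canonical reference data on each elementary piece. Given $c=(c_1,\ldots,c_l)\in\triangle$, each $c_i$ determines a convex projective structure on $E_i$ with principal boundary. For every boundary curve $\zeta$ of $E_i$, the holonomy $\rho_i(\zeta)$ is hyperbolic with a distinguished attracting fixed point, repelling fixed point, and saddle in $\mathbb{RP}^2$, yielding a canonical projective frame $\mathcal{F}_i(\zeta)$ that depends smoothly on $c_i$. For each mirror (order two) cone point of $E_i$ that lies in the interior of a full 1-suborbifold of $\mathcal{O}$, the fixed point of the corresponding order two generator together with the principal line through it supplies a similar canonical reference.

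Second, I would build the pasted structure inductively along the maximal spanning tree $\mathcal{T}$, with base vertex $E_1$. At each edge $\xi_j\in\mathcal{T}$ joining already-pasted data to a new piece $E_i$, the matching condition on $\triangle$ makes the two boundary holonomies conjugate; pasting amounts to choosing a conjugating element. This choice is ambiguous up to the centralizer of the boundary holonomy, which in the hyperbolic case of a simple closed curve is 2-dimensional and in the $s_1s_2$-case of a full 1-suborbifold is effectively 1-dimensional (the extra constraint $s_1^2=s_2^2=1$ rigidifies one of the two centralizer directions). I define the section by requiring the pasting to send the canonical frame $\mathcal{F}_i$ on one side to the canonical frame on the other; this uniquely specifies the gluing. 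For non-tree edges, the value of $\rho(e_i^\perp)$ is fixed by the analogous rule using the reference frames on both sides.

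Third, I would verify that the resulting projective structure is convex and that $s$ is smooth. Convexity follows from the geometric pasting results of Choi--Goldman \cite{choi2005}, since the canonical pasting sends principal boundary to principal boundary and the reference-frame recipe matches the orientations correctly. Smoothness of $s$ follows from the smooth dependence of the canonical frames and centralizer conjugators on the structures in $\triangle$. That $\mathcal{SP}\circ s=\operatorname{id}$ is immediate from the construction, since the canonical pasting produces a structure whose restriction to each $E_i$ recovers $c_i$.

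The main obstacle will be the full 1-suborbifold case, which has no analogue in \cite{choi2019}. There the dimension count $M=2J+\lfloor c_b/2\rfloor$ demands that gluing along a full 1-suborbifold contributes only one twist parameter rather than two; correspondingly, the canonical reference at a mirror point must be a single line (the principal line through the fixed point) rather than a full flag, and one must check that this reduced reference still uniquely determines the gluing modulo the appropriate centralizer. The subtlety is to confirm that the remaining one-dimensional ambiguity is exactly the $\ell$-flow along the full 1-suborbifold (with the $\mathbf{m}$-flow being absent in accordance with the definition of $M$), so that $s$ is genuinely a section of the principal $\R^M$-bundle and not merely of a larger action.
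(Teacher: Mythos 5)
Your opening remark --- that $\R^M$ is contractible, so any principal $\R^M$-bundle over a paracompact base admits a section --- is a complete and correct proof of the lemma, and it closes the argument on its own. The paper takes a parallel but distinct soft route: rather than exploiting contractibility of the fiber, it identifies $\triangle$ explicitly as $(\R_+)^I\times\mathcal{L}^J\times(\R_+)^{K-J}$ using the Choi--Goldman parameters $(s_i,t_i)$ for each non-exceptional piece, the $(\ell,\mathbf{m})$ invariants of the closed geodesics, and the $\ell$ invariant of the full 1-suborbifolds, then observes that this product is contractible and hence the bundle over it is trivial. Either route works; the paper's version additionally records the explicit shape of $\triangle$, which it reuses implicitly when introducing the internal coordinates $\mathbf{s}_i=\log s_i\circ\mathcal{SP}$, $\mathbf{t}_i=\log t_i\circ\mathcal{SP}$, so nothing is lost there.

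The bulk of your proposal, however, is an explicit construction of a section by canonical pasting along a spanning tree, and as you yourself flag at the end, this is not carried to completion: the reduction from two twist parameters to one at a full 1-suborbifold is precisely the point that would need a genuine argument, and you only indicate the expected mechanism (rigidification by $s_1^2=s_2^2=1$) without verifying it. In the paper this count is settled elsewhere --- the $\R^M$-action is defined with only $L^i_t$ flows at full 1-suborbifolds, and the Mayer--Vietoris sequence in the proof of the properness lemma pins down the fiber dimension --- so the section lemma itself never needs to re-derive it. Since the abstract argument already proves the lemma and the subsequent use of $s$ (defining twisting coordinates by integrating the Hamiltonian flows from its image) does not require any particular section, the constructive detour is unnecessary work; if you intend to keep it as a sharper result, the full 1-suborbifold gluing must be worked out in detail rather than left as an acknowledged obstacle.
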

\begin{proof}
We know, from Choi-Goldman \cite{choi2005}, that 
\[
\triangle=(\R_+^2)^I\times \mathcal{L}^{J}\times (\R_+)^{K-J}
\]
where $\mathcal{L}= \{(l,m)\in \R^2\,|\,l>0,\,3m+l>0\}$. Indeed, there are parameters $(s_i,t_i)$ defined in Choi-Goldman \cite{choi2005} that parametrize $\mathcal{C}^{\mathscr{B}_i}(E_i)$ when $E_i$ is non-exceptional. This $(s_i,t_i)$ parameters take all values in $\R_+^2=\{(x,y)\in \R^2\,|\, x,y> 0\}$. They correspond to the first factor $(\R_+^2)^I$. The second factor comes from the range of invariants $(\ell,\mathbf{m})$ associated to simple closed geodesics. The last factor is the range of $\ell$ associated to principal full 1-suborbifolds. 

Since, $\triangle$ is contractible, $\mathcal{SP}$ is the trivial $\R^M$-bundle. 
\end{proof}

\begin{remark}
In Choi-Goldman \cite{choi2005}, the parameter $s$ is defined merely as a parameter for the solution space of the system of linear equations \cite[eqns. (23), (24)]{choi2005} and is not specified explicitly. In this paper we use $\gamma_1$ (eq. (20) in \cite{choi2005}) as a free parameter for this system and declare this variable $s$. 
\end{remark}

The \emph{twisting coordinates} $\theta_1,\cdots, \theta_{K}, \phi_1,\cdots, \phi_{J}$ are defined by integration from the image of $s$ along the Hamiltonian flow.  We finally define \emph{internal coordinates}
\[
\mathbf{s}_i := \log  s_i \circ \mathcal{SP}
\]
and 
\[
\mathbf{t}_i := \log t_i\circ \mathcal{SP}.
\] 

Altogether, we get the global coordinates for $\mathcal{C}(\mathcal{O})$. Observe that there are $16g-16+6c-2c_b$ coordinates, which is equal to the dimension of $\mathcal{C}(\mathcal{O})$. These coordinates, however, may not be Darboux. 

\subsection{Existence of global Darboux coordinates}
We now show that $\mathcal{C}(\mathcal{O})$ admits global Darboux coordinates. We begin with the global coordinates constructed in the previous section.

\begin{theorem}\label{symplectic}
Let $E_i$ be a non-exceptional elementary orbifold of the type (P1), (P2), (P3), or (P4). Let $(\mathbf{s}_i,\mathbf{t}_i)$ be the coordinates for $\mathcal{C}^{\mathscr{B}_i}(E_i)$.  Then
\[
\omega^{E_i} = \der \mathbf{t}_i \wedge \der \mathbf{s}_i.
\]
\end{theorem}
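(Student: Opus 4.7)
\medskip

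The plan is to verify this case by case across the four types (P1)--(P4), exploiting the fact that $\dim \mathcal{C}^{\mathscr{B}_i}(E_i)=2$. Since $(\mathbf{s}_i, \mathbf{t}_i)$ is a coordinate chart on a 2-dimensional symplectic manifold, the form $\omega^{E_i}$ is completely determined by its single coefficient in this frame, so it suffices to prove
\[
\omega^{E_i}\!\left(\frac{\partial}{\partial \mathbf{s}_i}, \frac{\partial}{\partial \mathbf{t}_i}\right) = -1.
\]
First I would choose, for each type $E_i$, an explicit holonomy representative $\rho_{s,t}$ in terms of the Choi-Goldman parameters $(s,t)$, following Section 3 of \cite{choi2005}. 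Each such orbifold has small and well-understood fundamental group: for (P1) it is the fundamental group of a pair of pants, for (P2)--(P4) it has only a few additional torsion generators corresponding to the cone points. Along the way, I would fix conjugacy-class representatives of the fixed boundary holonomies so that the parameters $s,t$ can be read off directly from the matrix entries of $\rho_{s,t}$.

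Next I would compute the infinitesimal deformations
\[
u = \frac{\partial \rho_{s,t}}{\partial \mathbf{s}} \rho_{s,t}^{-1}, \qquad v = \frac{\partial \rho_{s,t}}{\partial \mathbf{t}} \rho_{s,t}^{-1}
\]
as $\mathfrak{g}_{\rho}$-valued cocycles in $Z^1_{\operatorname{par}}(\pi_1^{\operatorname{orb}}(E_i), \partial\pi_1^{\operatorname{orb}}(E_i); \mathfrak{g})$, using Lemma \ref{tangent} to identify tangent vectors at $[\rho_{s,t}]$ with parabolic cocycles via right translation. The chain rule produces factors of $s$ and $t$ from the substitution $\mathbf{s} = \log s$, $\mathbf{t} = \log t$; these factors will be crucial for the final constant to come out to $-1$ rather than $-1/(st)$.

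Then I would evaluate $\omega^{E_i}([u],[v])$ by the explicit formula of Theorem \ref{explicit}. For each generator appearing in the presentation of $\pi_1^{\operatorname{orb}}(E_i)$, the cocycles $u,v$ take values that are easy to trace back through the $\operatorname{Ad}$-action, and the Fox derivatives of $\mathbf{r}$ are elementary (in (P1) the fundamental relation is just $z_1 z_2 z_3 = 1$, while for (P2)--(P4) one gets small analogous words). The correction terms $-\sum \operatorname{Tr}(T_i v(s_i))$ and $-\sum \operatorname{Tr}(X_i v(z_i))$ must be handled carefully: the $X_i$ are forced by the condition $u(z_i) = z_i \cdot X_i - X_i$ at fixed boundary conjugacy classes, and the $T_i$ solve an analogous relation at torsion generators.

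The main obstacle I expect is the bookkeeping of these correction terms together with keeping the sign and scaling correct, since the Choi-Goldman parameters are not a priori canonically normalized against the trace form. Once one type, say (P1), is handled carefully, (P2), (P3), (P4) should follow by similar direct computation, because in each case only two independent moduli survive and the boundary/cone conjugacy classes are pinned down. Alternatively, one could avoid some of this by observing that both $\omega^{E_i}$ and $\der\mathbf{t}_i \wedge \der\mathbf{s}_i$ are translation-invariant under the Hamiltonian flows $L^j_t, M^j_t$ associated to the (fixed) boundary components, and hence it is enough to compute the pairing at a single judiciously chosen basepoint where $\rho_{s,t}$ takes a particularly simple block form.
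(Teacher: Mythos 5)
Your approach is essentially the same as the paper's: the paper proves this theorem by directly evaluating the explicit formula of Theorem \ref{explicit} on the coordinate tangent vectors for each elementary orbifold type, with the actual computation delegated to Mathematica (the paper's entire proof is a citation to the accompanying Mathematica files). Your outline correctly identifies all the ingredients that computation must handle — explicit Choi--Goldman holonomy representatives, the cocycles obtained by right translation via Lemma \ref{tangent}, the logarithmic rescaling of $(s,t)$, and the correction terms involving $T_i$ and $X_i$ — so it is a faithful description of what the paper's computer verification carries out.
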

\begin{proof}
We use Mathematica to evaluate the formula for $\omega^{E_i}$ (Theorem \ref{explicit}).  Mathematica files are available at  \cite{jung2020}. 
\end{proof}

\begin{remark}
For (P1), a pair-of-pants case, the same result is already shown by H. Kim \cite[Theorem 5.8]{kim1999}.
\end{remark}
By following the arguments of Choi-Jung-Kim \cite{choi2019} directly, one can obtain the parallel results. We list here some of key facts without proofs. 

\begin{lemma}\label{form}
For each $i$, the Hamiltonian vector field $\mathbb{X}_{\mathbf{s}_i}$ is of the form
\[
\mathbb{X}_{\mathbf{s}_i} = \frac{\partial}{\partial \mathbf{t}_i}+\sum_{j=1} ^{K} a_j \frac{\partial}{\partial \theta_j} +\sum_{j=1} ^{J} b_j \frac{\partial}{\partial \phi_j}
\]
for some smooth functions $a_j$ and $b_j$.  
\end{lemma}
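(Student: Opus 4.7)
The plan is to expand $\mathbb{X}_{\mathbf{s}_i}$ in the global coordinate basis and identify each coefficient as a Poisson bracket that is either forced to vanish by an invariance argument or computed in the reduced space. Writing
\[
\mathbb{X}_{\mathbf{s}_i} = \sum_{k=1}^{K} \alpha_k \frac{\partial}{\partial \ell_k} + \sum_{k=1}^{J} \beta_k \frac{\partial}{\partial \mathbf{m}_k} + \sum_{k=1}^{K} a_k \frac{\partial}{\partial \theta_k} + \sum_{k=1}^{J} b_k \frac{\partial}{\partial \phi_k} + \sum_{k=1}^{I} \gamma_k \frac{\partial}{\partial \mathbf{s}_k} + \sum_{k=1}^{I} \delta_k \frac{\partial}{\partial \mathbf{t}_k},
\]
the goal is to prove $\alpha_k = \beta_k = \gamma_k = 0$ and $\delta_k = \delta_{ik}$. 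Since the coordinate vector fields and the coordinate differentials form dual bases, we have, for example, $\alpha_j = \der\ell_j(\mathbb{X}_{\mathbf{s}_i}) = \mathbb{X}_{\mathbf{s}_i}(\ell_j)$, which is (up to a sign) the Poisson bracket $\{\ell_j, \mathbf{s}_i\}$; the other coefficients are treated analogously. Smoothness of all coefficients is automatic from the smoothness of $\mathbb{X}_{\mathbf{s}_i}$.

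Step one is $\R^{M}$-invariance of the internal coordinates. Inspection of the explicit formulas for $L_t^j$ and $M_t^j$ shows that on every piece $\mathcal{O}_r$ the flow acts on $\rho|_{\Gamma_{\mathcal{O}_r}}$ by (possibly trivial) global conjugation, so the conjugacy class $[\rho\circ\iota_k]\in\mathcal{C}^{\mathscr{B}_k}(E_k)$ is preserved. Because $\mathbf{s}_k = \log s_k\circ\mathcal{SP}$ and $\mathbf{t}_k = \log t_k\circ\mathcal{SP}$ depend only on these conjugacy classes, they are $\R^{M}$-invariant functions on $\mathcal{C}^{\mathscr{B}}(\mathcal{O})$. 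Hence $\mathbb{X}_{\ell_j}(\mathbf{s}_i) = \mathbb{X}_{\mathbf{m}_j}(\mathbf{s}_i) = 0$, that is, $\{\ell_j, \mathbf{s}_i\} = \{\mathbf{m}_j, \mathbf{s}_i\} = 0$. By antisymmetry this yields $\alpha_j = \beta_j = 0$.

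Step two uses the symplectic reduction supplied by Theorem \ref{globaldecomp}. Since $\mathbf{s}_j, \mathbf{t}_j, \mathbf{s}_i$ are all $\R^{M}$-invariant they descend to smooth functions on $\mu^{-1}(y)/\R^{M}$, and the Poisson bracket of two such invariants on $\mathcal{C}^{\mathscr{B}}(\mathcal{O})$ coincides with the reduced Poisson bracket. The symplectomorphism $\mathcal{SP}'_y\colon\mu^{-1}(y)/\R^{M}\to \prod_k \mathcal{C}^{\mathscr{B}_k}(E_k)$ carries $(\mathbf{s}_k, \mathbf{t}_k)$ to the $(\log s_k, \log t_k)$-coordinates of the $k$th elementary factor. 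Theorem \ref{symplectic} gives $\omega^{E_k} = \der\mathbf{t}_k\wedge\der\mathbf{s}_k$ on each factor, whence $\{\mathbf{s}_j, \mathbf{s}_i\} = 0$ and $\{\mathbf{t}_j, \mathbf{s}_i\} = \delta_{ij}$ in the product. Transferring back gives $\gamma_j = 0$ and $\delta_j = \delta_{ij}$, finishing the proof.

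The only real difficulty is bookkeeping: one must verify carefully, case by case in the definition of the flows, that the conjugacy class $[\rho\circ\iota_k]$ is preserved by $L_t^j$ and $M_t^j$ even when the splitting is performed along a principal full $1$-suborbifold, and check that the internal coordinates $(\mathbf{s}_k,\mathbf{t}_k)$ transport through $\mathcal{SP}'_y$ as claimed. All of the substantive machinery—freeness and properness of the $\R^{M}$-action (Lemma \ref{RMfree}, Lemma \ref{proper1}), the global decomposition (Theorem \ref{globaldecomp}), and the elementary-piece computation (Theorem \ref{symplectic})—is already in place.
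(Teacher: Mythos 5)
Your argument is correct, and it supplies a proof that the paper itself omits (the paper only states this lemma, deferring to the arguments of \cite{choi2019}); the two ingredients you use --- invariance of the internal coordinates under the twist flows to kill the $\partial/\partial\ell_j$ and $\partial/\partial\mathbf{m}_j$ components, and symplectic reduction onto $\prod_k\mathcal{C}^{\mathscr{B}_k}(E_k)$ together with Theorem \ref{symplectic} to evaluate the $\partial/\partial\mathbf{s}_j$ and $\partial/\partial\mathbf{t}_j$ components --- are exactly the machinery the paper has set up for this purpose. One small correction to your Step one: in Case III the flow $L^i_t$ is \emph{not} a global conjugation of the vertex group $\Gamma_{\mathcal{O}_r}$ (it conjugates only $s_{j,1}$ and $s_{j,2}$); what saves the argument is that the elementary piece obtained by splitting along the full $1$-suborbifold contains only the product $s_{j,1}s_{j,2}$, which is literally fixed because $\ell^{\#}(\rho(e_i^+))$ commutes with $\rho(s_{j,1}s_{j,2})$, so $[\rho\circ\iota_k]$ is still preserved --- this is precisely the case-by-case check you flag at the end.
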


\begin{lemma}
For each $i$, the Hamiltonian vector field $\mathbb{X}_{\mathbf{s}_i}$ is complete and 
\[
\mathbb{X}_{\mathbf{s}_1}, \cdots, \mathbb{X}_{\mathbf{s}_{I}}, \mathbb{X}_{\ell_1},\cdots,\mathbb{X}_{\ell_{K}}, \mathbb{X}_{\mathbf{m}_1}, \cdots, \mathbb{X}_{\mathbf{m}_{J}}
\]
span a Lagrangian subspace of $T_{[\rho]}\mathcal{C}(\mathcal{O})$ at each $[\rho]$. 
\end{lemma}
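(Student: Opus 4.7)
The plan has two independent components: completeness of each $\Ham{\mathbf{s}_i}$, and the Lagrangian property of the listed family of vector fields. Both proofs will follow the strategy of \cite{choi2019}, with the only new ingredient being a careful treatment of the full 1-suborbifold pieces, for which the local decomposition Theorem \ref{localdecompmain} is the crucial technical input.

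For completeness, I will exploit the explicit coordinate form granted by Lemma \ref{form}. The key observation is that the flow of $\Ham{\mathbf{s}_i}$ preserves all the length coordinates $\ell_j,\mathbf{m}_j$ and all the other internal coordinates $\mathbf{s}_j$ for $j\ne i$. The preservation of $\ell_j,\mathbf{m}_j$ is because the length functions are the moment map components of the commuting $\R^M$-action on $\mathcal{C}^{\mathscr{B}}(\mathcal{O})$, while $\mathbf{s}_i$ is intrinsic to the single elementary piece $E_i$ of the pants-type decomposition and hence invariant under the conjugation flows along the gluing curves $\xi_j$. The preservation of $\mathbf{s}_j$ for $j\ne i$ follows because $\mathbf{s}_i$ and $\mathbf{s}_j$ depend on disjoint elementary pieces, so Theorem \ref{localdecompmain} makes their Poisson bracket vanish. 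Consequently, along any integral curve of $\Ham{\mathbf{s}_i}$, the smooth coefficients $a_j, b_j$ in Lemma \ref{form} are constants (they are functions of the preserved coordinates). The flow therefore reduces to a linear motion in $(\mathbf{t}_i,\theta_j,\phi_j)\in\R^{1+K+J}$ with unit speed in $\mathbf{t}_i$, which is defined for all time.

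For the Lagrangian property I first verify Poisson-commutativity of all pairs in the family, which falls into three cases. Among length coordinates, $\{\ell_i,\ell_j\}=\{\mathbf{m}_i,\mathbf{m}_j\}=\{\ell_i,\mathbf{m}_j\}=0$ is the already-established commutativity of the $\R^M$-action. Between $\mathbf{s}_i$ and $\ell_j$ (or $\mathbf{m}_j$): the $L^j_t$ and $M^j_t$ flows act on $\rho\big|_{\pi_1^{\operatorname{orb}}(E_i)}$ by conjugation by an element of the centralizer of the gluing holonomy $\rho(e_j^+)$, hence preserve the conjugacy class of $\rho\big|_{\pi_1^{\operatorname{orb}}(E_i)}$, and therefore preserve $\mathbf{s}_i$. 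Between $\mathbf{s}_i$ and $\mathbf{s}_j$ with $i\ne j$: applying Theorem \ref{localdecompmain} to a Mayer--Vietoris-style decomposition, since $\Ham{\mathbf{s}_i}$ is tangent to the $E_i$-factor only while $\mathbf{s}_j$ depends only on the $E_j$-factor, the pairing $\omega^{\mathcal{O}}(\Ham{\mathbf{s}_i},\Ham{\mathbf{s}_j})$ splits as a sum of pairings each of which contains a zero argument and vanishes.

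Linear independence is then read off from Lemma \ref{form}: the length vector fields $\Ham{\ell_j}, \Ham{\mathbf{m}_k}$ are tangent to the $\R^M$-orbit direction and so lie in $\mathrm{span}\{\partial/\partial\theta_j, \partial/\partial\phi_k\}$, while $\Ham{\mathbf{s}_i}$ carries a distinguished $\partial/\partial\mathbf{t}_i$ component with coefficient $1$, transverse to that span and to the analogous $\partial/\partial\mathbf{t}_j$ components for $j\ne i$. Finally a dimension count gives
\[
I+K+J=(2g-2+c-\lfloor c_b/2\rfloor)+(3g-3+c)+(3g-3+c-\lfloor c_b/2\rfloor)=8g-8+3c-c_b,
\]
(the parity correction in the definition of $I$ absorbs the odd-$c_b$ case) which is precisely $\tfrac{1}{2}\dim\mathcal{C}(\mathcal{O})=\tfrac{1}{2}(16g-16+6c-2c_b)$; combined with the Poisson-commutativity, the span is isotropic of half the dimension, hence Lagrangian.

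The main obstacle I anticipate is the case $i\ne j$ of Poisson-commutativity between two internal coordinates when the gluing curve $\xi_k$ separating $E_i$ from $E_j$ is a full 1-suborbifold rather than a simple closed curve. Here one must verify that $\Ham{\mathbf{s}_i}$ is indeed tangent to $E_i$ in the sense required by Theorem \ref{localdecompmain}, which uses parabolic cohomology classes in $H^1_{\operatorname{par}}(\Gamma,\mathcal{S};\mathfrak{g})$ with $\mathcal{S}$ containing $\langle s_{k,1}s_{k,2}\rangle$; this is the genuinely new feature beyond the surface setting of \cite{choi2019} and relies on the explicit chain-level argument in Proposition \ref{sum}.
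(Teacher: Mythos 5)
The paper does not actually prove this lemma: it is one of the ``key facts'' stated without proof and deferred to \cite{choi2019}, so there is no in-paper argument to compare against. Your reconstruction is, in outline, the intended one, and most of it is sound: isotropy of the span is exactly pairwise Poisson-commutativity, which you verify case by case; linear independence follows from the $\partial/\partial\mathbf{t}_i$ components in Lemma \ref{form} together with freeness of the $\R^M$-action; and your dimension count $I+J+K=\tfrac12\dim\mathcal{C}(\mathcal{O})$ is correct in both parities of $c_b$. One remark: once Lemma \ref{form} is granted, $\{\mathbf{s}_i,\mathbf{s}_j\}=\der\mathbf{s}_j(\Ham{\mathbf{s}_i})=0$ and $\{\mathbf{s}_i,\ell_j\}=\der\ell_j(\Ham{\mathbf{s}_i})=0$ can simply be read off from the absence of $\partial/\partial\mathbf{s}_j$, $\partial/\partial\ell_j$, $\partial/\partial\mathbf{m}_j$ components, so the detour through Theorem \ref{localdecompmain} for the $i\ne j$ case, while valid, is not needed at this stage (it is needed to establish Lemma \ref{form} itself, which you are allowed to assume).

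The one step you assert rather than prove is in the completeness argument: ``the smooth coefficients $a_j,b_j$ in Lemma \ref{form} are constants (they are functions of the preserved coordinates).'' A priori $a_j,b_j$ could also depend on the twist coordinates $\theta,\phi$ and on $\mathbf{t}_i$, none of which are preserved or constant along the flow, and a $\theta$-dependence would turn $\dot\theta_j=a_j$ into a genuinely coupled ODE that could blow up in finite time. The fix is standard but should be said: $\mathbf{s}_i$ is $\R^M$-invariant (twisting only conjugates the restriction to each elementary piece, so $\mathcal{SP}$ is $\R^M$-invariant) and $\omega^{\mathcal{O}}$ is $\R^M$-invariant, hence $\Ham{\mathbf{s}_i}$ is an $\R^M$-invariant vector field and $a_j,b_j$ cannot depend on $\theta,\phi$. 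A residual dependence on $\mathbf{t}_i$ is harmless: along the flow $\mathbf{t}_i(t)=\mathbf{t}_i(0)+t$ while all other base coordinates are frozen, so $\theta_j(t)=\theta_j(0)+\int_0^t a_j(\dots,\mathbf{t}_i(0)+s,\dots)\,\der s$ is a finite integral of a globally defined smooth function for every finite $t$, since $\mathbf{t}_i$ ranges over all of $\R$. With that supplement your proof is complete.
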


As a consequence, the above Hamiltonian vector fields induces the Hamiltonian $\R^{I+J+K}$-action on $\mathcal{C}(\mathcal{O})$. Let us define the function $F: \mathcal{C}(\mathcal{O}) \to \R^{I+J+K}$ by
\begin{equation}\label{actioncoord}
F([\rho])= (\mathbf{s}_1([\rho]), \cdots, \mathbf{s}_{I}([\rho]), \ell_1([\rho]), \cdots, \ell_{K}([\rho]), \mathbf{m}_1([\rho]),\cdots, \mathbf{m}_{J}([\rho])).
\end{equation}
Then the $\R^{I+J+K}$-action preserves each fiber of $F$. To make use of Theorem 3.4.5 of \cite{choi2019}, we should have the following result

\begin{lemma}
The $\R^{I+J+K}$-action on $\mathcal{C}(\mathcal{O})$ is free, proper. In addition, the action is transitive on each fiber. Therefore $F$ is a fiber bundle such that each fiber is Lagrangian and diffeomorphic to $\R^{I+J+K}$.
\end{lemma}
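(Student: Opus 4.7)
The plan is to decompose the would-be $\R^{I+J+K}$-action as $\R^I\times \R^M$, where $\R^I$ is generated by $\mathbb{X}_{\mathbf{s}_1},\ldots,\mathbb{X}_{\mathbf{s}_I}$ and $\R^M$ by $\mathbb{X}_{\ell_1},\ldots,\mathbb{X}_{\ell_K},\mathbb{X}_{\mathbf{m}_1},\ldots,\mathbb{X}_{\mathbf{m}_J}$. Since the spanning vector fields constitute a Lagrangian subspace at every point, the Poisson brackets among the generating functions vanish, so the vector fields pairwise commute; combined with the already-established completeness, this produces a well-defined abelian action of $\R^{I+J+K}$ on $\mathcal{C}(\mathcal{O})$. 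Moreover, Poisson-commutation ensures that every generator preserves each of $\mathbf{s}_i,\ell_j,\mathbf{m}_j$, hence preserves every fiber of $F$ as well as every level set of the moment map $\mu$.

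Freeness I would establish in two stages. The subaction of $\R^M$ is free by Lemma~\ref{RMfree}. For the extra $\R^I$-direction, each $L_t^j$ and $M_t^j$ either fixes or conjugates $\rho|_{\pi_1^{\operatorname{orb}}(E_i)}$, so it preserves the conjugacy class of the restriction and therefore fixes the internal coordinates $\mathbf{s}_i,\mathbf{t}_i$. On the symplectic reduction $\mu^{-1}(y)/\R^M$, Theorem~\ref{globaldecomp} provides an identification with $\mathcal{C}^{\mathscr{B}_1}(\mathcal{O}_1)\times\cdots\times \mathcal{C}^{\mathscr{B}_l}(\mathcal{O}_l)$, and Theorem~\ref{symplectic} combined with Lemma~\ref{form} shows that $\mathbb{X}_{\mathbf{s}_i}$ descends to $\partial/\partial\mathbf{t}_i$ on the $i$-th non-exceptional factor, a translation on $\R^2$. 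Thus if $(a,b,c)\in\R^I\times\R^M$ fixes $[\rho]$, descent to the reduced space forces $a=0$ by the translation property, and Lemma~\ref{RMfree} then forces $(b,c)=0$.

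Properness follows from the same decomposition. The $\R^M$-action on each $\mu^{-1}(y)$ is proper by Lemma~\ref{proper1}, and since $\mu$ is continuous this promotes to properness on all of $\mathcal{C}(\mathcal{O})$. The induced $\R^I$-action on $\mu^{-1}(y)/\R^M\cong \prod_i\mathcal{C}^{\mathscr{B}_i}(\mathcal{O}_i)$ is by translation in the $\mathbf{t}_i$-direction of each factor $\R^2$, hence proper. Properness of the composite action is then a standard consequence of the fact that an extension of one proper action by a proper quotient action is proper.

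For transitivity on each fiber of $F$, given $[\rho_1],[\rho_2]$ with $F([\rho_1])=F([\rho_2])$, they share both the $\mu$-value and the $\mathbf{s}_i$-coordinates; a uniquely determined element of $\R^I$ then aligns the $\mathbf{t}_i$-coordinates using the translation structure above, after which $\mathcal{SP}([\rho_1])=\mathcal{SP}([\rho_2])$, so by Theorem~\ref{globaldecomp} a unique element of $\R^M$ carries one to the other. The main obstacle I anticipate is the careful verification that $\mathbb{X}_{\mathbf{s}_i}$ really descends to $\partial/\partial\mathbf{t}_i$ on $\mu^{-1}(y)/\R^M$; this requires matching the local formula of Theorem~\ref{symplectic} with the global symplectomorphism of Theorem~\ref{globaldecomp}. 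Once this step is in place, the fiber bundle conclusion follows from the standard principle that a free proper Lie group action yields a principal bundle, each fiber is diffeomorphic to $\R^{I+J+K}$ as an orbit of the simply connected abelian group, and the Lagrangian property is immediate because the tangent space to each fiber is the Lagrangian span of the generating Hamiltonian vector fields.
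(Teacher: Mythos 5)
Your freeness and fiberwise-transitivity arguments are sound and essentially parallel the paper's, both resting on the decomposition $\R^{I+J+K}=\R^I\times\R^M$, Lemma~\ref{form} (to kill the $\R^I$-component of a stabilizer via the $\mathbf{t}_i$-coordinate), and Lemma~\ref{RMfree} (to kill the residual $\R^M$-component). However, the paper's route to the fiber bundle conclusion is genuinely different from yours and deliberately sidesteps a direct proof of properness: it observes that $F$ is a submersion (it is a coordinate projection in the Goldman coordinates), that each fiber of $F$ is diffeomorphic to $\R^{I+J+K}$ (precisely because the action is free and fiberwise transitive), and then invokes Meigniez's Corollary~31, which asserts that a submersion with all fibers diffeomorphic to $\R^n$ is already a locally trivial bundle. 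Properness is then a consequence of the principal bundle structure, not an input.

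Your attempt to prove properness from first principles has a genuine gap. The step ``the $\R^M$-action on each $\mu^{-1}(y)$ is proper by Lemma~\ref{proper1}, and since $\mu$ is continuous this promotes to properness on all of $\mathcal{C}(\mathcal{O})$'' is not a valid inference: properness of a group action on each level set of an invariant map does not in general yield properness on the ambient space, because the escape-to-infinity that properness forbids can occur transversally to the level sets. Likewise, the closing appeal to ``an extension of one proper action by a proper quotient action is proper'' is circular here: unwinding it for $\R^I\times\R^M$ acting on $\mathcal{C}(\mathcal{O})$, one reduces the convergence of the $\R^M$-components to properness of the $\R^M$-action on all of $\mathcal{C}(\mathcal{O})$ — exactly what the previous step failed to establish — rather than to the fiberwise properness of Lemma~\ref{proper1}, since the perturbed points $a_n\cdot x_n$ need not lie in a single level set. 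Your anticipated obstacle (matching $\mathbb{X}_{\mathbf{s}_i}$ with $\partial/\partial\mathbf{t}_i$ on the reduction) is not the real difficulty; the real one is that properness cannot be assembled level-set by level-set in the way you propose. The clean repair is to abandon the direct properness argument and, like the paper, deduce the bundle structure from Meigniez's submersion criterion, after which properness is free.
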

\begin{proof}
By Lemma \ref{form}, the $\R^{I+J+K}$-action is free and fiberwise transitive. Moreover, $F$ is a submersion since $F$ is a projection with respect to the Goldman coordinates. Therefore, each fiber of $F$ is homeomorphic to $\R^{I+J+K}$.  Now we apply Corollary 31 of \cite{meigniez2002}. 
\end{proof}

Now we can deduce our main theorem:
\begin{theorem}
Let $\mathcal{O}$ be a closed oriented 2-orbifold of negative Euler characteristic, with genus $g$ and $c$ cone points. Then $\mathcal{C}(\mathcal{O})$ admits a global Darboux coordinates system.
\end{theorem}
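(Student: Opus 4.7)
The plan is to show that the coordinate system $(\mathbf{s}_i,\mathbf{t}_i, \ell_j, \theta_j, \mathbf{m}_k, \phi_k)$ assembled in the preceding subsections is Darboux with respect to $\omega^{\mathcal{O}}$, by invoking the action--angle principle \cite[Theorem 3.4.5]{choi2019}. All the ingredients are already in place: Lemma \ref{pantsdecom} splits $\mathcal{O}$ into elementary orbifolds $E_i$ along $J$ simple closed geodesics and $K-J$ full $1$-suborbifolds; Theorem \ref{symplectic} gives $\omega^{E_i}=\der\mathbf{t}_i\wedge \der\mathbf{s}_i$ on each non-exceptional piece; and the symplectic decomposition Theorem \ref{globaldecomp} identifies the reduced fibers of the moment map $\mu=(\ell_1,\dots,\ell_K,\mathbf{m}_1,\dots,\mathbf{m}_J)$ with products of these pieces.

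First I would verify that the map
\[
F=(\mathbf{s}_1,\dots,\mathbf{s}_I,\ell_1,\dots,\ell_K,\mathbf{m}_1,\dots,\mathbf{m}_J)
\]
of (\ref{actioncoord}) arises as the moment map of a Poisson-commuting family. The brackets $\{\ell_a,\ell_b\}$, $\{\ell_a,\mathbf{m}_b\}$, $\{\mathbf{m}_a,\mathbf{m}_b\}$ vanish because $\mu$ is already known to be the moment map of the commuting twist flows; the brackets $\{\mathbf{s}_i,\ell_j\}$ and $\{\mathbf{s}_i,\mathbf{m}_j\}$ vanish because each $\mathbf{s}_i$ is constant along every twist flow (it is pulled back from $\mathcal{C}^{\mathscr{B}_i}(E_i)$ via the $\R^M$-invariant projection $\mathcal{SP}$); and $\{\mathbf{s}_i,\mathbf{s}_{i'}\}=0$ reduces by Theorem \ref{localdecomp} to a bracket computation inside a single elementary piece, which vanishes by Theorem \ref{symplectic}. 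Combined with the previous lemma, this exhibits $F$ as the moment map of a free, proper, fiberwise-transitive Hamiltonian $\R^{I+J+K}$-action whose fibers are Lagrangian and diffeomorphic to $\R^{I+J+K}$.

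The conjugate angles are then obtained by integrating the Hamiltonian flows $\Ham{\mathbf{s}_i}, \Ham{\ell_k}, \Ham{\mathbf{m}_j}$ starting from the global section $s$ of $\mathcal{SP}$; the resulting time parameters are precisely the internal and twist coordinates $\mathbf{t}_i,\theta_k,\phi_j$, and they are well-defined globally because the base $\triangle$ of the section is contractible and every orbit is simply connected. Applying \cite[Theorem 3.4.5]{choi2019} then yields the Darboux form
\[
\omega^{\mathcal{O}}=\sum_{i=1}^{I}\der\mathbf{t}_i\wedge \der\mathbf{s}_i+\sum_{j=1}^{J}\der\phi_j\wedge \der\mathbf{m}_j+\sum_{k=1}^{K}\der\theta_k\wedge \der\ell_k,
\]
and a dimension count $2(I+J+K)=\dim\mathcal{C}(\mathcal{O})$ confirms that these coordinates exhaust the manifold.

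The main obstacle I anticipate is the cross-piece commutation $\{\mathbf{s}_i,\mathbf{s}_{i'}\}=0$ for $i\ne i'$, since the $\mathbf{s}_i$ are defined as intrinsic Goldman parameters on the individual pieces $E_i$ rather than as global geometric invariants on $\mathcal{C}(\mathcal{O})$. This is where the local decomposition Theorem \ref{localdecomp} and Proposition \ref{sum} do the essential work: they guarantee that the Atiyah--Bott--Goldman form splits orthogonally along every splitting curve and every full $1$-suborbifold, so a bracket pairing functions pulled back from distinct pieces automatically reduces to zero. Once this is established, the remaining checks -- completeness of the flows, contractibility of $\triangle$, and the bookkeeping of the action--angle construction -- are either already recorded in the earlier lemmas or are routine.
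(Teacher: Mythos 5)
Your route is the same as the paper's: exhibit $F=(\mathbf{s}_1,\dots,\mathbf{s}_I,\ell_1,\dots,\ell_K,\mathbf{m}_1,\dots,\mathbf{m}_J)$ of (\ref{actioncoord}) as the moment map of a free, proper, fiberwise transitive Hamiltonian $\R^{I+J+K}$-action whose fibers are Lagrangian and diffeomorphic to $\R^{I+J+K}$, and then invoke the action--angle theorem \cite[Theorem 3.4.5]{choi2019}. Your commutation checks (the $\ell$'s and $\mathbf{m}$'s commute as components of the twist moment map, the $\mathbf{s}_i$ are twist-invariant because they factor through $\mathcal{SP}$, and cross-piece brackets of the $\mathbf{s}_i$ vanish by the orthogonal splitting of Theorem \ref{localdecomp} together with Theorem \ref{symplectic}) are consistent with the lemmas the paper records, and the dimension count matches.

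The one genuine error is your claim that the conjugate angles produced by the action--angle theorem are \emph{precisely} the previously constructed coordinates $\mathbf{t}_i,\theta_k,\phi_j$, so that $\omega^{\mathcal{O}}=\sum_i\der\mathbf{t}_i\wedge\der\mathbf{s}_i+\sum_j\der\phi_j\wedge\der\mathbf{m}_j+\sum_k\der\theta_k\wedge\der\ell_k$. This is unjustified and in general false: by Lemma \ref{form} the Hamiltonian vector field of $\mathbf{s}_i$ is $\Ham{\mathbf{s}_i}=\partial/\partial\mathbf{t}_i+\sum_j a_j\,\partial/\partial\theta_j+\sum_j b_j\,\partial/\partial\phi_j$ with cross terms $a_j,b_j$ that need not vanish, so the time parameter of the flow of $\Ham{\mathbf{s}_i}$ launched from the section is not $\mathbf{t}_i$, and the coordinate system $(\mathbf{s},\mathbf{t},\ell,\theta,\mathbf{m},\phi)$ need not be Darboux --- the paper states this explicitly before the final theorem. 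The correct output of \cite[Theorem 3.4.5]{choi2019} is a \emph{new} set of complementary coordinates $\overline{\mathbf{s}}_i,\overline{\ell}_i,\overline{\mathbf{m}}_i$ in which $\omega^{\mathcal{O}}$ takes standard form. Since the statement only asserts existence of some global Darboux system, this slip does not defeat the argument, but you should delete the identification of the angle variables with $\mathbf{t}_i,\theta_k,\phi_j$ and state the conclusion in terms of the coordinates the action--angle theorem actually furnishes.
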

\begin{proof}
Apply Theorem 3.4.5 of \cite{choi2019} to the function $F$ defined in (\ref{actioncoord}). Then there are complementary coordinates
\[
(\overline{\mathbf{s}}_1, \cdots, \overline{\mathbf{s}}_{I}, \overline{\ell}_1, \cdots, \overline{\ell}_{K},\overline{\mathbf{m}}_1,\cdots, \overline{\mathbf{m}}_{J})
\]
such that
\[
\omega^{\mathcal{O}}=\sum_{i=1}^{I(g,c,c_b)} \der \mathbf{s}_i \wedge\der \overline{\mathbf{s}}_i + \sum_{i=1} ^{K(g,c)} \der \ell_i \wedge \der \overline{\ell}_i+ \sum_{i=1} ^{J(g,c,c_b)} \der \mathbf{m}_i \wedge \der \overline{\mathbf{m}}_i. 
\]
This completes the proof.
\end{proof}

\bibliographystyle{amsplain}
\bibliography{references_symp}
\end{document}